\newcommand{\cI}{\mathcal{I}}
\newcommand{\cL}{\mathcal{L}}
\newcommand{\cD}{\mathcal{D}}
\newcommand{\cE}{\mathcal{E}}
\newcommand{\cG}{\mathcal{G}}
\newcommand{\cN}{\mathcal{N}}
\newcommand{\cP}{\mathcal{P}}
\newcommand{\cR}{\mathcal{R}}
\newcommand{\cS}{\mathcal{S}}
\newcommand{\cU}{\mathcal{U}}
\newcommand{\cV}{\mathcal{V}}
\newcommand{\f}{f_{u^*,v^*,i^*,j^*}}
\newcommand{\bs}[1]{\boldsymbol{#1}}
\newcommand{\wt}[1]{\widetilde{#1}}
\newcommand{\ol}{\overline}
\newcommand{\ul}{\underline}
\newcommand{\mtc}{\mathcal}
\newcommand{\tn}{\textnormal}
\newcommand{\Frdp}[1]{F^{(#1)}_{\tn{\ref{opt:RDP}}}}
\newcommand{\Fopt}[1]{F^{*(#1)}_{\tn{\ref{opt:RDP}}}}
\newcommand{\Cp}{C^{\textnormal{plat}}}
\newcommand{\Cph}{\widetilde{C}^{\textrm{plat}}}
\newcommand{\Psp}{P^{\textrm{SP}}}
\newcommand{\Psprel}{P^{\textrm{SP}}_{1\textrm{-rel}}}
\newcommand{\mP}{\mathcal{P}}
\newcommand{\mL}{\mathcal{L}}
\newcommand{\mI}{\mathcal{I}}
\newcommand{\minimize}{\operatornamewithlimits{minimize}}
\newcommand{\replace}[2]{{{\color{black}#2\color{black}}}} %TeX source markup.
\newcommand{\replacemath}[2]{{{\color{black}#2\color{black}}}} %TeX source markup.
\newcommand{\jcolor}{black}
\newcommand{\rank}{\textrm{rank}}
\newcommand{\beq}{\begin{equation}}
\newcommand{\eeq}{\end{equation}}
\newcommand{\bdm}{\begin{displaymath}}
\newcommand{\edm}{\end{displaymath}}
\newcommand{\ba}{\begin{aligned}}
\newcommand{\ea}{\end{aligned}}
\newtheorem{theorem}{Theorem}
\theoremstyle{definition}
\newtheorem{definition}{Definition}
\newtheorem{proposition}{Proposition}
\newtheorem{remark}{Remark}
\newtheorem{lemma}{Lemma}
\newtheorem{observation}{Observation}[section]
\DeclarePairedDelimiterX\Set[2]{\lbrace}{\rbrace}%
\colon\mathopen{} #2 }
\let\footnote=\endnote
 \def\bibfont{\small}%
\providecommand{\KEYWORDS}[1]
{
  \small	
  \textbf{\textit{Keywords---}} #1
}
\begin{document}
%%%%%%%%%%%%%%%%

% Outcomment only when entries are known. Otherwise leave as is and
%   default values will be used.
%\setcounter{page}{1}
%\VOLUME{00}%
%\NO{0}%
%\MONTH{Xxxxx}% (month or a similar seasonal id)
%\YEAR{0000}% e.g., 2005
%\FIRSTPAGE{000}%
%\LASTPAGE{000}%
%\SHORTYEAR{00}% shortened year (two-digit)
%\ISSUE{0000} %
%\LONGFIRSTPAGE{0001} %
%\DOI{10.1287/xxxx.0000.0000}%

% Author's names for the running heads
% Sample depending on the number of authors;
% \RUNAUTHOR{Jones}
% \RUNAUTHOR{Jones and Wilson}
% \RUNAUTHOR{Jones, Miller, and Wilson}
% \RUNAUTHOR{Jones et al.} % for four or more authors
% Enter authors following the given pattern:
% \RUNAUTHOR{Luo and Larson}

% Title or shortened title suitable for running heads. Sample:
% \RUNTITLE{Bundling Information Goods of Decreasing Value}
% Enter the (shortened) title:
% \RUNTITLE{Route-then-Schedule Platooning}

% Full title. Sample:
% \TITLE{Bundling Information Goods of Decreasing Value}
% Enter the full title:
\title{A Repeated Route-then-Schedule Approach to Coordinated Vehicle Platooning:
Algorithms, Valid Inequalities and Computation}

% Block of authors and their affiliations starts here:
% NOTE: Authors with same affiliation, if the order of authors allows,
%   should be entered in ONE field, separated by a comma.
%   \EMAIL field can be repeated if more than one author
% \ARTICLEAUTHORS{%
% \AUTHOR{Fengqiao Luo}
% \AFF{Department of Industrial Engineering and Management Science, Northwestern University, Evanston, IL, 60208,
% \EMAIL{fengqiaoluo2014@u.northwestern.edu}}
% \AUTHOR{Jeffrey Larson}
% \AFF{Argonne National Laboratory, Mathematics and Computer Science Division, Lemont, IL, 60439, \EMAIL{jmlarson@anl.gov}} %, \URL{}}
% % Enter all authors
% } % end of the block
\author{Fengqiao Luo\footnote{Northwestern University, Dept. of Industrial Engineering and Management Science, \texttt{fengqiaoluo2014@u.northwestern.edu}}
~and Jeffrey Larson\footnote{Argonne National Laboratory, Mathematics and Computer Science Division, \texttt{jmlarson@anl.gov}}
}

\date{} % Comment this line to show today's date
\maketitle

\begin{abstract}
Platooning of vehicles is a promising approach for reducing fuel consumption,
increasing vehicle safety, and using road space more efficiently. 
% Motivation
\replace{The}{We consider the important but} difficult problem of assigning optimal routes and departure schedules to a
collection of vehicles\replace{is therefore important}{}. 
% State what we do
We propose an iterative route-then-schedule \replace{approach}{heuristic} for centralized
planning that quickly converges to high-quality solutions.
We also propose and analyze a collection of valid inequalities for the individual
problems of assigning vehicles to routes and scheduling the times that vehicles
traverse their routes. These inequalities are shown to reduce the computational
time or optimality gap of solving the routing and scheduling problem instances.
Our approach uses
the valid inequalities in both the routing and scheduling portions of each
iteration; numerical experiments highlight the speed of the approach for routing
vehicles on a real-world road network.
\end{abstract}%

% Sample
%\KEYWORDS{deterministic inventory theory; infinite linear programming duality;
%  existence of optimal policies; semi-Markov decision process; cyclic schedule}

% Fill in data. If unknown, outcomment the field
\KEYWORDS{vehicle platooning; valid inequalities; vehicle routing problems} 
% \HISTORY{}

%%%%%%%%%%%%%%%%%%%%%%%%%%%%%%%%%%%%%%%%%%%%%%%%%%%%%%%%%%%%%%%%%%%%%%

% Samples of sectioning (and labeling) in OPRE
% NOTE: (1) \section and \subsection do NOT end with a period
%       (2) \subsubsection and lower need end punctuation
%       (3) capitalization is as shown (title style).
%
%\section{Introduction.}\label{intro} %%1.
%\subsection{Duality and the Classical EOQ Problem.}\label{class-EOQ} %% 1.1.
%\subsection{Outline.}\label{outline1} %% 1.2.
%\subsubsection{Cyclic Schedules for the General Deterministic SMDP.}
%  \label{cyclic-schedules} %% 1.2.1
%\section{Problem Description.}\label{problemdescription} %% 2.

% Text of your paper here

\section{Introduction}
% What is vehicle platooning
Coordinated platooning of vehicles is a proposed fuel-saving approach where
vehicles travel in a single file
with short intervehicle distances. Control of the vehicles in the platoon may be
facilitated by on-board sensors or vehicle-to-vehicle communications
\citep{Bergenhem2012,tsugawa2013overview,Serizawa2019,Dey2016}. 
%
% What are platooning benefits
Researchers have shown platoons to more efficiently use road space, improve
network throughput, decrease the risk of crashes, and reduce greenhouse gas
emissions~\citep{eritco2016}. Because platooning vehicles have reduced aerodynamic drag,
such environmental benefits directly correspond to reduced fuel use and
ultimately financial savings to vehicle owners. 
In this manuscript we focus on platooning as a fuel-saving (and therefore
cost-saving) technology. Such savings are not trivial.
ERITCO reports a 7--16\% reduction in \ce{CO2} emissions for trailing vehicles,
as well as a 1--8\% reduction for the vehicle leading the
platoon~\citep{eritco2016}. 
\replace{}{The leading vehicle's savings arise from a reduction in the
low-pressure air bubble behind it; a trailing vehicle's savings arise from a
decrease in the high-pressure air bubble in front of it.} 
See \cite{Tsugawa2016} for a review of platooning technologies, observed energy
saving values, and potential issues that have been reported by notable platooning projects. 

% Pointing to very recent platooning results.
While much of the recent literature has been devoted to the control of
individual platoons \citep{Zhang2019,Gong2016,Ye2019}, the technology has also
inspired research in many \replace{}{other} areas. Studies have been devoted to increasing the acceptance of
platooning among professional drivers~\citep{Castritius2020}, using platoons
optimally when unloading container ships~\citep{You2020}, and controlling
platoons to improve pavement longevity~\citep{Gungor2020}.
Many works in the recent literature seek to maximize the benefits of platooning
by adjusting vehicle operations. These works include adjustments to the times
vehicles traverse their routes \citep{Boysen2018},
the routes that vehicles take \citep{Baskar2013},
and the speed at which vehicles travel
\citep{Hoef2015:itsc:15,Liang2016,luo2018-veh-platn-mult-speeds}.
Such coordination can be in a centralized framework where a single coordinator
decides vehicle operations (e.g., \cite{Abdolmaleki2019}) or in a distributed
approach where platooning opportunity information is conveyed to vehicles
already traveling in the network (e.g., \cite{Larson2014d,Sokolov2017}).

\replace{}{This manuscript also considers the problem of routing vehicles to
facilitate platooning.}
Because \replace{the general platoon routing problem}{determining a fuel-optimal
routing of platoonable vehicles} has been shown to be NP-hard
\citep{Larsson2015}, various approaches for solving these optimization
problems have been considered: evolutionary methods
\citep{Nourmohammadzadeh2016}, mixed-integer programming techniques
\citep{2016-coord-platn-routing}, and network-flow formulations
\citep{Abdolmaleki2019}. Nearly all of the literature considers a few dozen vehicles
to be the limit of their method. Notable exceptions are the works of
\citet{luo2018-veh-platn-mult-speeds} and \citet{Abdolmaleki2019}, which 
present approaches that are capable of routing 
thousands of vehicles. 

The recent work of \cite{Abdolmaleki2019} is especially related to our work. They
consider a time-discretized approximation and assume that more than two
vehicles that enter an edge within a given time segment will platoon.
A time-expanded network is constructed under
this time-discretized approximation. The coordinated vehicle platooning problem (CVPP) is formulated as a concave minimization problem
with mixed-integer variables and is reformulated into a convex mixed-integer
nonlinear program after introducing an artificial strictly convex term in the objective.
They identify high-quality solutions of this nonlinear program using an outer-approximation technique and local search.
In contrast to their work, which expands the
number of vertices using the time discretization, our work treats
time as a continuous variable.
We focus on formulating a tighter mixed-integer linear program (MILP) (but not nonlinear reformulations)
by decoupling the routing and scheduling subproblems.
We note that
the two approaches are not exclusive to each other.
The time-discretization approximation from \cite{Abdolmaleki2019} can also be applied to 
reformulate the scheduling subproblem proposed in this paper.

\replace{}{Many investigations into vehicle-routing 
problems have been undertaken using time-extend networks and time-discretized approximations
to handle space-time coordination.
\cite{Boland2019} investigates how the time-interval size
affects model complexity and approximation accuracy. 
 In some cases, the approximation gap between the time-discretized and the time-continuous models
 can be greater than 20\%.
 \cite{Fischer2012} establishes a general dynamic time-extend graph generation framework
 to control the complexity of the time-extended network
 without sacrificing the quality of approximation when the time horizon is
 long or the time discretization is increasingly refined. 
 This approach can be applied to many routing
 problems in which shorter mission-completion time is preferred. 
 In some situations, 
 carefully designed methods keep time as a continuous parameter
 without sacrificing computational tractability.
 \cite{Boland2017} design a service network
that consolidates carriers and transport shipments where
the paths of the carriers need to be coordinated in both space and time.
They develop an iterative refinement algorithm using a partially time-expanded
network for solving this problem in a continuous-time situation,
rather than applying a time-discretization approach. 
Other approaches, such as \cite{Contardo2015} and \cite{Pecin2017}, solve 
time-dependent vehicle routing problems without explicitly constructing a
time-extended network; instead, time windows are used to select the feasible
portions of the network.
\cite{Skutella2009} and \cite{Gendreau2015} provide overviews of network
optimization problems in time-dependent networks. 
}

This work assumes travel times are known deterministically and that a
centralized controller determines vehicle routes and departure times. 
(Platooning coordination in the deterministic case is already difficult;
accounting for uncertainty in travel times is even more challenging 
\citep{Zhang2017,Li2017,Li2017a}.) Note that we do not allow vehicles to stop
traveling once they have started traveling. See the work of
\citet{Bhoopalam2018} for a thorough review of approaches for optimal platoon
routing approaches and their assumptions.

The research reported in this paper benefits from existing techniques of identifying 
valid and facet-defining inequalities for strengthening a mixed-integer linear program.
These techniques include 
constructing disjunctive valid inequalities to cut fractional solutions
\citep{1998-balas_disj-prog-conv-hull-feasb-pt,Jeroslow1977,
letchford2001_disj-cuts-for-combinatorial-opt,Luedtke2017_lift-and-proj-cuts-for-convex-minlp},
identifying valid inequalities for clique-partition polytopes 
\citep{grotschel1990_clique-partition-polytope,wolsey1996_form-valid-ineq-cap-graph-partition,
oosten2001_clique-partition-facets,bandelt1999_lift-facet-character-clique-partition}, 
using lift-and-projection methods to construct the convex hull of a mixed binary set
\citep{au2016_analysis-poly-lift-proj-methd,
kocuk2016_cycle-base-form-valid-ineq-DC-power-transmission,
Luedtke2017_lift-and-proj-cuts-for-convex-minlp,burer2005_solve-lift-proj-relax,
aguilera2004_lift-proj-relax-matching-related,
balas1997_mod-lift-proj-proced},
projecting out undesired variables by using Fourier-Motzkin elimination technique
\citep{dantzig1972_FM-elimination-and-dual,williams1976_FM-elim-int-prog,
schechter1998_integration-polyhedron-appl-of-EM-elim},
and using lifting techniques to extend facet-defining inequalities valid only 
for lower-dimensional polytopes to be valid for higher-dimensional polytopes
\citep{2007wolsey_lift-superadd-single-node-flow,nemhauser2003_lifted-ineq-mip-thy-alg,atamturk2003_facets-of-mixed-int-knapsack-poly,
gu1999_liftted-cover-ineq-complexity,gu1999_liftted-flow-cover-ineq,
gu1998_liftted-cover-ineq-computing}.
Furthermore, our proofs utilize a variety of techniques for verifying that
a certain valid inequality is also facet-defining, which include the
construction and counting of affinely independent points
\citep{conforti-IP-2014} and coefficient determination techniques that are
facilitated by substituting feasible points
\citep{grotschel1990_clique-partition-polytope}. Valid and facet-defining
inequalities are widely used in strengthening the mixed-integer
formulations of combinatorial optimization problems in operations research
\citep{pochet1993_lot-sizing-form-valid-ineq,pochet1995_cap-fac-loc-valid-ineq,perboli2010_new-valid-ineq-two-echelon-veh-routing,coelho2014_inventory-routing-with-valid-ineq,
baldacci2009_valid-ineq-fleet-size-mix-veh-routing,
huygens2006_two-edge-hop-constr-netwk-design}.
Valid inequalities derived for a particular polyhedral structure can be applied to a variety of
problems that have that structure in their formulations. For instance, flow-cover inequalities
can be applied to network design and resource management problems \citep{gu1999_liftted-flow-cover-ineq},
cover inequalities can be applied to problems with knapsack-type constraints 
\citep{gu1999_liftted-cover-ineq-complexity},
and path-cover-path-pack inequalities \citep{atamturk2017_path-cover-pack-ineq-cap-netwk-flow} 
can be applied to capacitated network flow problems
arising from telecommunication, facility location, production planning, and supply chain management.
In a recent trend of research on mixed-integer programming (MIP) theory,
valid inequalities are derived for a variety of problems arising from optimization
under uncertainty \citep{wang2019_dr-chance-constr-assignment,gade2012_decomp-alg-param-gomory-cuts-TSSIP,kucukyavuz2012_mix-sets-chance-constr-prog,koster2011_rob-netwk-des-valid-ineq}.  
Valid inequalities are powerful in reducing 
the computational time or optimality gap in many applications.
However, there should be a balance between strengthening the formulation
and computational cost, since numerical construction of valid inequalities incurs 
additional computational time, and adding too many valid inequalities
can slow down solving node-relaxation problems 
\citep{botton2013_benders-decomp-for-hop-constrained-surv-netwk-des}.

\subsection{\textcolor{\jcolor}{Description of the Coordinated Vehicle Platooning Problem}}
\replace{}{
In the CVPP, we are given a road network and a set of vehicles, each with an
origin node and destination node, as well as origin/departure times for each vehicle.
We seek a time-feasible path for each vehicle from its origin to its destination. Fuel
saving will be incurred when vehicles form a platoon, that is, when they
traverse a shared edge in their paths at the same time. The benefits of
platooning should be incurred only if they outweigh the cost of having each
vehicle take its assigned route. The CVPP objective is to minimize the 
collective total fuel consumed by the vehicles.}

% A summary of our approach
\subsection{Contributions and Organization of the Paper}
We propose a repeated route-then-schedule heuristic method (RSHM) to 
solve the CVPP. Our approach is motivated \replace{}{by}
the fact that vehicle routes that can enable platooning
and fuel saving are limited in practice: such routes must be ``close'' to the shortest path. 
This fact can help reduce the size of MIP models
and also indicates that it may be computationally beneficial 
to decompose the routing and scheduling procedures within a novel route-then-schedule
approach where a route is assigned to each vehicle
and then a scheduling problem is solved for the given routes. 
In the framework we present here, the route-then-schedule method is 
repeated to regenerate routes that are more promising for saving fuel
after time constraints are imposed. 
\replace{}{Structural properties of the CVPP subproblems 
are also identified.} 
 
We develop valid inequalities that can strengthen the MILP formulations of the routing and scheduling 
problems that are decoupled in the RSHM scheme. 
The polyhedral properties of the routing and scheduling formulations 
identified in this work are of independent interest. We perform systematic and intensive numerical
experiments in order to understand the performance of the RSHM scheme, 
the advantage of using this scheme over former MILP formulations 
\citep{2016-coord-platn-routing,luo2018-veh-platn-mult-speeds} 
of the CVPP that combine the routing and scheduling, and the merits of adding the 
developed valid inequalities. We are unaware of any other work that
investigates valid inequalities for the routing or scheduling subproblems
associated with the CVPP.

Section~\ref{sec:prob-description} defines the CVPP.
Sections~\ref{sec:routing-problem} and \ref{sec:form-scheduling} decouple the
CVPP into routing and scheduling problems, respectively.
Section~\ref{sec:edge-contraction} discusses a procedure that can reduce
the size of the scheduling problem. Section~\ref{sec:two-stage-heuristc}
presents the RSHM scheme that is based on repeatedly solving the routing and
scheduling subproblems and updating the presumed fuel cost. This culminates in
Theorem~\ref{thm:RSHM-alg-converge}, which highlights properties of the RSHM
method. Section~\ref{sec:valid-ineq-route} and
Section~\ref{sec:valid-ineq-schedule} develop valid inequalities to strengthen
the routing and scheduling formulations, respectively.
Section~\ref{sec:num-study} presents the numerical investigation of the
algorithms and valid inequalities developed in this paper. Proofs and
additional numerical results are given in the electronic supplement.

\section{Problem Formulation and Solution Methodology}
\label{sec:form-sol-method}
\replace{}{Section~\ref{sec:prob-description} gives a mathematical description
of the CVPP, which consists of a collection of vehicle routing and scheduling
decisions. Simultaneously addressing both types of decisions can
lead to highly intractable problems with loose relaxations because of a large number of
big-M coefficients~\citep{luo2018-veh-platn-mult-speeds}. We propose 
a route-then-schedule approach that 
decomposes the CVPP into a routing subproblem and a scheduling subproblem
and repeatedly solves the two subproblems with adjusted input parameters.
Section~\ref{sec:prob-form} formulates the routing and scheduling subproblems,
and Section~\ref{sec:two-stage-heuristc} outlines how our
method integrates the two subproblems.
}

\subsection{A Formal Problem Description}\label{sec:prob-description}
We first describe the CVPP.
Let $\cG(\cN,\cE)$ be a graph representing the highway network of an area, 
where $\cN$ is the set of nodes and $\cE$ is the set of highway segments (or edges).
Let $C_{i,j}$ denote the fuel cost of traversing edge $(i,j)\in \cE$.
Let $\cV$ be the set of vehicles to be routed, each with origin node, $O_v \in
\cN$; destination node, $D_v \in \cN$; earliest time it can \replace{departure}{depart} from
$O_v$, $T_v^O \in \mathbb{R}_+ $; and latest time it can arrive at $D_v$, $T_v^D \in \mathbb{R}_+$. 
\replace{}{We assume that $v$ travels continuously from $O_v$ and $D_v$ without 
making intermediate stops (e.g., at a parking or rest area)
to wait for other vehicles to coordinate platoon formation.}
Nevertheless, if multiple vehicles arrive at some node $i$ simultaneously, they may be
able to platoon along some edge $(i,j) \in \cE$. 
When vehicles form a platoon, the fuel-saving rates for the lead vehicle and 
each trailing (following) vehicle are $\sigma^l$ and $\sigma^f$, respectively, where $\sigma^l<\sigma^f$.   
Ultimately, we seek a set of optimal routes and departure times for all
vehicles that minimizes the total fuel consumed while completing all transport
missions. We assume that each vehicle drives at free-flow speed along 
each edge (allowing for this speed to be edge dependent).
\replace{}{In practice, 
vehicles can make slight adjustments to their speed in order to
facilitate platoon formation. We do not consider such intervehicle coordination but
focus exclusively on the high-level strategy of routing and scheduling.}

\replace{}{Note that modeling the highway network as a directed graph means
edges $(i,j)$ and $(j,i)$ may correspond to two directions of the same 
road segment. In general, $\cE$ can correspond to a connected network or a
collection of disconnected components. In the latter case, the CVPP
can be decomposed into independent smaller problems
corresponding to each disconnected component. The formulations below are valid
for either case. Although the road network is not assumed to be sparse in what
follows, most road networks correspond to relatively sparse planar graphs.}

\subsection{Problem Formulation}\label{sec:prob-form}
Every iteration of the RSHM consists of two phases (or subproblems).
In phase one, we solve a route-design problem (RDP) 
to identify optimal routes for all vehicles assuming that there
are no time constraints on vehicles' departure and arrival times. 
In phase two, we solve a scheduling problem (SP) with time constraints
assuming that vehicles travel on their RDP routes. The SP solution is used to
update the costs $C_{i,j}$ in the next RDP. The solution of the 
routing and scheduling problems is repeated until a termination criterion is reached.

We note that the RSHM's first RDP problem is identical
to the unlimited vehicle platooning (UVP) problem investigated by \cite{Larsson2015}.
In the UVP problem, it is assumed that all vehicles that are assigned to travel through 
a common highway segment (represented by an edge in the transportation network) 
can form a single platoon on this edge. Based on this assumption, the UVP problem
searches for an optimal route assignment to all vehicles to minimize the total fuel cost. 
The fuel cost adjustment can result in a different objective in subsequent
iterations, but the constraints of the RDP problem are unchanged.

\subsubsection{Formulation of the Route-Design Problem.}\label{sec:routing-problem}
\citet{Larsson2015} prove that the RDP on general 
and on planar graphs is NP hard. 
They provide a mixed 0-1 formulation that is applicable to the RDP, but their
formulation
involves more logical constraints than necessary (e.g.,~\citet[Definition~9]{Larsson2015}) 
to determine whether two vehicles are
platooned when traveling on an edge. 
We propose a new formulation that uses
vehicle-number indicator variables to note when 
only one vehicle or more than two vehicles traverse on each edge. 
\replace{This RDP formulation uses the following notation.}{}
\begin{table}[t]
  \footnotesize
  \caption{Sets, parameters, and variables for the RDP formulation. \label{tab:RDP}}
  \begin{tabularx}{\textwidth}{lX}
    \toprule
    Set & Definition\\
    \midrule
			$\cN$ & set of nodes\\
			$\cE$ & set of road edges, $(i,j)$, for $i,j \in \cN$\\
			$\cV$ & set of vehicles \\
    \midrule\\[-.35cm]
    Parameter & Definition\\
    \midrule
			$C_{i,j}$ & fuel cost of traversing edge $(i,j)\in \cE$ for each vehicle\\
			$\sigma^l$, $\sigma^f$ & fuel saving rate for the lead and following vehicle in a platoon\\
			$O_v, D_v$ & origin and destination nodes for vehicle $v\in\cV$  \\
			$T_v^O$ & earliest allowable departure time of vehicle $v$ from its origin\\
			$T_v^D$ & latest allowed arrival time of vehicle $v$ at its destination\\
      $T_{i,j}$ & time cost of traversing edge $(i,j)\in \cE$ for all vehicles\\
    \midrule\\[-.35cm]
    Variable & Definition\\
    \midrule
			$x_{v,i,j}$ & 1 if vehicle $v$ traverses edge $(i,j)\in \cE$, 0 otherwise \\
			$y_{i,j}$ & 1 if any vehicle traverses edge $(i,j)\in \cE$, 0 otherwise\\
			$y'_{i,j}$ & 1 if two or more vehicles traverse edge $(i,j)\in \cE$, 0 otherwise\\
			$w_{i,j}$ & number of extra vehicles taking $(i,j)$, 
			if more than one vehicle takes $(i,j)$. That is, \linebreak[4]$w_{i,j}=\max\{0,m-1\}$, 
			where $m$ is the number of vehicles assigned to traverse $(i,j)$\\
    \bottomrule
  \end{tabularx}
\end{table}
% \begin{itemize}
%   \item Notations for sets
%     \begin{itemize}
%       \item $\cN$: the set of nodes
%       \item $\cE$: the set of road edges, $(i,j)$, for $i,j \in \cN$
%       \item $\cV$: the set of vehicles
%     \end{itemize}
%   \item Notations for parameters
%     \begin{itemize}
%       \item $C_{i,j}$: fuel cost of traversing edge $(i,j)\in \cE$ for each vehicle
%       \item $\sigma^l$: fuel saving rate for the lead vehicle in a platoon
%       \item $\sigma^f$: fuel saving rate for a following vehicle in a platoon 
%       \item $O_v, D_v$: origin and destination nodes for vehicle $v\in\cV$  
%       \item $T_v^O$: earliest allowable departure time of vehicle $v$ from its origin
%       \item $T_v^D$: latest allowed arrival time of vehicle $v$ at its destination
%             \item $T_{i,j}$: time cost of traversing edge $(i,j)\in \cE$ for all vehicles
%     \end{itemize}
	
%   \item Notations for decision variables
%     \begin{itemize}
%       \item $x_{v,i,j}\in\{0,1\}$: 1 if vehicle $v$ traverses edge $(i,j)\in \cE$, 0 otherwise 
%       \item $y_{i,j}\in\{0,1\}$: 1 if any vehicle traverses edge $(i,j)\in \cE$, 0 otherwise
%       \item $y'_{i,j}\in\{0,1\}$: 1 if two or more vehicles traverse edge $(i,j)\in \cE$, 0 otherwise
%       \item $w_{i,j}\in\mathbb{R}$: number of extra vehicles taking $(i,j)$, 
%       if more than one vehicle takes $(i,j)$. That is, $w_{i,j}=\max\{0,m-1\}$, 
%       where $m$ is the number of vehicles assigned to traverse $(i,j)$
%     \end{itemize}
% \end{itemize}
The phase-one RDP \replace{}{uses the notation in Table~\ref{tab:RDP} and} is formulated as follows:
\begin{subequations}
  \makeatletter
  \def\@currentlabel{RDP}
  \makeatother
  \label{opt:RDP}
  \renewcommand{\theequation}{RDP.\arabic{equation}}
  \begin{alignat}{3}
    \minimize_{x,y,y',w}     &\quad \sum_{v\in\cV}\sum_{(i,j)\in \cE} C_{i,j}x_{v,i,j} - \sum_{(i,j)\in \cE} \sigma^l C_{i,j} y'_{i,j} - \sum_{(i,j)\in \cE} \sigma^f C_{i,j} w_{i,j} \label{eqn:RDP_obj}\\
    \text{subject to:}  &  \sum_{k\in \cN: (j,k)\in \cE} x_{v,j,k} - \sum_{i\in \cN: (i,j)\in \cE} x_{v,i,j}  =
    \begin{cases}
        1  &  \textrm{if } j= O_v \\
        -1 &  \textrm{if } j=D_v \\
        0  &  \textrm{otherwise } \\
      \end{cases}                         & \forall v\in\cV,\;\forall j\in\cN,  \label{eqn:RDP_1} \\
  &\sum_{(i,j)\in\cE}T_{i,j}x_{v,i,j}\le T^D_v-T^O_v & \forall v\in\cV, \label{eqn:RDP_1extra} \\
  &\sum_{v\in\cV} x_{v,i,j} \ge 2 y'_{i,j}       & \forall (i,j)\in \cE,  \label{eqn:RDP_2} \\
  &w_{i,j} - \sum_{v\in\cV} x_{v,i,j} + y_{i,j} \le 0  & \forall (i,j)\in \cE,  \label{eqn:RDP_3}\\
  &x_{v,i,j} \le y_{i,j}             & \forall v\in\cV,\; \forall (i,j)\in \cE,  \label{eqn:RDP_4}\\
  & y'_{i,j}\le y_{i,j}           & \forall (i,j)\in \cE, \label{eqn:RDP_5} \\
  &x_{v,i,j}, y_{i,j},  y'_{i,j}\in\{0,1\}, w_{i,j}\ge 0    & \forall  v\in\cV,\; \forall (i,j)\in \cE. \label{eqn:RDP_6}
  \end{alignat}
\end{subequations}
The objective \ref{eqn:RDP_obj} contains three terms. The first term
represents the cost of fuel used to traverse edges, without any savings from
platooning. The second and third terms represent the cost of fuel saved
from lead and trailing vehicles, respectively.
Constraint~\ref{eqn:RDP_1} ensures that vehicles 
travel from $O_v$ to $D_v$.
Constraint \ref{eqn:RDP_1extra} ensures that the route is feasible for the destination time
requirement. (Time constraints to ensure that formation of platoons
is feasible appear
in the \ref{opt:SPF}.)
Constraints~\ref{eqn:RDP_2}--\ref{eqn:RDP_4} ensure that $y'_{i,j}$,
$y_{i,j}$, and $w_{i,j}$ represent their respective quantities.
Although $w_{i,j}$ is specified only as a nonnegative real variable, the integral values
of $x_{v,i,j}$ and $y_{i,j}$ with constraints \ref{eqn:RDP_3} \replace{}{and
objective \ref{eqn:RDP_obj}} ensure that $w_{i,j}$ takes only integral values
\replace{}{at an optimal solution}.

\subsubsection{Formulation for Scheduling.}
\label{sec:form-scheduling}
After solving the \ref{opt:RDP} to obtain routes for all vehicles, 
we solve a scheduling problem in phase two to determine vehicle departure times.
\replace{}{The decisions on the formation of platoons will be determined along with vehicle schedules
when solving the scheduling problem. Since we assume the order of vehicles in a platoon
does not affect its fuel savings, we can label all vehicles using positive integers 
and simplify our model by assuming that vehicles in a platoon follow a canonical order with
vehicles platooning in order of their labels. 
}
The scheduling problem uses the following notation.
\begin{table}[t]
  \footnotesize
  \caption{Sets, parameters, and variables for the SP formulation. \label{tab:SP}}
  \begin{tabularx}{\textwidth}{lX}
    \toprule
    Set & Definition\\
    \midrule
			$\cR_v$ & route of vehicle $v\in\cV$ (an ordered subset of edges in the route)\\
			$\cN_v$ & set of nodes on $\cR_v$, $v\in\cV$\\
      $\cV_{i,j}$ & set of vehicles taking the edge $(i,j) \in \cup_{v\in\cV}\cR_v$ \\
    \midrule\\[-.35cm]
    Parameter & Definition\\
    \midrule
    $T_v^O,T_v^D,T_{i,j}$ & defined in Table~\ref{tab:RDP}\\
			$\lambda$ & maximum number of vehicles permitted in a platoon  \\
    \midrule\\[-.35cm]
    Variable & Definition\\
			$t_{v,O_v}$ & departure time of vehicle $v$ from node $O_v$ \\
			$t_{v,i}$ & arrival time of vehicle $v$ at node $i$ such that there exists a $j$ such that $(j,i) \in \cR_v$ \\
      $f_{u,v,i,j}$ & 1 if vehicle $u$ follows vehicle $v$ at edge $(i,j)\in\cR_u\cap\cR_v$, 0 otherwise. \replace{}{(We require $u>v$ by the rule of canonical ordering.)} \\
			$\ell_{v,i,j}$ & 1 if vehicle $v$ leads a platoon at edge $(i,j)\in\cR_v$, 0 otherwise\\
    \bottomrule
  \end{tabularx}
\end{table}
\replace{The scheduling problem uses the following notation.}{}
%   \begin{itemize}
%   \item Notations for sets
%     \begin{itemize}
%       \item $\cR_v$: the route of vehicle $v\in\cV$ (an ordered subset of edges in the route)
%       \item $\cN_v$: the set of nodes on $\cR_v$, $v\in\cV$
%       \item $\cV_{i,j}$: the set of vehicles taking the edge $(i,j) \in \cup_{v\in\cV}\cR_v$ 
%     \end{itemize}
%   \item Notations for parameters
%     \begin{itemize}
%       \item $T_v^O,T_v^D,T_{i,j}$: defined in the parameter list in Section~\ref{sec:routing-problem}
%       \item $\lambda$: maximum number of vehicles permitted in a platoon  
%   \end{itemize}
%   \item Notations for decision variables
%     \begin{itemize}
%       \item $t_{v,O_v}$: the departure time of vehicle $v$ from node $O_v$ 
%       \item $t_{v,i}$: the arrival time of vehicle $v$ at node $i$ such that there exists a $j$ such that $(j,i) \in \cR_v$ 
%       \item $f_{u,v,i,j}$ (requiring $u>v$ \replace{}{by the rule of canonical ordering}): 1 if vehicle $u$ follows vehicle $v$ at edge $(i,j)\in\cR_u\cap\cR_v$, 0 otherwise
%       \item $\ell_{v,i,j}$: 1 if vehicle $v$ leads a platoon at edge $(i,j)\in\cR_v$, 0 otherwise
%     \end{itemize}
% \end{itemize}
The second-stage model to assign schedules for each vehicle \replace{}{uses the notation in Table~\ref{tab:SP} and} can be formulated as
\begin{subequations}
  \makeatletter
  \def\@currentlabel{SP}
  \makeatother
  \label{opt:SPF}
  \renewcommand{\theequation}{SP.\arabic{equation}}
  \begin{alignat}{3}
    \text{maximize }     &\sum_{(i,j)\in\cup_v\cR_v} \Big(  \sum_{v\in\cV_{i,j}} \sigma^l C_{i,j} \ell_{v,i,j} + \sum_{u,v\in\cV_{i,j} u>v} \sigma^f C_{i,j} f_{u,v,i,j} \Big) \hspace{114pt}  \label{eqn:SPF_1}
  \end{alignat}
  \vspace{-25pt}
  \begin{alignat}{3}
    \text{subject to: }  &  t_{v,O_v} \ge T_v^O    & \forall v\in\cV,  \label{eqn:SPF_2} \\
& t_{v,D_v} \le T_v^D  & \forall  v\in\cV,   \label{eqn:SPF_3}  \\
& t_{v,j} = t_{v,i} + T_{i,j} & \forall v\in\cV,\; \forall (i,j)\in\cR_v, \label{eqn:SPF_4} \\
&  t_{u,i} - t_{v,i} \le M_{u,v,i,j}(1-f_{u,v,i,j}) & \forall (i,j)\in\bigcup_{v\in\cV} \cR_v, \; \forall u > v\in\cV_{i,j},  \label{eqn:SPF_5}  \\
&  t_{u,i} - t_{v,i} \ge -M_{u,v,i,j}(1-f_{u,v,i,j}) & \hspace{1in} \forall (i,j)\in\bigcup_{v\in\cV} \cR_v, \; \forall u > v\in\cV_{i,j},  \label{eqn:SPF_6}  \\
&\sum_{w\in\cV_{i,j}: w<v} f_{v,w,i,j} \le 1-\ell_{v,i,j} & \forall v\in \cV,\; (i,j)\in \cR_v,  \label{eqn:SPF_7}  \\
&\sum_{u\in\cV_{i,j}: u>v} f_{u,v,i,j} \le (\lambda-1)\ell_{v,i,j} & \forall v\in \cV,\; (i,j)\in \cR_v,  \label{eqn:SPF_8}  \\
&\sum_{u\in\cV_{i,j}: u>v} f_{u,v,i,j}\ge \ell_{v,i,j}  & \forall v\in \cV,\; (i,j)\in \cR_v,   \label{eqn:SPF_9} \\
%&t_{v,i}\ge 0,  & \forall v\in\cV, \; \forall i \in \cN_v, \label{eqn:SPF_10}  \\
&f_{u,v,i,j}\in\{0,1\} & \forall u,v\in\cV, u>v, \forall (i,j)\in\cR_v,  \label{eqn:SPF_11}  \\
&\ell_{v,i,j}\in\{0,1\} & \forall v\in\cV, \forall (i,j)\in\cR_v,  \label{eqn:SPF_12} 
  \end{alignat}
\end{subequations}
where $M_{u,v,i,j}$ in constraint \ref{eqn:SPF_7} is defined for 
all $(i,j)\in\cup_v\cR_v$ and $u,v\in\cV_{i,j}$ with $u>v$, 
such that $-M_{u,v,i,j}\le t_{u,i}-t_{v,i}\le M_{u,v,i,j}$ 
holds for any feasible values of $t_{u,i}$ and $t_{v,i}$.

The objective \ref{eqn:SPF_1} consists of two terms corresponding to 
fuel saving of lead and trailing vehicles on each edge, respectively.  
Note that maximizing the total fuel saving is equivalent to minimizing the total fuel cost. 
Constraints \ref{eqn:SPF_2}--\ref{eqn:SPF_4} ensure that vehicle travel times are accurate.
Constraints \ref{eqn:SPF_5}--\ref{eqn:SPF_6} ensure that if vehicle $u$ 
follows vehicle $v$ in a platoon on edge $(i,j)$, they must traverse the edge simultaneously.
Constraint~\ref{eqn:SPF_7} ensures that if vehicle $v$ is a lead vehicle 
on an edge, it cannot follow any other vehicles.
If $v$ is not a lead vehicle,
it can follow no more than one vehicle (the lead vehicle) on an edge. 
(Note that our formulation does not allow any trailing vehicle to lead another
vehicle.)
Constraint~\ref{eqn:SPF_8} enforces that
a vehicle can lead at most $\lambda-1$ other vehicles. 
Constraint~\ref{eqn:SPF_9} states that if $v$ is a lead vehicle
on an edge, the number of vehicles following $v$ on that edge must be at least one.

In this problem,
one can remove variables $t_{v,i}$ other than for $i =
O_v$ because the vehicle routes are fixed and there is no
waiting at any node other than the starting node. Therefore, once a vehicle leaves
$O_v$, the time to travel from $O_v$ to $D_v$ is $\sum_{(i,j) \in \cR_v} T_{i,j}$.
To make constraint declaration cleaner, however,
we use $t_{v,i}$ for intermediate nodes $i$.

\subsubsection{Reducing the Scheduling Problem Size by Edge Contraction.}
\label{sec:edge-contraction}
\replace{}{Consider the optimal schedule obtained from \eqref{opt:SPF} based on any route assignment.
The following proposition states that in an optimal solution, 
if two vehicles are platooned on a shared 
edge $e$, they will be platooned on all shared edges preceding and following
$e$.} 
\begin{proposition}
\label{prop:edge-contraction}
\replace{}{Let $\cup_v\cR_v$ be any route assignment.
There exist an optimal schedule and platooning decisions
from \eqref{opt:SPF} based on $\cup_v\cR_v$
such that for any path $L:\;n_1\to n_2\to\ldots\to n_k$
satisfying $\cV_{n_i,n_{i+1}}=\cV_{n_j,n_{j+1}}$ for all $i,j\in\{1,\ldots,k-1\}$,
the platooning decisions are the same on every edge of $L$,
where $\cV_{n_i,n_{i+1}}$ is the subset of vehicles sharing the
edge $(n_i,n_{i+1})$ induced by $\cup_v\cR_v$.}
\end{proposition}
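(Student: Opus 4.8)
The plan is to start from an arbitrary optimal solution $(t^*,f^*,\ell^*)$ of \eqref{opt:SPF} for the given route assignment $\cup_v\cR_v$, keep the schedule $t^*$ untouched, and rebuild the platooning decisions on the edges of $L$ so that they become identical across all of $L$ without changing the objective value. The argument rests on a single structural fact about the schedule; once that is in place, the rest is a combinatorial re-optimization that decouples edge by edge.

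First I would show that \emph{synchronization propagates along} $L$. Since the vehicle set is constant along $L$, every vehicle in the common set $\cV_L:=\cV_{n_1,n_2}=\cdots=\cV_{n_{k-1},n_k}$ traverses every edge of $L$. Because travel times are deterministic and equal for all vehicles and there is no waiting at intermediate nodes, constraint \eqref{eqn:SPF_4} gives $t^*_{w,n_{i+1}}=t^*_{w,n_i}+T_{n_i,n_{i+1}}$ for every $w\in\cV_L$. Hence for any pair $u,v\in\cV_L$ the gap $t^*_{u,n_i}-t^*_{v,n_i}$ is independent of $i$, so the relation ``$u$ and $v$ arrive simultaneously'' ($t^*_{u,n_i}=t^*_{v,n_i}$) holds at one node of $L$ if and only if it holds at all of them. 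This is an equivalence relation on $\cV_L$ and therefore induces one and the same partition $\cV_L=P_1\cup\cdots\cup P_r$ at every node of $L$. I expect this propagation step to be the one genuinely delicate point; everything afterward is bookkeeping.

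Next I would re-optimize the platooning decisions class by class, uniformly across $L$. Within a single class $P_s$ all vehicles share an arrival time at the head of each edge of $L$, so constraints \eqref{eqn:SPF_5}--\eqref{eqn:SPF_6} permit any follow relation between members of $P_s$, whereas vehicles in distinct classes can never platoon. For each class I would select one maximum-saving grouping into platoons of size at most $\lambda$, ordering $P_s$ by vehicle label and letting the smallest-labeled member of each multi-vehicle platoon lead and the others follow it (a singleton is left with $\ell=0$, $f=0$). This respects the canonical ordering $u>v$ and the structure constraints \eqref{eqn:SPF_7}--\eqref{eqn:SPF_9} together with the cap $\lambda$; the one subtlety to record is that when $\lambda=2$ and $|P_s|$ is odd an unavoidable singleton remains, but the chosen grouping still depends only on $|P_s|$ and $\lambda$, not on the edge. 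I would then install this same grouping on every edge of $L$ and leave all decisions on edges outside $L$ as in $(f^*,\ell^*)$.

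Finally I would verify feasibility and optimality. The per-edge constraints \eqref{eqn:SPF_5}--\eqref{eqn:SPF_9} couple only variables living on that edge, so altering decisions on the edges of $L$ cannot disturb feasibility elsewhere, and $t^*$ is unchanged. On each edge $(i,j)$ of $L$ the saving from a class $P_s$ partitioned with $a$ leaders, $b$ singletons equals $C_{i,j}\big(\sigma^f|P_s|-\sigma^f b-(\sigma^f-\sigma^l)a\big)$; since $\sigma^l<\sigma^f$ both penalties are positive, so the maximizing grouping minimizes $b$ then $a$ and depends only on $|P_s|$ and $\lambda$. Thus the rebuilt decisions attain the maximum possible saving on every edge of $L$, and the modified solution has objective value at least that of $(t^*,f^*,\ell^*)$; by optimality of the latter the two values coincide, so the modified solution is itself optimal. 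By construction its platooning decisions are identical on every edge of $L$, which is the claim.
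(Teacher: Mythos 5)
Your proof is correct and follows essentially the same route as the paper's: the key fact in both is that, because the vehicle set is constant along $L$ and there is no waiting at intermediate nodes, the relative arrival times---and hence the set of feasible platooning configurations---are identical on every edge of $L$, after which an exchange/re-optimization argument installs a best configuration uniformly along $L$ without changing the objective. Your write-up is more explicit and constructive than the paper's terse contradiction argument (which only shows that differing configurations on two edges must incur equal cost), but the underlying idea is the same.
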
  
Proposition~\ref{prop:edge-contraction} is proven in Section~\ref{sec:edge_contraction_proof} and
implies that the \ref{opt:SPF} problem size can be reduced by preprocessing \ref{opt:RDP} routes.
This procedure (Algorithm~\ref{alg:edge-contraction}) contracts consecutive
edges into a single edge if the sets of vehicles sharing these edges are identical.
Table~\ref{tab:edge-contraction} in Section~\ref{app:add-num-res}
shows how this procedure can reduce the number of variables and constraints and problem loading time.
\begin{algorithm}[t]
{\footnotesize
\caption{Edge-contraction preprocess applied to the \ref{opt:RDP} routes.}\label{alg:edge-contraction}
\begin{algorithmic}[1]
\State{\bf Input}: A set of routes $\Set*{\cR_v}{v\in\cV}$, 
and the sets of time costs $\Set*{T_{i,j}}{(i,j)\in\cup_{v\in\cV}\cR_v}$,
fuel costs $\Set*{C_{i,j}}{(i,j)\in\cup_{v\in\cV}\cR_v}$, 
and the vehicles $\Set*{\cV_{i,j}}{(i,j)\in\cup_{v\in\cV}\cR_v}$ on each edge
of the input routes.
\State{\bf Output}: A set of routes $\Set*{\cR^{\prime}_v}{v\in\cV}$,
the set $\Set*{T^{\prime}_{i,j}}{(i,j)\in\cup_{v\in\cV}\cR_v}$ of time cost,
the set $\Set*{C^{\prime}_{i,j}}{(i,j)\in\cup_{v\in\cV}\cR_v}$ of fuel costs, 
and the set $\Set*{\cV^{\prime}_{i,j}}{(i,j)\in\cup_{v\in\cV}\cR_v}$ of vehicle subsets on each edge
of the new routes. 
\State{\bf Initialization:} Set $\cR^{\prime}_v\gets\cR_v\;\forall v\in\cV$,
 $T^{\prime}_{i,j}\gets T_{i,j}$, $C^{\prime}_{i,j}\gets C_{i,j}$ and 
$\cV^{\prime}_{i,j}\gets\cV_{i,j}$ for all $(i,j)\in\cup_{v\in\cV}\cR^{\prime}_v$.
\While{$\exists$ a vehicle $v\in\cV$, and two consecutive edges 
$(i,j),(j,k)\in\cR^{\prime}_v$ such that $\cV^{\prime}_{i,j}=\cV^{\prime}_{j,k}$.}
	\For{$v^{\prime}\in\cV^{\prime}_{i,j}$}
		\State{Delete two edges $(i,j),(j,k)$ from $\cR^{\prime}_{v^{\prime}}$,
			 and add $(i,k)$ to $\cR^{\prime}_{v^{\prime}}$.}
	\EndFor
	\State{Delete $T^{\prime}_{i,j}$, $T^{\prime}_{j,k}$ and create 
		  $T^{\prime}_{i,k}\gets T^{\prime}_{i,j}+T^{\prime}_{j,k}$ in the set of time costs.}
      \State{Delete $C^{\prime}_{i,j}$, $C^{\prime}_{j,k}$ and create 
		  $C^{\prime}_{i,k}\gets C^{\prime}_{i,j}+C^{\prime}_{j,k}$ in the set of fuel costs.}
	\State{Delete the sets $\cV^{\prime}_{i,j}$, $\cV^{\prime}_{j,k}$ and create 
	           a set $\cV^{\prime}_{i,k}\gets\cV^{\prime}_{i,j}$.}
\EndWhile
\State{\Return{$\Set*{\cR_v}{v\in\cV}$, $\Set*{T^{\prime}_{i,j},C^{\prime}_{i,j},\cV^{\prime}_{i,j}}{(i,j)\in\cup_{v\in\cV}\cR^{\prime}_v}$}}
\end{algorithmic}
}
\end{algorithm}

\subsection{A Repeated Route-Then-Schedule Heuristic Using a Feedback Mechanism}
\label{sec:two-stage-heuristc}
The \ref{opt:RDP} disregards the time constraints and assumes
that all vehicles that share the same edge on their routes can form a single platoon
when traversing that edge. The optimal value of the \ref{opt:RDP} is therefore a lower bound
on the optimal fuel consumption.
When the \ref{opt:RDP} routes are fixed for the scheduling problem,
the optimal solution of the \ref{opt:SPF} may indicate that, because of the time constraints, 
only a subset of the vehicles sharing an edge can be platooned. 
Therefore, multiple platoons may need to be formed on some edges. 
Based on the \ref{opt:RDP} routes,
if the platooning configuration generated from the \ref{opt:SPF} 
is much worse than the presumed case (without time constraints),
the \ref{opt:RDP} routes are not likely 
to be the optimal routes for the CVPP.    

Each iteration of the RSHM adjusts the \ref{opt:RDP} objective coefficients
to generate routes with enriched information on the time constraints. 
Specifically, we develop a strategy to adjust the 
objective function of the \ref{opt:RDP} based on 
the optimal solution of the previous scheduling problem.
The RSHM is novel in the way that it establishes a learning process. 
The platooning configuration produced by solving the \ref{opt:SPF}
gives feedback to help the \ref{opt:RDP} learn how to generate
routes that better address the time constraints in the next iteration. 

\subsubsection{RSHM Notation.}
\begin{definition}\label{def:notations}
Below is notation used by the RSHM, Algorithm~\ref{alg:RSHM}.
\begin{itemize} 
  \item \ref{opt:RDP}-$(n)$, \ref{opt:SPF}-$(n)$: routing problem and the scheduling problem
    at iteration $n$ of the RSHM. (The exact definition of these two problems will be specified later)
  \item $x^{(n)}_{v,i,j}$: the $x_{v,i,j}$ component of an optimal solution of \ref{opt:RDP}-$(n)$
  \item $\ell^{(n)}_{v,i,j},f^{(n)}_{v,i,j}$: the $\ell_{v,i,j}$ and $f_{v,i,j}$ component of an optimal solution of \ref{opt:SPF}-$(n)$
  \item $\cR^{(n)}_v$: \emph{route} of vehicle $v$ obtained from solving \ref{opt:RDP}-$(n)$:
        $\cR^{(n)}_v = \left\{ (i,j)\in \cE: x^{(n)}_{v,i,j} = 1 \right\}$
      \item $\cV^{(n)}_{i,j}$: set of vehicles on edge $(i,j)$ in routes $\cup_v\cR^{(n)}_v$:
   $ \cV^{(n)}_{i,j} = \left\{ v\in \cV : x^{(n)}_{v,i,j} = 1 \right\}$
  \item For a set of routes $\cup_v\cR^{(n)}_v$ obtained from \ref{opt:RDP}-$(n)$ and a
    solution of \ref{opt:SPF}-$(n)$, \emph{the platoon $\mP^{(n)}_{v,i,j}$ for vehicle $v$ on an
    edge $(i,j)\in\cup_v \cR^{(n)}_v$} is the set of vehicles containing $v$ and 
    \begin{itemize}
      \item if $\ell^{(n)}_{v,i,j} = 1$, all vehicles $u \in \cV$ such that $f^{(n)}_{u,v,i,j}=1$, or
      \item if $f^{(n)}_{v,v',i,j} = 1$, $v'$ and all vehicles $u \in \cV$ such that $f^{(n)}_{u,v',i,j}=1$.
    \end{itemize}
    A platoon such that $\cP^{(n)}_{v,i,j} = \left\{ v \right\}$ is called a \emph{trivial platoon}.
  \item $\cL^{(n)}_{i,j}$: set of leading vehicles for platoons formed at edge $(i,j)$ at iteration $n$,
  	that is, $\cL^{(n)}_{i,j}=\Set*{v}{v\in\cV^{(n)}_{i,j},\;v\textrm{ is the leading vehicle of }\cP^{(n)}_{v,i,j}}$ 
  \item $z^{(n)}$: total fuel cost obtained at the end of iteration $n$ 
  	   by solving \ref{opt:RDP}-$(n)$ and \ref{opt:SPF}-$(n)$
  \item $C^{(n)}_{v,i,j}$: presumed fuel cost for vehicle $v$ on edge $(i,j)$
			 input into \ref{opt:RDP}-$(n)$, the value of which will be given later 
\end{itemize}
\begin{remark}
Based on this definition, if vehicles $u$ and $v$ are in the same platoon on
an edge $(i,j)$ at iteration $n$, then $\cP^{(n)}_{u,i,j}=\cP^{(n)}_{v,i,j}$.
By convention, a vehicle $v$ is considered to be the leading vehicle in the platoon $\cP^{(n)}_{v,i,j}$
if $\cP^{(n)}_{v,i,j}=\{v\}$.
\end{remark}

\begin{definition}\label{def:extr-idx}
For a given iteration index $n\ge 3$ in Algorithm~\ref{alg:RSHM},
a vehicle $v$, and an edge $(i,j)\in\cup_v\cR^{(k)}_v$,
the \textit{set $\mtc{I}(n,v,i,j)$ of iteration indices with platooning configuration similarity}
is the set of iteration \replace{index}{indices} $k$ satisfying the following conditions:
\begin{displaymath}
  (1)\; 1\le k\le n-2, \quad (2)\; v\in\cV^{(k+1)}_{i,j}, \quad
  (3)\; \Set*{\mP^{(k)}_{u,i,j}}{u\in\cV^{(k)}_{i,j}}=\Set*{\mP^{(n)}_{u,i,j}}{u\in\cV^{(n)}_{i,j}}.
\end{displaymath}
When $\mtc{I}(n,v,i,j)$ is nonempty, we define \textit{the maximal iteration index
with platooning configuration similarity} as the largest index in $\mtc{I}(n,v,i,j)$.
This maximal iteration index is denoted as $I(n,v,i,j)$ in the rest of the paper.
\end{definition}
\end{definition}

\begin{definition}\label{def:platn-fuel-cost}
  Let $\Cp_{i,j}(l)$ denote the total fuel cost of a platoon of size-$l$ ($l\ge 0$) when
traversing an edge $(i,j)\in\cE$. 
\end{definition}
Based on this definition, 
the value $\Cp_{i,j}(l)$ is calculated by using the following formula:
\begin{equation}\label{eqn:C-plat}
  \Cp_{i,j}(l)=\left\{\def\arraystretch{1.5}
		    \begin{array}{ll}
          lC_{i,j} & \textrm{ if } l=0, 1, \\
          (1-\sigma^l)C_{i,j}+(1-\sigma^f)(l-1)C_{i,j} & \textrm{ if } l\ge 2.
		    \end{array}
		    \right.
\end{equation}

\begin{figure}
\centering
\subfigure[Routing problem assuming all vehicles that share 
an edge can platoon (without considering the time constraints).]{ \includegraphics[width=0.47\linewidth,trim=1cm 14.5cm 0.3cm 4cm]{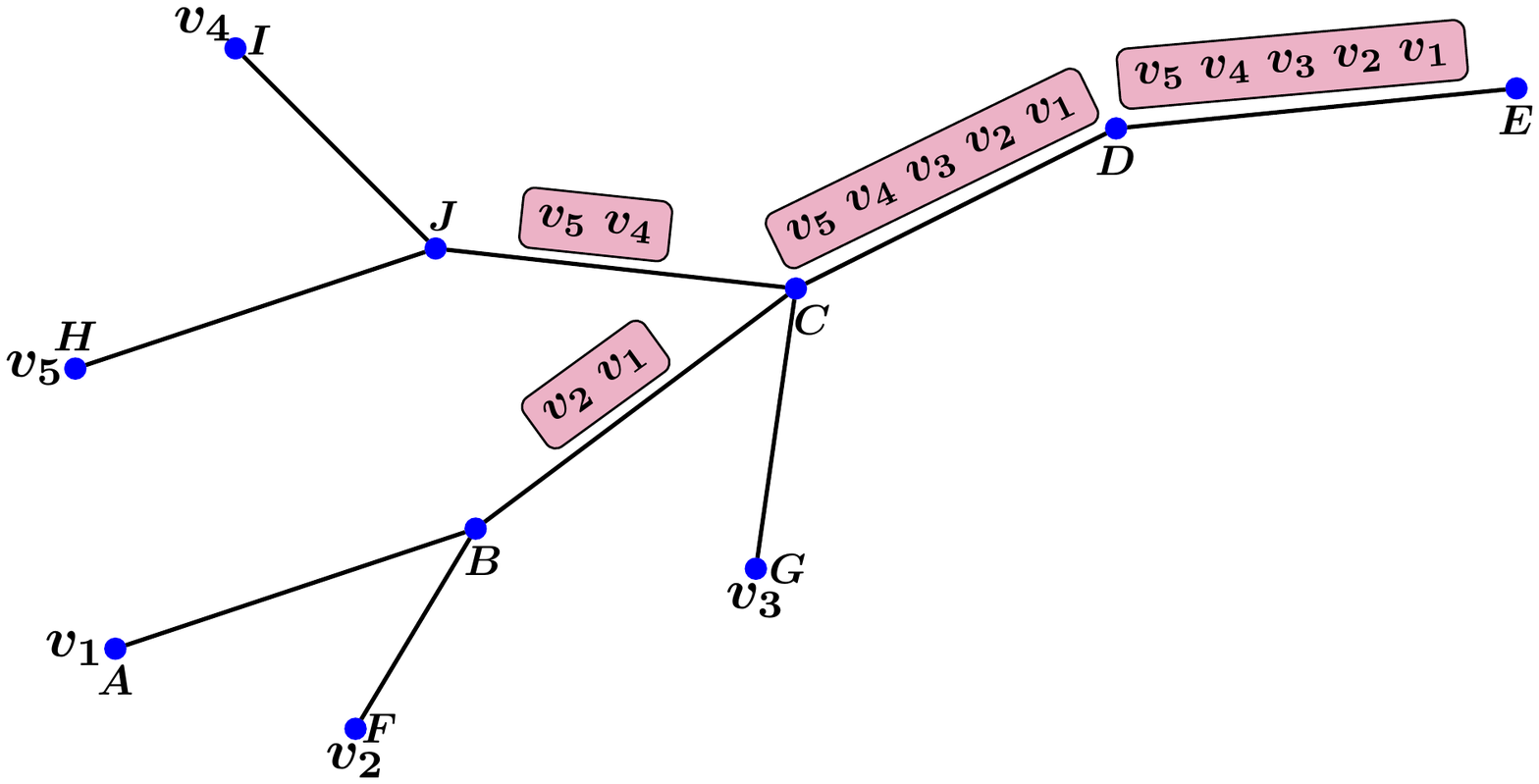} }
\hfill 
\subfigure[After the scheduling problem is solved, 
the vehicles can form only two separate platoons on edges $(C,D)$ and $(D,E)$,
for example, if $v_1,v_2,v_3$ must reach their destinations before vehicles $v_4,v_5$.]{ \includegraphics[width=0.47\linewidth,trim=1cm 14.5cm 0.3cm 4cm]{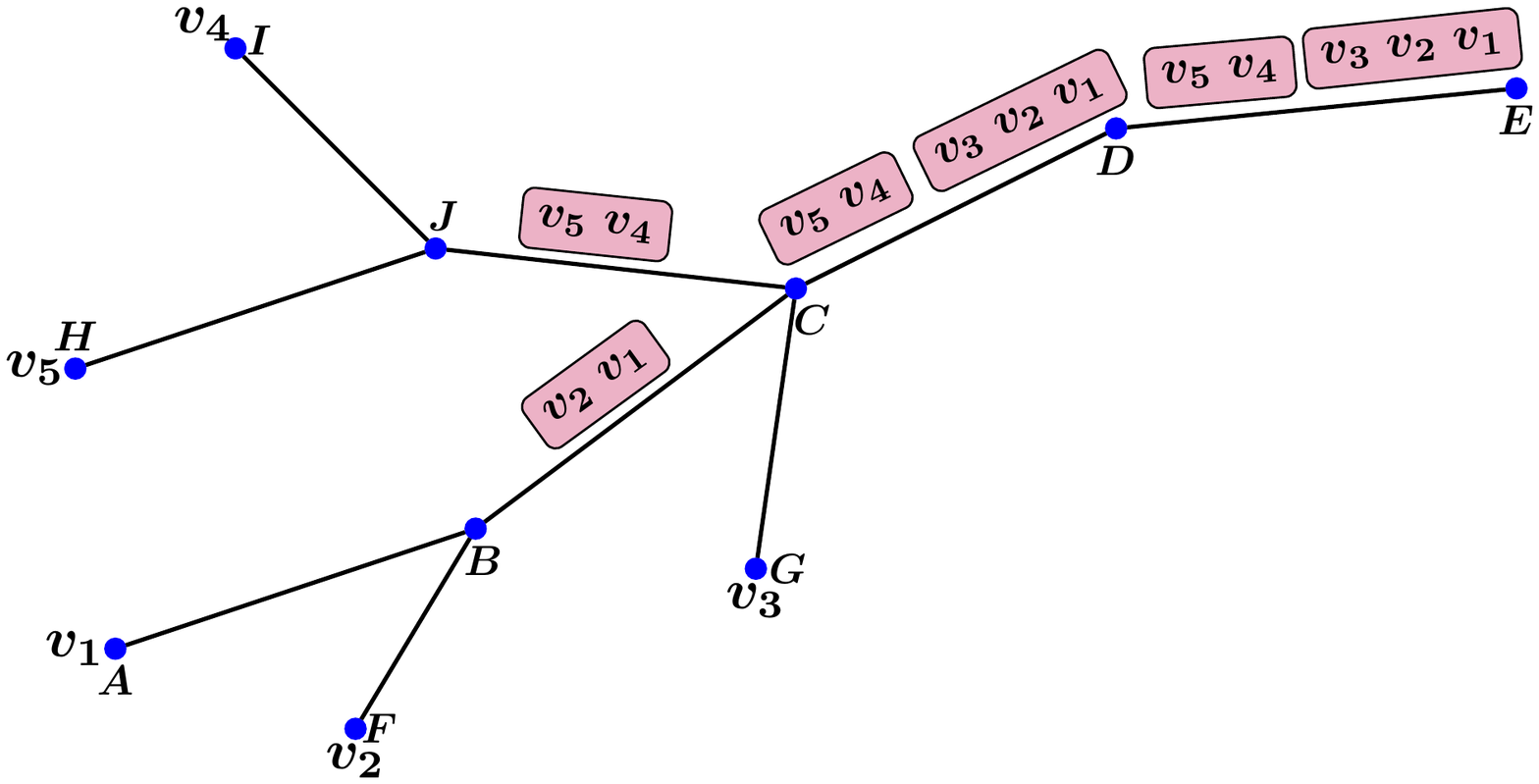} }
\caption{Illustrative figure of platoons formed by five vehicles $v_1,v_2,v_3,v_4,v_5$
given by a typical iteration of Algorithm~\ref{alg:RSHM}. Every vehicle is labeled 
beside its origin node, and the node $E$ is their common destination node.
The blocks plotted on top of some edges represent platoons on those edges.}\label{fig:platoon_illustration}
\end{figure}
\begin{algorithm}
{\footnotesize
	\caption{Route-then-schedule heuristic method}
	\label{alg:RSHM}
	\begin{algorithmic}
      \State {\bf Input}: Model parameters, transportation network, and positive integer $I$ as a threshold termination frequency.
	\State {\bf Output}: Routes and schedules for the CVPP instance.
	\State {\bf Initialization}: Set the $n\gets 1$ and $\widehat{z}\gets\infty$.
			Let $routesFreq$ store the number of times a route assignment is identified.
			% Label the initial routing problem as \ref{opt:RDP}-(1) and
			Let $\{\cup_v\widehat{\cR}_v,\hat{f},\hat{\ell}, \hat{t}\}$
      and $\widehat{z}$ denote the current best solution and objective value identified.
                \While {$\Call{GetMaxFreq}{routesFreq}<I$} 
                \State{\bf Step 1}: Solve \ref{opt:RDP}-$(n)$ to get the \ref{opt:RDP} routes $\cup_v\cR^{(n)}_v$.
                	\State\hspace{3.5em} $\Call{AddRoutes}{\cup_v\cR^{(n)}_v, routesFreq}$.		
                  \State{\bf Step 2}:  Input the \ref{opt:RDP} routes $\cup_v\cR^{(n)}_v$ into \ref{opt:SPF}-$(n)$. 
                  \State\hspace{3.5em}   Solve \ref{opt:SPF}-$(n)$ to get the optimal
         platooning variables $(f^{(n)}, \ell^{(n)}, t^{(n)})$, and objective value $z^{(n)}$. 
    \If{$z^{(n)}<\widehat{z}$}
    					\State $\widehat{\cR}_v\gets\cR^{(n)}_v$ for all $v\in\cV$.
						  $\hat{f}\gets f^{(n)}$, $\hat{\ell}\gets \ell^{(n)}$, $\hat{t}\gets t^{(n)}$, $\widehat{z}\gets z^{(n)}$.
    				\EndIf	       					 
    \State{\bf Step 3}: Compute the size $|P^{(n)}_{v,i,j}|$ using \eqref{eqn:P^n_ve}.  
		           Update the presumed fuel cost using \eqref{eqn:C^n_ve}.
               \State{\bf Step 4}: Build the objective function $F^{(n+1)}_{\tn{\ref{opt:RDP}}}$ 
		of \ref{opt:RDP}-$(n+1)$ using \eqref{eqn:RDP-obj-update}.	
    		
		\If{$\cR^{(n)}_v=\cR^{(n-1)}_v$ for all $v\in\cV$}  
    			\Return the current best solution $\{\cup_v\widehat{\cR}_v,\hat{f},\hat{\ell},\hat{t}\}$. 
		\Else\hspace{0.3em} Set $n\gets n+1$. 
		\EndIf
	\EndWhile 	 
  	\State \Return the current best solution $\{\cup_v\widehat{\cR}_v,\hat{f},\hat{\ell},\hat{t}\}$.
       	\end{algorithmic}	
       \vspace{-8pt}\noindent\makebox[\linewidth]{\rule{\linewidth}{0.4pt}}\vspace{-5pt}
	\begin{algorithmic}
		\Function{GetMaxFreq}{$routesFreq$}
			\State{$maxCount\gets0$}
			\For{$routes$ in $routesFreq$}
				\If{$routesFreq[routes]>maxCount$}
					\State{$maxCount \gets maxCount+1$.}
				\EndIf
			\EndFor
			\State{\Return $maxCount$.}
		\EndFunction
	\end{algorithmic} 
       \vspace{-8pt}\noindent\makebox[\linewidth]{\rule{\linewidth}{0.4pt}}\vspace{-5pt}
	\begin{algorithmic}
		\Procedure{AddRoutes}{$routes$,$routesFreq$}			
			\If{$routes$ is in $routesFreq$}
				\State{$routesFreq[routes]\gets routesFreq[routes]+1$.}
			\Else
				\State{$routesFreq[routes]\gets 1$.}
			\EndIf
		\EndProcedure
	\end{algorithmic}
}
\end{algorithm}

\subsubsection{RSHM Discussion.}
The first RSHM iteration solves the route design problem \ref{opt:RDP}-(1)
to produce routes $\cR^{(1)}_v$ for each vehicle $v$.
Based on these routes, \ref{opt:SPF}-(1) is solved
to determine the platooning configuration at each edge. 
% Note that
% the optimal fuel savings given by the \ref{opt:SPF} can be no greater than
% the savings from the \ref{opt:RDP}: the \ref{opt:RDP} assumes 
% all vehicles traversing an edge will do so in a single single platoon, which
% may not satisfy the time constraints.
The second RSHM iteration solves the
route design problem \ref{opt:RDP}-(2) with constraints that  
are the same as for the \ref{opt:RDP} but with the objective:
\begin{equation}\label{eqn:RDP-obj-2}
\begin{aligned}
  {F}_{\tn{\ref{opt:RDP}}}^{(2)}=&\sum_{v\in\cV}\sum_{(i,j)\in \cE\setminus\cup_u\cR^{(1)}_u} C_{i,j} x_{v,i,j} 
  - \sum_{(i,j)\in \cE\setminus\cup_u\cR^{(1)}_u} \sigma^l C_{i,j} y'_{i,j} 
  - \sum_{(i,j)\in \cE\setminus\cup_u\cR^{(1)}_u} \sigma^fC_{i,j} w_{i,j} \\
  &+ \sum_{v\in\cV}\sum_{(i,j)\in \cup_u\cR^{(1)}_u} C^{(2)}_{v,i,j} x_{v,i,j},
 \end{aligned}
\end{equation}
where the adjusted fuel cost $C^{(2)}_{v,i,j}$ for $v\in\cV$ and 
$(i,j)\in\cup_{u\in\cV}\cR^{(1)}_u$ is 
\begin{equation}\label{eqn:C^2_ve}
  C^{(2)}_{v,i,j}=\left\{\def\arraystretch{1.7}
		    \begin{array}{ll}
          \displaystyle \frac{\Cp_{i,j}(|\cP^{(1)}_{v,i,j}|)}{|\cP^{(1)}_{v,i,j}|} &  \textrm{ if }  (i,j)\in\cR^{(1)}_v,\\
          \displaystyle (1-\sigma^f)C_{i,j} & \textrm{ if }  (i,j)\in\cup_{u\in\cV}\cR^{(1)}_u\setminus\cR^{(1)}_v.
		    \end{array}
		    \right.
\end{equation}
We provide the intuition behind the adjustment of the objective in \eqref{eqn:RDP-obj-2}.
All the edges of the network are partitioned into two parts: the explored edges (by some vehicles)
$\cup_{u\in\cV}\cR^{(1)}_u$ and the unexplored edges $\cE\setminus\cup_{u\in\cV}\cR^{(1)}_u$. 
For the unexplored edges, fuel costs are represented by the first three terms
in \eqref{eqn:RDP-obj-2}, and they are the same as in the objective of the \ref{opt:RDP}. 
When a vehicle $v$ is assigned to an explored edge $(i,j)\in\cE\setminus\cup_{u\in\cV}\cR^{(1)}_u$, 
the adjusted objective \eqref{eqn:RDP-obj-2} assumes that the vehicle will
incur $C^{(2)}_{v,i,j}$ amount of fuel cost, which is the last term in \eqref{eqn:RDP-obj-2}.
The adjusted fuel cost $C^{(2)}_{v,i,j}$ is defined differently for two cases in \eqref{eqn:C^2_ve}:
(a) the edge $(i,j)$ is in the route $\cR^{(1)}_v$ and (b) the edge $(i,j)$ is not 
in the route $\cR^{(1)}_v$ but in the route of some other vehicle.
For case (a), the adjusted objective assumes that the vehicle $v$
if assigned again to the edge $(i,j)$
will save at a rate that is equal to the averaged amount of fuel saving
in the platoon $\cP^{(1)}_{v,i,j}$ obtained at the first iteration of RSHM,
where the average is taken over the number of vehicles in $\cP^{(1)}_{v,i,j}$.  
For case (b), the adjusted objective assumes that the vehicle 
can receive the maximum amount of fuel saving if assigned to $(i,j)$.

In general, suppose we are at the beginning of the iteration $n \ge 2$.
We solve the current vehicle routing problem \ref{opt:RDP}-$(n)$
and obtain the \ref{opt:RDP} routes $\cup_{u\in\cV}\cR^{(n)}_u$. 
We input the routes $\cup_{u\in\cV}\cR^{(n)}_v$ into \eqref{opt:SPF}-$(n$)
in order to obtain the size of platoon $|\mP^{(n)}_{v,i,j}|$ 
for each $v\in\cV$ and $(i,j)\in\cR^{(n)}_v$. In particular, $|\mP^{(n)}_{v,i,j}|$
can be computed based on the solution of \ref{opt:RDP}-$(n)$ as
\begin{equation}\label{eqn:P^n_ve}
  |\mP^{(n)}_{v,i,j}|=\left\{\def\arraystretch{1.5}
			\begin{array}{ll}
        \displaystyle 1+\sum_{u\in\cV(i,j):\;u>v} f^{(n)}_{u,v,i,j} & \textrm{ if } \ell^{(n)}_{v,i,j}=1, \\
        \displaystyle 1+\sum_{u\in\cV(i,j):\;u>w} f^{(n)}_{u,w,i,j} & \textrm{ if } f^{(n)}_{v,w,i,j}=1 \textrm{ for a }w\in\cV^{(n)}_{i,j}, \\
			\displaystyle 1 & \textrm{ otherwise}.
			\end{array} 
		  \right.
\end{equation}
In the first case of \eqref{eqn:P^n_ve}, $v$ is the lead vehicle of $\mP^{(n)}_{v,i,j}$.
In the second case, $w$ is the lead vehicle.
In the third case, $v$ forms a single-vehicle platoon.
As in \eqref{eqn:RDP-obj-2}, we update the objective function (for the next iteration) 
based on the set $\cup^n_{k=1}\cup_{u\in\cV}\cR^{(k)}_u$ of explored edges before iteration $n+1$ 
and the set $\cE\setminus\cup^n_{k=1}\cup_{u\in\cV}\cR^{(k)}_u$ of unexplored edges.
The updated objective for the routing problem \ref{opt:RDP}-$(n+1)$ 
at iteration $n+1$ is
\begin{equation}\label{eqn:RDP-obj-update}
\begin{aligned}
  & {F}_{\tn{\ref{opt:RDP}}}^{(n+1)}=\sum_{v\in\cV}\sum_{(i,j)\in \cE\setminus\cup^n_{k=1}\cR^{(k)}} C_{i,j} x_{v,i,j} 
  - \sum_{(i,j)\in \cE\setminus\cup^n_{k=1}\cup_{u\in\cV}\cR^{(k)}_u} \sigma^l C_{i,j} y'_{i,j} 
  - \sum_{(i,j)\in \cE\setminus\cup^n_{k=1}\cup_{u\in\cV}\cR^{(k)}_u} \sigma^fC_{i,j} w_{i,j}  \\
 &\qquad\qquad + \sum_{v\in\cV}\sum_{(i,j)\in \cup^n_{k=1}\cup_{u\in\cV}\cR^{(k)}_u} C^{(n+1)}_{v,i,j} x_{v,i,j},
 \end{aligned}
\end{equation}
and constraints of \ref{opt:RDP}-$(n+1)$ are the same as those of the \ref{opt:RDP}.
The presumed fuel cost for assigning a vehicle $v\in\cV$ to an edge 
$(i,j)\in\cup^n_{k=1}\cup_{u\in\cV}\cR^{(k)}_u$
at iteration $n+1$ is updated based on the following recurrence equation:
\begin{equation}\label{eqn:C^n_ve}
  C^{(n+1)}_{v,i,j}=\left\{\def\arraystretch{1.7}
		    \begin{array}{ll}
          \displaystyle \frac{\Cp_{i,j}(|\mP^{(n)}_{v,i,j}|)}{|\mP^{(n)}_{v,i,j}|} &  \textrm{ if }  (i,j)\in\cR^{(n)}_v,\\
          \displaystyle (1-\sigma^f)C_{i,j}  &  \textrm{ if }  (i,j)\in \cup^n_{k=1}\cup_{u\in\cV}\cR^{(k)}_u\setminus\cR^{(n)}_v\textrm{ and }\mtc{I}(n,v,i,j)=\emptyset,  \\
          \displaystyle C^{(I(n,v,i,j)+2)}_{v,i,j} & \textrm{ if } (i,j)\in \cup^n_{k=1}\cup_{u\in\cV}\cR^{(k)}_u\setminus\cR^{(n)}_v\textrm{ and }\mtc{I}(n,v,i,j)\neq\emptyset.
		    \end{array}
		    \right.
\end{equation}
The intuition of updating the presumed fuel cost in \eqref{eqn:C^n_ve}
is similar to \eqref{eqn:C^2_ve} but with an additional ingredient.
In the first case of \eqref{eqn:C^n_ve} when $v$ is assigned 
to an edge $(i,j)\in\cR^{(n)}_v$ in the next iteration,
the adjusted objective assumes that $v$ will incur the platoon-averaged fuel cost.
In the second and third cases of \eqref{eqn:C^n_ve} when $v$ is 
assigned to an explored (up to iteration $n$) edge $(i,j)$ that is not in the current route of $v$,
the adjusted model updates the $C^{(n+1)}_{v,i,j}$ depending on whether a ``similar''
situation in previous iterations exists.
Specifically, if there exists a
previous iteration $k$  in which the platoon 
configuration at edge $(i,j)$ is the same as in the current iteration,
that is, $\Set*{\mP^{(k)}_{u,i,j}}{u\in\cV^{(k)}_{i,j}}=\Set*{\mP^{(n)}_{u,i,j}}{u\in\cV^{(n)}_{i,j}}$,
and $v$ is assigned to $(i,j)$ in iteration $k+1$, then we set the costs for
$v$ to traverse $(i,j)$ in the next iteration (iteration $n+1$),
to be the same fuel cost as in iteration $k+1$. 
The set of iteration indices satisfying these conditions is exactly
the set $\mtc{I}(n,v,i,j)$ introduced in Definition~\ref{def:extr-idx}.
Every index in $\mtc{I}(n,v,i,j)$ can be viewed as having a local 
similarity with iteration $n$ at edge $(i,j)$.
If $\mtc{I}(n,v,i,j)$ is empty, it means that no previous iteration
has a local similarity with iteration $n$ at edge $(i,j)$,
and then the adjusted objective makes the most optimistic estimation
of the adjusted fuel cost $C^{(n+1)}_{v,i,j}\gets (1-\sigma^f)C_{i,j}$ (case 2 in \eqref{eqn:C^n_ve}). 
If $\mtc{I}(n,v,i,j)$ is nonempty,
we pick the most recent one in it, which is $I(n,v,i,j)$. Notice that 
in this case, vehicle $v$ has been assigned to $(i,j)$ at iteration 
$I(n,v,i,j)+1$ and the adjusted fuel cost obtained at the end of iteration 
$I(n,v,i,j)+1$ corresponding to this assignment is $C^{(I(n,v,i,j)+2)}_{v,i,j}$.
Thus, by analogy to the most recent local similarity, one can reasonably 
estimate $C^{(n+1)}_{v,i,j}$ as $C^{(n+1)}_{v,i,j}\gets C^{(I(n,v,i,j)+2)}_{v,i,j}$
(case 3 in \eqref{eqn:C^n_ve}). 
The algorithm then goes to iteration $n+1$, and the steps are repeated. 
The sequence of updating key quantities involved in the RSHM is illustrated
in Figure~\ref{fig:RSHM_scheme}.
Note that at iteration $n$, if the presumed total fuel cost
obtained from solving \ref{opt:RDP}-$(n)$ is very different from the real
total fuel cost after imposing the time constraints, the fuel cost adjustment
at the end of iteration $n$ is likely to encourage vehicles to explore different routes
in the next routing problem \ref{opt:RDP}-$(n+1)$.

\begin{wrapfigure}{r}{0.5\textwidth}
\centering
\vspace{-20pt}
\includegraphics[width=\linewidth,trim=0cm 16cm 0cm 4.6cm,clip]{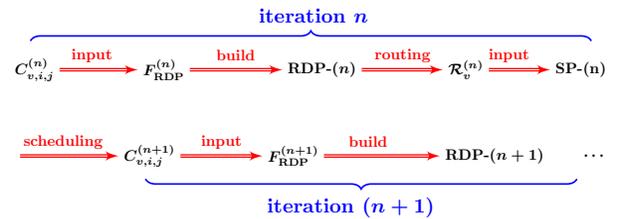}
\caption{Updating of key RSHM quantities in two consecutive iterations.}
\label{fig:RSHM_scheme}
\vspace{-20pt}
\end{wrapfigure}

We remark that the goal of the RSHM is to encourage vehicles to explore routes 
that are potentially more fuel economic. This intention can be seen from
the optimistic estimation of fuel cost when a vehicle is assigned to an unexplored edge.
On the other hand, if such an optimistic updating of presumed fuel cost results in  
the same routes in every iteration, the routes are likely to be true optimal routes.
In Theorem~\ref{thm:RSHM-alg-converge}, we show that the performance 
of the RSHM is guaranteed in special cases when the sequence
of route assignment generated by the algorithm satisfies the following condition.
For a given optimal route assignment $\cup_v\cR^*_v$,
we define the notations $\cV^*_{i,j}$, $\mP^*_{v,i,j}$,
$\cL^*_{i,j}$, and $z^*$ as in Definition~\ref{def:notations}. 

\begin{definition}\label{def:nest-cond}
Let $\cup_v \cR^*_v$ be
the route assignment corresponding to an optimal solution of a CVPP instance.  
Consider an edge $(i,j)\in\cup_v\cR^*_v$
and a platoon $\mP^*_{u,i,j}$ for some $u\in\cV^*_{i,j}$. 
A route assignment $\cup_v\cR^{(k)}_v$ of Algorithm~\ref{alg:RSHM} satisfies the 
\emph{local nested condition (i) at $\mP^*_{u,i,j}$} 
if
\begin{itemize}
  	\item[($\romannumeral 1$)] \replace{}{There exists a platoon $\mP^{(k)}_{w,i,j}$ 
  		(for some $w\in\cV^{(k)}_{i,j}$) such that $\mP^{(k)}_{w,i,j}\cap\mP^*_{u,i,j}$ is nonempty,
		$|\mP^{(k)}_{w,i,j}|\ge 1 + \bs{1}\{|\mP^*_{u,i,j}|>1\}$,
   		and $(\mP^{(k)}_{w,i,j}\setminus\mP^*_{u,i,j})\cap\cV^*_{i,j}=\emptyset$.}
 		 \replace{}{Furthermore, $\mP^{(k)}_{w^{\prime},i,j}\cap \mP^*_{u,i,j}$ is empty for 
  		any platoon $\mP^{(k)}_{w^{\prime},i,j}$ with $w^{\prime}\in\cV^{(k)}_{i,j}\setminus\mP^{(k)}_{w,i,j}$.}
\end{itemize}
The route assignment $\cup_v\cR^{(k)}_v$ satisfies the 
\emph{local nested condition (ii) at $\mP^*_{u,i,j}$} 
if 
\begin{itemize}
	\item[($\romannumeral 2$)] \replace{}{$\cV^{(k)}_{ij}\cap\mP^*_{u,i,j}=\emptyset$.}
\end{itemize}
\end{definition}

\begin{definition}\label{def:noncrossing-cond}
Let $\cup_v\cR^*_v$ be an optimal route assignment. 
A sequence of route assignments $\mathcal{S}=\{\cup_v\cR^{(k)}_v\}^n_{k=1}$ 
generated from Algorithm~\ref{alg:RSHM}
satisfies the \emph{noncrossing condition with respect to $\cup_v\cR^*_v$} 
if for every $(i,j)\in\cup_v\cR^*_v$ and every platoon $\mP^*_{u,i,j}$ ($u\in\cV^*_{i,j}$),
at least one of the following conditions is satisfied:
\begin{itemize}
  \item[($\romannumeral 1$)] \replace{}{The route assignment $\cup_v\cR^{(n)}_v$ 
  satisfies the local nested condition (i) or (ii) at $\mP^*_{u,i,j}$.
  Furthermore, for every $v\in \mP^*_{u,i,j}\setminus\mP$ 
    the set $\mI(n,v,i,j)$ is empty, where 
    \begin{equation}
    \mP=\left\{  
      \begin{array}{ll}
      \textrm{the platoon in $\cV^{(n)}_{i,j}$ 
       satisfying Definition~\ref{def:nest-cond}(i)}  & \textrm{if local nested cond.(i) is met}, \\
     \emptyset  &  \textrm{if local nested cond.(ii) is met}.
     \end{array}
    \right.
    \end{equation}
    }
  \item[($\romannumeral 2$)] For any $\cup_v\cR^{(k)}_v$ in $\mathcal{S}$, 
  if there exists a platoon $\mP^{(k)}_{w,i,j}$ for some $w\in\cV^{(k)}_{i,j}$ 
  satisfying $\mP^{(k)}_{w,i,j}\cap\mP^*_{u,i,j}\neq\emptyset$, 
  then $\mP^*_{u,i,j}\subseteq\mP^{(k)}_{w,i,j}$.
\end{itemize} 
\end{definition}

The proof of Theorem~\ref{thm:RSHM-alg-converge} appears in Section~\ref{proof_of_RSHM_theorem}. 
\begin{theorem}\label{thm:RSHM-alg-converge}
Let $\{\cup_v\cR^{(k)}_v\}_{k\ge1}$ be the sequence of \ref{opt:RDP} routes generated by the RSHM. 
Let $\cup_v\cR^*_v$ be an optimal route assignment. 
Then the following properties hold:
\begin{itemize}
  \item[\emph{(a)}] The algorithm will terminate in finitely many iterations.
      \item[\emph{(b)}] Let $n$ be the last iteration of Algorithm~\ref{alg:RSHM}.
         If $\cR^{(n)}_v=\cR^{(n-1)}_v$ for all $v\in\cV$,
				then 
				\begin{equation}\label{eqn:gap-Fn-F*}
				\begin{aligned}
          z^{(n)}-z^* \le&\sum_{(i,j)\in\cup_v\cR^*_v\setminus\cup_v\cR^{(n-1)}_v}-\sigma^f(|\cL^*_{i,j}|-1) \\
          &+ \sum_{(i,j)\in(\cup_v\cR^*_v)\cap\left(\cup_v\cR^{(n-1)}_v\right)}
          \sum_{u\in\cL^*_{i,j}} \sum_{v\in \cU^{(n-1)}_{u,i,j}}
          \left(\frac{C^{\emph{plat}}_{i,j}(|\cP^{(n_v)}_{v,i,j}|)}{|\cP^{(n_v)}_{v,i,j}|} - \frac{\widetilde{C}^{\emph{plat}}_{i,j}(|\cU^{(n-1)}_{u,i,j}|)}{|\cU^{(n-1)}_{u,i,j}|}\right),
				\end{aligned}
				\end{equation}
        where $\cU^{(n-1)}_{u,i,j}:=\cP^*_{u,i,j}\cap\left(\cV^{(n-1)}_{i,j}\cup\Set*{w}{\cI(n-1,w,i,j)\neq\emptyset} \right)$,
        and $\widetilde{C}^{\emph{plat}}_{i,j}(l)$ is defined as
        \begin{displaymath}
        \widetilde{C}^{\emph{plat}}_{i,j}(l)=\left\{\def\arraystretch{1.7}
        		\begin{array}{ll}
		(1-\sigma^l)C_{i,j} & \textrm{ if } l=1, \\
		C^{\emph{plat}}_{i,j}(l) & \textrm{ if } l=0,\textrm{ or }l\ge 2,
		\end{array}
        		\right.
	\end{displaymath}
	and where index $n_v$ is defined as
	\bdm
	n_v=\left\{
		\begin{array}{ll}
		n-1 & \textrm{ if } v\in\cV^*_{i,j}\cap\cV^{(n-1)}_{i,j}, \\
		I(n-1,v,i,j)+1 & \textrm{ if } v\in\cV^*_{i,j}\setminus\cV^{(n-1)}_{i,j}\textrm{ and }\mI(n-1,v,i,j)\neq\emptyset.
		\end{array}
		\right.
	\edm
	Recall that the definitions of $z^{(n)},\cV^{(n)}_{i,j},\cP^{(n)}_{v,i,j}$ are given in Definition~\ref{def:notations}
	and that $z^*,\cV^*_{i,j},\cP^*_{v,i,j}$ are their counterparts in an optimal route assignment of the CVPP.
      \item[\emph{(c)}] At any iteration $n$, if the sequence $\{\cup_v\cR^{(k)}_v\}^{n-1}_{k=1}$ 
        satisfies the noncrossing condition with respect to $\cup_v\cR^*_v$, then
        $F^{(n)}_{\tn{\ref{opt:RDP}}}(\cup_v\cR^*_v)\le z^*$, 
        where $F^{(n)}_{\tn{\ref{opt:RDP}}}(\cup_v\cR^*_v)$ is the objective value of
        $F^{(n)}_{\tn{\ref{opt:RDP}}}$ evaluated at routes $\cup_v\cR^*_v$. 
      \item[\emph{(d)}] Let $n$ be the last iteration of Algorithm~\ref{alg:RSHM}.
      		If the sequence $\{\cup_v\cR^{(k)}_v\}^{n-1}_{k=1}$ 
		satisfies the noncrossing condition with respect to $\cup_v\cR^*_v$ 
		and if $\cR^{(n)}_v=\cR^{(n-1)}_v$ for all $v\in\cV$, 
		then the RSHM returns an optimal solution of the CVPP. 
  \item[\emph{(e)}] Let $n$ be the last iteration of Algorithm~\ref{alg:RSHM}.
       If $\cR^{(n)}_v=\cR^{(n-1)}_v$ for all $v\in\cV$
       and if the set $\cI(n-1,v,i,j)$ is empty for all $\cup^n_{k=1}\cup_{u\in\cV}\cR^{(k)}_u$, then
       \beq
       z^{(n)}-z^*\le \sum_{(i,j)\in\cup_u\cR^{(n)}_u}\sum_{v\in\cL^{(n)}_{i,j}}\left[\left(1-\frac{|\cP^{(n)}_{v,i,j}|}{\lambda} \right)(\sigma_f-\sigma_l)+\bs{1}\{|\cP^{(n)}_{v,i,j}|=1\}\sigma_l\right]C_{i,j}.
       \eeq
\end{itemize}
\end{theorem}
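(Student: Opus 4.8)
The plan is to treat part (a) as a standalone finiteness argument and to derive parts (b)–(e) from a single master inequality. In all of (b), (d), (e) the stopping rule supplies $\cR^{(n)}_v=\cR^{(n-1)}_v$ for every $v$, so the returned routes are optimal for \ref{opt:RDP}-$(n)$ and hence
$\Frdp{n}(\cup_v\cR^{(n)}_v)\le\Frdp{n}(\cup_v\cR^*_v)$.
Everything then reduces to rewriting each side of this inequality in terms of the realized cost $z^{(n)}$ and the optimal cost $z^*$, using the cost recurrence \eqref{eqn:C^n_ve} and the platoon-size formula \eqref{eqn:P^n_ve}. The one algebraic fact used throughout is the \emph{averaging identity}: summing the case-(a) adjusted cost $\Cp_{i,j}(l)/l$ over the $l$ members of a platoon reconstructs the true platoon cost $\Cp_{i,j}(l)$; together with the monotonicity of $l\mapsto\Cp_{i,j}(l)/l$ (nonincreasing for $l\ge1$, since $\sigma^l<\sigma^f$), this lets per-vehicle estimates be compared to genuine platoon costs.

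For part (a) I would argue by the pigeonhole principle over a finite set. The time-feasibility constraint~\ref{eqn:RDP_1extra} bounds each route's length, so on the finite graph $\cG$ each vehicle has finitely many admissible routes and there are finitely many route assignments $\cup_v\cR_v$. The procedure \textsc{AddRoutes} records each generated assignment's multiplicity, and the main loop runs only while the maximum recorded multiplicity is below $I$. Once the iteration count exceeds $I$ times the number of distinct assignments, some assignment must have recurred $I$ times, so \textsc{GetMaxFreq} returns at least $I$ and the loop halts; the secondary test $\cR^{(n)}_v=\cR^{(n-1)}_v$ can only cause earlier termination.

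For parts (b) and (e) I would evaluate $\Frdp{n}$ at both route assignments. Because the routes are stable, \ref{opt:SPF}-$(n)$ receives the same input as \ref{opt:SPF}-$(n-1)$, so the platoon sizes at iteration $n$ match those at $n-1$; by the averaging identity this gives $\Frdp{n}(\cup_v\cR^{(n)}_v)=z^{(n)}$. I would then upper-bound $\Frdp{n}(\cup_v\cR^*_v)$ by partitioning $\cup_v\cR^*_v$ into edges outside $\cup_v\cR^{(n-1)}_v$ and edges in both. On the former, the optimistic estimate $(1-\sigma^f)C_{i,j}$ over-credits savings relative to the true optimal platooning by exactly $\sigma^f(|\cL^*_{i,j}|-1)$ per edge, producing the first summation. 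On the latter, the per-vehicle adjusted costs aggregate over each optimal platoon into the averaged comparison $\Cp_{i,j}(|\cP^{(n_v)}_{v,i,j}|)/|\cP^{(n_v)}_{v,i,j}|$ against $\Cph_{i,j}(|\cU^{(n-1)}_{u,i,j}|)/|\cU^{(n-1)}_{u,i,j}|$, producing the second summation. Subtracting $z^{(n)}$ yields the stated gap. Part (e) is the specialization $\mI(n-1,v,i,j)=\emptyset$, where every off-route explored edge takes the uniform estimate $(1-\sigma^f)C_{i,j}$, collapsing the comparison into the $\lambda$-normalized expression $\bigl(1-|\cP^{(n)}_{v,i,j}|/\lambda\bigr)(\sigma_f-\sigma_l)$ plus the singleton indicator term.

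The main obstacle is part (c), from which (d) follows at once. The target is $\Frdp{n}(\cup_v\cR^*_v)\le z^*$, that is, for each optimal platoon the summed adjusted costs must not exceed its true cost $\Cp_{i,j}(|\mP^*_{u,i,j}|)$. The difficulty is that a vehicle $v$ on an edge it did \emph{not} take at iteration $n-1$ inherits, through the third case of \eqref{eqn:C^n_ve}, an optimistic estimate from an earlier ``similar'' iteration $I(n,v,i,j)+1$, and one must rule out these inherited estimates accumulating into an overshoot. Here I would unpack Definition~\ref{def:noncrossing-cond}: for each $\mP^*_{u,i,j}$ either the algorithm platoons nest compatibly (local nested condition (i)/(ii) of Definition~\ref{def:nest-cond}, with empty $\mI$ for the off-platoon members) or every algorithm platoon meeting $\mP^*_{u,i,j}$ contains it. In the containment case, monotonicity of $\Cp_{i,j}(l)/l$ makes the larger algorithm platoon's averaged cost no greater than $\mP^*$'s, so the sum underestimates. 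In the nested case, the guarantee of at least $1+\bs{1}\{|\mP^*_{u,i,j}|>1\}$ in-platoon members is exactly what forces the averaged contribution to dominate the optimistic $(1-\sigma^f)C_{i,j}$ credits of the remaining members. Verifying that these two alternatives are exhaustive and each yields the underestimate—this case analysis tied to cost monotonicity—is the technical heart. Part (d) then chains $z^{(n)}=\Frdp{n}(\cup_v\cR^{(n)}_v)\le\Frdp{n}(\cup_v\cR^*_v)\le z^*$ against the trivial bound $z^{(n)}\ge z^*$ to conclude $z^{(n)}=z^*$.
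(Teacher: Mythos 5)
Your proposal follows essentially the same route as the paper: the finiteness/pigeonhole argument for (a), the master inequality $\Frdp{n}(\cup_v\cR^{(n)}_v)\le\Frdp{n}(\cup_v\cR^*_v)$ combined with $\Frdp{n}(\cup_v\cR^{(n)}_v)=z^{(n)}$ and the edge partition for (b) and (e), the per-optimal-platoon bound $\sum_{v\in\mP^*_{u,i,j}}C^{(n)}_{v,i,j}\le\Cp_{i,j}(|\mP^*_{u,i,j}|)$ via the nested/containment case split and monotonicity of $\Cp_{i,j}(l)/l$ for (c) (the paper's Lemma~\ref{lem:sum-of-d<f_e}), and the chain $z^{(n)}\le\Frdp{n}(\cup_v\cR^*_v)\le z^*\le z^{(n)}$ for (d). The only detail you leave implicit is that in the containment case the inherited indices $\mI(n,v,i,j)$ agree across all members of the optimal platoon (the paper's Proposition~\ref{prop:I1=I2}), but this is a refinement within the same argument rather than a different approach.
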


\section{Valid Inequalities of the Routing and Scheduling Problems} \label{sec:sol-route}
We now present valid and facet-defining inequalities for
the feasible sets in the routing problem
and the scheduling problem, respectively.
Adding these inequalities to
these problems 
\replace{}{c}an tighten the 
two formulations and may be beneficial in the computational performance.
Specifically, for the routing problem
the goal is to identify inequalities that
define the convex hull of the feasible set induced by \ref{eqn:RDP_2}--\ref{eqn:RDP_6}.
For the scheduling problem
we identify two families of valid inequalities.
Inequalities in the first family cut fractional solutions due to
the introduction of $M_{u,v,i,j}$ coefficients in the constraints that ensure
platooning vehicles travel edges simultaneously.
Inequalities in the second family improve the representation of the 
platooning polytope defined by the feasible points satisfying the 
constraints \ref{eqn:SPF_7}--\ref{eqn:SPF_9}, \ref{eqn:SPF_11}, and \ref{eqn:SPF_12}. 
The valid inequalities for the routing problem and scheduling problem are presented in
Sections~\ref{sec:valid-ineq-route} and \ref{sec:valid-ineq-schedule}, respectively.
The improvement in computational performance due to these valid inequalities 
is analyzed in Sections~\ref{sec:num-routing} and \ref{sec:num-sched}.

\subsection{Valid Inequalities for Tightening the Routing Problem Formulation}
\label{sec:valid-ineq-route}
We first study polyhedral properties of the polytope 
defined by \ref{eqn:RDP_2}--\ref{eqn:RDP_6}. Specifically,
polytope $P^{\textrm{RDP}}_{i,j}$ for $(i,j)\in \cE$ is defined as 
\begin{equation}\label{def:P_RDP_ij}
P^{\textrm{RDP}}_{i,j}:=\textrm{conv}
\Set*{
\begin{array}{l}
[x_{v,i,j}, y_{i,j}, w_{i,j}, y'_{i,j}]\\
\forall v\in\cV
\end{array}
}
{
\begin{aligned}
& \sum_{v\in\cV} x_{v,i,j} \ge 2 y'_{i,j},\\
& w_{i,j} - \sum_{v\in\cV} x_{v,i,j} + y_{i,j} \le 0, \\
& x_{v,i,j}\le y_{i,j}\; \forall v\in\cV,\;  y'_{i,j}\le y_{i,j}, \\
& x_{v,i,j},y_{i,j}, y'_{i,j}\in\{0,1\},\; w_{i,j}\ge 0 
\end{aligned}
}.
\end{equation}
The polytope $P^{\textrm{RDP}}_{i,j}$ is the convex hull of some integer points.
One can give a linear system representation of $P^{\textrm{RDP}}_{i,j}$
by adding three facet-defining inequalities to \eqref{def:P_RDP_ij}.
The full representation of $P^{\textrm{RDP}}_{i,j}$ is given by the following
theorem, proven in Section~\ref{proof_of_RDP_thrm}.

\begin{theorem}\label{thm:P_RDP_ij_full}
Let $|\cV|\ge 3$. For any $(i,j)\in \cE$, consider the polytope $P^{\textrm{RDP}}_{i,j}$. 
\emph{(a)} The following inequality is facet defining for $P^{\textrm{RDP}}_{i,j}$:
$\sum_{v\in\cV}x_{v,i,j}\ge y_{i,j} +  y'_{i,j}$.
\emph{(b)} The polytope $P^{\textrm{RDP}}_{i,j}$ can be represented by the linear system
\begin{equation}\label{eqn:P_RDP_ij_full}
P^{\textrm{RDP}}_{i,j}=\Set*{\begin{aligned} &x_{v,i,j}\; \forall v\in\cV, \\ & y_{i,j},  y'_{i,j}, w_{i,j}   \end{aligned}}{
	\begin{aligned}
		&\sum_{v\in\cV} x_{v,i,j}\ge 2 y'_{i,j},\;\; x_{v,i,j}\le y_{i,j} \;\;\forall v\in\cV,\;\;y_{i,j}\ge  y'_{i,j}, \\
		& w_{i,j} - \sum_{v\in\cV} x_{v,i,j} +y_{i,j} \le 0,\;\; \sum_{v\in\cV}x_{v,i,j}\ge y_{i,j} +  y'_{i,j}, \\ 
		&0\le x_{v,i,j}\le 1\;\; \forall v\in\cV,\;\; 0\le y_{i,j}\le 1, \;\; 0\le y'_{i,j}\le 1, \;\; w_{i,j}\ge 0
	\end{aligned}
  }.
\end{equation}

\end{theorem}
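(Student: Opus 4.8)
The plan is to prove part (a) first (the proposed inequality is facet-defining) and then leverage it to establish part (b) (the full linear description), since a facet-defining inequality must appear in any minimal description of the polytope. Throughout I would fix an edge $(i,j)$ and suppress the subscript, writing $x_v$, $y$, $y'$, $w$, and set $n:=|\cV|\ge 3$. The polytope $P^{\textnormal{RDP}}_{i,j}$ lives in $\mathbb{R}^{n+3}$, so a facet is an affinely independent collection of $n+2$ feasible integer points all satisfying a given valid inequality at equality.

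\textbf{Part (a).} First I would verify that $\sum_{v}x_v \ge y + y'$ is valid by checking the integer points: if $y=0$ then all $x_v=0$ and $y'=0$, so both sides are $0$; if $y=1,y'=0$ then at least one vehicle takes the edge (indeed $\sum_v x_v\ge 1 = y+y'$ follows from $x_v\le y$ forcing the edge to be used when $y=1$, combined with $w\ge 0$ and constraint \ref{eqn:RDP_3}); and if $y=y'=1$ then $y'=1$ requires $\sum_v x_v\ge 2 = y+y'$ by \ref{eqn:RDP_2}. To show it is facet-defining, I would exhibit $n+2$ affinely independent feasible points on the hyperplane $\sum_v x_v = y + y'$. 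A convenient family: the origin $(x=0,y=0,y'=0,w=0)$; the $n$ points where exactly one vehicle $v$ uses the edge, i.e. $x_v=1$, $y=1$, $y'=0$, $w=0$ (each satisfies $\sum x = 1 = y+y'$); and points where exactly two vehicles use the edge with $y=y'=1$, $w=1$ (giving $\sum x = 2 = y+y'$). Since $n\ge 3$, I can choose enough distinct two-vehicle configurations to reach $n+2$ total points, then argue affine independence by inspecting the incidence pattern (the single-vehicle points already span the $x$-directions, and the two-vehicle points supply the remaining independent direction involving $w$).

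\textbf{Part (b).} Let $Q$ denote the polytope defined by the linear system \eqref{eqn:P_RDP_ij_full}. The inclusion $P^{\textnormal{RDP}}_{i,j}\subseteq Q$ is immediate since every defining inequality of $Q$ is valid for the integer points (the new inequality by part (a), the rest by definition). For the reverse inclusion I would argue that $Q$ is an integral polytope whose vertices all lie in $P^{\textnormal{RDP}}_{i,j}$; equivalently, that every inequality in the system is either facet-defining or implied, and together they cut out exactly the convex hull. My preferred route is to show that every vertex of $Q$ is integral in the $(x,y,y')$ coordinates with $w$ determined at an extreme value, and that each such integral vertex is a feasible point of the original system. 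Given a vertex of $Q$, I would use the tightness of $n+3$ linearly independent constraints to pin down the coordinates: the box constraints $0\le x_v,y,y'\le 1$ together with the coupling inequalities force the fractional possibilities to collapse, and the new inequality $\sum_v x_v\ge y+y'$ rules out the fractional vertices that would otherwise survive from the relaxation in \eqref{def:P_RDP_ij}.

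The hard part will be the integrality argument in part (b): the relaxation obtained by merely dropping the integrality of $x,y,y'$ from \eqref{def:P_RDP_ij} is \emph{not} integral, which is precisely why the extra inequality is needed, so I must show that adding $\sum_v x_v\ge y+y'$ (and the redundant-but-clarifying bounds) exactly repairs this. Concretely, I would enumerate the fractional vertex types of the weaker relaxation — the dangerous case being $y$ fractional with a single $x_v=y\in(0,1)$ and $y'=0$, where the old system permits $\sum_v x_v = y$ but the new inequality now demands $\sum_v x_v\ge y$, forcing such a point off the old vertex and onto an integral one — and check that each fractional vertex violates exactly one of the added inequalities. An efficient alternative, which I would attempt if the case analysis proves unwieldy, is to show the constraint matrix of \eqref{eqn:P_RDP_ij_full} (after handling $w$, whose only role is the single constraint \ref{eqn:RDP_3}) yields a totally unimodular or otherwise provably integral system, so that all vertices are automatically integral and hence belong to $P^{\textnormal{RDP}}_{i,j}$. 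Either way, once integrality of $Q$'s vertices is established, feasibility of those vertices for the original integer system closes the argument and gives $Q\subseteq P^{\textnormal{RDP}}_{i,j}$, completing the equality.
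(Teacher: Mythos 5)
There are two genuine gaps. First, in part (a) your facet count is off by one. The polytope $P^{\textrm{RDP}}_{i,j}$ is full-dimensional in $\mathbb{R}^{|\cV|+3}$ (variables $x_v$ for $v\in\cV$, plus $y$, $y'$, $w$), so a facet has dimension $|\cV|+2$ and requires $|\cV|+3$ affinely independent points on the hyperplane, not $|\cV|+2$. Your proposed family --- the origin, the $|\cV|$ single-vehicle points, and a single two-vehicle point with $w=1$ --- yields only $|\cV|+1$ independent directions and therefore certifies at best a face of dimension $|\cV|+1$. The missing direction is exactly the one that decouples $w$ from the two-vehicle configuration: you need (as the paper does) both a two-vehicle point with $w=1$ and one with $w=0$, together with enough distinct two-vehicle supports, for a total of $|\cV|+3$ points. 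Your validity argument for the inequality is fine and matches the paper's (sum $\sum_v x_v\ge y$, obtained from \ref{eqn:RDP_3} and $w\ge 0$, with $\sum_v x_v\ge 2y'$ and round using integrality).

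Second, in part (b) the crucial step --- that every vertex of the proposed linear system $Q$ is integral --- is precisely where your proposal stops being a proof. Your fallback via total unimodularity fails immediately: the constraint $\sum_v x_{v}\ge 2y'$ has a coefficient $-2$, so the constraint matrix is not TU (and no easy combinatorial integrality criterion applies). Your primary route, enumerating the fractional vertices of the weaker relaxation and checking that each violates an added inequality, is logically sound but is left entirely as a placeholder; you analyze only one fractional configuration, and nothing in the proposal rules out other fractional vertices of $Q$ itself (which is what must be shown, not merely that old fractional vertices of \eqref{def:P_RDP_ij} are cut off). The paper avoids vertex enumeration altogether: it branches on $y'$, observes that each branch $P_1$ ($y'=1$) and $P_2$ ($y'=0$) is already an integral polyhedron, forms $\textrm{conv}(P_1\cup P_2)$ via the standard disjunctive (lift-and-project) extended formulation, and then projects out the auxiliary variables by substitution and Fourier--Motzkin elimination, using a small interval-feasibility lemma (Lemma~\ref{lem:a<x<b}) to verify the reverse inclusion. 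That derivation simultaneously proves integrality and explains where the inequality $\sum_v x_{v}\ge y+y'$ comes from; if you want to keep your vertex-based route, you would need to carry out the full basis analysis of $Q$'s tight-constraint systems, which is the hard content you have deferred.
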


\subsection{Valid Inequalities for Tightening the Scheduling Formulation}
\label{sec:valid-ineq-schedule}
Let $\Psp$ be the convex hull of the feasible set of the \ref{opt:SPF}.
For simplicity, we use shorthand notations such as $t$, $f$, or $\ell$
to denote the decision variables in the \ref{opt:SPF}
with the missing indices enumerated over their possible values.
For example, $t$ represents the subset of decision variables
$\{t_{u,i}\;|\;u\in V,\; i\in\cR_u\}$, and $f$ represents the collection
of decision variables $\{f_{u,v,i,j}\;|\;u,v\in\cV,\; (i,j)\in\cR_u\cap\cR_v\}$.

We divide our polyhedral study of the $\Psp$ into two parts. 
We first investigate the polytope $\Psp_1$, which is a relaxation of $\Psp$ 
that consists of constraints involving time decision variables $t$:
\begin{equation}
\begin{aligned}
\Psp_1:=\textrm{conv}\Set*{ t, y}{\textrm{ s.t. } 
\tn{\ref{eqn:SPF_2}}-\tn{\ref{eqn:SPF_6}},\;% \tn{\ref{eqn:SPF_10}},\; 
\tn{\ref{eqn:SPF_11}}}.
\end{aligned}
\end{equation}   
Then we investigate the vehicle-counting polytope $\Psp_2$, defined as
\begin{equation}\label{eqn:Psp_2}
\Psp_2:=\textrm{conv}\Set*{y, z}{ \textrm{ s.t. }  
\tn{\ref{eqn:SPF_7}}-\tn{\ref{eqn:SPF_9}}, \; \tn{\ref{eqn:SPF_11}}, \; \tn{\ref{eqn:SPF_12}}
				      }.			
\end{equation} 
We also consider the polytope $\Psp_{2,i,j}$ defined as
\begin{equation}
\Psp_{2,i,j}=\textrm{conv}\Set*{
\begin{array}{l}
f_{u,v,i,j}\;\forall u,v\in\cV_{i,j}\;u>v,\\
\ell_{v,i,j}\; \forall v\in\cV_{i,j}
\end{array}
}{ \textrm{ s.t. }  
\tn{\ref{eqn:SPF_7}}-\tn{\ref{eqn:SPF_9}}, \; \tn{\ref{eqn:SPF_11}}, \; \tn{\ref{eqn:SPF_12}}
				      \textrm{ for }(i,j) }.
\end{equation}
Note that since $ \Psp_2=\bigotimes_{(i,j)\in\cup_v\cR_v} \Psp_{2,i,j}$,
it suffices to understand the structure of $\Psp_{2,i,j}$ in order to understand
the structure of $\Psp_2$.
We define the following notations to ease our discussion later. 
\begin{definition}\label{def:Rv-t}\
\begin{enumerate}
	\item Let $\cR_v(a,b)$ denote the subpath from node $a$ to node $b$ on $\cR_v$, 
and let $N_v(a,b)$ be the set of nodes on $\cR_v(a,b)$ (including $a$ and $b$). 
For any node $i\in\cR_v$, let $i^-$ ($i^+$) denote
the preceding (succeeding) node of $i$ on $\cR_v$, if it exists.
	\item $\underline{t}_{u,i}$ and $\overline{t}_{u,i}$ ($\forall u\in\cV,\forall i\in N_u$): 
        the lower and upper bound of $t_{u,i}$,
		 where $\underline{t}_{u,i}=T_u^{O_u}+\sum_{(r,s)\in\cR_u(O_u,i)}T_{rs}$
		 and $\overline{t}_{u,i}=T_u^{D_u}-\sum_{(r,s)\in\cR_u(i,D_u)}T_{rs}$.
	\item $f_{\ol{u,v},i,j}$ ($\forall u,v\in\cV,\forall (i,j)\in\cR_u\cap\cR_v$): If $u>v$, $f_{\ol{u,v},i,j}$ is $f_{u,v,i,j}$.
    If $v>u$, $f_{\ol{u,v},i,j}$ represents $f_{v,u,i,j}$. 
\end{enumerate}
\end{definition}
One can easily check that a necessary \emph{platoonable condition} is satisfied
for vehicles $u$ and $v$ 
to be platooned when traversing the edge $(i,j)\in\cR_u\cap\cR_v$ is
$\overline{t}_{u,i}-\underline{t}_{v,i}>0$ and $\ol{t}_{v,i}-\underline{t}_{u,i}>0$.
Therefore, at a preprocessing stage, constraints \ref{eqn:SPF_5}
and \ref{eqn:SPF_6} will be removed for $u,v,i,j$ that violate the platoonable condition, 
and we set the corresponding platooning variable $f_{u,v,i,j}=0$.  
For $u,v,i,j$ that satisfies this condition we can set 
$M_{u,v,i,j}=\textrm{max}\big\{\big\{ \overline{t}_{u,i}-\underline{t}_{v,i}\big\}^+,
\big\{\overline{t}_{v,i}-\underline{t}_{u,i}\big\}^+ \big\}$.

\subsubsection{Scenario-Based Disjunctive Cutting Plane Generation for $\Psp_1$.}
\label{sec:disj-cut}
Consider the root-relaxation problem of the \ref{opt:SPF} in a branch-and-bound
approach for solving the \ref{opt:SPF}. In the optimal solution of this relaxation problem,
some platooning variables $f_{u,v,i,j}$ may 
be fractional even though vehicles $u,v$ are not 
platoonable because of inconsistent entering time at node $i$.
We develop Algorithm~\ref{alg:act-constr-coll}
and derive disjunctive inequalities based on the active constraints identified 
by this algorithm to cut fractional solutions 
with respect to the polytope $\Psp_1$ when solving the mixed 0-1 linear program that defines \ref{opt:SPF}.
A key step in Algorithm~\ref{alg:act-constr-coll} is to recursively search and collect active constraints in the \ref{opt:SPF}  
that produce the current fractional solution. 
Let $\Psprel$ be the polytope after relaxing
the integral constraints of variables $f$ in $\Psp_1$, that is, 
$\Psprel :=\Set*{ t, f}{\textrm{ s.t. } 
				  \tn{\ref{eqn:SPF_1}}-\tn{\ref{eqn:SPF_6}},\; % \tn{\ref{eqn:SPF_10}},\;
          \bs{0}\le f\le \bs{1}}$. 
% The algorithm for searching and collecting active constraints at a fractional point
% is given as Algorithm~\ref{alg:act-constr-coll}.
More specifically, Algorithm~\ref{alg:act-constr-coll} takes a fractional point $[\hat{t},\hat{f}]$
(i.e., at least one entry of $\hat{f}$ is fractional) as the input. If the point $[\hat{t},\hat{f}]$ is an extreme point of 
$\Psprel$, it returns two subsets $\cV_1$ and $\cV_2$ of vehicles that induce a subset of constraints
in (the linear system definition of) $\Psprel$ that are active at the input point.
If $[\hat{t},\hat{f}]$ is not an extreme point of $\Psprel$, the algorithm returns ``None.''
An example demonstrating this algorithm is given in Section~\ref{app:disj-cut-example}.
Suppose the algorithm terminates with nonempty sets of $\cV_1$ and $\cV_2$.
Then the following constraints are active at the fractional point $[\hat{t},\hat{f}]$ input to the algorithm
(the sets $\mtc{U}_1,\mtc{U}_2,\mtc{F}$ are defined in \eqref{eqn:U1-U2-F}):
\begin{align}
& t_{u,j} = t_{u,i} + T_{i,j}  &  \forall u\in \cV_1\cup\cV_2,\;\forall (i,j)\in\cR_u, \label{eqn:ac1}\\
& t_{u,O_u}\ge \underline{t}_{u,O_u}  &  \forall u\in \mathcal{U}_1, \label{eqn:ac2}\\
&  t_{u,O_u}\le \ol{t}_{u,O_u} & \forall u\in \mathcal{U}_2, \label{eqn:ac3}\\
& t_{u,i}-t_{v,i} \le M_{u,v,i,j}(1-f_{u,v,i,j})   & \forall (u,v,i,j)\in\mathcal{F}, \label{eqn:ac4}\\
& t_{u,i}-t_{v,i} \ge -M_{u,v,i,j}(1-f_{u,v,i,j})   & \forall (u,v,i,j)\in\mathcal{F}, \label{eqn:ac5}
\end{align}
\begin{align}
& \left\{
\begin{array}{ll}
t_{u^*,i^*}-t_{v^*,i^*} \le M_{u^*,v^*,i^*,j^*}(1-f_{u^*,v^*,i^*,j^*}) & \textrm{ if } \hat{t}_{u^*,i^*}-\hat{t}_{v^*,i^*}> 0 \\
t_{u^*,i^*}-t_{v^*,i^*} \ge -M_{u^*,v^*,i^*,j^*}(1-f_{u^*,v^*,i^*,j^*}) & \textrm{ if } \hat{t}_{u^*,i^*}-\hat{t}_{v^*,i^*}< 0 ,
\end{array}
\right.\label{eqn:ac6}
\end{align}
where $u^*,v^*,i^*,j^*$ are determined in Line~\ref{line:y} of Algorithm~\ref{alg:act-constr-coll}
and the sets $\mathcal{U}_1$, $\mathcal{U}_2$, and $\mathcal{F}$ are 
\begin{equation}\label{eqn:U1-U2-F}
\begin{aligned}
&\mathcal{U}_1=\Set*{u\in \cV_1\cup \cV_2}{\hat{t}_{u,O_u}=\underline{t}_{u,O_u}}, 
\quad\mathcal{U}_2=\Set*{u\in \cV_1\cup \cV_2}{\hat{t}_{u,O_u}=\ol{t}_{u,O_u}}, \\
&\mathcal{F}=\bigcup_{k=1,2}\Set*{(u,v,i,j)}{u,v\in \cV_k,\;u>v,\;(i,j)\in\cR_u\cap\cR_v,\; \hat{f}_{u,v,i,j}=1}.
\end{aligned}
\end{equation}
The way of generating a disjunctive cut toward a given fractional solution is
given in Theorem~\ref{thm:disj-cut}, which relies on
Proposition~\ref{prop:constrList}; both are proven in
Section~\ref{proofs_for_disj_cuts}.

\begin{algorithm}
{
\footnotesize
	\caption{Collecting a set of vehicles with constraints that yield a fractional solution.}
	\label{alg:act-constr-coll}
	\begin{algorithmic}[1]
		\State{\bf Input}: An extreme point $[\hat{t},\hat{f}]$ of $\Psprel$ 
					that contains fractional values in $\hat{f}$.
		\State{\bf Output}: Two subsets of vehicles or None.  
		\State Find $u^*,v^*\in\cV$ (with $u>v$) and $(i^*,j^*)\in\cR_{u^*}\cap\cR_{v^*}$ such that 
			   $\hat{f}_{u^*,v^*,i^*,j^*}$ is fractional, 
			   and $|\hat{t}_{u^*,i^*}-\hat{t}_{v^*,i^*}|=M_{u^*,v^*,i^*,j^*}(1-\hat{f}_{u^*,v^*,i^*,j^*})$. \label{line:y}
		\If{such $u^*,v^*,i^*,j^*$ do not exist}
			\State \textbf{stop} and \Return None. \label{line:empty1}
		\EndIf  
		\State Set $\cV_1\gets\{u^*\}$, $\cV_2\gets\{v^*\}$, $flag\gets \textrm{None}$.  \label{line:init-V1-V2}
		\If{$\underline{t}_{u^*,O_{u^*}}<\hat{t}_{u^*,O_{u^*}}<\ol{t}_{u^*,O_{u^*}}$}
			\State \Call{DeepSearch}{$flag$,\;$\cV_1$}.  \label{line:deep-search}
		\EndIf
		\State \textbf{if} $flag=0$, \textbf{then} \textbf{stop} and \Return None.   \label{line:empty2}
		\State $flag\gets \textrm{None}$. 
		\If{$\underline{t}_{v^*,O_{v^*}}<\hat{t}_{v^*,O_{v^*}}<\ol{t}_{v^*,O_{v^*}}$}
			\State \Call{DeepSearch}{$flag$,\;$\cV_2$}.
		\EndIf
		\State \textbf{if} $flag=0$, \textbf{then} \textbf{stop} and \Return None.   \label{line:empty3}
		\State \Return $[\cV_1,\;\cV_2]$. \label{line:terminate}
       \end{algorithmic}
       \vspace{-8pt}\noindent\makebox[\linewidth]{\rule{\linewidth}{0.4pt}}\vspace{-5pt}
       \begin{algorithmic}[1]
		\Procedure{DeepSearch}{$flag$,\;$vehSet$}
			\State $continue\gets 1$
			\While{$continue=1$}
				\If{$\exists$ $v\in vehSet$, $\exists$ $u\in\cV\setminus vehSet$, 
					and $\exists$ $(r,s)\in\cR_{v}\cap\cR_u$ such that $\hat{f}_{\ol{u,v},r,s}=1$} \label{line:cond-add-u}
					\State $vehSet.add(u)$. \label{line:add-u}
					\If{$\hat{t}_{u,O_{u}}=\underline{t}_{u,O_u}$ or $\hat{t}_{u,O_{u}}=\ol{t}_{u,O_u}$} \label{line:cond}
						\State $continue\gets 0$. $flag\gets 1$. \label{line:cont=0}
					\Else
						\State \Call{DeepSearch}{$flag,\;vehSet$}.
					\EndIf
				\Else
					\State $flag\gets 0$. \textbf{Stop}. \label{line:signal0} 
		 		\EndIf
			\EndWhile
		 \EndProcedure
	\end{algorithmic}
 }	
\end{algorithm}

\begin{proposition}\label{prop:constrList}
Suppose $[\hat{t},\hat{f}]$ is an extremal point of $\Psprel$ that contains at
least one fractional entry in $\hat{f}$. If $[\hat{t},\hat{f}]$ is input into
Algorithm~\ref{alg:act-constr-coll}, then the algorithm will return nonempty
$\cV_1$ and $\cV_2$.
\end{proposition}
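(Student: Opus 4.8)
The plan is to argue by contradiction: assuming the algorithm returns None, I trace which of the three return-None exits (Lines~\ref{line:empty1}, \ref{line:empty2}, \ref{line:empty3}) is triggered and, in each case, produce a feasible direction $\Delta$ with $[\hat t,\hat f]\pm\delta\Delta$ feasible for small $\delta>0$, contradicting the extremality of $[\hat t,\hat f]$ in $\Psprel$.

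First I would rule out Line~\ref{line:empty1}. Since $[\hat t,\hat f]$ is an extreme point and $\hat f$ has a fractional entry $\hat f_{u,v,i,j}\in(0,1)$, the bound constraints $0\le f_{u,v,i,j}\le 1$ are inactive there. If no active constraint involved $f_{u,v,i,j}$, then perturbing $\pm\epsilon$ in the $f_{u,v,i,j}$ coordinate alone would keep every active constraint active and every inactive one (slightly) satisfied, exhibiting $[\hat t,\hat f]$ as the midpoint of a feasible segment and contradicting extremality. As $f$ appears only in \ref{eqn:SPF_5}, \ref{eqn:SPF_6} and its bounds, some big-M constraint must be active, i.e.\ $|\hat t_{u,i}-\hat t_{v,i}|=M_{u,v,i,j}(1-\hat f_{u,v,i,j})$. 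This is exactly the quantity sought in Line~\ref{line:y}, so the search succeeds and Line~\ref{line:empty1} is never reached.

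The core of the argument is to show $\textsc{DeepSearch}$ cannot return $flag=0$, so that Lines~\ref{line:empty2}--\ref{line:empty3} are unreachable. Suppose $\textsc{DeepSearch}$ on $\cV_k$ ($k\in\{1,2\}$) terminates with $flag=0$, yielding a vehicle set $S$. By construction, (i) every $w\in S$ has interior origin time $\underline t_{w,O_w}<\hat t_{w,O_w}<\overline t_{w,O_w}$ (the seed by the guard preceding the call, and each vehicle added in Line~\ref{line:add-u}, since otherwise Line~\ref{line:cond} would set $flag=1$), and (ii) there is no $f=1$ link leaving $S$, i.e.\ $\hat f_{\ol{u,w},r,s}=1$ with $w\in S$ forces $u\in S$ (failure of the condition in Line~\ref{line:cond-add-u}). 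I then take the direction $\Delta$ that rigidly shifts all node times of $S$, $\Delta_{t_{w,i}}=1$ for $w\in S$, $i\in\cR_w$ and $0$ otherwise, together with a matching adjustment $\Delta_f=\mp 1/M$ of the finitely many fractional $f$'s on active big-M constraints linking $S$ to its complement (and of $\hat f_{u^*,v^*,i^*,j^*}$ when exactly one of $u^*,v^*$ lies in $S$), the sign chosen according to whether the active constraint is the upper~\ref{eqn:SPF_5} or lower~\ref{eqn:SPF_6} one.

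The main obstacle is verifying that $[\hat t,\hat f]\pm\delta\Delta$ stays feasible. Property~(i) keeps the origin/destination bounds \ref{eqn:SPF_2}--\ref{eqn:SPF_3} slack in both directions, and the travel-time identities \ref{eqn:SPF_4} are preserved because each vehicle's times shift rigidly. For a big-M constraint between $w\in S$ and $u\notin S$ on a shared edge, I would use~(i) and the definition $M_{u,w,r,s}=\max\{\{\overline t_{w,r}-\underline t_{u,r}\}^+,\{\overline t_{u,r}-\underline t_{w,r}\}^+\}$ to show that when $\hat f_{\ol{u,w},r,s}=0$ the two constraints are strictly slack (since $\underline t_{w,r}<\hat t_{w,r}<\overline t_{w,r}$ follows by propagating the strict origin-time inequalities along~\ref{eqn:SPF_4}); when $\hat f_{\ol{u,w},r,s}\in(0,1)$ at most one of \ref{eqn:SPF_5}, \ref{eqn:SPF_6} is active and the chosen $\Delta_f=\mp1/M$ keeps it satisfied as an equality while the companion stays slack; and the case $\hat f_{\ol{u,w},r,s}=1$ is excluded by~(ii). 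Constraints internal to $S$ (including internal $f=1$ equalities) are preserved because both endpoints shift equally, and constraints touching neither endpoint in $S$ are untouched. Since $\Delta\neq 0$ (its $t_{u^*,\cdot}$ or $t_{v^*,\cdot}$ component is $1$), this again contradicts extremality, so $flag=0$ never occurs. As $\cV_1\supseteq\{u^*\}$ and $\cV_2\supseteq\{v^*\}$ are only ever enlarged, the algorithm returns two nonempty sets.
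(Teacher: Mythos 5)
Your proof is correct and follows essentially the same route as the paper's: rule out each of the three return-None exits by exhibiting, in each case, a perturbation direction whose two feasible endpoints average to $[\hat{t},\hat{f}]$, contradicting extremality. Your single-coordinate perturbation for Line~\ref{line:empty1} and your rigid time shift of the collected vehicle set (with a compensating $\mp\delta/M$ change in $f_{u^*,v^*,i^*,j^*}$) for Lines~\ref{line:empty2}--\ref{line:empty3} are exactly the constructions in the paper's proof.

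One place where you are in fact more careful than the paper: the paper's displayed property \eqref{eqn:property} asserts that \emph{every} big-M constraint joining a vehicle in $\cV_1$ to one outside is strictly slack, but the stopping condition at Line~\ref{line:cond-add-u} only rules out $\hat{f}=1$ links leaving the set; it does not exclude an active instance of \ref{eqn:SPF_5} or \ref{eqn:SPF_6} with a \emph{fractional} $\hat{f}_{\ol{u,w},r,s}$ crossing the boundary, which the paper's perturbation (adjusting only $f_{u^*,v^*,i^*,j^*}$) would then violate. Your direction adds a $\mp\delta/M$ compensation on each such fractional active cross-link, and your observation that at most one of the two big-M inequalities can be active when $\hat{f}$ is fractional (both active would force $\hat{f}=1$) makes this adjustment well defined. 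Together with your derivation of strict slackness for the $\hat{f}=0$ cross-links from the interiority of origin times and the definition of $M_{u,w,r,s}$, this closes a gap that the paper glosses over; the remainder of your feasibility checking matches the paper's.
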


\begin{theorem}\label{thm:disj-cut}
Suppose $[\hat{t},\hat{f}]$ is a fractional point in $\Psprel$.
Suppose Algorithm~\ref{alg:act-constr-coll} returns nonempty vehicle sets $\cV_1$ and $\cV_2$.
Let $\omega$ denote the variables involved in the active constraints 
\eqref{eqn:ac1}--\eqref{eqn:ac5}
\emph{(}$\omega$ is a subset of variables from $[t,f]$\emph{)}    
representing the constraints \eqref{eqn:ac1}--\eqref{eqn:ac5} as $A\omega\ge b$,
where $A$ and $b$ are appropriate constant matrix and vector that can represent
those constraints.
Let $\hat{\omega}$ be values of $[\hat{t},\hat{f}]$ corresponding to variables $\omega$.
Then the following holds: \newline
\emph{(a)} \emph{(}Validness of the algorithm\emph{)}  If $\hat{t}_{u^*,i^*}-\hat{t}_{v^*,i^*}> 0$,
 the point $[\hat{\omega},\hat{f}_{u^*,v^*,i^*,j^*}]$ is the unique solution to the linear equation system 
\begin{equation}
 \left\{
	\begin{array}{l}
		A\omega=b \\
		t_{u^*,i^*}-t_{v^*,i^*} = M_{u^*,v^*,i^*,j^*}(1-f_{u^*,v^*,i^*,j^*}).
	\end{array}
\right.
\end{equation}
If $\hat{t}_{u^*,i^*}-\hat{t}_{v^*,i^*}< 0$,
 the point $[\hat{\omega},\hat{f}_{u^*,v^*,i^*,j^*}]$ is the unique solution to the linear equation system 
\begin{equation}
 \left\{
	\begin{array}{l}
		A\omega=b \\
		t_{u^*,i^*}-t_{v^*,i^*} = -M_{u^*,v^*,i^*,j^*}(1-f_{u^*,v^*,i^*,j^*}).
	\end{array}
\right.
\end{equation}
\emph{(b)} \emph{(}Disjunctive cut generation\emph{)}
Let $\widetilde{A}\omega+\tilde{p} f_{u^*,v^*,i^*,j^*}\ge\tilde{b}$ represent the system of linear inequalities:
\begin{displaymath}
\begin{aligned}
&A\omega\ge b, & \\
&t_{u^*,i^*}-t_{v^*,i^*} \le M_{u^*,v^*,i^*,j^*}(1-f_{u^*,v^*,i^*,j^*}), & \\
&t_{u^*,i^*}-t_{v^*,i^*} \ge -M_{u^*,v^*,i^*,j^*}(1-f_{u^*,v^*,i^*,j^*}), & \\
&\ul{t}_{u,O_u}\le t_{u,O_u}\le\ol{t}_{u,O_u} &\forall u\in\cV_1\cup\cV_2, \\
&0\le f_{u,v,i,j}\le 1\; & \forall(u,v,i,j)\in\mathcal{F}, \\
&0\le f_{u^*,v^*,i^*,j^*}\le 1, &
\end{aligned}
\end{displaymath}
where $\tilde{A}$, $\tilde{p}$, and $\tilde{b}$ are appropriate matrix and vectors 
that can represent the above constraints.
Consider the following linear program:
\begin{equation}\label{opt:disj-cut-gen}
\begin{aligned}
\minimize\;\;& \alpha^{\top}\hat{\omega}
+(\beta^{0\top}\tilde{b}-\beta^{1\top}\tilde{b}
+\beta^{1\top}\tilde{p}+\gamma^0-\gamma^1)\hat{f}_{u^*,v^*,i^*,j^*}
-\beta^{0\top}\tilde{b}+\gamma^1 \\
\emph{subject to:}\;\;& \widetilde{A}^{\top}\beta^{0}-\alpha=0, 
\;\; \widetilde{A}^{\top}\beta^{1}-\alpha=0,\;\;
\beta^0\ge\bs{0},\;\;\beta^1\ge\bs{0},\;\;\gamma^0\ge0,
1\ge\gamma^1\ge0,\;\;\alpha\in\mathbb{R}^{\dim(\omega)}.
\end{aligned}
\end{equation}
The inequality 
\begin{equation}\label{eqn:disj-valid-ineq}
\hat{\alpha}^{\top}\omega
+(\hat{\beta}^{0\top}\tilde{b}-\hat{\beta}^{1\top}\tilde{b}
+\hat{\beta}^{1\top}\tilde{p}+\hat{\gamma}^0-\hat{\gamma}^1)f_{u^*,v^*,i^*,j^*}
-\hat{\beta}^{0\top}\tilde{b}+\hat{\gamma}^1\ge 0
\end{equation}
is valid for $\Psp_{1}$, 
and it strongly separates the point $[\hat{t},\hat{f}]$ from $\Psp_{1}$,
where the coefficient vector $[\hat{\alpha},\hat{\beta}^0,\hat{\beta}^1,
\hat{\gamma}^0,\hat{\gamma}^1]$ is  
the optimal solution of \eqref{opt:disj-cut-gen}.   
\end{theorem}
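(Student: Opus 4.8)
The plan is to treat the statement as a lift-and-project / disjunctive-cut argument built on the single binary variable $\f$, with part~(a) supplying the nondegeneracy that makes the separation in part~(b) work. Throughout, let $P_0$ and $P_1$ denote the two disjuncts obtained by appending $\f\le 0$ and $\f\ge 1$, respectively, to the active system $\tilde A\omega+\tilde p\f\ge\tilde b$ of part~(b); since that system already contains $0\le\f\le 1$, the variable $\f$ is forced to $0$ on $P_0$ and to $1$ on $P_1$.

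For part~(a) I would first use the travel-time equalities \eqref{eqn:ac1} to eliminate every intermediate time variable, writing each $t_{u,i}$ as $t_{u,O_u}$ plus a constant; this reduces the unknowns to the departure times $\{t_{u,O_u}:u\in\cV_1\cup\cV_2\}$, the $f$-variables indexed by $\mathcal{F}$, and $\f$ itself. For each $(u,v,i,j)\in\mathcal{F}$ the pair of active big-M constraints \eqref{eqn:ac4}--\eqref{eqn:ac5} simultaneously pins that $f$-variable to $1$ and forces $t_{u,i}=t_{v,i}$, which after the elimination becomes a linear relation between two departure times. The core is then a rank/counting claim: the tight links traced by \textsc{DeepSearch} connect the vehicles of $\cV_1$ (resp. $\cV_2$) in a spanning-tree fashion, contributing $|\cV_1|-1$ (resp. $|\cV_2|-1$) independent difference equations, while the termination test on Line~\ref{line:cond} supplies at least one active departure bound \eqref{eqn:ac2} or \eqref{eqn:ac3} in each set; together these $|\cV_k|$ equations determine all departure times in $\cV_k$ uniquely. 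Finally the active big-M equality for $u^*,v^*,i^*,j^*$, whose two time endpoints are now fixed, determines $\f$. Proposition~\ref{prop:constrList} guarantees the returned sets are nonempty, so this structure is present, and the resulting uniqueness is exactly the assertion of part~(a).

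For the validity half of part~(b) I would first observe $\Psp_1\subseteq\mathrm{conv}(P_0\cup P_1)$: every feasible point of \ref{opt:SPF} has $\f\in\{0,1\}$ and satisfies the active constraints (a subset of the SP constraints), hence lies in $P_0$ or $P_1$. It then suffices to show \eqref{eqn:disj-valid-ineq} is valid on each disjunct for any multipliers feasible to \eqref{opt:disj-cut-gen}. Using $\tilde A^\top\beta^0=\alpha$ and $\beta^0\ge\bs{0}$, a direct substitution shows the left-hand side of \eqref{eqn:disj-valid-ineq} is bounded below by $\gamma^1\ge 0$ on $P_0$; likewise, using $\tilde A^\top\beta^1=\alpha$ and $\beta^1\ge\bs{0}$ together with $\tilde A\omega\ge\tilde b-\tilde p$ on $P_1$, the left-hand side is bounded below by $\gamma^0\ge 0$. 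Validity on $\mathrm{conv}(P_0\cup P_1)$, and hence on $\Psp_1$, then follows by linearity.

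The separation half is where part~(a) does its work, and I expect it to be the main obstacle to present cleanly. The objective of \eqref{opt:disj-cut-gen} is precisely the left-hand side of \eqref{eqn:disj-valid-ineq} evaluated at $[\hat t,\hat f]$, so by LP duality its optimum is strictly negative iff $[\hat t,\hat f]\notin\mathrm{conv}(P_0\cup P_1)$. To establish the latter I would argue by contradiction: if $[\hat t,\hat f]=\mu p_0+(1-\mu)p_1$ with $p_0\in P_0$ and $p_1\in P_1$, then matching the $\f$-coordinate gives $\mu=1-\hat f\in(0,1)$, so both weights are strictly positive; since each active constraint \eqref{eqn:ac1}--\eqref{eqn:ac6} holds with equality at the combination and with the correct inequality sense at $p_0$ and $p_1$, it must hold with equality at both $p_0$ and $p_1$. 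Thus $p_0$ and $p_1$ each solve the equality system of part~(a), which has a unique solution, forcing $p_0=p_1$ and contradicting that they carry $\f=0$ and $\f=1$. Hence the fractional point is strictly separated, and the optimal multipliers of \eqref{opt:disj-cut-gen} yield the desired cut. The delicate points are the independence bookkeeping in part~(a) (ensuring the \textsc{DeepSearch} tree plus boundary conditions exactly exhaust the degrees of freedom) and the tightness propagation under the convex combination, since both hinge on the precise set of constraints collected by Algorithm~\ref{alg:act-constr-coll}.
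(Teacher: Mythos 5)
Your proposal is correct in substance and matches the paper's architecture for part (a) and for the validity half of part (b): the paper likewise establishes uniqueness in (a) by showing that the active system \eqref{eqn:ac1}--\eqref{eqn:ac6} has exactly as many linearly independent equations as unknowns (it keeps the intermediate $t_{u,i}$ and counts $\sum_k|N_{u_k}|+\sum_k|N_{v_k}|+m+l-1$ on both sides, whereas you eliminate them first and count departure times plus a spanning tree of tight links plus one boundary condition per set --- the same bookkeeping), and validity of \eqref{eqn:disj-valid-ineq} follows in both treatments from the multiplier conditions $\widetilde{A}^{\top}\beta^{0}=\widetilde{A}^{\top}\beta^{1}=\alpha$ applied disjunct by disjunct. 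Where you genuinely diverge is the separation step. You prove directly that $[\hat{\omega},\hat{f}_{u^*,v^*,i^*,j^*}]\notin\textrm{conv}(P'_0\cup P'_1)$ by propagating tightness of the active constraints through the putative convex combination and invoking the uniqueness from part (a) to force $p_0=p_1$; the paper instead never reuses part (a) here --- it argues that if the cut-generating LP had nonnegative optimum then the point would satisfy every inequality in the family, hence lie in $\textrm{conv}(P'_0\cup P'_1)$, and then derives a contradiction from the fact that $[\hat{t},\hat{f}]$ is an extreme point of $P^{\textrm{SP}}_{1\textrm{-rel}}$, so its restriction is an extreme point of $\textrm{conv}(P'_0\cup P'_1)$, whose extreme points have integral $f_{u^*,v^*,i^*,j^*}$. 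Your route is more self-contained in that it makes part (a) do the work it was evidently designed for; the paper's route avoids the tightness-propagation argument but leans on extremality of the input point. One thing you leave implicit that the paper spells out: the step ``optimum strictly negative iff the point is outside the hull'' requires knowing that the multiplier family \eqref{eqn:dual-var-cond} generates \emph{all} valid inequalities of $\textrm{conv}(P'_0\cup P'_1)$ in the $[\omega,f_{u^*,v^*,i^*,j^*}]$ space, and that the normalization $0\le\gamma^1\le1$ keeps \eqref{opt:disj-cut-gen} bounded; the paper establishes the former via the Balas lifted polytope (Theorem~4.39 of \cite{conforti-IP-2014}) and explicit elimination of the copies $\omega^0,\omega^1$. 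You should cite or reproduce that completeness argument rather than attributing it to ``LP duality'' alone, but this is a presentational gap, not a logical one.
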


\subsubsection{Valid Inequalities of $\Psp_{2,i,j}$.}
\label{sec:valid-ineq-SPF2}
The polytope $\Psp_{2,i,j}$ is defined by constraints that specify the rules of 
how platoons can be formed by vehicles sharing a same edge.
%There exists a graph interpretation of these constraints.
We can create a graph that represents the set of vehicles that share a given edge,
where every vertex represents a vehicle and where an edge is added to every pair of a lead
vehicle and a trailing vehicle within a platoon. This graph can be decomposed into
(vertex- and edgewise) disjointed star subgraphs (see Definition~\ref{def:star-partition});
the size of each star subgraph is at most $\lambda$. 
\begin{definition}\label{def:star-partition}
(a) For $(i,j)\in\cup_v\cR_v$, a set of values 
$S=\{\hat{f}_{u,v,i,j}:\;u,v\in\cV_{i,j},\;u>v\}$
\textit{specifies a star partition} if there exists some disjoint subsets 
$\{U_1,\ldots,U_k\}$ of $\cV_{i,j}$ and vehicles $\{u_s\}^k_{s=1}$ with $u_s\in U_s$ 
for every $s\in\replacemath{[k]}{ \left\{ 1,\ldots, k\right\}  }$ such that the following condition holds:
$\hat{f}_{u,u_s,i,j}=1$ for every $u\in U_s\setminus\{u_s\}$ and every $s\in \replacemath{[k]}{ \left\{ 1, \ldots, k \right\}}$, 
and other values in $S$ are equal to zero.  
\end{definition}
A star partition graph has a topological structure similar to that of a clique partition graph.
The polytope structure of the clique partition graph has been investigated 
\citep{grotschel1990_clique-partition-polytope,bandelt1999_lift-facet-character-clique-partition,oosten2001_clique-partition-facets},
but we have not found related references on the polytope structure of the star
partition graph.
We find that facet inequalities for the star-partition polytope are simpler 
than those of the clique-partition polytope.
Consider the following polytopes:
\begin{equation}
\begin{aligned}
&P_{3,i,j}=\textrm{conv}\Set*{\widehat{S}}{ \widehat{S}
\textrm{ specifies a star partition in which the size of each star is at most }Q}, \\
&\widetilde{P}_{3,i,j}=\textrm{conv}\Set*{ \widehat{S} }{ \widehat{S}
\textrm{ specifies a star partition}},
\end{aligned}
\end{equation}
where $S=\Set*{f_{u,v,i,j}}{u,v\in\cV_{i,j},\;u>v}$ represents the set of $f_{u,v,i,j}$ variables
and $\widehat{S}$ denotes a value assignment to $S$.
Note that we have $P_{3,i,j}=\textrm{Proj}_{S}(\Psp_{2,i,j})$ by definition
and that $\widetilde{P}_{3,i,j}$ is defined by removing the star-size (size of a platoon)
constraints from $P_{3,i,j}$. The sets of valid inequalities for these polytopes 
obey the following relation:
$\{\textrm{valid ineqs. of }\widetilde{P}_{3,i,j}\}\subseteq
\{\textrm{valid ineqs of }P_{3,i,j}\}\subseteq
\{\textrm{valid ineqs. of }\Psp_{2,i,j}\}$.
\begin{comment}
\begin{definition}\label{def:star-partition}
(a) For $(i,j)\in\cR$, let $f_{\cdot,i,j}$, $\ell_{\cdot,i,j}$ denote the following subset of variables
$f_{\cdot,i,j}=\{f_{u,v,i,j}:\;u,v\in\cV_{i,j},\;u>v\}$, $\ell_{\cdot,i,j}=\{\ell_{v,i,j}:v\in\cV_{i,j}\}$, respectively. 
A vector $[\hat{y}_{\cdot,i,j},\hat{z}_{\cdot,i,j}]$ \textit{specifies a star partition} if there exists a partition
$\{U_1,\ldots,U_k\}$ of $\cV_{i,j}$ and $\{u_i\}^k_{i=1}$ with $u_i\in\cU_i$ for every $i\in[k]$ such that the following 
conditions hold:
\begin{itemize}
\item[(1)] $\hat{z}_{u_i,i,j}=1$ for every $i\in [k]$, $\hat{z}_{u,i,j}=0$ for any $u\in\cU\setminus\{u_i\}^k_{i=1}$,
\item[(2)] $\hat{y}_{u,u_i,i,j}=1$ for every $u\in\cU_i\setminus\{u_i\}$ and every $i\in [k]$, 
and other $\hat{y}_{\cdot,i,j}$ values are equal to zero.  
\end{itemize}
(b) A vector $\hat{y}_{\cdot,i,j}$ \textit{specifies a star partition} if there exists a vector $\hat{z}_{\cdot,i,j}$
such that $[\hat{y}_{\cdot,i,j},\hat{z}_{\cdot,i,j}]$ specifies a star partition.
\end{definition}
\end{comment}
Theorem~\ref{thm:facet-star-partition}, proven in Section~\ref{proof_of_star_part}, provides sets of valid inequalities 
for $\widetilde{P}_{3,i,j}$ and $P_{3,i,j}$.
\begin{theorem}\label{thm:facet-star-partition}
\emph{(a)} The polytope $\widetilde{P}_{3,i,j}$ can be represented by 
the following inequalities:
\begin{equation}\label{eqn:claim1}
\begin{aligned}
\widetilde{P}_{3,i,j}=\Set*{
\def\arraystretch{1.5}
\begin{array}{l}
f_{u,v,i,j},\\
\forall u,v\in\cV_{i,j},\\
u>v
\end{array}
}
{
\def\arraystretch{1.5}
\begin{array}{ll}
  \displaystyle \sum_{v\in\cV_{i,j}<v_{\emph{max}}}f_{v_{\emph{max}},v,i,j}\le 1, \\
  \displaystyle f_{u,v,i,j}+\sum_{w\in\cV_{i,j}<v}f_{v,w,i,j}\le 1 & \forall u,v\in\cV_{i,j}, u>v,\;v>v_{\emph{min}} \\
  \displaystyle 0\le f_{u,v,i,j}\le 1 & \forall u,v\in\cV_{i,j}, u>v
\end{array}
},
\end{aligned}
\end{equation} 
where $v_{\emph{min}}$ and $v_{\emph{max}}$ 
are the vehicles in $\cV_{i,j}$ with the smallest and largest index, respectively.

\emph{(b)} The following are facet defining for $P_{3,i,j}$:
\begin{equation}\label{eqn:sum_y<Q-1}
  \sum_{u,v\in U:u>v}f_{u,v,i,j}\le \lambda-1 \qquad \forall U\subset\cV_{i,j},\;|U|=\lambda+1.
\end{equation}
\end{theorem}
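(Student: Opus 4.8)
The plan is to treat Theorem~\ref{thm:facet-star-partition} by the two standard polyhedral certificates: for part~(a), prove the two inclusions between the convex hull $\widetilde{P}_{3,i,j}$ and the linear system~\eqref{eqn:claim1}; for part~(b), verify that~\eqref{eqn:sum_y<Q-1} is valid and then exhibit $\binom{|\cV_{i,j}|}{2}$ affinely independent star partitions of platoon size at most $\lambda$ that meet it with equality. Throughout I assume $\lambda\ge 2$ and $|\cV_{i,j}|\ge\lambda+1$ so that the inequality in~(b) is non-vacuous, and I read $f_{u,v,i,j}$ (with $u>v$) as the indicator that $u$ follows the star-center $v$, as in Definition~\ref{def:star-partition}.

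For part~(a), validity is immediate from Definition~\ref{def:star-partition}: if $f_{u,v,i,j}=1$ then $v$ is the center of $u$'s star, so $v$ itself follows no one and $\sum_{w<v}f_{v,w,i,j}=0$; otherwise $v$ follows at most one center and the sum is at most $1$, so the second family of~\eqref{eqn:claim1} holds, and the first family holds because $v_{\max}$ is never a center and hence follows at most one vehicle. For the reverse inclusion I would first record the implied bound $\sum_{w<u}f_{u,w,i,j}\le 1$ for every $u$: for $u<v_{\max}$ it follows from the second family applied to the pair $(v_{\max},u)$ after discarding the nonnegative term $f_{v_{\max},u,i,j}$, for $u=v_{\max}$ it is the first family, and for $u=v_{\min}$ it is vacuous. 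Hence every $0/1$ point of~\eqref{eqn:claim1} is a depth-one ``follows'' map (each vehicle follows at most one smaller index, and any followed vehicle follows no one), which is precisely a star partition. The crux is to show that~\eqref{eqn:claim1} has only integral vertices; I would attempt this by projecting the extended $(f,\ell)$ description given by~\ref{eqn:SPF_7}, \ref{eqn:SPF_9}, \ref{eqn:SPF_11}, \ref{eqn:SPF_12} (with the size bound~\ref{eqn:SPF_8} dropped) onto the $f$-space, eliminating each $\ell_{v,i,j}$ by Fourier--Motzkin and checking that the resulting system is exactly~\eqref{eqn:claim1}; an alternative is a direct uncrossing argument that perturbs any fractional vertex along its fractional support while holding every tight constraint of~\eqref{eqn:claim1} at equality. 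Establishing this exactness is the main obstacle of part~(a).

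For part~(b), I first verify validity of~\eqref{eqn:sum_y<Q-1}. For a fixed $U$ with $|U|=\lambda+1$, the quantity $\sum_{u,v\in U:\,u>v}f_{u,v,i,j}$ counts the vehicles of $U$ that follow a center lying in $U$. If the centers in $U$ are $c_1,\dots,c_t$ with respective in-$U$ follower counts $r_1,\dots,r_t$, then the $t$ centers together with their $\sum_s r_s$ followers are distinct elements of $U$, so $t+\sum_s r_s\le \lambda+1$. For $t\ge 2$ this gives $\sum_s r_s\le\lambda-1$, and for $t=1$ the size bound $r_1\le\lambda-1$ gives the same; thus the inequality holds, with equality exactly for a single size-$\lambda$ star in $U$ plus one leftover vehicle, or for two stars that partition $U$. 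Next I establish that $P_{3,i,j}$ is full dimensional, of dimension $\binom{|\cV_{i,j}|}{2}$: the all-singleton vector $\mathbf 0$ together with each single-arc vector $e_{(u,v)}$ (the platoon $\{u,v\}$, legal since $\lambda\ge2$) are $1+\binom{|\cV_{i,j}|}{2}$ affinely independent star partitions.

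Finally I would produce $\binom{|\cV_{i,j}|}{2}$ affinely independent tight points by generating difference directions in three groups, matching the dimension count $\binom{|\cV_{i,j}|}{2}-1=\binom{m-\lambda-1}{2}+(\lambda+1)(m-\lambda-1)+\bigl(\binom{\lambda+1}{2}-1\bigr)$ with $m=|\cV_{i,j}|$. Writing $U=\{u_0<\dots<u_\lambda\}$ and taking the base point $P_0$ to be the size-$\lambda$ star with center $u_0$ and followers $u_1,\dots,u_{\lambda-1}$ (with $u_\lambda$ and all of $\cV_{i,j}\setminus U$ singletons): (i) for each outside--outside arc $(a,b)$, adding the disjoint platoon $\{a,b\}$ gives a tight point with difference $e_{(a,b)}$; (ii) for each crossing arc I realize its unit direction as the difference of two tight points that agree except in that coordinate—e.g.\ for the leader-incident export $(u_0,v)$ with $v\notin U,\ v<u_0$, compare the config where $\{u_1,\dots,u_\lambda\}$ forms a size-$\lambda$ star and $u_0$ follows $v$ against the same config with $u_0$ a singleton, and for a crossing arc $(u,v)$ into some $v\in U$ use a two-star partition of $U$ in which $v$ is a center with spare capacity and toggle the outside follower $u$; (iii) for the inside coordinates I vary the star structure on $U$—swapping which follower is the singleton yields differences $e_{(u_a,u_0)}-e_{(u_b,u_0)}$, and passing through two-star partitions of $U$ moves the center, yielding differences that reach arcs into other centers—so that these inside differences span the codimension-one subspace $\{d:\sum_{A_U}d=0\}$. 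Summing, the difference vectors span the full $(\binom{m}{2}-1)$-dimensional direction space of the hyperplane, certifying a facet. The main obstacle here is the crossing arcs incident to the current center $u_0$ and the simultaneous need to span the inside directions while keeping exactly $\lambda-1$ inside arcs; I resolve it by rotating the center role through the vehicles of $U$ via the two-star tight configurations, so that every center-incident crossing arc and every inside direction is realized by an explicit size-$\le\lambda$ star partition on the facet.
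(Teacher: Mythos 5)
Your outline has the right skeleton for part (a) --- validity of the inequalities, the observation that integral points of \eqref{eqn:claim1} are exactly star partitions, and then integrality of the extreme points --- but the third step, which you yourself flag as ``the main obstacle,'' is precisely the content of the theorem and is not proven. Neither of your two candidate strategies closes it. The Fourier--Motzkin route is unsound as stated: projecting the LP relaxation of \ref{eqn:SPF_7}, \ref{eqn:SPF_9}, \ref{eqn:SPF_11}, \ref{eqn:SPF_12} onto the $f$-space and matching the result to \eqref{eqn:claim1} would only show that \eqref{eqn:claim1} equals that projection, and since the extended relaxation is not itself known to be integral, this says nothing about whether \eqref{eqn:claim1} has fractional vertices. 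The ``uncrossing/perturbation'' alternative is the right idea but is exactly where the work lies: the paper's proof defines \emph{pivot constraints} (tight constraints meeting the fractional support), maps them to a linear equation system $\mathcal{H}$, and then proves by an explicit counting procedure (Algorithm~\ref{alg:seq-take-E1}, using that distinct equations coming from the zero-coefficient family share no variables and that each equation involves at least two fractional variables) that $|\mathcal{H}|$ is strictly less than the number of variables it touches; Proposition~\ref{prop:Ax=b} then produces the two-point decomposition that contradicts extremality. Without some version of that counting argument your part (a) is an assertion, not a proof.

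For part (b), your validity argument matches the paper's (case analysis on the number of centers in $U$), but your facet certificate is a different technique from the paper's and is left incomplete. The paper does \emph{not} construct $\binom{|\cV_{i,j}|}{2}$ affinely independent tight points; it uses the indirect coefficient-determination method: assume a facet $b^{\top}f\le b_0$ contains the face and prove $b=\alpha a$ through a sequence of substitutions of pairs of tight star partitions (Claims 1--5), which sidesteps any independence verification. Your direct route could in principle work, but groups (ii) and (iii) are only sketched --- you acknowledge the obstruction at center-incident crossing arcs and assert without verification that the inside differences span the codimension-one subspace $\{d:\sum_{A_U}d=0\}$; note also that Definition~\ref{def:star-partition} forces every star center to be the minimum index in its star, which further constrains which two-star configurations on $U$ are available for your ``rotate the center'' step. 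As written, both halves of the proposal stop short of a proof at exactly the points where the real difficulty sits.
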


\section{Numerical Investigation}
\label{sec:num-study}
We now study the performance of the RSHM (Algorithm~\ref{alg:RSHM}), 
the \ref{opt:RDP}, and the \ref{opt:SPF}.
Section~\ref{sec:inst-gen} describes the considered CVPP instances. 
In Sections~\ref{sec:num-routing} and \ref{sec:num-sched},
we present the computational performance of solving the routing problem
and scheduling problem,
respectively. 
Section~\ref{sec:num-RSHM} presents the performance of the RSHM approach
on the CVPP instances and demonstrates the advantage of the RSHM over
solving the CVPP as a single MILP. Our code is available at the following.
\begin{center}
  \url{https://www.mcs.anl.gov/~jlarson/Platooning/}
\end{center}

\begin{wrapfigure}{r}{0.4\textwidth}
\centering
\vspace{-20pt}
\includegraphics[scale=0.4]{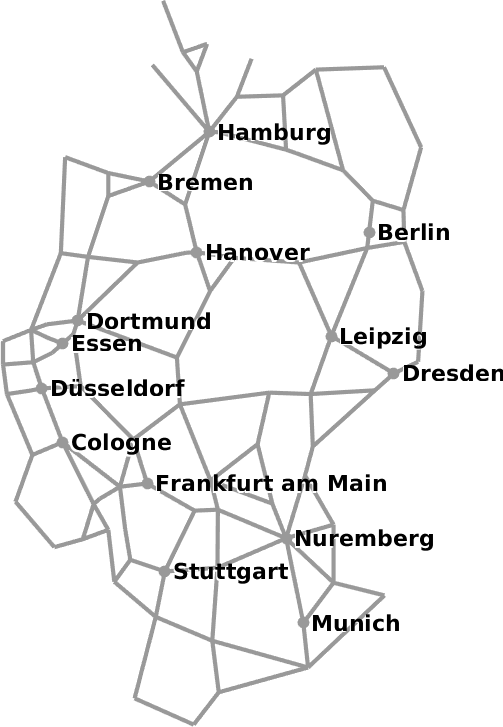}
\caption{Network topology of the Germany highway network system, noting the 14 largest cities.
\label{fig:germany}}
\vspace{-10pt}
\end{wrapfigure}
\subsection{Problem Generation}
\label{sec:inst-gen}
All numerical instances use the Germany highway network system in Figure~\ref{fig:germany}.
Three ingredients are involved in the generation of an instance: the number
of vehicles, the origin and destination nodes of each vehicle ($O_v,D_v$), and 
the start-and-destination time window ($T^O_v,T^D_v$) of each vehicle. 
We consider the number of vehicles to be in $\{50,100,150,200,250,400,800\}$.
For a given number of vehicles, we use two approaches to generate $O_v$ and $D_v$ for 
of each vehicle: a distributed model and a two-cluster model. 
In the distributed model,
most trips in the road network have origins and destinations that are near
major cities. From the 14 German cities with
more than 500,000 people, we drew (without replacement) two vertices. We then
chose an origin node $O_v$ randomly within a 50~km radius of the first vertex
and a destination node $D_v$ randomly within a 50~km radius of the second vertex. 
Since Germany is approximately 75\% urban/25\% rural, this procedure was performed for 75\%
of the vehicles considered. The remaining 25\% of the vehicles'
origin/destination pairs were drawn uniformly over the all possible nodes. 
Such an approach does not mirror reality exactly but generates a
collection of realistic trips for people largely traveling between major
destinations, while still accounting for trips with rural origins or rural
destinations (or both). In the two-cluster model, 
we created a list $L$ of distance values
of any two nodes in the network.
We randomly selected two hub nodes $H_1,H_2$ that satisfy 
the condition that the distance between $H_1$ and $H_2$ is greater than at least 70\% of the
elements in $L$. For each vehicle, we randomly selected a node within 
$r_0$ radius from the hub node $H_1$/$H_2$ as the origin/destination node of the vehicle,
where $r_0$ is a value that is greater than 20\% of the elements in $L$.

In both models, the origin time $T^O_v$ was selected uniformly throughout the
day, and the destination time $T^D_v$ was selected such that $T^D_v-T^O_v$ is twice 
the shortest-path traveling time from $O_v$ to $D_v$. Such a time window
means that a vehicle can have a waiting time equal to the time of travel. 
This waiting time can be utilized for determining a departure time in order to form platoons
with other vehicles. This waiting time calculation follows the intuition
that a vehicle will admit a longer waiting time if it is traveling a longer distance.
% The fuel and time costs traversing edges is (of course) proportional
% to the length of the edge.
The platooning savings rates 
are set as $\sigma^l=0.02$ for a leading vehicle and $\sigma^f=0.1$
for a trailing vehicle in a platoon.
\replace{}{These values are safely within the respective saving ranges of
1--8\% and 7--16\% reported in the literature~\citep{eritco2016}.}

A numerical instance G100-0 represents, for example,
the CVPP instance containing 100 vehicles with the origin/destination nodes
generated by using the distributed model and with a random seed set to 0.
Similarly, a numerical instance GC200-1 represents
the CVPP instance containing 200 vehicles with the origin/destination nodes
generated by using the two-cluster model and the random seed set to 1.
The algorithms and MILP models are implemented in Python 3.7, and the MILP
models are solved by using Gurobi 8.1.

\subsection{Computational Performance of Solving the Routing Problem}
\label{sec:num-routing}
Given the origin and destination nodes of a vehicle, we can determine 
a set of possible edges it can traverse in an optimal route for the CVPP
based on the physical saving rate of forming platoons. We observe
that for any vehicle $v$,
the length of an optimal route of $v$ in the CVPP
cannot be greater than $d(O_v,D_v)/(1-\sigma^f)$,
where $d(O_v,D_v)$ is the length of the shortest path between $O_v$ and $D_v$;
otherwise vehicle $v$ choosing its shortest path without platooning
uses less fuel. This observation is proved in
Proposition~\ref{prop:bound-on-path}. 
The bound also provides a condition to select a set of edges for a vehicle $v$
that possibly can be involved in an optimal route, namely,
\begin{equation}\label{eq:E_v}
\cE_v=\Set*{(i,j)\in\cE}{d(O_v,i)+d(i,j)+d(j,D_v)\le d(O_v,D_v)/(1-\sigma^f)}.
\end{equation}

Table~\ref{tab:routing} gives the computational performance of solving
\ref{opt:RDP}-(1) for 40 CVPP instances. 
All valid inequalities in \eqref{eqn:P_RDP_ij_full} that give a complete description
of $P^{\textrm{RDP}}_{i,j}$ are incorporated in the definition of the \ref{opt:RDP} being solved.
All the instances are considered solved when the relative optimality
gap is less than $10^{-4}$. 
The results show that with the valid inequalities, the routing problem
is effectively solved before the start of the branch-and-bound procedure because the 
number of branch-and-bound nodes is either 0 or 1 for all the instances.
Specifically, seven instances are solved during Gurobi's preprocessing, 
and the remaining instances are solved during root relaxation. 
For large instances (e.g., with 800 vehicles), however, loading the model
can take longer.
\begin{table}
\centering
\caption{Computational performance of solving \ref{opt:RDP}-(1)
for 40 CVPP instances. All instances are stopped when they reach a relative
optimality gap of $10^{-4}$.
CPU(s) is the computational time for solving \ref{opt:RDP}-(1),
not including the problem loading time and solution extraction time.
Fuel$_0$ is the total fuel consumed
when each vehicle takes its shortest path without platooning,
Nodes is the number of branch-and-bound nodes needed to solve the given
instance. DetourVs is the number of vehicles for which the \ref{opt:RDP} route is not the shortest path.
 }\label{tab:routing}
{\scriptsize
\begin{tabular}{cccccc|cccccc}
\hline\hline
Inst.	&	Fuel$_0$	&	Obj	&	CPU(s)	&	Nodes	&	DetourVs &
Inst.	&	Fuel$_0$	&	Obj	&	CPU(s)	&	Nodes	&	DetourVs
\\
\hline
G100-0	&	2217.7	&	2049.0	&	0.25	&	1	&	14	&	GC100-0	&	2684.6	&	2438.5	&	0.51	&	1	&	6	\\
G100-1	&	2164.1	&	2003.3	&	0.29	&	0	&	12	&	GC100-1	&	3859.0	&	3503.1	&	0.79	&	1	&	1	\\
G100-2	&	2384.4	&	2199.9	&	0.26	&	1	&	7	&	GC100-2	&	2699.1	&	2458.8	&	0.47	&	1	&	3	\\
G100-3	&	2191.2	&	2022.3	&	0.32	&	1	&	5	&	GC100-3	&	3695.1	&	3355.3	&	0.73	&	0	&	1	\\
G100-4	&	2384.3	&	2201.1	&	0.30	&	1	&	9	&	GC100-4	&	2903.8	&	2638.6	&	0.52	&	1	&	0	\\
G200-0	&	4692.0	&	4281.6	&	0.74	&	0	&	12	&	GC200-0	&	5461.0	&	4942.4	&	1.36	&	1	&	2	\\
G200-1	&	4603.4	&	4206.8	&	0.65	&	1	&	14	&	GC200-1	&	7865.4	&	7110.0	&	2.42	&	1	&	2	\\
G200-2	&	4693.6	&	4290.7	&	0.73	&	1	&	8	&	GC200-2	&	5392.5	&	4885.7	&	1.43	&	1	&	3	\\
G200-3	&	4448.0	&	4062.7	&	0.64	&	1	&	4	&	GC200-3	&	7303.1	&	6604.5	&	2.04	&	0	&	1	\\
G200-4	&	4730.0	&	4318.2	&	0.72	&	1	&	4	&	GC200-4	&	5578.4	&	5047.2	&	1.33	&	1	&	3	\\
G400-0	&	9067.4	&	8232.2	&	1.85	&	1	&	6	&	GC400-0	&	10870.5	&	9812.3	&	3.87	&	1	&	3	\\
G400-1	&	9100.1	&	8260.7	&	2.09	&	1	&	6	&	GC400-1	&	15572.0	&	14046.7	&	6.87	&	1	&	5	\\
G400-2	&	9804.6	&	8898.3	&	1.93	&	1	&	15	&	GC400-2	&	10793.9	&	9749.9	&	4.41	&	0	&	4	\\
G400-3	&	9240.1	&	8385.7	&	2.00	&	0	&	3	&	GC400-3	&	14675.0	&	13240.5	&	6.57	&	1	&	1	\\
G400-4	&	9509.2	&	8627.1	&	2.02	&	1	&	9	&	GC400-4	&	11333.9	&	10229.1	&	4.10	&	0	&	7	\\
G800-0	&	19111.3	&	17278.9	&	5.57	&	1	&	3	&	GC800-0	&	21657.4	&	19522.2	&	11.20	&	1	&	2	\\
G800-1	&	18608.3	&	16824.1	&	6.24	&	1	&	9	&	GC800-1	&	31044.2	&	27972.8	&	18.22	&	1	&	6	\\
G800-2	&	19142.6	&	17308.9	&	6.40	&	1	&	1	&	GC800-2	&	21416.2	&	19310.2	&	10.88	&	1	&	1	\\
G800-3	&	18708.6	&	16913.9	&	6.04	&	1	&	2	&	GC800-3	&	29690.0	&	26754.6	&	14.44	&	1	&	7	\\
G800-4	&	19257.2	&	17408.9	&	6.16	&	1	&	2	&	GC800-4	&	23034.1	&	20761.3	&	11.29	&	1	&	9	\\
\hline
\end{tabular}
}
\end{table}

\subsection{Computational Performance of Solving the Scheduling Problem}
\label{sec:num-sched}
Once the \ref{opt:RDP} routes are obtained, we solve the \ref{opt:SPF}
to decide an optimal schedule of each vehicle for the given \ref{opt:RDP} routes.
Before solving the \ref{opt:SPF}, we can apply the edge-contraction
procedure to reduce the problem size. 
Table~\ref{tab:edge-contraction} of Section~\ref{app:add-num-res}
compares the problem size
(measured by the number of variables and constraints
and the loading time of the problem)
before and after applying the edge-contraction procedure.
Indeed, we see that the problem size is reduced significantly after applying the 
edge-contraction procedure:
the number of variables decreases 20.6\%--74.7\%, 
the number of constraints decreases 20.7\%--85.6\%,
and the amount of loading time decreases 66.7\%--99.6\%.
The results in the remainder of this subsection are 
after the edge-contraction preprocess.

We now show the power of adding 
(i) the disjunctive inequalities \eqref{eqn:disj-valid-ineq} based on Theorem~\ref{thm:disj-cut}
and (ii) the platooning inequalities \eqref{eqn:claim1} based on Theorem~\ref{thm:facet-star-partition}(a).
Note that the platooning inequalities can be added directly to the \ref{opt:SPF}
while the disjunctive inequalities are generated to cut a target fractional solution,
but only  
of the root-relaxation linear program. 
We do not generate
disjunctive inequalities at branch-and-bound nodes.
First, we study whether the added cuts improve
the root-relaxation bound. Second, we highlight that adding cuts can lead to a better
objective value or a better MILP gap in a certain amount of time at the branch-and-bound
process. 

Table~\ref{tab:LP_bd} 
shows the differences in the root-relaxation bound after
adding disjunctive cuts and platooning cuts for 40 \ref{opt:RDP} solutions described in
the preceding subsections.
The table shows that both the disjunctive cuts and platooning cuts
improve the linear relaxation bound. 
For most instances the platooning cuts are more powerful than the disjunctive cuts,
in the sense that adding platooning cuts leads to a more significant improvement 
(3.51\% on average) 
on the linear relaxation bound than just adding disjunctive cuts 
(0.66\% on average).  
The different levels of improvement in the relaxation bound with respect
to the two families of inequalities may be because there are more 
platooning cuts than disjunctive cuts. 
The platooning cuts \eqref{eqn:claim1} are easier to add because they have a simple
form and they can be added directly to the original problem.
Adding the disjunctive cuts \eqref{eqn:disj-valid-ineq}  
requires solving the linear program \eqref{opt:disj-cut-gen}, and
generating these disjunctive cuts can require between 0.1 and 13 
seconds depending on the problem size. 
% \jlnote{How much time does it take to
% decrease the objective by 1\%? without the cuts? If it's less than 33 seconds,
% then it doesn't make sense to use the cuts, right?}
% \flnote{I would rather not to think in that way. The CPU time (for cuts addition part) in Table~\ref{tab:mipgap}
% takes into account the cut generation time. I just give a fixed amount of time (10 mins, 30 mins, 60 mins)
% for both approaches (no-cuts approach vs. adding-cuts approach).
% For the adding-cuts approach, if the solver spends more time for cuts generation, 
% it has less time for branch-and-bound. I only care about what is the gap it finally gives me
% when time is due.}
% \jlnote{That's a good response.}

Table~\ref{tab:mipgap} shows the differences in the objective value and the relative MIP gap
before and after adding the cuts. 
We use SP-plain and SP-cuts
to denote the scheduling problem without adding the cuts and the scheduling problem
after adding the cuts (including the disjunctive cuts and platooning cuts), respectively.
We first compare SP-plain and SP-cuts at the one-hour time limit.
Table~\ref{tab:mipgap} shows that SP-plain and SP-cuts both can solve 20 instances to optimality,
but the 20 instances they can solve are not exactly the same.
For problem GC100-0, SP-cuts solves it in 2084 seconds but SP-plain
does not solve it to optimality before the one-hour time limit. For problem G150-3, SP-plain
solves it in 3318 seconds but SP-cuts cannot solve it to optimality before the one-hour
time limit. Both SP-plain and SP-cuts perform well in solving instances
of G50, G100, and GC50: all but one of these instances are solved to optimality in within
25 seconds.
% except for GC50-1 in which 
% SP-plain solves in 1142 seconds while SP-cuts solves 
% in 701 seconds.
For the remaining 20 instances that cannot be solved by either of the two approaches, 
SP-cuts finds a better objective value in 14 instances, 
SP-plain finds a better objective value in 4 instances,
and the two approaches find almost the same objective value in 2 instances. 
The average (over the unsolved 20 instances) optimality gap reduction 
(gap of SP-plain $-$ gap of SP-cuts) 
due to the added cuts is 1.95\% at the one-hour limit.  

We next compare the two approaches at 5 minutes, 10 minutes, and 30 minutes
of the problem-solving process, respectively.
At the 5-minute time limit, SP-cuts finds a better objective value in 14 instances
(9 are significantly better),
and SP-plain finds a better objective value in 7 instances
(1 of them is significantly better).
At the 10-minute time limit, SP-cuts finds a better objective value in 13 instances
(8 are significantly better),
and SP-plain finds a better (but not significantly better) objective value in 7 instances.
At the 30-minute time limit, SP-cuts finds a better objective value in 11 instances
(6 are significantly better),
and SP-plain finds a better objective value in 8 instances
(1 is significantly better).
In terms of optimality gap reduction due to added cuts, 
the average gap reduction is 2.88\%, 2.80\%, and 2.53\%
at the 5-minute, 10-minute, and 30-minute time limits, respectively,
with the largest gap reduction being 11.75\%.

Table~\ref{tab:mipgap-final} shows that for numerical instances that 
cannot be solved to optimality by either SP-plain or 
SP-cuts, the number of branch-and-bound nodes
processed by SP-cuts is 59.5\% of the  
nodes processed by SP-plain on average.
In particular, for instances GC200-1 and GC200-2,
SP-cuts is not able to process any branch-and-bound nodes.
This reveals a drawback of adding too many valid inequalities to the problem:
it can increase the time required to solve the linear relaxation at each node, possibly
reducing the total number of nodes explored.
\replace{}{More specifically, having too many valid inequalities 
increases the size of linear program relaxation at each branch-and-bound node 
and makes solving the node relaxation more time consuming. Most of these
valid inequalities are not binding in later branch-and-bound iterations,
but their presence can increase the problems' size in memory.
This drawback has been recognized in the literature 
\citep{botton2013_benders-decomp-for-hop-constrained-surv-netwk-des}.}
As a consequence, in some instances, 
SP-cuts can give a worse objective value than
SP-plain gives.

\subsection{Computational Performance of the RSHM} \label{sec:num-RSHM}

\begin{table}
\centering
\caption{\scriptsize
Results from applying the RSHM (with threshold $I = 3$) to 15 problem instances. 
Fuel Cost is 
the best total fuel cost identified by the RSHM and
Saving Rate is 
the savings
compared with the total fuel cost of shortest-path driving without platooning.
Iters, Termination, and CPU represent the number of iterations 
completed by the algorithm, the termination status upon time limit, and the actual
computational time in solving an instance, respectively. RelDev represents the 
value $\sigma/\mu$, where $\mu$ and $\sigma$ are the mean and variance of
the sequence of total fuel costs found by the algorithm in iterations, respectively.
 }\label{tab:RSHM}
{\scriptsize
\begin{tabular}{ccccccc}
\hline\hline
Instance	&	Fuel Cost	&	Saving Rate(\%)	&	RelDev(\%)	&	Iters	&	Termination	&	CPU(s)	\\
\hline
G50-0	&	1241.02	&	2.35	&	0.13	&	31	&	Y	&	71	\\
G50-1	&	1130.42	&	2.51	&	0.23	&	16	&	Y	&	25	\\
G50-2	&	1116.49	&	1.76	&	0.14	&	12	&	Y	&	20	\\
G50-3	&	1039.94	&	2.20	&	0.28	&	12	&	Y	&	18	\\
G50-4	&	1249.44	&	2.91	&	0.25	&	8	&	Y	&	20	\\
G100-0	&	2146.82	&	3.19	&	0.20	&	35	&	Y	&	376	\\
G100-1	&	2084.34	&	3.68	&	0.14	&	108	&	Y	&	1482	\\
G100-2	&	2285.91	&	4.13	&	0.15	&	82	&	Y	&	1043	\\
G100-3	&	2124.65	&	3.04	&	0.11	&	198	&	N	&	3621	\\
G100-4	&	2306.90	&	3.24	&	0.14	&	161	&	N	&	3600	\\
G150-0	&	3292.15	&	4.12	&	0.15	&	17	&	N	&	4062	\\
G150-1	&	3166.87	&	4.32	&	0.16	&	31	&	Y	&	1402	\\
G150-2	&	3294.10	&	4.43	&	0.15	&	20	&	N	&	3621	\\
G150-3	&	3328.20	&	4.10	&	0.11	&	8	&	N	&	3630	\\
G150-4	&	3401.21	&	4.35	&	0.27	&	9	&	N	&	3637	\\
\hline
\end{tabular}
}
\end{table}

We demonstrate the performance of the RSHM on 15 CVPP instances
corresponding to 50-, 100- and 150-vehicle systems. 
We set $I=3$ in our experiments,
which means the algorithm terminates after identifying
a specific route assignment $\cup_{v\in\cV}\cR_v$
$3$ times. In the computational setting, the valid inequalities  
identified in Section~\ref{sec:sol-route} are added to solve each \ref{opt:RDP}
and \ref{opt:SPF} instance
in every RSHM iteration.
We allow 10 minutes
for each \ref{opt:RDP} and \ref{opt:SPF} instance and limit the RSHM to
one hour in all experiments.
Based on the best fuel cost identified by the algorithm, 
the total fuel savings ranges from 2\% to 4.5\%
compared with the free-driving case (every vehicle drives its shortest path without platooning),
and the savings are more in systems with more vehicles.
As expected, the RSHM identifies different routes in different iterations,
and different route assignments lead to different total fuel costs after imposing the time constraints.
The results are summarized in Table~\ref{tab:RSHM} where 
the total fuel costs identified at different iterations are plotted
for \replace{six numerical instances}{an example problem instance}. 
Although there exists a deviation in total fuel costs
across iterations, the deviation (RelDev in Table~\ref{tab:RSHM}) is one magnitude
smaller compared with the aggregated fuel saving rate in all 15 instances. 

\begin{figure}[b!]
\centering
\includegraphics[trim={10 0 45 10}, clip, width=0.47\linewidth]{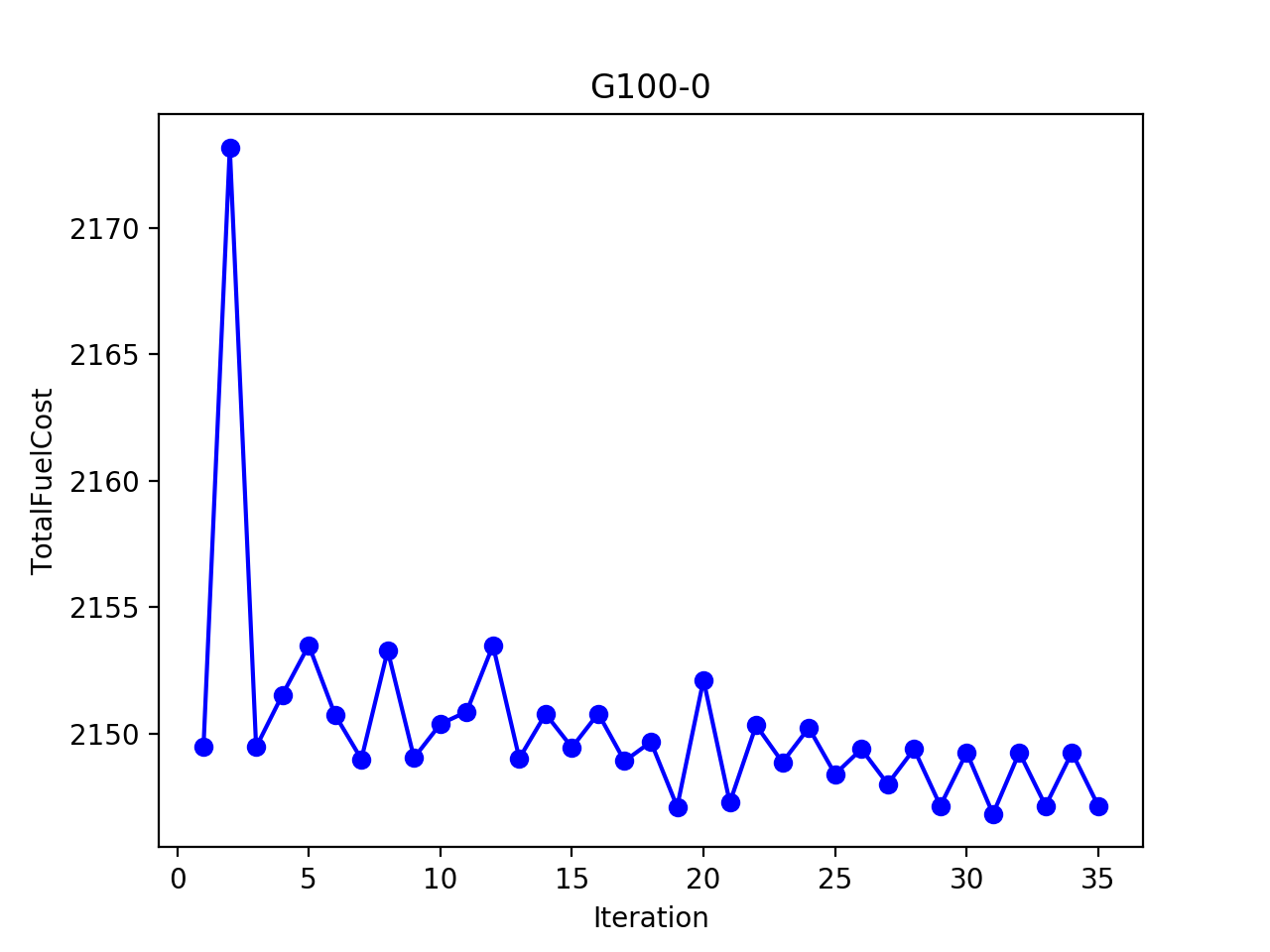} 
\hfill
\includegraphics[trim={10 0 45 10}, clip, width=0.47\linewidth]{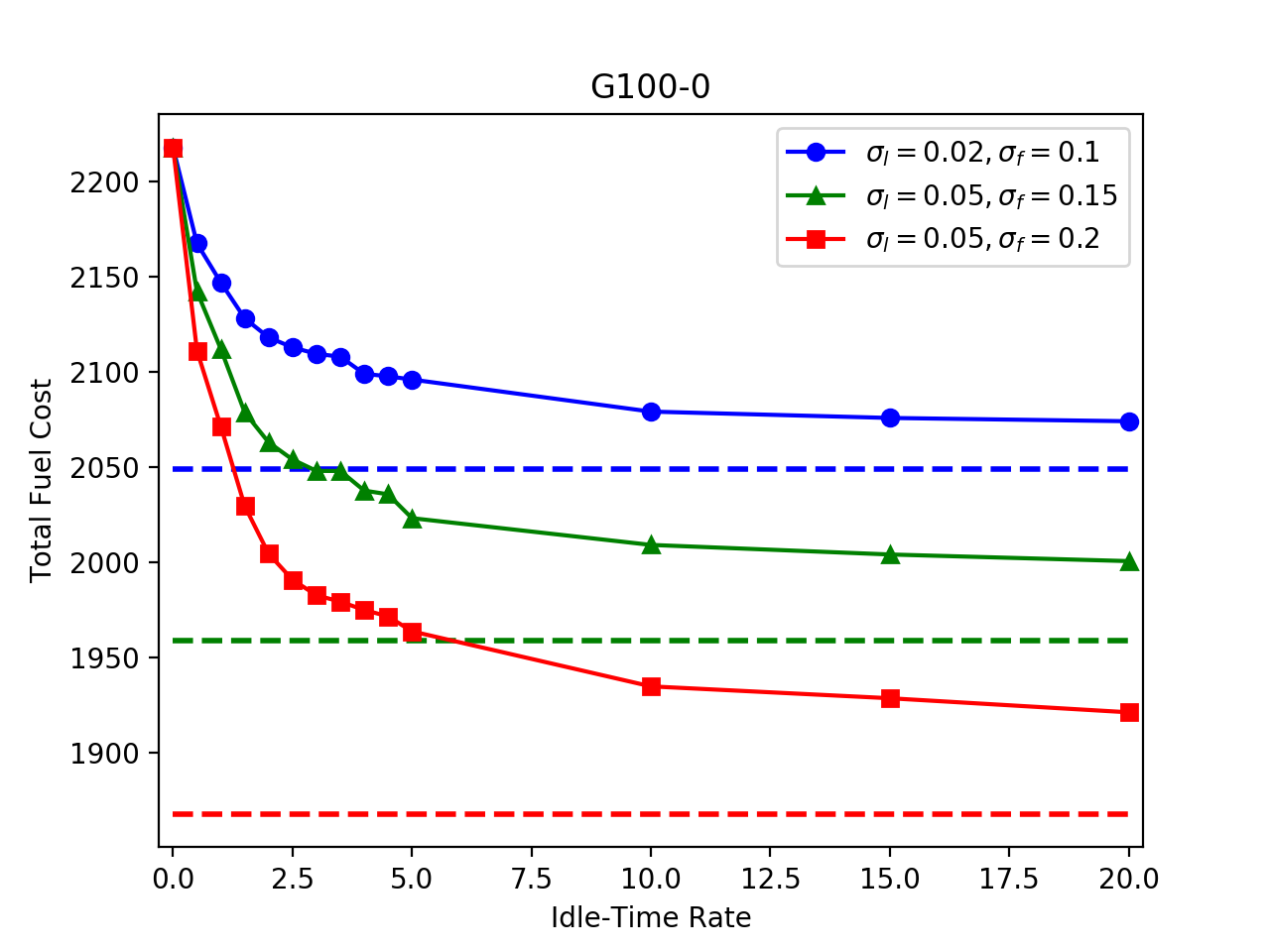} 
\caption{left) True total fuel cost found at every iteration in the RSHM for an example numerical instance.\\
right) Total fuel cost versus time-flexibility rate under three platooning savings scenarios.
}
\label{fig:RSHM-iters-plot_new}
\end{figure}

% \begin{figure}
% \centering
% \subfigure{ \includegraphics[scale=0.39]{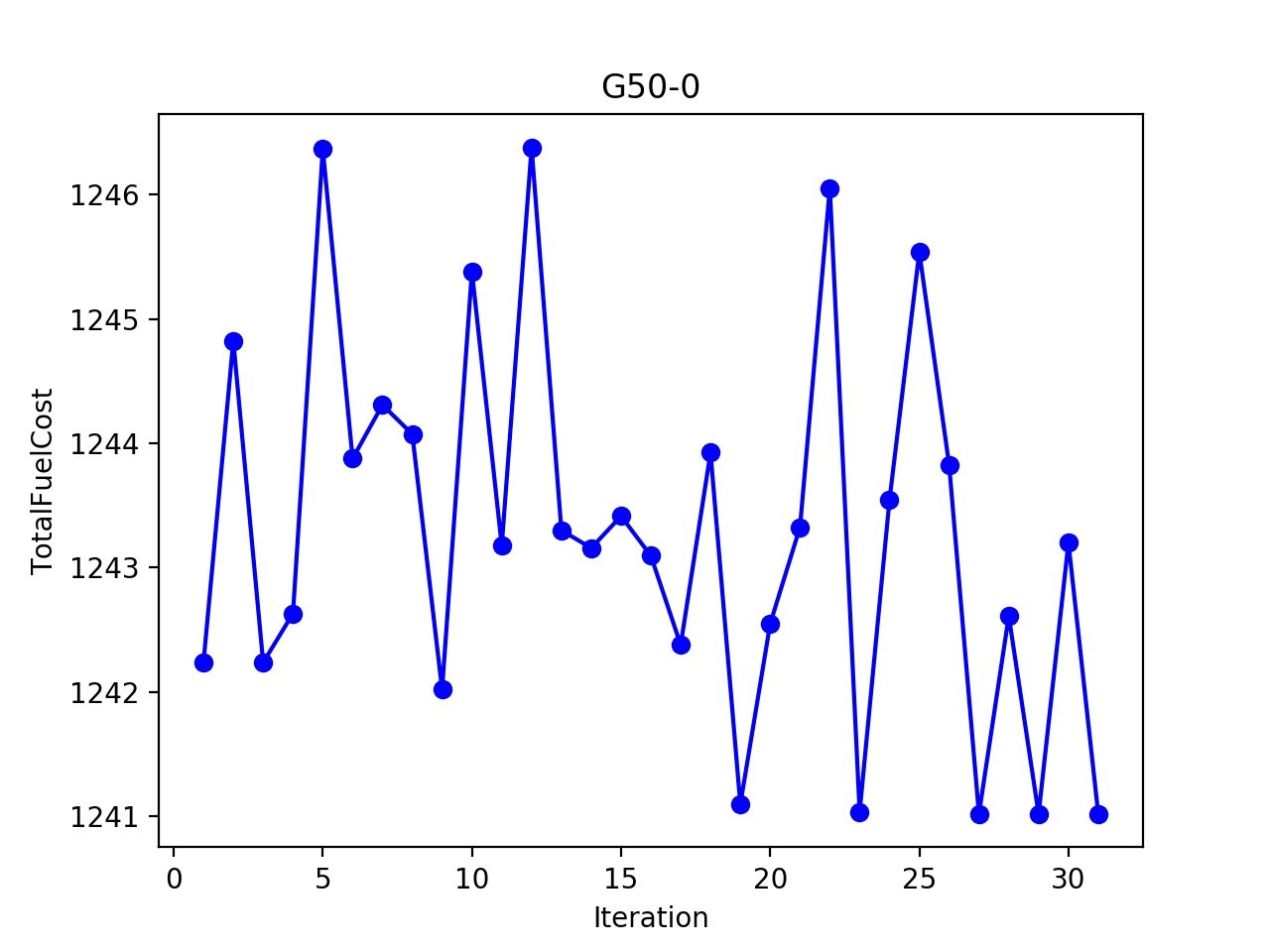} }
% \subfigure{ \includegraphics[scale=0.39]{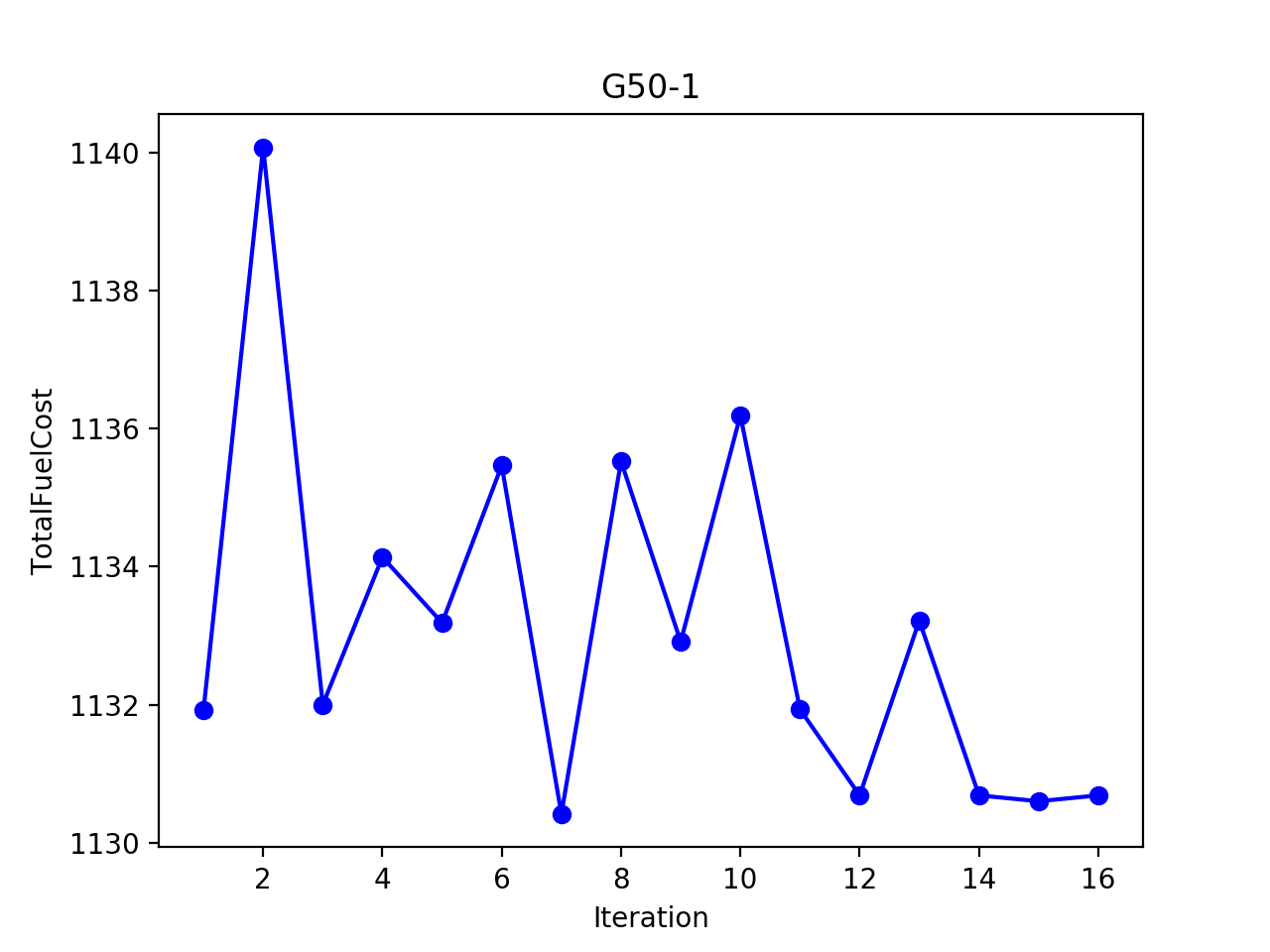} }
% \subfigure{ \includegraphics[scale=0.39]{G100-0-iters-plot.png} }
% \subfigure{ \includegraphics[scale=0.39]{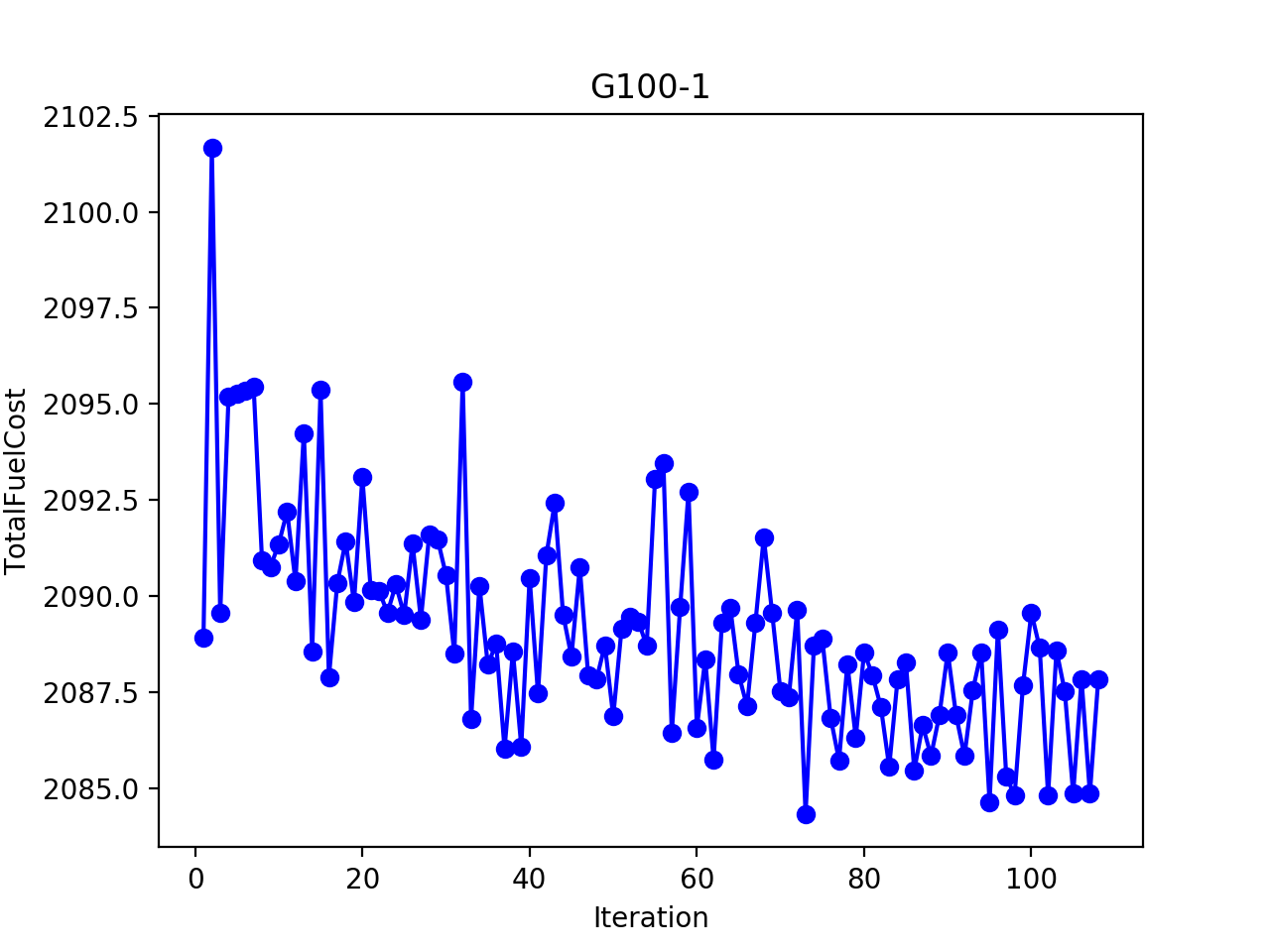} }
% \subfigure{ \includegraphics[scale=0.39]{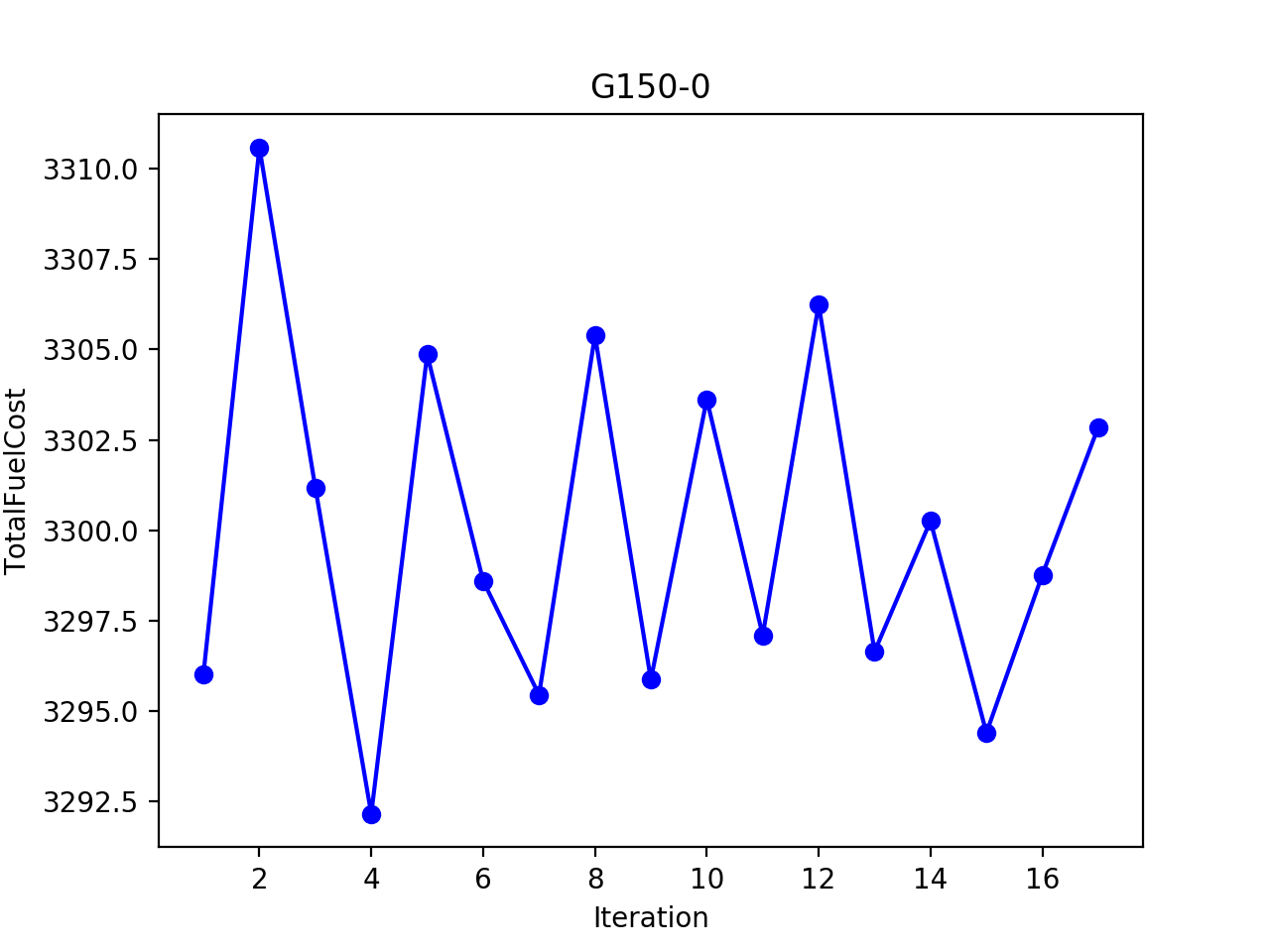} }
% \subfigure{ \includegraphics[scale=0.39]{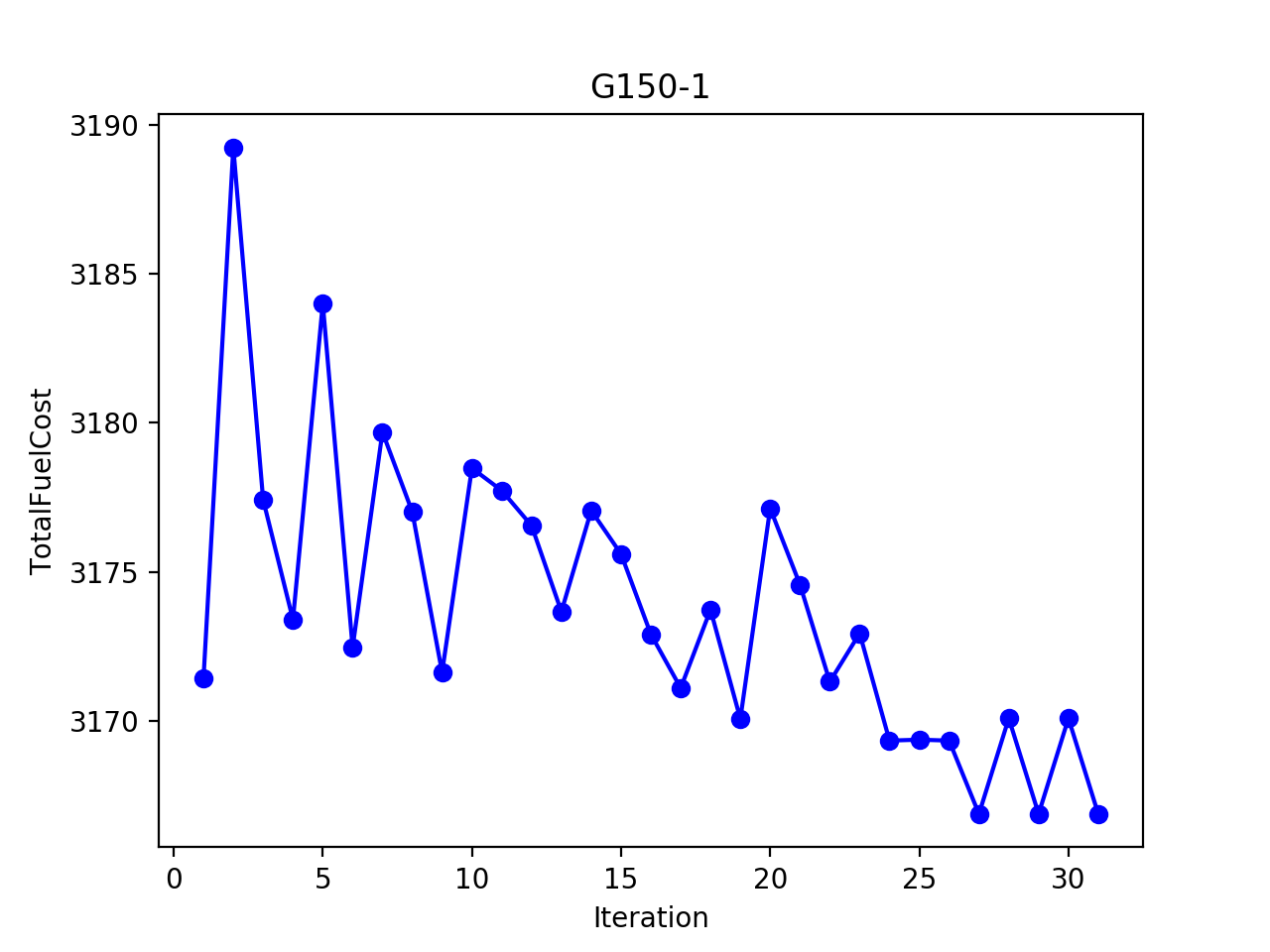} }
% \caption{True total fuel cost found at every iteration in the RSHM for 6 numerical instances.}
% \label{fig:RSHM-iters-plot}
% \end{figure}

Figure~\ref{fig:RSHM-iters-plot_new} shows a zigzag pattern in the plot\replace{s}{} of the total fuel cost
versus iterations. This pattern reveals the essential idea of the algorithm: use an optimistic
estimation of fuel cost at edges that are not traveled by a vehicle and use 
realistic mean fuel cost for edges that are traveled by a vehicle in the previous main iteration.
Adjusting fuel cost in this manner encourages vehicles to try new routes that are potentially 
beneficial. But when the fuel cost based on new routes is realized (after solving the \ref{opt:SPF}),
the new routes may lead to a higher total fuel cost than before 
(from a valley to a peak in the plot). After some iterations, the algorithm can identify 
some competitive route assignments and realize the total fuel costs corresponding to
these assignments. The overall trend, then, is that the total fuel costs are decreasing
in a zigzag pattern as the number of iterations increases. 
\replace{(e.g., G50-1, G100-0, G100-1, G150-1)}{ See further examples in Section~\ref{app:add-num-res}.}

Table~\ref{tab:joint-MILP} also compares the performance of the RSHM with solving
a joint MILP formulation of the routing and scheduling problems as in 
\cite{luo2018-veh-platn-mult-speeds}. The comparison shows that the RSHM is much more efficient.
Specifically, we find that joint problem instances of 100- and 150-vehicle systems cannot be
loaded by the solver in an hour. For numerical instances of 50 vehicles, the solver takes about
one minute to load the problem and leaves a considerable optimality gap after solving the instances for four hours.
The best fuel cost found in four hours by directly solving the joint MILP is also less competitive 
than the RSHM.

% \begin{figure}
% \centering
% \captionsetup{justification=centering}
% \subfigure{ \includegraphics[scale=0.39]{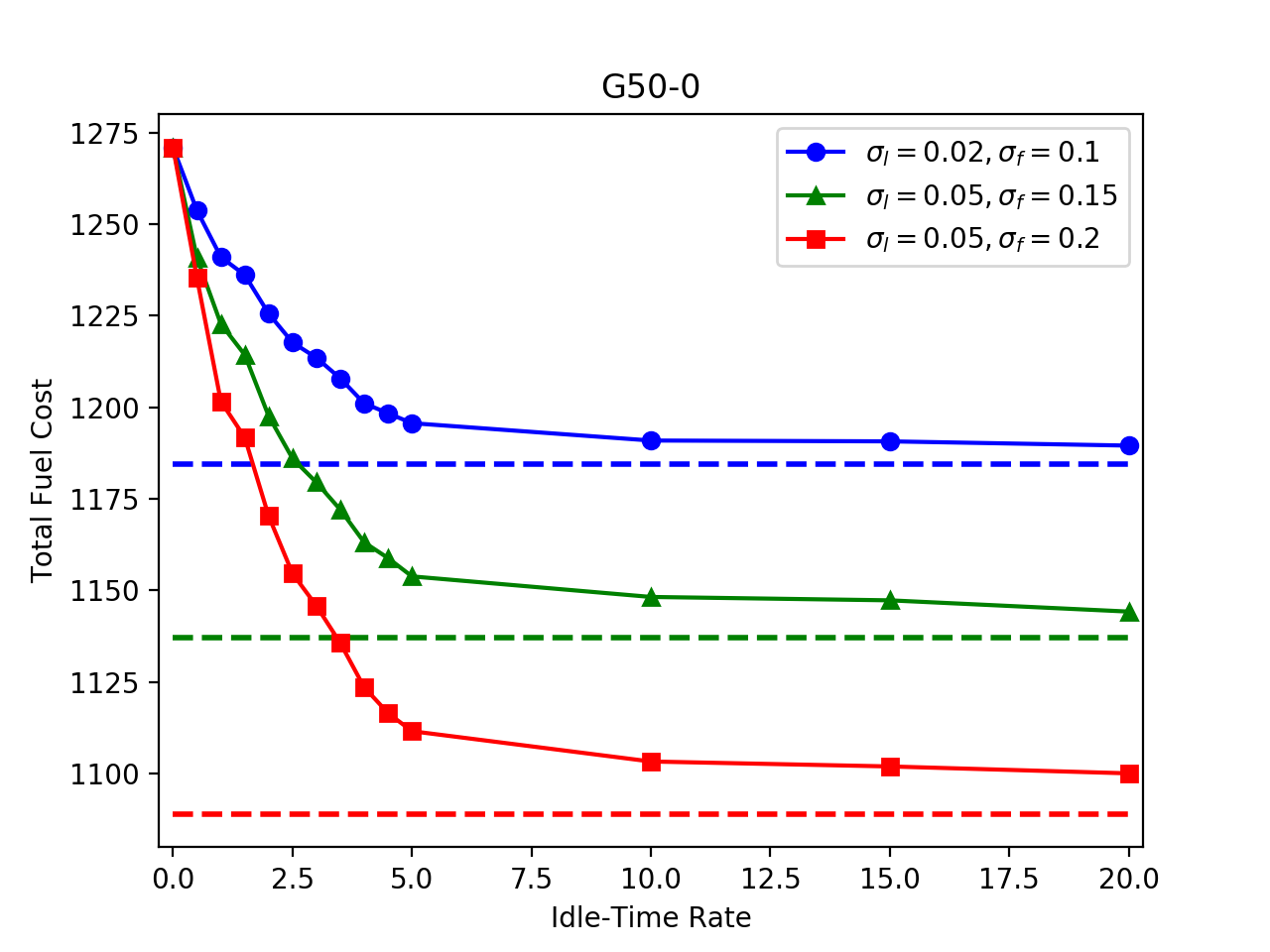} }
% \subfigure{ \includegraphics[scale=0.39]{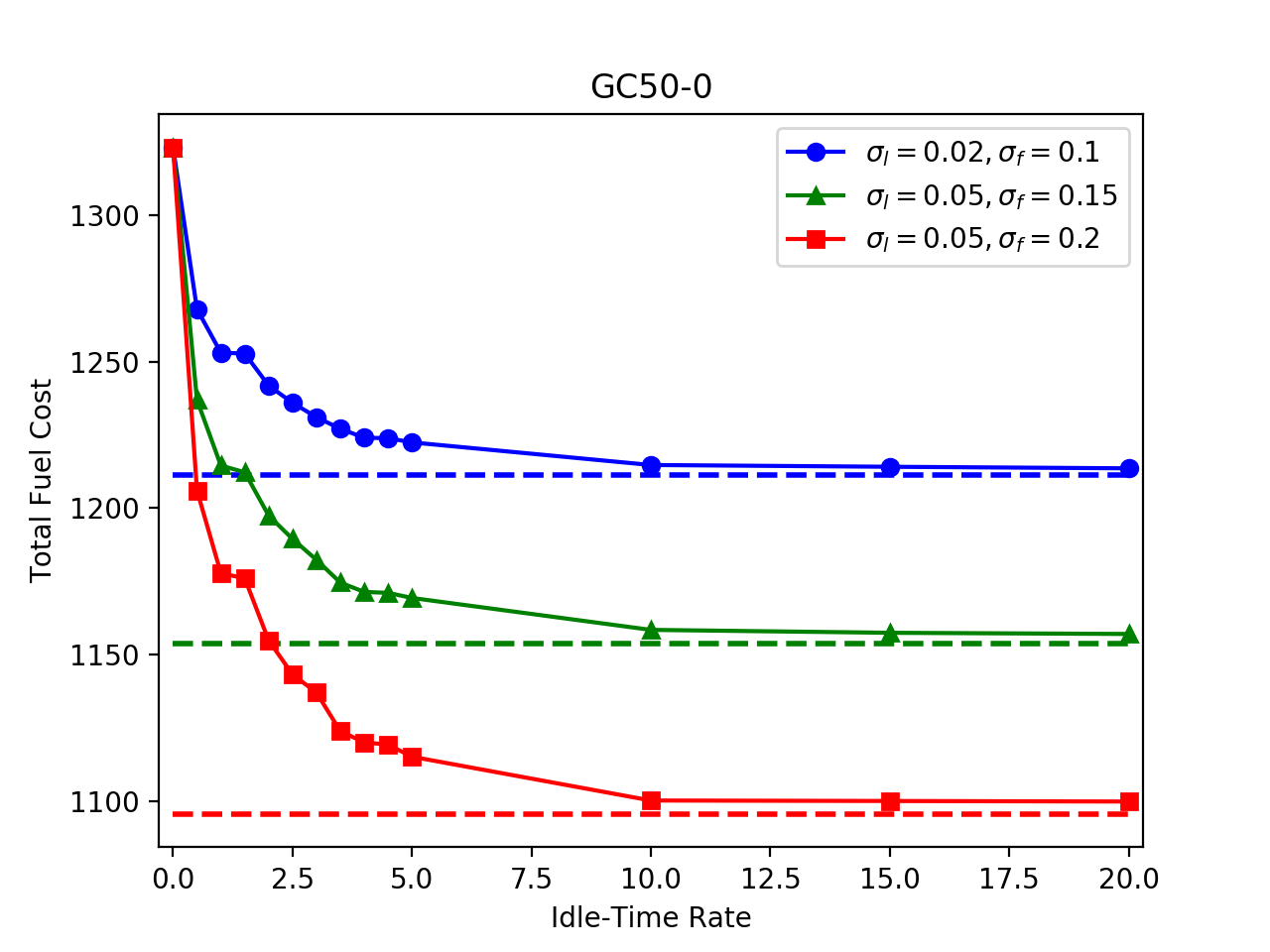} }
% \subfigure{ \includegraphics[scale=0.39]{G100-0.png} }
% \subfigure{ \includegraphics[scale=0.39]{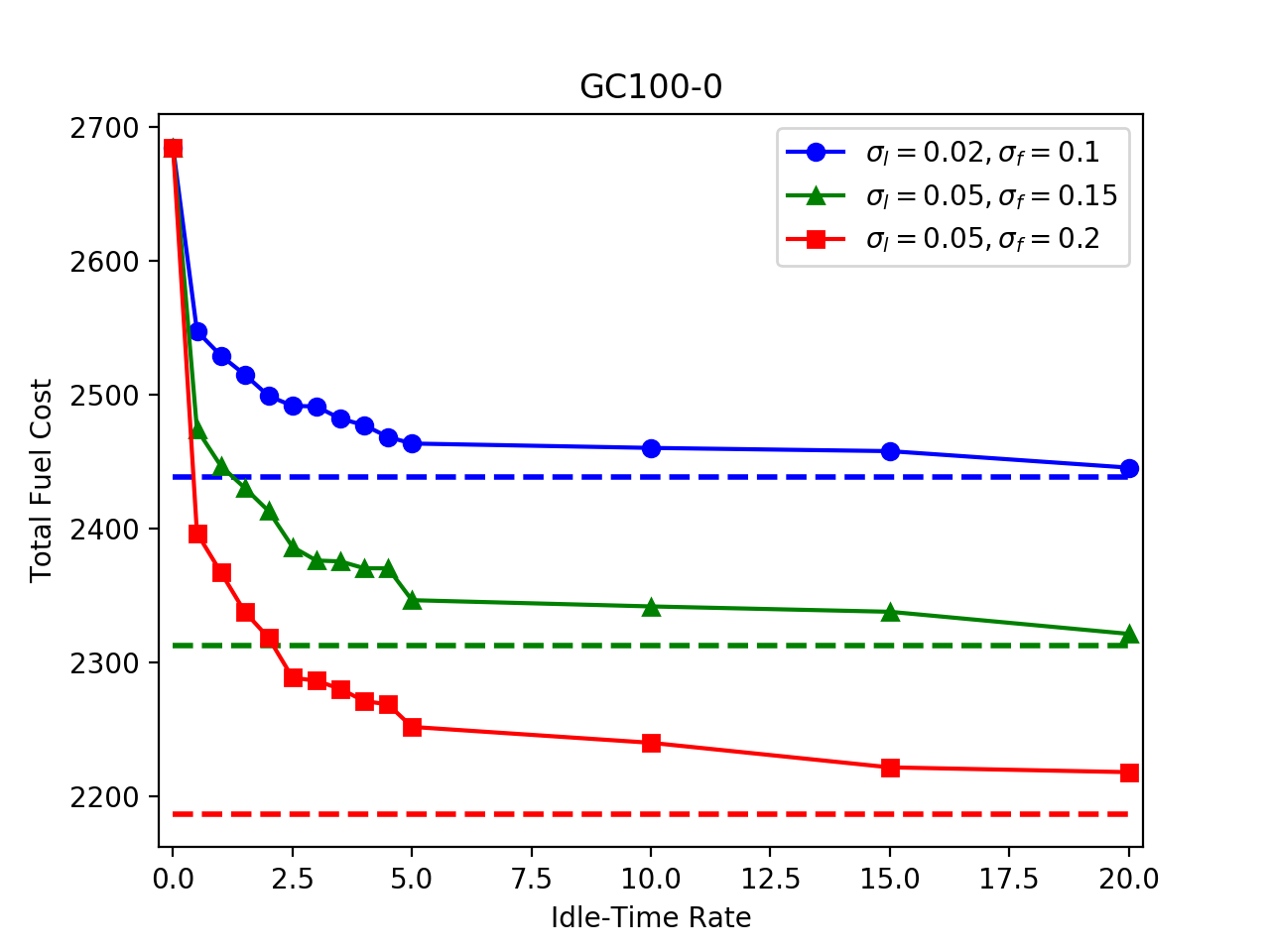} }
% \caption{Total fuel cost versus time-flexibility rate for 4 CVPP instances and three platooning savings scenarios.
% Dashed horizontal lines represent the fuel costs without time constraints at different settings of the saving parameters.}
% \label{fig:fuel-vs-idleTime}
% \end{figure}

We investigated the total fuel cost as a function of the time-flexibility rate of each vehicle.
The time-flexibility rate is defined as $r_v=\frac{T^D_v-T^O_v}{T^{sp}_v}-1$,
where $T^{sp}_v$ is the time cost of traversing the shortest path of vehicle $v$.
In all preceding sections, the time-flexibility rate $r_v$ is set to be $r=1$ for 
all vehicles. The right of Figure~\ref{fig:RSHM-iters-plot_new} and Figure~\ref{fig:fuel-vs-idleTime} show how the RSHM total fuel cost
changes with the time-flexibility rate $r$ under three different settings of the saving
parameters $\sigma_l$, and $\sigma_f$. 
The general trend seen from these figures
is that the fuel cost decreases as $r$ increases, because the vehicles allow
more flexibility to facilitate platoon formation.
As $r\to\infty$, the fuel cost approaches the lower bound that equals
the fuel cost in the absence of time constraints. The fuel cost decreases faster
in the range $r\in [0,5]$ compared with that in the range $r\in [10,20]$.  
These results indicate that a relatively small amount of waiting time 
can realize a majority of platooning opportunities.
% The results also reveal an advantage of implementing the coordinated vehicle platooning in practice,
% where it is acceptable for vehicles to wait for a relatively short period of time. 
Comparing the results for the instances generated by using the distributed model (G50-0, G100-0)
versus the instances generated by using the two-cluster model (GC50-0, GC100-0),
we observe that the GC-instances have a smaller gap between the fuel cost 
at $r=20$ and the lower-bound limit compared with the G-instances.
The reason is that in the GC-instances, the origins and destinations of vehicles 
are more concentrated and hence it is easier to coordinate their schedules in order 
to achieve the lower bound of the fuel cost
%than the G-instances
when there is enough time flexibility. Similarly,
comparing the results for the 50-vehicle instances
(G50-0, GC50-0) with the 100-vehicle instances (G100-0, GC100-0), 
we see that the schedules of 50-vehicle systems are easier to coordinate
in order to approach the fuel cost limit
%than the 100-vehicle systems
when there is enough time flexibility.

\section{Concluding Remarks}
% \jlnote{Fengqiao, maybe remove this first sentence, or clarify what is meant.
% Say that we were considering only our past work, but not the work of others. In
% fact, we think of ours and \cite{Abdolmaleki2019} as being very complementary.}
% The RSHM established in this paper shows \replace{a remarkable}{an} advantage
% of identifying a high-quality solution of the coordinated vehicle platooning problem
% in a limited amount of time \replace{}{compared with the joint-MILP formulation approach
% provided in \cite{luo2018-veh-platn-mult-speeds}. }
\replace{}{We note that the time-discretization 
approximation and time-extended network approach proposed by \cite{Abdolmaleki2019} 
can be incorporated in the RSHM framework to yield an alternative way of
formulating the scheduling problem. We believe such an approach will admit 
more easily solved problem instances (assuming a mild regularity condition on the routes).
This is our next step of research.} 
The valid inequalities developed in this work can
further speed up the computational time or reduce the optimality gap for routing
and scheduling problems. Although the RSHM is developed for the deterministic
CVPP, it can be directly applied
to a generalized framework where the impact of traffic on the traveling time is incorporated.
As an extension of this work, a robust optimization framework can be 
established to incorporate the impact of traffic, for example. 

%One way of incorporating the 
%impact of traffic is to establish a framework of optimization under uncertainty.
\section*{Acknowledgments}
This material is based upon work supported by the
U.S.\ Department of Energy, Office of Science, under contract number DE-AC02-06CH11357.
\replace{}{We are grateful for the comments from three anonymous reviewers that
greatly improved an early version of this manuscript.}

\def\bibfont{\scriptsize}
\bibliographystyle{informs2014} 
\bibliography{../../../bibs/platooning2018}

\vfil
\scriptsize
\framebox{\parbox{\textwidth}{
The submitted manuscript has been created by UChicago Argonne, LLC, Operator of Argonne National Laboratory (“Argonne”).
Argonne, a U.S. Department of Energy Office of Science laboratory, is operated under Contract No. DE-AC02-06CH11357.
The U.S. Government retains for itself, and others acting on its behalf, a paid-up nonexclusive, irrevocable worldwide
license in said article to reproduce, prepare derivative works, distribute copies to the public, and perform publicly
and display publicly, by or on behalf of the Government.  The Department of Energy will provide public access to these
results of federally sponsored research in accordance with the DOE Public Access Plan.
\url{http://energy.gov/downloads/doe-public-access-plan}
}}
\normalsize

%% Here starts the e-companion (EC)
%%%%%%%%%%%%%%%%%%%%%%%%%%%%%%%%%%%%%%%%%%%%%%%%%%%%%%%%%%
% \ECSwitch
% \ECDisclaimer
%%%%%%%%%%%%%%%%%%%%%%%%%%%%%%%%%%%%%%%%%%%%%%%%%%%%%%%%%%
%%% Main head for the e-companion
% \ECHead{Supplemental Material}
\appendix
\section{Supplement of Section~\ref{sec:edge-contraction}\label{sec:edge_contraction_proof}}
\begin{proof}{Proof of Proposition~\ref{prop:edge-contraction}}
\replace{}{
Let $\cV^{\prime}$ be the set of
vehicles sharing the path $L$.
Suppose in an optimal schedule
and platooning decision the set $S_1$ of platoons
on edge $(n_i,n_{i+1})$ is different from the set $S_2$ of platoons
on edge $(n_j,n_{j+1})$. Notice that the
schedule of all vehicles in $\cV^{\prime}$ ensure the platooning 
decisions $S_1$ and $S_2$ on every edge of $L$ are valid.
Furthermore, it contradicts to the optimality condition
if imposing the platoon decision
induced by $S_1$ on edge $(n_j,n_{j+1})$ incurs
smaller (greater) fuel cost on $(n_j,n_{j+1})$ than $S_2$. 
It follows that imposing $S_1$ and $S_2$ on $(n_i,n_{i+1})$
lead to the same fuel cost, and it also applies to $(n_j,n_{j+1})$.
Since $i$ and $j$ are arbitrary, the result is shown.}
% \hfill$\blacksquare$
\end{proof}

\section{Additional Numerical Results}
\label{app:add-num-res}
\begin{table}
\centering
\caption{Model size comparison before and after edge contraction for the \ref{opt:SPF}.
The columns Vars, Constrs and LoadT represent the number of variables, constraints
and loading time of the model, respectively.}\label{tab:edge-contraction}
{\scriptsize
\begin{tabular}{c|ccc|ccc}
\hline\hline
  & \multicolumn{3}{c|}{Before edge contraction}  &  \multicolumn{3}{c}{After edge contraction} \\
\hline
Instance	&	Vars	&	Constrs	&	LoadT(s)	&	Vars 	&	Constrs	&	LoadT(s)	\\
\hline
G100-0	&	1.35E+04	&	2.81E+04	&	4.8	&	3.80E+03	&	7.98E+03	&	0.5	\\
G100-1	&	1.18E+04	&	2.38E+04	&	3.3	&	3.32E+03	&	6.71E+03	&	0.3	\\
G100-2	&	1.54E+04	&	3.38E+04	&	6.9	&	3.55E+03	&	7.46E+03	&	0.4	\\
G100-3	&	1.27E+04	&	2.61E+04	&	4.4	&	3.32E+03	&	6.73E+03	&	0.4	\\
G100-4	&	1.42E+04	&	2.96E+04	&	5.0	&	4.08E+03	&	8.68E+03	&	0.5	\\
G200-0	&	4.28E+04	&	1.12E+05	&	238.8	&	1.42E+04	&	3.66E+04	&	7.0	\\
G200-1	&	4.26E+04	&	1.12E+05	&	159.8	&	1.34E+04	&	3.46E+04	&	6.2	\\
G200-2	&	4.12E+04	&	1.06E+05	&	209.1	&	1.40E+04	&	3.60E+04	&	6.7	\\
G200-3	&	3.79E+04	&	9.48E+04	&	64.3	&	1.19E+04	&	2.95E+04	&	4.9	\\
G200-4	&	4.51E+04	&	1.19E+05	&	127.1	&	1.58E+04	&	4.21E+04	&	8.9	\\
G400-0	&	1.31E+05	&	3.86E+05	&	3095.4	&	5.26E+04	&	1.54E+05	&	344.2	\\
G400-1	&	1.32E+05	&	3.84E+05	&	3781.0	&	5.40E+04	&	1.57E+05	&	416.4	\\
G400-2	&	1.57E+05	&	4.71E+05	&	5195.4	&	6.65E+04	&	1.99E+05	&	589.6	\\
G400-3	&	1.37E+05	&	4.03E+05	&	3083.9	&	5.16E+04	&	1.51E+05	&	358.4	\\
G400-4	&	1.49E+05	&	4.45E+05	&	4056.7	&	5.73E+04	&	1.70E+05	&	451.1	\\
G800-0	&	5.32E+05	&	1.71E+06	&	$>$10hrs	&	2.57E+05	&	8.25E+05	&	$>$10hrs	\\
G800-1	&	5.12E+05	&	1.64E+06	&	$>$10hrs	&	2.51E+05	&	8.05E+05	&	$>$10hrs	\\
G800-2	&	5.29E+05	&	1.70E+06	&	$>$10hrs	&	2.67E+05	&	8.57E+05	&	$>$10hrs	\\
G800-3	&	5.06E+05	&	1.62E+06	&	$>$10hrs	&	2.39E+05	&	7.63E+05	&	$>$10hrs	\\
G800-4	&	5.47E+05	&	1.77E+06	&	$>$10hrs	&	2.55E+05	&	8.21E+05	&	$>$10hrs	\\
\hline
GC100-0	&	4.35E+04	&	1.21E+05	&	296.6	&	1.45E+04	&	3.92E+04	&	8.4	\\
GC100-1	&	1.40E+05	&	3.93E+05	&	4462.1	&	2.14E+04	&	5.69E+04	&	19.5	\\
GC100-2	&	5.12E+04	&	1.44E+05	&	377.3	&	2.43E+04	&	6.83E+04	&	33.1	\\
GC100-3	&	6.64E+04	&	1.75E+05	&	835.2	&	1.42E+04	&	3.57E+04	&	7.2	\\
GC100-4	&	6.14E+04	&	1.73E+05	&	675.8	&	2.61E+04	&	7.27E+04	&	41.2	\\
GC200-0	&	1.54E+05	&	4.55E+05	&	4796.3	&	5.54E+04	&	1.60E+05	&	411.7	\\
GC200-1	&	5.25E+05	&	1.52E+06	&	$>$10hrs	&	9.74E+04	&	2.74E+05	&	1916.4	\\
GC200-2	&	1.96E+05	&	5.90E+05	&	8640.8	&	1.19E+05	&	3.56E+05	&	2878.9	\\
GC200-3	&	2.33E+05	&	6.56E+05	&	12083.9	&	5.95E+04	&	1.63E+05	&	557.0	\\
GC200-4	&	2.07E+05	&	6.22E+05	&	8869.0	&	1.15E+05	&	3.43E+05	&	2254.0	\\
GC400-0	&	5.91E+05	&	1.83E+06	&	$>$10hrs	&	2.36E+05	&	7.19E+05	&	11991.8	\\
GC400-1	&	2.08E+06	&	6.14E+06	&	$>$10hrs	&	3.92E+05	&	1.14E+06	&	36138.5	\\
GC400-2	&	7.47E+05	&	2.32E+06	&	$>$10hrs	&	5.51E+05	&	1.71E+06	&	15907.7	\\
GC400-3	&	9.33E+05	&	2.73E+06	&	$>$10hrs	&	2.63E+05	&	7.58E+05	&	$>$10hrs	\\
GC400-4	&	8.43E+05	&	2.61E+06	&	$>$10hrs	&	5.44E+05	&	1.67E+06	&	$>$10hrs	\\
GC800-0	&	2.29E+06	&	7.25E+06	&	$>$10hrs	&	9.69E+05	&	3.03E+06	&	$>$10hrs	\\
GC800-1	&	8.08E+06	&	2.41E+07	&	$>$10hrs	&	1.55E+06	&	4.57E+06	&	$>$10hrs	\\
GC800-2	&	2.84E+06	&	9.02E+06	&	$>$10hrs	&	2.26E+06	&	7.16E+06	&	$>$10hrs	\\
GC800-3	&	3.69E+06	&	1.10E+07	&	$>$10hrs	&	1.07E+06	&	3.16E+06	&	$>$10hrs	\\
GC800-4	&	3.13E+06	&	9.76E+06	&	$>$10hrs	&	2.13E+06	&	6.65E+06	&	$>$10hrs	\\
\hline
\end{tabular}
}
\end{table}

\begin{table}
\centering
\caption{Improvement on the root relaxation bound after 
adding disjunctive cuts and platooning cuts for the \ref{opt:SPF}.
The columns LPbd0, LPbd1 and LPbd2 correspond to the linear relaxation
bounds when (1) no cuts are added; (2) only disjunctive
cuts are added; (3) both disjunctive cuts and platooning cuts are added.
TimeDisjCut gives the computational time of generating all disjunctive cuts
by solving the linear programs \eqref{opt:disj-cut-gen}.
DisjCuts and Platcuts are the number of disjunctive cuts and platooning cuts
added to the problem, respectively. IMP1 and IMP2 are the improvement 
corresponding to adding the disjunctive cuts and adding platooning cuts, respectively.
They are defined as $\textrm{IMP1}=(\textrm{LPbd0}-\textrm{LPbd1})/\textrm{LPbd0}$,
and $\textrm{IMP2}=(\textrm{LPbd1}-\textrm{LPbd2})/\textrm{LPbd1}$.
}\label{tab:LP_bd}
{\scriptsize
\begin{tabular}{ccccccccc}
\hline\hline
Instance	&	LPbd0	&	LPbd1	&	LPbd2	&	TimeDisjCut	(s)&	DisjCuts	&	PlatCuts	&	IMP1(\%)	&	IMP2(\%)	\\
\hline
G50-0	&	34.08	&	33.34	&	32.55	&	0.16	&	32	&	348	&	2.15	&	2.33	\\
G50-1	&	32.88	&	32.34	&	31.57	&	0.08	&	23	&	381	&	1.63	&	2.37	\\
G50-2	&	24.74	&	24.40	&	23.10	&	0.11	&	27	&	327	&	1.36	&	5.25	\\
G50-3	&	27.64	&	27.25	&	27.23	&	0.26	&	30	&	315	&	1.40	&	0.10	\\
G50-4	&	41.24	&	40.42	&	38.38	&	0.17	&	30	&	265	&	1.99	&	4.93	\\
G100-0	&	82.32	&	81.36	&	78.81	&	0.66	&	74	&	1637	&	1.16	&	3.10	\\
G100-1	&	85.56	&	85.08	&	84.14	&	0.36	&	52	&	1265	&	0.55	&	1.10	\\
G100-2	&	109.96	&	108.96	&	105.98	&	0.31	&	67	&	1506	&	0.91	&	2.70	\\
G100-3	&	79.41	&	78.29	&	73.73	&	0.32	&	64	&	1333	&	1.41	&	5.75	\\
G100-4	&	90.51	&	89.17	&	84.44	&	0.39	&	67	&	1763	&	1.48	&	5.22	\\
G150-0	&	168.42	&	166.84	&	158.73	&	0.80	&	122	&	3642	&	0.94	&	4.82	\\
G150-1	&	161.74	&	160.36	&	153.06	&	0.73	&	109	&	3460	&	0.85	&	4.51	\\
G150-2	&	176.47	&	175.04	&	169.90	&	1.02	&	144	&	3895	&	0.81	&	2.91	\\
G150-3	&	171.90	&	170.36	&	163.82	&	0.92	&	126	&	3841	&	0.90	&	3.80	\\
G150-4	&	178.94	&	177.63	&	173.35	&	1.17	&	138	&	4290	&	0.74	&	2.39	\\
G200-0	&	287.17	&	285.31	&	273.58	&	2.37	&	248	&	8327	&	0.65	&	4.09	\\
G200-1	&	270.17	&	268.32	&	254.34	&	1.96	&	273	&	7900	&	0.68	&	5.18	\\
G200-2	&	266.94	&	265.84	&	257.84	&	2.31	&	159	&	8106	&	0.41	&	3.00	\\
G200-3	&	244.44	&	242.24	&	233.65	&	1.97	&	276	&	6685	&	0.90	&	3.51	\\
G200-4	&	271.52	&	270.55	&	256.23	&	2.35	&	181	&	9749	&	0.36	&	5.27	\\
\hline
GC50-0	&	64.63	&	63.78	&	60.16	&	0.49	&	70	&	1714	&	1.33	&	5.59	\\
GC50-1	&	127.10	&	126.46	&	121.17	&	0.49	&	87	&	2081	&	0.51	&	4.16	\\
GC50-2	&	61.51	&	61.03	&	56.94	&	0.68	&	77	&	2515	&	0.79	&	6.65	\\
GC50-3	&	118.47	&	118.41	&	114.82	&	0.17	&	21	&	1621	&	0.06	&	3.03	\\
GC50-4	&	79.20	&	78.94	&	75.67	&	0.56	&	77	&	3031	&	0.32	&	4.14	\\
GC100-0	&	172.35	&	171.91	&	162.52	&	1.33	&	155	&	10474	&	0.25	&	5.45	\\
GC100-1	&	309.81	&	309.59	&	304.35	&	1.81	&	157	&	17284	&	0.07	&	1.69	\\
GC100-2	&	177.76	&	177.48	&	169.94	&	2.33	&	170	&	19301	&	0.16	&	4.24	\\
GC100-3	&	293.90	&	293.75	&	283.32	&	1.30	&	128	&	10243	&	0.05	&	3.55	\\
GC100-4	&	207.51	&	207.17	&	201.58	&	3.76	&	391	&	20982	&	0.17	&	2.69	\\
GC150-0	&	283.61	&	282.70	&	267.79	&	3.75	&	286	&	25695	&	0.32	&	5.26	\\
GC150-1	&	493.02	&	492.89	&	485.04	&	6.14	&	265	&	41819	&	0.03	&	1.59	\\
GC150-2	&	305.55	&	305.08	&	296.86	&	9.24	&	399	&	52827	&	0.16	&	2.69	\\
GC150-3	&	458.36	&	458.10	&	445.96	&	4.37	&	322	&	25150	&	0.06	&	2.65	\\
GC150-4	&	321.23	&	320.59	&	311.97	&	7.95	&	506	&	53947	&	0.20	&	2.68	\\
GC200-0	&	410.29	&	409.55	&	387.90	&	5.77	&	371	&	45742	&	0.18	&	5.28	\\
GC200-1	&	677.99	&	677.81	&	671.74	&	11.07	&	467	&	86912	&	0.03	&	0.90	\\
GC200-2	&	418.79	&	418.34	&	409.41	&	18.36	&	578	&	105828	&	0.11	&	2.13	\\
GC200-3	&	615.50	&	614.99	&	605.03	&	7.67	&	406	&	49678	&	0.08	&	1.62	\\
GC200-4	&	442.81	&	442.29	&	432.53	&	13.01	&	563	&	101696	&	0.12	&	2.20	\\
\hline
\end{tabular}
}
\end{table}

\begin{table}[H]
\centering
\caption{Computational costs of solving the joint MILP developed in 
\cite{luo2018-veh-platn-mult-speeds}.}\label{tab:joint-MILP}
\scriptsize
\begin{tabular}{cccccc}
\hline\hline
Instance & Obj & Gap(\%) & LoadTime(s)  & CPU(s) & BBNodes \\
\hline 
G50-1 & 1276.18   &    14.8    &    50.4    &   14400    &   4619  \\
G50-2 & 1122.16   &    2.1    &    56.6    &   14400    &   12131  \\
\hline
\end{tabular}
\end{table}
\replace{}{Note that only the results of G50-1 and G50-2 are shown in
Table~\ref{tab:joint-MILP}. For the remaining instances, Gurobi cannot find an
incumbent solution within an hour (only a relaxation bound is given but no
optimality gap is available). The is due to the size of the integrated MILP
formulation and the presence of many big-M coefficients.}

\begin{figure}[h!]


\centering
\includegraphics[trim={10 0 45 10}, clip, width=0.40\linewidth]{G50-0-iters-plot.png} 
\includegraphics[trim={10 0 45 10}, clip, width=0.40\linewidth]{G50-1-iters-plot.png}\\
\includegraphics[trim={10 0 45 10}, clip, width=0.40\linewidth]{G100-1-iters-plot.png} 
\includegraphics[trim={10 0 45 10}, clip, width=0.40\linewidth]{G150-0-iters-plot.png} \\
\includegraphics[trim={10 0 45 10}, clip, width=0.40\linewidth]{G150-1-iters-plot.png} 
\caption{True total fuel cost found at every iteration in the RSHM for 5 numerical instances.}
\label{fig:RSHM-iters-plot}
\end{figure}

\begin{figure}


\centering
\captionsetup{justification=centering}
\subfigure{ \includegraphics[trim={10 0 45 10}, clip, width=0.40\linewidth]{G50-0.png} }
\subfigure{ \includegraphics[trim={10 0 45 10}, clip, width=0.40\linewidth]{GC50-0.png} }\\
% \subfigure{ \includegraphics[trim={10 0 45 10}, clip, width=0.40\linewidth]{G100-0.png} }
\subfigure{ \includegraphics[trim={10 0 45 10}, clip, width=0.40\linewidth]{GC100-0.png} }
\caption{Total fuel cost versus time-flexibility rate for 3 CVPP instances and three platooning savings scenarios.
Dashed horizontal lines represent the fuel costs without time constraints at different settings of the saving parameters.}
\label{fig:fuel-vs-idleTime}
\end{figure}

\begin{landscape}
\begin{table}
\centering
\caption{Computational costs of
solving the \ref{opt:SPF} before and after adding cuts.
The time limit is one hour for all instances. 
Obj1 and Gap1 are the best objective
and relative MIP gap after solving the problem for 5 minutes, respectively.
If the problem is solved to optimality within 5 minutes, the Obj1 and Gap1
values are set to be the objective and MIP gap at termination.
Similarly, the Obj2/Gap2, Obj3/Gap3 and Objfinal/Gapfinal
correspond to solving the problem
for 10 minutes, 30 minutes, and at termination, respectively. 
All MIP gaps are percentages.}\label{tab:mipgap}
{\scriptsize
\begin{tabular}{c|ccccccccc|ccccccccc}
\hline\hline
& \multicolumn{9}{c|}{Plain}  & \multicolumn{9}{c}{AddCuts}  \\
\hline
 Instance	&	Obj1	&	Gap1	&	Obj2	&	Gap2	&	Obj3	&	Gap3	&	Objfinal	&	Gapfinal	& CPU(s)	&	Obj1	&	Gap1	&	Obj2	&	Gap2	&	Obj3	&	Gap3	&	Objfinal	&	Gapfinal	& CPU(s)	\\
\hline
G50-0 & 29.6 & 0.00 & -    & -    & -    & -    & -    & -    & 0.2 & 29.6 & 0.00 & -    & -    & -    & -    & -    & -    & 0.2 \\ 
G50-1 & 30.2 & 0.00 & -    & -    & -    & -    & -    & -    & 0.0 & 30.2 & 0.00 & -    & -    & -    & -    & -    & -    & 0.1 \\ 
G50-2 & 20.5 & 0.00 & -    & -    & -    & -    & -    & -    & 0.1 & 20.5 & 0.00 & -    & -    & -    & -    & -    & -    & 0.2 \\ 
G50-3 & 25.1 & 0.00 & -    & -    & -    & -    & -    & -    & 0.1 & 25.1 & 0.00 & -    & -    & -    & -    & -    & -    & 0.1 \\ 
G50-4 & 35.7 & 0.00 & -    & -    & -    & -    & -    & -    & 0.0 & 35.7 & 0.00 & -    & -    & -    & -    & -    & -    & 0.1 \\ 
G100-0 & 69.6 & 0.00 & -    & -    & -    & -    & -    & -    & 1.2 & 69.6 & 0.00 & -    & -    & -    & -    & -    & -    & 1.8 \\ 
G100-1 & 78.2 & 0.00 & -    & -    & -    & -    & -    & -    & 0.5 & 78.2 & 0.00 & -    & -    & -    & -    & -    & -    & 0.8 \\ 
G100-2 & 97.5 & 0.00 & -    & -    & -    & -    & -    & -     & 0.4 & 97.5 & 0.00 & -    & -    & -    & -    & -    & -     & 0.6 \\ 
G100-3 & 65.1 & 0.00 & -    & -    & -    & -    & -    & -    & 1.9 & 65.1 & 0.00 & -    & -    & -    & -    & -    & -    & 2.6 \\ 
G100-4 & 74.5 & 0.00 & -    & -    & -    & -    & -    & -    & 2.1 & 74.5 & 0.00 & -    & -    & -    & -    & -    & -     & 3.3 \\ 
G150-0 & 138.3 & 2.17 & 139.9 & 0.77 & 140.6 & 0.00 & -     & -     & 872.6 & 140.6 & 0.13 & 140.6 & 0.00 & -     & -    & -     & -     & 324.1 \\ 
G150-1 & 139.7 & 0.00 & -     & -    & -     & -    & -     & -     & 29.5 & 139.7 & 0.00 & -     & -    & -     & -    & -     & -     & 21.7 \\ 
G150-2 & 153.3 & 0.72 & 154.0 & 0.00 & -     & -    & -     & -     & 438.4 & 154.0 & 0.12 & 154.0 & 0.00 & -     & -    & -     & -     & 309.7 \\ 
G150-3 & 142.4 & 1.48 & 142.7 & 1.10 & 143.1 & 0.52 & 143.4 & 0.01 & 3318.1 & 142.2 & 1.82 & 142.3 & 1.56 & 142.3 & 1.32 & 142.6 & 0.97 & 3600 \\ 
G150-4 & 155.5 & 0.00 & -     & -    & -     & -    & -     & -     & 110.4 & 155.4 & 0.00 & -     & -    & -     & -    & -     & -     & 128.5 \\ 
G200-0 & 223.0 & 8.26 & 223.0 & 8.18 & 223.0 & 8.05 & 224.3 & 7.41 & 3600 & 222.4 & 8.38 & 224.4 & 7.48 & 225.7 & 6.82 & 225.9 & 6.61 & 3600 \\ 
G200-1 & 206.5 & 10.27 & 209.4 & 8.96 & 214.6 & 6.60 & 214.6 & 6.52 & 3600 & 211.3 & 8.02 & 211.3 & 7.92 & 211.3 & 7.81 & 215.9 & 5.72 & 3600 \\ 
G200-2 & 205.2 & 10.37 & 208.2 & 8.98 & 208.2 & 8.85 & 209.0 & 8.43 & 3600 & 205.5 & 10.18 & 205.5 & 10.09 & 210.7 & 7.73 & 212.1 & 7.03 & 3600 \\ 
G200-3 & 199.3 & 4.19 & 199.4 & 4.06 & 201.7 & 2.82 & 201.7 & 2.69 & 3600 & 199.8 & 3.92 & 201.5 & 3.02 & 202.7 & 2.25 & 202.7 & 2.15 & 3600 \\ 
G200-4 & 217.9 & 6.87 & 217.9 & 6.79 & 219.5 & 5.89 & 219.5 & 5.78 & 3600 & 214.5 & 8.19 & 216.8 & 7.11 & 218.2 & 6.39 & 221.4 & 4.93 & 3600 \\ 
\hline
GC50-0 & 54.7 & 0.00 & -    & -    & -    & -    & -    & -    & 0.6 & 54.7 & 0.00 & -    & -    & -    & -    & -    & -    & 0.6 \\ 
GC50-1 & 107.7 & 1.78 & 108.3 & 0.90 & 108.6 & 0.01 & -     & -    & 1142.4 & 108.5 & 1.02 & 108.5 & 0.41 & 108.6 & 0.01 & -     & -    & 701.7 \\ 
GC50-2 & 49.7 & 0.00 & -    & -    & -    & -    & -    & -    & 1.3 & 49.7 & 0.00 & -    & -    & -    & -    & -    & -    & 1.2 \\ 
GC50-3 & 104.0 & 0.00 & -     & -    & -     & -    & -     & -    & 19.8 & 103.7 & 0.00 & -     & -    & -     & -    & -     & -    & 24.1 \\ 
GC50-4 & 66.2 & 0.00 & -    & -    & -    & -    & -    & -    & 2.7 & 66.2 & 0.00 & -    & -    & -    & -    & -    & -    & 1.5 \\ 
GC100-0 & 138.5 & 1.06 & 138.5 & 0.84 & 138.5 & 0.53 & 138.5 & 0.35 & 3600 & 137.6 & 1.59 & 138.7 & 0.66 & 138.7 & 0.19 & 138.8 & 0.01 & 2084.0 \\ 
GC100-1 & 234.2 & 19.12 & 246.5 & 14.85 & 251.3 & 13.18 & 252.9 & 12.62 & 3600 & 246.6 & 14.19 & 246.6 & 14.02 & 254.0 & 11.44 & 259.4 & 9.54 & 3600 \\ 
GC100-2 & 135.1 & 11.83 & 138.0 & 9.92 & 139.4 & 8.88 & 139.4 & 8.82 & 3600 & 128.6 & 15.54 & 135.0 & 11.31 & 142.1 & 6.59 & 142.1 & 6.51 & 3600 \\ 
GC100-3 & 219.8 & 17.16 & 229.4 & 13.54 & 229.4 & 13.44 & 230.2 & 13.05 & 3600 & 220.6 & 16.36 & 220.6 & 16.35 & 232.4 & 11.82 & 234.1 & 11.18 & 3600 \\ 
GC100-4 & 161.5 & 11.98 & 163.2 & 11.04 & 168.9 & 7.79 & 168.9 & 7.57 & 3600 & 159.2 & 12.52 & 162.5 & 10.63 & 168.0 & 7.44 & 170.3 & 6.01 & 3600 \\ 
GC150-0 & 213.8 & 8.97 & 217.8 & 7.22 & 217.8 & 7.14 & 217.8 & 7.05 & 3600 & 209.5 & 10.32 & 216.4 & 7.34 & 216.4 & 7.28 & 218.7 & 6.24 & 3600 \\ 
GC150-1 & 320.7 & 32.20 & 320.7 & 32.11 & 320.7 & 32.11 & 373.5 & 20.94 & 3600 & 371.8 & 20.89 & 371.8 & 20.89 & 371.8 & 20.89 & 371.8 & 20.84 & 3600 \\ 
GC150-2 & 208.6 & 25.74 & 208.6 & 25.62 & 228.1 & 18.67 & 228.1 & 18.64 & 3600 & 223.2 & 19.14 & 223.2 & 19.14 & 223.2 & 18.95 & 233.1 & 15.32 & 3600 \\ 
GC150-3 & 307.1 & 26.85 & 307.1 & 26.81 & 321.2 & 23.45 & 321.2 & 23.43 & 3600 & 345.4 & 17.36 & 345.4 & 17.30 & 345.4 & 17.28 & 353.9 & 15.25 & 3600 \\ 
GC150-4 & 223.8 & 23.73 & 223.8 & 23.73 & 241.4 & 17.73 & 252.1 & 14.08 & 3600 & 231.8 & 20.08 & 231.8 & 20.02 & 231.8 & 19.84 & 231.8 & 19.84 & 3600 \\ 
GC200-0 & 272.6 & 21.21 & 272.6 & 21.20 & 285.0 & 17.58 & 290.4 & 16.02 & 3600 & 286.1 & 17.07 & 286.1 & 16.90 & 297.1 & 13.61 & 310.1 & 9.82 & 3600 \\ 
GC200-1 & 496.5 & 24.86 & 496.5 & 24.86 & 496.5 & 24.86 & 496.5 & 24.83 & 3600 & 496.5 & 24.91 & 496.5 & 24.91 & 496.5 & 24.91 & 496.5 & 24.85 & 3600 \\ 
GC200-2 & 259.4 & 33.43 & 259.4 & 33.43 & 259.4 & 33.31 & 259.4 & 33.31 & 3600 & 297.9 & 22.37 & 297.9 & 22.37 & 297.9 & 22.37 & 297.9 & 22.37 & 3600 \\ 
GC200-3 & 431.5 & 25.48 & 431.5 & 25.33 & 431.5 & 25.29 & 457.4 & 20.80 & 3600 & 456.9 & 20.33 & 456.9 & 20.33 & 456.9 & 20.18 & 456.9 & 20.10 & 3600 \\ 
GC200-4 & 276.1 & 33.83 & 276.1 & 33.68 & 276.1 & 33.68 & 321.0 & 22.80 & 3600 & 319.2 & 22.08 & 319.2 & 22.08 & 319.2 & 22.08 & 319.2 & 21.89 & 3600 \\ 
\hline
\end{tabular}
}
\end{table}
\end{landscape}

\begin{table}[h!]
\centering
\caption{Number of branch-and-bound nodes with and without additional cuts.
(Extends Table~\ref{tab:mipgap}.)
}
\label{tab:mipgap-final}
{\scriptsize
\begin{tabular}{c|cccc|cccc}
\hline\hline
& \multicolumn{4}{c|}{Plain} & \multicolumn{4}{c}{AddCuts} \\
\hline
Instance	&	Obj	&	Gap(\%)	&	CPUTime(s)	&	BBNodes	&	Obj	&	Gap(\%)	&	CPUTime(s)	&	BBNodes	\\
\hline
G50-0	&	29.6	&	0.00	&	0.2	&	7	&	29.6	&	0.00	&	0.2	&	29	\\
G50-1	&	30.2	&	0.00	&	0.0	&	1	&	30.2	&	0.00	&	0.1	&	1	\\
G50-2	&	20.5	&	0.00	&	0.1	&	1	&	20.5	&	0.00	&	0.2	&	0	\\
G50-3	&	25.1	&	0.00	&	0.1	&	1	&	25.1	&	0.00	&	0.1	&	1	\\
G50-4	&	35.7	&	0.00	&	0.0	&	1	&	35.7	&	0.00	&	0.1	&	1	\\
G100-0	&	69.6	&	0.00	&	1.2	&	23	&	69.6	&	0.00	&	1.8	&	51	\\
G100-1	&	78.2	&	0.00	&	0.5	&	0	&	78.2	&	0.00	&	0.8	&	0	\\
G100-2	&	97.5	&	0.01	&	0.4	&	0	&	97.5	&	0.01	&	0.6	&	0	\\
G100-3	&	65.1	&	0.00	&	1.9	&	61	&	65.1	&	0.00	&	2.6	&	77	\\
G100-4	&	74.5	&	0.00	&	2.1	&	21	&	74.5	&	0.01	&	3.3	&	102	\\
G150-0	&	140.6	&	0.01	&	872.6	&	33148	&	140.6	&	0.01	&	324.1	&	10022	\\
G150-1	&	139.7	&	0.01	&	29.5	&	728	&	139.7	&	0.01	&	21.7	&	325	\\
G150-2	&	154.0	&	0.01	&	438.4	&	16070	&	154.0	&	0.01	&	309.7	&	8894	\\
G150-3	&	143.4	&	0.01	&	3318.1	&	103178	&	142.6	&	0.97	&	3600	&	72531	\\
G150-4	&	155.5	&	0.01	&	110.4	&	2520	&	155.4	&	0.01	&	128.5	&	2758	\\
G200-0	&	224.3	&	7.41	&	3600	&	15779	&	225.9	&	6.61	&	3600	&	12972	\\
G200-1	&	214.6	&	6.52	&	3600	&	20424	&	215.9	&	5.72	&	3600	&	16409	\\
G200-2	&	209.0	&	8.43	&	3600	&	17328	&	212.1	&	7.03	&	3600	&	15394	\\
G200-3	&	201.7	&	2.69	&	3600	&	43954	&	202.7	&	2.15	&	3600	&	32838	\\
G200-4	&	219.5	&	5.78	&	3600	&	13998	&	221.4	&	4.93	&	3600	&	10997	\\
\hline
GC50-0	&	54.7	&	0.00	&	0.6	&	1	&	54.7	&	0.00	&	0.6	&	0	\\
GC50-1	&	108.6	&	0.01	&	1142.4	&	30967	&	108.6	&	0.00	&	701.7	&	15218	\\
GC50-2	&	49.7	&	0.00	&	1.3	&	1	&	49.7	&	0.00	&	1.2	&	1	\\
GC50-3	&	104.0	&	0.00	&	19.8	&	425	&	103.7	&	0.00	&	24.1	&	322	\\
GC50-4	&	66.2	&	0.00	&	2.7	&	9	&	66.2	&	0.00	&	1.5	&	1	\\
GC100-0	&	138.5	&	0.35	&	3600	&	28502	&	138.8	&	0.01	&	2084.0	&	19178	\\
GC100-1	&	252.9	&	12.62	&	3600	&	2504	&	259.4	&	9.54	&	3600	&	1558	\\
GC100-2	&	139.4	&	8.82	&	3600	&	3857	&	142.1	&	6.51	&	3600	&	2372	\\
GC100-3	&	230.2	&	13.05	&	3600	&	2775	&	234.1	&	11.18	&	3600	&	1684	\\
GC100-4	&	168.9	&	7.57	&	3600	&	5825	&	170.3	&	6.01	&	3600	&	3759	\\
GC150-0	&	217.8	&	7.05	&	3600	&	4175	&	218.7	&	6.24	&	3600	&	3939	\\
GC150-1	&	373.5	&	20.94	&	3600	&	308	&	371.8	&	20.84	&	3600	&	4	\\
GC150-2	&	228.1	&	18.64	&	3600	&	767	&	233.1	&	15.32	&	3600	&	189	\\
GC150-3	&	321.2	&	23.43	&	3600	&	801	&	353.9	&	15.25	&	3600	&	366	\\
GC150-4	&	252.1	&	14.08	&	3600	&	1105	&	231.8	&	19.84	&	3600	&	230	\\
GC200-0	&	290.4	&	16.02	&	3600	&	1300	&	310.1	&	9.82	&	3600	&	685	\\
GC200-1	&	496.5	&	24.83	&	3600	&	51	&	496.5	&	24.85	&	3600	&	0	\\
GC200-2	&	259.4	&	33.31	&	3600	&	130	&	297.9	&	22.37	&	3600	&	0	\\
GC200-3	&	457.4	&	20.80	&	3600	&	320	&	456.9	&	20.10	&	3600	&	46	\\
GC200-4	&	321.0	&	22.80	&	3600	&	433	&	319.2	&	21.89	&	3600	&	11	\\
\hline
\end{tabular}
}
\end{table}

\section{Supplement of Section~\ref{sec:form-sol-method}}
Recall that $ \Cp_{i,j}(l)$ (Definition~\ref{def:platn-fuel-cost}) denotes the total fuel consumption of a size-$l$ ($l\ge 1$) 
platoon on the edge $(i,j)$, with a size-$1$ platoon being a single vehicle.
The following properties hold.
\begin{observation}\label{obs:Cp_properties}
  This quantities $\Cp_{i,j}$ and $\Cph_{i,j}$ satisfy: 
\begin{enumerate}
  \item $\Cp_{i,j}(l)=\Cp_{i,j}(l_1)+(1-\sigma^f)(l-l_1)C_{i,j}$  for $2\le l_1\le l-1$, $l\ge 3$;
  \item $\Cp_{i,j}(l)\le \Cp_{i,j}(l_1) + \Cp_{i,j}(l-l_1)$ for $1\le l_1\le l-1$, $l\ge 2$;
  \item $\Cp_{i,j}(l_2)/l_2\le \Cp_{i,j}(l_1)/l_1$ for any $1\le l_1\le l_2$; 
  \item $\Cp_{i,j}(l)= \Cph_{i,j}(l_1)+(1-\sigma^f)(l-l_1)C_{i,j}$ for any $l_1\ge 1$, $l\ge2$, $l\ge l_1$.
\end{enumerate}
\end{observation}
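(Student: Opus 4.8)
The plan is to reduce all four statements to one structural fact: on platoon sizes $l\ge 2$ the cost $\Cp_{i,j}(l)$ is \emph{affine} in $l$. Expanding the second branch of \eqref{eqn:C-plat},
\[
\Cp_{i,j}(l)=(1-\sigma^f)\,l\,C_{i,j}+(\sigma^f-\sigma^l)\,C_{i,j}\qquad (l\ge 2),
\]
a line with slope $(1-\sigma^f)C_{i,j}$ and intercept $(\sigma^f-\sigma^l)C_{i,j}>0$ (positive since $\sigma^l<\sigma^f$). The only value off this line is $\Cp_{i,j}(1)=C_{i,j}$, which exceeds the extrapolated value $(1-\sigma^l)C_{i,j}$; this is exactly why $\Cph_{i,j}$ is defined to overwrite the size-one entry by $(1-\sigma^l)C_{i,j}$, so that $\Cph_{i,j}$ agrees with the affine function for \emph{every} $l\ge 1$. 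I would state this affine normal form first, as it renders the remaining work mechanical.

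Given the normal form, I would dispatch the two equalities immediately. The first property is the difference of two points on the line: for $2\le l_1\le l-1$ the slope gives $\Cp_{i,j}(l)-\Cp_{i,j}(l_1)=(1-\sigma^f)(l-l_1)C_{i,j}$. The fourth property is the same interpolation identity anchored at $\Cph_{i,j}(l_1)$ instead of $\Cp_{i,j}(l_1)$: since $\Cph_{i,j}$ equals the affine function on all of $l_1\ge 1$ while $\Cp_{i,j}(l)$ equals it for $l\ge 2$, travelling from $l_1$ to $l$ along the line adds exactly $(1-\sigma^f)(l-l_1)C_{i,j}$, with the boundary $l=l_1$ a trivial identity. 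Thus the fourth property is merely the first one extended to $l_1=1$ through the linearized endpoint.

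For the subadditivity (second) property I would split on whether either summand is a singleton. When $l_1\ge 2$ and $l-l_1\ge 2$, subtracting affine values yields $\Cp_{i,j}(l_1)+\Cp_{i,j}(l-l_1)-\Cp_{i,j}(l)=(\sigma^f-\sigma^l)C_{i,j}\ge 0$, using the positive intercept. When exactly one summand is $1$ the gap is $\sigma^f C_{i,j}\ge 0$, and in the degenerate split $l_1=l-l_1=1$ (so $l=2$) it is $(\sigma^l+\sigma^f)C_{i,j}\ge 0$; both use only nonnegativity of the saving rates. For the monotone-average (third) property I would write the per-vehicle cost for $l\ge 2$ as $\Cp_{i,j}(l)/l=(1-\sigma^f)C_{i,j}+(\sigma^f-\sigma^l)C_{i,j}/l$, manifestly nonincreasing in $l$, and then check the single boundary $l_1=1$ directly, namely $\Cp_{i,j}(l_2)/l_2\le C_{i,j}$, which rearranges to $(l_2-1)\sigma^f+\sigma^l\ge 0$. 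The main obstacle is purely bookkeeping: the affine picture collapses exactly at $l=1$, so every inequality must be re-examined on the boundary cases where the single-vehicle branch enters, and one must track which sign hypothesis each case needs ($\sigma^l<\sigma^f$ for the bulk, mere nonnegativity for the singleton splits).
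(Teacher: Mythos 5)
Your proof is correct and follows essentially the same route as the paper's: both arguments reduce to the explicit formula \eqref{eqn:C-plat} and exploit that $\Cp_{i,j}$ is affine in $l$ for $l\ge 2$ with positive intercept $(\sigma^f-\sigma^l)C_{i,j}$, treating $l=1$ as the only exceptional value (which is exactly what $\Cph_{i,j}$ repairs). Your explicit affine normal form and the case split in property 2 are organizational variants of the paper's uniform bound $\Cp_{i,j}(m)\ge(1-\sigma^l)C_{i,j}+(1-\sigma^f)(m-1)C_{i,j}$, not a different argument.
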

\begin{proof}
1. Note that for $l\ge 2, l_1\ge 2$, we have $\Cp_{i,j}(l)=(1-\sigma^l)C_{i,j}+(1-\sigma^f)(l-1)C_{i,j}$,
and $\Cp_{i,j}(l_1)=(1-\sigma^l)C_{i,j}+(1-\sigma^f)(l_1-1)C_{i,j}$.
Therefore, $\Cp_{i,j}(l)-\Cp_{i,j}(l_1)=(l-l_1)(1-\sigma^f)C_{i,j}$. \newline
2. Since $\Cp_{i,j}(l_1)\ge(1-\sigma^l)C_{i,j}+(1-\sigma^f)(l_1-1)C_{i,j}$,
and $\Cp_{i,j}(l-l_1)\ge(1-\sigma^l)C_{i,j}+(1-\sigma^f)(l-l_1-1)C_{i,j}$,
we have 
\bdm
\ba
&\Cp_{i,j}(l_1)+\Cp_{i,j}(l-l_1)\ge2(1-\sigma^l)C_{i,j}+(1-\sigma^f)(l-2)C_{i,j} \\
&\ge (1-\sigma^l)C_{i,j}+(1-\sigma^f)C_{i,j}+(1-\sigma^f)(l-2)C_{i,j} \\
&\ge (1-\sigma^l)C_{i,j}+(1-\sigma^f)(l-1)C_{i,j}=\Cp_{i,j}(l).
\ea
\edm
3. The inequality obviously holds for $l_1=l_2=1$. For the case $l_1\ge 1$, $l_2\ge 2$,
we have 
\bdm
\ba
&\frac{\Cp_{i,j}(l_2)}{l_2}=\frac{(1-\sigma^l)C_{i,j}+(1-\sigma^f)(l_2-1)C_{i,j}}{l_2}
=\frac{(1-\sigma^l)C_{i,j}}{l_2}+(1-\sigma^f)\left(1-\frac{1}{l_2}\right)C_{i,j} \\
&= \frac{(\sigma^f-\sigma^l)C_{i,j}}{l_2}+(1-\sigma^f)C_{i,j}
\le \frac{(\sigma^f-\sigma^l)C_{i,j}}{l_1}+(1-\sigma^f)C_{i,j} \\
&=\frac{(1-\sigma^l)C_{i,j}+(1-\sigma^f)(l_1-1)C_{i,j}}{l_1}\le\frac{\Cp_{i,j}(l_1)}{l_1}.
\ea
\edm 
4. For $l_1=1$, we have $\Cph_{i,j}(l_1)=(1-\sigma^l)C_{i,j}$ and 
$\Cp_{i,j}(l)=(1-\sigma^l)C_{i,j}+(1-\sigma^f)(l-l_1)C_{i,j}$.
For $l_1\ge 2$, we have $\Cph_{i,j}(l_1)=\Cp_{i,j}(l_1)$, the equality
becomes the case 1.
% \hfill$\blacksquare$
\end{proof}

\label{prop:bound-on-path}
\begin{proposition}
If the cost of traversing any edge is proportional to the length of 
the edge, then the length of any optimal route of a vehicle $v$ in a CVPP instance
is no greater than $d(O_v,D_v)/(1-\sigma^f)$, where $d(O_v,D_v)$ is the 
length of shortest path from $O_v$ to $D_v$. 
\end{proposition}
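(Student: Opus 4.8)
The plan is to argue by contradiction with an exchange argument that reroutes $v$ onto a shortest $O_v$--$D_v$ path. Since the cost of traversing an edge is proportional to its length, write $C_{i,j}=\kappa\,\ell_{i,j}$ for a constant $\kappa>0$; then the fuel cost of any route equals $\kappa$ times its length, and $\kappa\,d(O_v,D_v)$ is exactly the cost that $v$ would pay driving a shortest path alone. Suppose, for contradiction, that some optimal solution of the CVPP assigns $v$ a route $\cR_v$ whose length exceeds $d(O_v,D_v)/(1-\sigma^f)$.

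First I would bound below the fuel that $v$ itself consumes along $\cR_v$. On each edge the cheapest role available to $v$ is that of a trailing vehicle, saving at the higher rate $\sigma^f$; because $\sigma^l<\sigma^f$, leading or travelling alone can only cost more, so the fuel charged to $v$ on any edge $(i,j)\in\cR_v$ is at least $(1-\sigma^f)C_{i,j}$. Summing over the route, $v$'s own fuel on $\cR_v$ is at least $(1-\sigma^f)\,\kappa\cdot(\text{length of }\cR_v)$, which by the standing assumption strictly exceeds $(1-\sigma^f)\,\kappa\cdot d(O_v,D_v)/(1-\sigma^f)=\kappa\,d(O_v,D_v)$, the cost $v$ incurs on a solo shortest path. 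Thus $v$ alone would pay strictly less by switching; feasibility of this switch is immediate, since the shortest path is no longer in travel time than $\cR_v$ (time cost being proportional to length), so the window constraints \ref{eqn:RDP_1extra} and \ref{eqn:SPF_3} continue to hold.

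I would then form the competing assignment in which $v$ is moved onto a shortest path travelling solo while every other vehicle keeps its route, and track the induced change in total cost using Observation~\ref{obs:Cp_properties}. On an edge where $v$ was a trailing vehicle, its departure shrinks the platoon from size $l$ to $l-1$ and, by part~(1), lowers that platoon's cost by exactly $(1-\sigma^f)C_{i,j}$; on an edge where $v$ led, a successor inherits the lead and the platoon cost changes only through the gap between $\sigma^l$ and $\sigma^f$. The step I expect to be the main obstacle is precisely this bookkeeping over the other vehicles: lowering $v$'s own cost is not by itself decisive, because removing $v$ can erase savings enjoyed by the vehicles that platooned with it, most delicately when $v$'s departure forces a size-two platoon back to a solo traversal and thereby returns the rate-$\sigma^l$ saving of the partner. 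The crux is therefore to show that the total saving that the rest of the fleet derives from $v$'s presence is dominated by the strictly positive surplus $(1-\sigma^f)\,\kappa\cdot(\text{length of }\cR_v)-\kappa\,d(O_v,D_v)$ established above, so that the rerouted assignment has strictly smaller total fuel cost and contradicts optimality. I would attempt to close this gap by charging each unit of saving that $v$ confers on a co-platooning vehicle against the corresponding stretch of $\cR_v$ and invoking the per-vehicle monotonicity $\Cp_{i,j}(l)/l\le\Cp_{i,j}(l_1)/l_1$ from Observation~\ref{obs:Cp_properties}(3), which caps the marginal benefit $v$ provides per unit length; verifying that this cap never exceeds the surplus is the heart of the argument.
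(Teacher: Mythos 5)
Your overall strategy coincides with the paper's: suppose some vehicle $v$ has an optimal route $\cR^*_v$ whose length exceeds $d(O_v,D_v)/(1-\sigma^f)$, reroute $v$ alone onto a solo shortest path while freezing every other vehicle's route and schedule, and show that the total fuel cost strictly decreases. The problem is that what you have written is a plan rather than a proof: you explicitly defer the step you yourself call ``the heart of the argument,'' namely showing that the savings $v$ confers on its former platoon partners are dominated by the surplus $(1-\sigma^f)\kappa\cdot(\textrm{length of }\cR^*_v)-\kappa\, d(O_v,D_v)$. The paper needs no per-vehicle cost allocation and no charging scheme; it computes the total-cost difference edge by edge. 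Writing $\cE'=\Set*{(i,j)\in\cR^*_v}{|\cP^*_{v,i,j}|\ge 2}$, the cost change on an edge of $\cR^*_v\setminus\cE'$ is $C_{i,j}$ (a trivial platoon disappears), and on an edge of $\cE'$ it is the marginal platoon cost $\Cp_{i,j}(l)-\Cp_{i,j}(l-1)$ with $l=|\cP^*_{v,i,j}|$, which the paper evaluates as $(1-\sigma^f)C_{i,j}$; summing and using proportionality of $C_{i,j}$ to edge length gives $z^*-z'\ge\alpha(1-\sigma^f)\sum_{(i,j)\in\cR^*_v}d_{i,j}-\alpha d(O_v,D_v)>0$, the contradiction. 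So the missing ingredient is not a clever amortization but the elementary identity for the increment $\Cp_{i,j}(l)-\Cp_{i,j}(l-1)$; your proposed appeal to the averaged monotonicity $\Cp_{i,j}(l)/l\le\Cp_{i,j}(l_1)/l_1$ is the wrong tool, since what must be bounded is the marginal total cost of one extra platoon member, not a per-vehicle average.

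That said, the case you single out as delicate --- a size-two platoon collapsing to a solo traversal when $v$ departs --- is genuinely the tight spot. From \eqref{eqn:C-plat}, $\Cp_{i,j}(2)-\Cp_{i,j}(1)=(1-\sigma^l-\sigma^f)C_{i,j}$ rather than $(1-\sigma^f)C_{i,j}$: the partner loses its leader saving $\sigma^l C_{i,j}$ exactly as you feared, and Observation~\ref{obs:Cp_properties}(1) guarantees the increment $(1-\sigma^f)C_{i,j}$ only for $l\ge 3$. On detour edges where $v$ rides in a two-vehicle platoon, the single-vehicle exchange as set up therefore only certifies the weaker threshold $d(O_v,D_v)/(1-\sigma^l-\sigma^f)$. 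Your instinct about where the argument strains is thus correct, but to turn the proposal into a proof you must actually carry out the exchange computation and confront that case explicitly --- either by a separate argument for size-two platoons or by accepting the correspondingly weaker constant --- rather than leaving it as an announced intention.
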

\begin{proof}
We prove by contradiction. Assume there exists an optimal route assignment
$\cup_{u\in\cV}\cR^*_u$ (the routes corresponding to an optimal solution of the 
CVPP instance), such that $\sum_{(i,j)\in\cR^*_v}d_{i,j}>d(O_v,D_v)/(1-\sigma^f)$ for some vehicle $v$,
where $d_{i,j}$ is the length of the edge $(i,j)$. For any $(i,j)\in\cR^*_v$, let $\cP^*_{v,i,j}$
be the platoon that contains $v$ on edge $(i,j)$ in the optimal solution. Let $z^*$
be the optimal total fuel cost. 
We consider the following feasible solution of the CVPP instance: The vehicle $v$ 
traverses the shortest path from $O_v$ to $D_v$ without forming a platoon with any
other vehicles, and the routes and schedules of all the other vehicles remain the same
as in the optimal solution. Let $z^{\prime}$ be total fuel cost corresponding to this 
feasible solution. Define the following subset of edges in $\cR^*_v$:
\beq
\cE^{\prime}=\Set*{(i,j)\in\cR^*_v}{|\cP^*_{v,i,j}|\ge 2}.
\eeq
For any $(i,j)\in\cE^{\prime}$, let $\cP^{\prime}_{i,j}$ be the platoon consisting of 
vehicles in $\cP^*_{v,i,j}\setminus\{v\}$. Let $\alpha=C_{i,j}/d_{i,j}$ be the constant
rate between the fuel cost and distance for any edge in the network.
The difference of total fuel cost between the optimal solution and 
the constructed feasible solution can be bounded as:
\beq
\ba
&z^*-z^{\prime}=\sum_{(i,j)\in\cR^*_v}\Cp_{i,j}(|\cP^*_{v,i,j}|)
-\left(\sum_{(i,j)\in\cE^{\prime}}\Cp_{i,j}(|\cP^{\prime}_{i,j}|)+ \alpha d(O_v,D_v) \right) \\
&=\sum_{(i,j)\in\cR^*_v\setminus\cE^{\prime}}C_{i,j} 
+\sum_{(i,j)\in\cE^{\prime}} \left[\Cp_{i,j}(|\cP^*_{v,i,j}|)-\Cp_{i,j}(|\cP^{\prime}_{i,j}|) \right]
-\alpha d(O_v,D_v) \\
&=\sum_{(i,j)\in\cR^*_v\setminus\cE^{\prime}}C_{i,j} +\sum_{(i,j)\in\cE^{\prime}}(1-\sigma^f)C_{i,j}
-\alpha d(O_v,D_v) \\
&\ge\sum_{(i,j)\in\cR^*_v}(1-\sigma^f)C_{i,j} - \alpha d(O_v,D_v) 
=\alpha(1-\sigma^f)\sum_{(i,j)\in\cR^*_v}d_{i,j}- \alpha d(O_v,D_v) \\
&>\alpha (1-\sigma^f)\frac{d(O_v,D_v)}{1-\sigma^f}- \alpha d(O_v,D_v)=0.
\ea
\eeq
This contradicts $z^*$ being the optimal total fuel cost.
% \hfill$\blacksquare$
\end{proof}

The proof of Theorem~\ref{thm:RSHM-alg-converge} uses the result from Lemma~\ref{lem:sum-of-d<f_e},
and the proof of Lemma~\ref{lem:sum-of-d<f_e} depends on Proposition~\ref{prop:I1=I2}.
\begin{proposition}\label{prop:I1=I2} 
Let $\mathcal{S}=\{\cup_v\cR^{(k)}_v\}^n_{k=1}$ be the sequence of 
route assignments generated from Algorithm~\ref{alg:RSHM}.
Let $\cU$ be a subset of vehicles and $(i,j)$ be an edge.
Let $\mI^*:=\Set*{k}{1\le k\le n,\; \exists w\in\mL^{(k)}_{i,j}\textrm{ such that }  
\mP^{(k)}_{w,i,j}\cap\cU\neq\emptyset }$
(recall that the set $\mL^{(k)}_{i,j}$ is defined in Definition~\ref{def:notations}).
If for any $1\le k\le n$ and any $w\in\mL^{(k)}_{i,j}$,
$\mP^{(k)}_{w,i,j}\cap\cU\neq\emptyset$ implies
$\cU\subseteq\mP^{(k)}_{w,i,j}$,
then it holds that $\Set*{k+1}{k\in\mI(n,v,i,j)}\subseteq\mI^*$,
and $\mI(n,v,i,j)=\mI(n,v^{\prime},i,j)$ for any $v,v^{\prime}\in\cU$. 
\end{proposition}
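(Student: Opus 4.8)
The plan is to exploit the structure of Definition~\ref{def:extr-idx}: among the three conditions defining $\mI(n,v,i,j)$, only condition~(2)---namely $v\in\cV^{(k+1)}_{i,j}$---depends on the vehicle $v$, while conditions (1) and (3) refer only to the index $k$ and to the platoon configurations at iterations $k$ and $n$. I would therefore first introduce the common index set $\mathcal{K}:=\{\,k : 1\le k\le n-2,\ \{\mP^{(k)}_{u,i,j}:u\in\cV^{(k)}_{i,j}\}=\{\mP^{(n)}_{u,i,j}:u\in\cV^{(n)}_{i,j}\}\,\}$ and observe that both $\mI(n,v,i,j)$ and $\mI(n,v',i,j)$ are subsets of $\mathcal{K}$ carved out by condition~(2). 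The entire proposition then reduces to controlling membership in $\cV^{(k+1)}_{i,j}$.

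For the inclusion $\{k+1:k\in\mI(n,v,i,j)\}\subseteq\mI^*$, with $v\in\cU$, I would fix $k\in\mI(n,v,i,j)$. Condition~(2) gives $v\in\cV^{(k+1)}_{i,j}$, so $v$ lies in a unique platoon on edge $(i,j)$ at iteration $k+1$; calling $w\in\mL^{(k+1)}_{i,j}$ its leader (using the convention that a trivial platoon $\{v\}$ has $v$ as its own leader, so such a $w$ always exists), we have $v\in\mP^{(k+1)}_{w,i,j}$. Since $v\in\cU$, this platoon meets $\cU$, and since $k\le n-2$ forces $1\le k+1\le n$, the defining condition of $\mI^*$ is satisfied, giving $k+1\in\mI^*$. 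Notably, this half of the argument does not use the nested hypothesis at all.

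The hypothesis is what drives the equality $\mI(n,v,i,j)=\mI(n,v',i,j)$ for $v,v'\in\cU$. Here I would take $k\in\mathcal{K}$ and suppose $k\in\mI(n,v,i,j)$, i.e.\ $v\in\cV^{(k+1)}_{i,j}$. The platoon $\mP^{(k+1)}_{w,i,j}$ containing $v$ meets $\cU$ (it contains $v\in\cU$), so the hypothesis applied at iteration $k+1\le n$ yields $\cU\subseteq\mP^{(k+1)}_{w,i,j}\subseteq\cV^{(k+1)}_{i,j}$; in particular $v'\in\cV^{(k+1)}_{i,j}$, whence $k\in\mI(n,v',i,j)$. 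Swapping $v$ and $v'$ gives the reverse implication, so condition~(2) holds for $v$ exactly when it holds for $v'$ throughout $\mathcal{K}$, and the two index sets coincide.

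I do not anticipate a genuine obstacle: the content lies almost entirely in recognizing that the configuration-similarity condition~(3) is vehicle-independent, after which the hypothesis precisely upgrades ``some vehicle of $\cU$ is present on $(i,j)$'' to ``the whole platoon containing that vehicle swallows $\cU$.'' The only points demanding care are the index shift $k\mapsto k+1$ (checking $1\le k+1\le n$ so that both the $\mI^*$ membership and the hypothesis are applicable) and the leader-of-a-trivial-platoon convention, which guarantees that a leader $w\in\mL^{(k+1)}_{i,j}$ is always available.
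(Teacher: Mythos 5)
Your proof is correct and follows essentially the same route as the paper's: the first inclusion comes directly from condition (2) of the definition of $\mI(n,v,i,j)$ together with $v\in\cU$, and the equality is obtained by applying the nested hypothesis to upgrade $v\in\cV^{(k+1)}_{i,j}$ to $\cU\subseteq\mP^{(k+1)}_{v,i,j}$ and then invoking symmetry in $v,v'$. Your explicit observations that conditions (1) and (3) are vehicle-independent, that $k\le n-2$ guarantees $1\le k+1\le n$, and that the trivial-platoon leader convention supplies the required $w\in\mL^{(k+1)}_{i,j}$ are careful points the paper leaves implicit.
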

\begin{proof} By definition of $\mI$, for any $k\in\mI(n,v,i,j)$, vehicle
$v$ is assigned to the edge $(i,j)$ at iteration $k+1$. 
By definition, $\mP^{(k+1)}_{v,i,j}$ is the platoon that  
contains $v$ on edge $(i,j)$ at iteration $k+1$.
Because $v\in\mP^{(k+1)}_{v,i,j}\cap\cU$, it implies
that $k+1\in\mI^*$ by the definition of $\mI^*$.
Now we show that $\mI(n,v,i,j)=\mI(n,v^{\prime},i,j)$ 
for any $v,v^{\prime}\in\cU$. 
For any iteration index $k\in\mI(n,v,i,j)$,
by the definition of $\mI(n,v,i,j)$, we must have:
\bdm
  (1)\; 1\le k\le n-2, \quad (2)\; v\in\cV^{(k+1)}_{i,j}, \quad
  (3)\; \Set*{\mP^{(k)}_{u,i,j}}{u\in\cV^{(k)}_{i,j}}=\Set*{\mP^{(n)}_{u,i,j}}{u\in\cV^{(n)}_{i,j}}.
\edm
Note that (2) implies that $\mP^{(k+1)}_{v,i,j}\cap\cU\neq\emptyset$,
and hence $\cU\subseteq\mP^{(k+1)}_{v,i,j}$ by the hypothesis in the proposition. 
Therefore, we also have 
$v^{\prime}\in\cU\subseteq\mP^{(k+1)}_{v,i,j}$, and hence
$v^{\prime}\in\cV^{(k+1)}_{i,j}$. Then we have $k\in\mI(n,v^{\prime},i,j)$
by definition. This shows that $\mI(n,v,i,j)\subseteq\mI(n,v^{\prime},i,j)$.
Similar arguments show that $\mI(n,v^{\prime},i,j)\subseteq\mI(n,v,i,j)$, and 
therefore, $\mI(n,v,i,j)=\mI(n,v^{\prime},i,j)$.
% \hfill$\blacksquare$
\end{proof}

\begin{lemma}\label{lem:sum-of-d<f_e}
Let $\cup_v\cR^*_v$ be an optimal route assignment.
Let $\mathcal{S}=\{\cup_v\cR^{(k)}_v\}^n_{k=1}$ be the sequence of 
route assignments generated from Algorithm~\ref{alg:RSHM}.
If $\mathcal{S}$ satisfy the non-crossing condition (Definition~\ref{def:noncrossing-cond})
with respect to $\cup_v\cR^*_v$, 
then for any $(i,j)\in\left(\cup_v\cR^*_v\right)\cap\left(\cup^n_{k=1}\cup_v\cR^{(k)}_v\right)$ 
and any platoon $\mP^*_{u,i,j}$ on $(i,j)$, the following inequality holds
\begin{equation}\label{eqn:sum-of-d<f_e}
  \sum_{v\in\mP^*_{u,i,j}} C^{(n+1)}_{v,i,j} \le \Cp_{i,j}(|\mP^*_{u,i,j}|).
\end{equation}
\end{lemma}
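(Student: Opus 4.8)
The plan is to fix the edge $(i,j)$ and the optimal platoon, write $\mP^*:=\mP^*_{u,i,j}$ and $p:=|\mP^*|$, and prove the bound by strong induction on $n$, splitting according to which of the two alternatives of the noncrossing condition (Definition~\ref{def:noncrossing-cond}) holds for $\mP^*$ at $(i,j)$. The whole argument is local to this single platoon and edge, so I only ever invoke the noncrossing condition for $\mP^*$; this is what lets the induction close up, and it is why I would actually prove the per-platoon statement rather than the globally quantified one. Throughout I would lean on Observation~\ref{obs:Cp_properties}, especially property~3 (the average cost $\Cp_{i,j}(l)/l$ is nonincreasing in $l$) and property~4 (the decomposition $\Cp_{i,j}(l)=\Cph_{i,j}(l_1)+(1-\sigma^f)(l-l_1)C_{i,j}$), together with the standing assumption $\sigma^l<\sigma^f$.

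First I would treat the case where local nested condition (i) or (ii) holds for $\mP^*$ (the first alternative of Definition~\ref{def:noncrossing-cond}); here no recursion is needed. Let $\mP$ be the iteration-$n$ platoon of Definition~\ref{def:nest-cond}(i) (or $\mP=\emptyset$ under (ii)). Using the ``furthermore'' clauses of Definition~\ref{def:nest-cond}, I would argue that every $v\in\mP^*\cap\mP$ satisfies $\mP^{(n)}_{v,i,j}=\mP$, so $(i,j)\in\cR^{(n)}_v$ and $C^{(n+1)}_{v,i,j}=\Cp_{i,j}(|\mP|)/|\mP|$ (case~1 of \eqref{eqn:C^n_ve}); while every $v\in\mP^*\setminus\mP$ lies outside $\cV^{(n)}_{i,j}$ and, because the noncrossing hypothesis forces $\mI(n,v,i,j)=\emptyset$ for such $v$, satisfies $C^{(n+1)}_{v,i,j}=(1-\sigma^f)C_{i,j}$ (case~2). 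Writing $a:=|\mP^*\cap\mP|$ and $b:=p-a$, and using $|\mP|\ge\max\{a,1+\bs{1}\{p>1\}\}$, I would bound $a\,\Cp_{i,j}(|\mP|)/|\mP|\le\Cph_{i,j}(a)$: for $a\ge2$ this is property~3, and for $a=1$ it reduces to $\Cp_{i,j}(2)/2\le(1-\sigma^l)C_{i,j}$, which is exactly $\sigma^l\le\sigma^f$. Summing and applying property~4 with $l_1=a$ then yields $\sum_{v\in\mP^*}C^{(n+1)}_{v,i,j}\le\Cph_{i,j}(a)+(1-\sigma^f)bC_{i,j}=\Cp_{i,j}(p)$, with the degenerate $p=1$ subcase handled directly by property~3.

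Next I would treat the second alternative, where condition (ii) holds for $\mP^*$: any iteration-$k$ platoon meeting $\mP^*$ in fact contains $\mP^*$. This is precisely the hypothesis of Proposition~\ref{prop:I1=I2} (with $\cU=\mP^*$), which I would invoke to conclude that $\mI(n,v,i,j)$ is the same for all $v\in\mP^*$; hence all vehicles of $\mP^*$ take the same branch of \eqref{eqn:C^n_ve}. If some (equivalently, every) $v\in\mP^*$ has $(i,j)\in\cR^{(n)}_v$, then condition (ii) forces $\mP^*\subseteq\mP^{(n)}_{v,i,j}=:\mP^{(n)}$ with $|\mP^{(n)}|\ge p$, and property~3 gives $\sum_{v\in\mP^*}\Cp_{i,j}(|\mP^{(n)}|)/|\mP^{(n)}|\le\Cp_{i,j}(p)$. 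If instead $\mP^*\cap\cV^{(n)}_{i,j}=\emptyset$ and all $\mI(n,v,i,j)=\emptyset$, the costs are $(1-\sigma^f)C_{i,j}$ and the bound $p(1-\sigma^f)C_{i,j}\le\Cp_{i,j}(p)$ again reduces to $\sigma^l\le\sigma^f$.

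The crux, and the step I expect to be the main obstacle, is the remaining subcase of condition (ii): $\mP^*\cap\cV^{(n)}_{i,j}=\emptyset$ with a common nonempty $\mI(n,v,i,j)$, so that $C^{(n+1)}_{v,i,j}=C^{(I+2)}_{v,i,j}$ for the common maximal index $I:=I(n,v,i,j)\le n-2$ (case~3 of \eqref{eqn:C^n_ve}). Here I would set $m:=I+1\le n-1$ and recognize $C^{(I+2)}_{v,i,j}=C^{(m+1)}_{v,i,j}$. The key observation is that condition (ii) for $\mP^*$ is inherited by every prefix of $\mathcal{S}$, so it holds for $\{\cup_v\cR^{(\ell)}_v\}^{m}_{\ell=1}$; moreover $v\in\cV^{(m)}_{i,j}$ for $v\in\mP^*$ (from $I\in\mI(n,v,i,j)$), so $(i,j)$ is explored by iteration $m$ and lies in the domain where the claim applies. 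Since $m<n$, the induction hypothesis gives $\sum_{v\in\mP^*}C^{(m+1)}_{v,i,j}\le\Cp_{i,j}(p)$, the desired bound. I anticipate the delicate points to be (a) justifying that condition (ii) genuinely passes to prefixes and that Proposition~\ref{prop:I1=I2} applies verbatim, and (b) framing the induction as a per-platoon statement so that the recursion in case~3 never leaves the regime where condition (ii) is available, rather than demanding the full noncrossing condition of the truncated sequence.
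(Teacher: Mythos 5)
Your proof is correct and follows essentially the same route as the paper's: the same case split on the two alternatives of Definition~\ref{def:noncrossing-cond}, the same use of Observation~\ref{obs:Cp_properties} (properties 3 and 4) in the nested case, and the same appeal to Proposition~\ref{prop:I1=I2} under condition (ii). The one place you diverge is the step you flag as the crux: the paper needs no induction on $n$ there. Since $I:=I(n,v,i,j)\in\mI(n,v,i,j)$ implies $v\in\cV^{(I+1)}_{i,j}$, the cost $C^{(I+2)}_{v,i,j}$ was set by case~1 of \eqref{eqn:C^n_ve} at iteration $I+1$, i.e., it is already a platoon average $\Cp_{i,j}(|\mP^{(I+1)}_{v,i,j}|)/|\mP^{(I+1)}_{v,i,j}|$; condition (ii) applied at iteration $I+1$ gives $\mP^*_{u,i,j}\subseteq\mP^{(I+1)}_{v,i,j}$, and property~3 of Observation~\ref{obs:Cp_properties} finishes immediately. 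So the case-3 recursion bottoms out after exactly one step, and the delicate points you anticipate (prefix inheritance of condition (ii), per-platoon framing of the induction) can be bypassed entirely --- though your inductive version is also sound, since condition (ii) is universally quantified over $k$ and hence does restrict to prefixes. Your use of Proposition~\ref{prop:I1=I2} to force all of $\mP^*_{u,i,j}$ into the same branch of \eqref{eqn:C^n_ve} is, if anything, a slight cleanup of the paper's bookkeeping, which instead carries a mixed partition $\cV_0\cup\cV_1$ through the estimate.
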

\begin{proof}
For any edge $(i,j)\in\left(\cup_v\cR^*_v\right)\cap\left(\cup^n_{k=1}\cup_v\cR^{(k)}_v\right)$ 
and any platoon $\mP^*_{u,i,j}$, we divide the proof for two cases.

Case (a): Condition $(i)$ of Definition~\ref{def:noncrossing-cond} is satisfied at $\mP^*_{u,i,j}$. 
This again splits into two cases: 
\begin{itemize}
	\item Case (a.1): the local nested condition (i) is satisfied by $\cup_v\cR^{(n)}_v$ at $\mP^*_{u,i,j}$;
	\item Case (a.2): the local nested condition (ii) is satisfied by $\cup_v\cR^{(n)}_v$ at $\mP^*_{u,i,j}$.
\end{itemize}
We first prove the result for Case (a.1).
Let $\mP^{(n)}_{u^{\prime},i,j}$ be the platoon from $\cV^{(n)}_{i,j}$
such that $\mP^{(n)}_{u^{\prime},i,j}\cap\mP^*_{u,i,j}$ is non-empty,
$|\mP^{(n)}_{u^{\prime},i,j}|\ge 1+\bs{1}\{|\mP^*_{u,i,j}|>1\}$,
and $\mP^{(n)}_{w,i,j}\cap\mP^*_{u,i,j}=\emptyset$ for any platoon
$\mP^{(n)}_{w,i,j}$ in $\cV^{(n)}_{i,j}$ with $w\in\cV^{(n)}_{i,j}\setminus\mP^{(n)}_{u^{\prime},i,j}$.
Also by Definition~\ref{def:noncrossing-cond}(i), the index set $\mI(n,v,i,j)$ 
is empty for each $v\in\mP^*_{u,i,j}\setminus\mP^{(n)}_{u^{\prime},i,j}$.
If $|\mP^*_{u,i,j}|=1$, $\mP^{(n)}_{u^{\prime},i,j}$ is a single vehicle platoon 
and \eqref{eqn:sum-of-d<f_e} holds trivially.
Now suppose $|\mP^*_{u,i,j}|>1$. From \eqref{eqn:C^n_ve} Case 1, 
we know that that $C^{(n+1)}_{v,i,j}=\Cp_{i,j}(|\mP^{(n)}_{u^{\prime},i,j}|)/|\mP^{(n)}_{u^{\prime},i,j}|$ 
for each $v\in\mP^{(n)}_{u^{\prime},i,j}$. 
From \eqref{eqn:C^n_ve} Case 2, 
$C^{(n+1)}_{v,i,j}=(1-\sigma^f)C_{i,j}$ for each $v\in\mP^*_{u,i,j}\setminus\mP^{(n)}_{u^{\prime},i,j}$.
Then it follows that 
\begin{equation}
\begin{aligned}
&\sum_{v\in\mP^*_{u,i,j}} C^{(n+1)}_{v,i,j} = \sum_{v\in\mP^*_{u,i,j}\cap\mP^{(n)}_{u^{\prime},i,j}} C^{(n+1)}_{v,i,j} + \sum_{v\in \mP^*_{u,i,j}\setminus\mP^{(n)}_{u^{\prime},i,j}} C^{(n+1)}_{v,i,j} \\
&=\sum_{v\in\mP^*_{u,i,j}\cap\mP^{(n)}_{u^{\prime},i,j}} \Cp_{i,j}(|\mP^{(n)}_{u^{\prime},i,j}|)/|\mP^{(n)}_{u^{\prime},i,j}| + \sum_{v\in \mP^*_{u,i,j}\setminus\mP^{(n)}_{u^{\prime},i,j}} (1-\sigma^f)C_{i,j}   \\
&=\Cp_{i,j}(|\mP^{(n)}_{u^{\prime},i,j}|)\frac{|\mP^*_{u,i,j}\cap\mP^{(n)}_{u^{\prime},i,j}|}{|\mP^{(n)}_{u^{\prime},i,j}|} + |\mP^*_{u,i,j}\setminus\mP^{(n)}_{u^{\prime},i,j}|(1-\sigma^f)C_{i,j}  \\
&=\frac{|\mP^*_{u,i,j}\cap\mP^{(n)}_{u^{\prime},i,j}|}{|\mP^{(n)}_{u^{\prime},i,j}|}
\Big[(1-\sigma_l)+(1-\sigma_f)(|\mP^{(n)}_{u^{\prime},i,j}|-1)\Big]C_{i,j} + |\mP^*_{u,i,j}\setminus\mP^{(n)}_{u^{\prime},i,j}|(1-\sigma^f)C_{i,j} \\
&=\frac{|\mP^*_{u,i,j}\cap\mP^{(n)}_{u^{\prime},i,j}|}{|\mP^{(n)}_{u^{\prime},i,j}|}(1-\sigma_l)C_{i,j}
+\Big(\frac{|\mP^*_{u,i,j}\cap\mP^{(n)}_{u^{\prime},i,j}|}{|\mP^{(n)}_{u^{\prime},i,j}|}(|\mP^{(n)}_{u^{\prime},i,j}|-1)
 +|\mP^*_{u,i,j}\setminus\mP^{(n)}_{u^{\prime},i,j}| \Big)(1-\sigma_f)C_{i,j} \\
&=\frac{|\mP^*_{u,i,j}\cap\mP^{(n)}_{u^{\prime},i,j}|}{|\mP^{(n)}_{u^{\prime},i,j}|}(\sigma_f-\sigma_l)C_{i,j}
+|\mP^*_{u,i,j}|(1-\sigma_f)C_{i,j} \\
&\le (1-\sigma_l)C_{i,j}+(|\mP^*_{u,i,j}|-1)(1-\sigma_f)C_{i,j} \\
&= \Cp_{i,j}(|\mP^*_{u,i,j}|)
\end{aligned}
\end{equation}
Therefore, \eqref{eqn:sum-of-d<f_e} holds in Case (a.1).
Now we need to prove that the result holds in Case (a.2).
Since by Definition~\ref{def:nest-cond}(ii) we have 
$\mP^*_{u,i,j}\cap\cV^{(n)}_{i,j}=\emptyset$ and by 
Definition~\ref{def:noncrossing-cond}(i), the index set $\mI(n,v,i,j)$ 
is empty for each $v\in\mP^*_{u,i,j}$.
Then from \eqref{eqn:C^n_ve} Case 2, 
we have $C^{(n+1)}_{v,i,j}=(1-\sigma^f)C_{i,j}$ for every $v\in\mP^*_{u,i,j}$.
It then follows that 
\begin{displaymath}
\begin{aligned}
&\sum_{v\in\mP^*_{u,i,j}} C^{(n+1)}_{v,i,j}=|\mP^*_{u,i,j}|(1-\sigma^f)C_{i,j}\le\Cp_{i,j}(|\mP^*_{u,i,j}|).
\end{aligned}
\end{displaymath}
This proves that \eqref{eqn:sum-of-d<f_e} holds in Case (a.2).

Case (b): Condition $(ii)$ of Definition~\ref{def:noncrossing-cond}
is satisfied at $\mP^*_{u,i,j}$. \newline
Let $\mI^*:=\Set*{k}{1\le k\le n,\; \exists w\in\mL^{(k)}_{i,j}\textrm{ such that }  
\mP^{(k)}_{w,i,j}\cap\mP^*_{u,i,j}\neq\emptyset }$, for the set $\mL^{(k)}_{i,j}$
defined in Definition~\ref{def:notations}. 
For a given $k\in\mI^*$, let $\mP^{(k)}_{w,i,j}$ be the platoon from $\cV^{(k)}_{i,j}$
such that $\mP^{(k)}_{w,i,j}\cap\mP^*_{u,i,j}\neq\emptyset$ 
(the uniqueness of $\mP^{(k)}_{w,i,j}$ is guaranteed by Definition~\ref{def:noncrossing-cond}($ii$)). 
We also have $\mP^*_{u,i,j}\subseteq\mP^{(k)}_{w,i,j}$ by 
condition $(ii)$ of Definition~\ref{def:noncrossing-cond}. 
We consider the following two sub-cases: \newline
Case (b.1): $n\in\mI^*$.
In this case there exists a platoon $\mP^{(n)}_{w,i,j}$ such that 
$\mP^*_{u,i,j}\subseteq\mP^{(n)}_{w,i,j}$ by condition $(ii)$ of Definition~\ref{def:noncrossing-cond}. 
Then it follows that
\begin{equation}
  \sum_{v\in\mP^*_{u,i,j}}C^{(n+1)}_{v,i,j} = \sum_{v\in\mP^*_{u,i,j}}\frac{\Cp_{i,j}(|\mP^{(n)}_{w,i,j}|)}{|\mP^{(n)}_{w,i,j}|}
  \le \sum_{v\in\mP^*_{u,i,j}}\frac{\Cp_{i,j}(|\mP^*_{u,i,j}|)}{|\mP^*_{u,i,j}|} = \Cp_{i,j}(|\mP^*_{u,i,j}|).
\end{equation}
Case (b.2): $n\notin\mI^*$. 
Let $\cV_0=\Set*{v\in\mP^*_{u,i,j}}{\mI(n,v,i,j)=\emptyset}$ 
and $\cV_1=\mP^*_{u,i,j}\setminus\cV_0$.
For $v\in\cV_1$, let $n_v=I(n,v,i,j)+2$.
Then the left side of \eqref{eqn:sum-of-d<f_e} can be upper bounded as follows:
\begin{equation}\label{eqn:up-bd-sum-d}
\begin{aligned}
&\sum_{v\in\mP^*_{u,i,j}}C^{(n+1)}_{v,i,j} = \sum_{v\in\cV_0} C^{(n+1)}_{v,i,j} + \sum_{v\in\cV_1} C^{(n+1)}_{v,i,j} 
=\sum_{v\in\cV_0}(1-\sigma^f)C_{i,j} + \sum_{v\in\cV_1} C^{(n_v)}_{v,i,j} \\
&=\sum_{v\in\cV_0}(1-\sigma^f)C_{i,j} + \sum_{v\in\cV_1} \frac{\Cp_{i,j}(|\mP^{(n_v)}_{v,i,j}|)}{|\mP^{(n_v)}_{v,i,j}|} 
\le \sum_{v\in\cV_0}(1-\sigma^f)C_{i,j} + \sum_{v\in\cV_1} \frac{\Cp_{i,j}(|\mP^*_{u,i,j}|)}{|\mP^*_{u,i,j}|} \\
&\le \sum_{v\in\cV_0}(1-\sigma^f)C_{i,j} + \sum_{v\in\cV_1} \frac{\Cp_{i,j}(|\cV_1|)}{|\cV_1|} 
= |\cV_0|(1-\sigma^f)C_{i,j} + \Cp_{i,j}(|\cV_1|) = \Cp_{i,j}(|\mP^*_{u,i,j}|),
\end{aligned}
\end{equation}
where we apply \eqref{eqn:C^n_ve} in the second and third equalities above.
In the first inequality, we use the following facts: 
$\mP^*_{u,i,j}\subseteq\mP^{(n_v)}_{v,i,j}$ based on the assumption of Case (b),
$\mP^{(n_v)}_{v,i,j}=\mP^{(n_{v^{\prime}})}_{v^{\prime},i,j}$ for any $v,v^{\prime}\in\mP^*_{u,i,j}$
(results of Proposition~\ref{prop:I1=I2}),
and the third property of $\Cp_{i,j}$ given in Observation~\ref{obs:Cp_properties}.  
In conclusion, \eqref{eqn:sum-of-d<f_e} holds for Cases (a) and (b).  
% \hfill$\blacksquare$
\end{proof}

\begin{proof}{Proof of Theorem~\ref{thm:RSHM-alg-converge}.}\label{proof_of_RSHM_theorem}
{\bf{(a)}} The number of possible \ref{opt:RDP} routes is finite. 
If the algorithm does not terminate in finitely many iterations, 
then there exists an \ref{opt:RDP} route that is generated at infinitely many
iterations in the algorithm, which will satisfy the termination
criteria.

{\bf{(b)}} By assumption the algorithm terminates at iteration $n$
and $\cR^{(n)}_v=\cR^{(n-1)}_v$ for all $v\in\cV$. 
We must have $z^{(n-1)}=\Frdp{n}(\cup_v\cR^{(n-1)}_v)=\Frdp{n}(\cup_v\cR^{(n)}_v)=z^{(n)}$.
To understand these relations, we note that the first equality is due to the way of updating 
presumed fuel cost given in the first case of \eqref{eqn:C^n_ve}.
That is at any iteration $k\ge 2$ if we assign vehicles to their previous routes 
$\cup_v\cR^{(k-1)}_v$ in the routing problem \ref{opt:RDP}-$(k)$,
then the objective value will be equal to the 
total fuel cost $z^{(k-1)}$ obtained by solving the scheduling problem 
\ref{opt:SPF}-$(k-1)$. The second equality is due to the hypothesis 
that $\cR^{(n)}_v=\cR^{(n-1)}_v$ for all $v$.
The third equality is due to the scheduling problem \ref{opt:SPF}-$(n)$
being identical to \ref{opt:SPF}-$(n-1)$ because
both problems have the same input routes by the hypothesis.
We also have $\Frdp{n}(\cup_v\cR^{(n)}_v)\le\Frdp{n}(\cup_v\cR^*_v)$, 
because $\cup_v\cR^{(n)}_v$ is an optimal solution to the problem \ref{opt:RDP}-$(n)$. 
By the way the fuel costs are updated in \eqref{eqn:C^n_ve}, it follows that
\begin{equation}\label{eqn:F^nR^*}
\begin{aligned}
&\Frdp{n}(\cup_v\cR^*_v) = 
\sum_{(i,j)\in(\cup_v\cR^*_v)\setminus(\cup_v\cR^{(n-1)}_v)}\Cp_{i,j}(|\cV^*_{i,j}|)+
\sum_{(i,j)\in(\cup_v\cR^*_v)\cap(\cup_v\cR^{(n-1)}_v)}\sum_{v\in\cV^*_{i,j}} C^{(n)}_{v,i,j} \\
=&\sum_{(i,j)\in(\cup_v\cR^*_v)\setminus(\cup_v\cR^{(n-1)}_v)}\Cp_{i,j}(|\cV^*_{i,j}|)+
\sum_{(i,j)\in(\cup_v\cR^*_v)\cap(\cup_v\cR^{(n-1)}_v)}\sum_{u\in\mL^*_{i,j}}
\sum_{v\in\mP^*_{u,i,j}}C^{(n)}_{v,i,j} \\
=&\sum_{(i,j)\in(\cup_v\cR^*_v)\setminus(\cup_v\cR^{(n-1)}_v)}\Cp_{i,j}(|\cV^*_{i,j}|)\\
&+ \sum_{(i,j)\in(\cup_v\cR^*_v)\cap(\cup_v\cR^{(n-1)}_v)}\sum_{u\in\mL^*_{i,j}}
\left(\sum_{v\in\cU^{(n-1)}_{u,i,j}}C^{(n)}_{v,i,j}+\sum_{v\in\mP^*_{u,i,j}\setminus\cU^{(n-1)}_{u,i,j}}C^{(n)}_{v,i,j} \right) \\
=&\sum_{(i,j)\in(\cup_v\cR^*_v)\setminus(\cup_v\cR^{(n-1)}_v)}\Cp_{i,j}(|\cV^*_{i,j}|) \\
&+\sum_{(i,j)\in(\cup_v\cR^*_v)\cap(\cup_v\cR^{(n-1)}_v)}\sum_{u\in\mL^*_{i,j}}
\qquad \left(\sum_{v\in\cU^{(n-1)}_{u,i,j}}C^{(n)}_{v,i,j}+|\mP^*_{u,i,j}\setminus\cU^{(n-1)}_{u,i,j}|(1-\sigma^f)C_{i,j} \right) 
\end{aligned}
\end{equation}  
By definition, the set $\cU^{(n-1)}_{u,i,j}$ can be partitioned into the two subsets: 
$\mP^*_{u,i,j}\cap\cV^{(n-1)}_{i,j}$ and 
$\mP^*_{u,i,j}\cap\left(\Set*{v}{\mI(n-1,v,i,j)\neq\emptyset}\setminus\cV^{(n-1)}_{i,j}\right)$.
For any vehicle $v$ in the first subset, 
we have $C^{(n)}_{v,i,j}=\Cp_{i,j}(|\mP^{(n-1)}_{v,i,j}|)/|\mP^{(n-1)}_{v,i,j}|$. 
While for any vehicle $v$ in the second subset, we have 
$C^{(n)}_{v,i,j}=C^{(I(n-1,v,i,j)+1)}_{v,i,j}=\Cp_{i,j}(|\mP^{(I(n-1,v,i,j)+1)}_{v,i,j}|)/|\mP^{(I(n-1,v,i,j)+2)}_{v,i,j}|$.
Using the definition of $n_v$ given in the statement of the theorem, we have
\begin{equation}\label{eqn:sum-d}
  \sum_{v\in\cU^{(n-1)}_{u,i,j}}C^{(n)}_{v,i,j}=
  \sum_{v\in\cU^{(n-1)}_{u,i,j}}\frac{\Cp_{i,j}(|\mP^{(n_v)}_{v,i,j}|)}{|\mP^{(n_v)}_{v,i,j}|}.
\end{equation}
Substituting \eqref{eqn:sum-d} into \eqref{eqn:F^nR^*} gives the following inequality:
\beq\label{eqn:Fuel^n}
\ba
&z^{(n)}\le\Frdp{n}(\cup_v\cR^*_v)\le\sum_{(i,j)\in(\cup_v\cR^*_v)\setminus(\cup_v\cR^{(n-1)}_v)}\Cp_{i,j}(|\cV^*_{i,j}|) \\
&+\sum_{(i,j)\in(\cup_v\cR^*_v)\cap(\cup_v\cR^{(n-1)}_v)}\sum_{u\in\mL^*_{i,j}}
\left(\sum_{v\in\cU^{(n-1)}_{u,i,j}}\frac{\Cp_{i,j}(|\mP^{(n_v)}_{v,i,j}|)}{|\mP^{(n_v)}_{v,i,j}|}+|\mP^*_{u,i,j}\setminus\cU^{(n-1)}_{u,i,j}|(1-\sigma^f)C_{i,j} \right).
\ea
\eeq
On the other side, the optimal fuel consumption $z^*$ can be written as:
\begin{equation}\label{eqn:APP-G(R*)}
\begin{aligned}
  &z^*=\sum_{(i,j)\in(\cup_v\cR^*_v)\setminus(\cup_v\cR^{(n-1)}_v)}
  \sum_{u\in\mL^*_{i,j}}\Cp_{i,j}(|\mP^*_{u,i,j}|)+ 
  \sum_{(i,j)\in(\cup_v\cR^*_v)\cap(\cup_v\cR^{(n-1)}_v)} \sum_{u\in\mL^*_{i,j}}\Cp_{i,j}(|\mP^*_{u,i,j}|)  \\
  &\ge \sum_{(i,j)\in(\cup_v\cR^*_v)\setminus(\cup_v\cR^{(n-1)}_v)}
  \sum_{u\in\mL^*_{i,j}}\Cp_{i,j}(|\mP^*_{u,i,j}|)+ \\
  &\qquad \sum_{(i,j)\in(\cup_v\cR^*_v)\cap(\cup_v\cR^{(n-1)}_v)} \sum_{u\in\mL^*_{i,j}}
  \left(\Cph_{i,j}(|\cU^{(n-1)}_{u,i,j}|)+|\mP^*_{u,i,j}\setminus\cU^{(n-1)}_{u,i,j}|(1-\sigma^f)C_{i,j}\right) \\
  &\ge\sum_{(i,j)\in(\cup_v\cR^*_v)\setminus(\cup_v\cR^{(n-1)}_v)}
  \sum_{u\in\mL^*_{i,j}}\Cp_{i,j}(|\mP^*_{u,i,j}|)+ \\
  &\sum_{(i,j)\in(\cup_v\cR^*_v)\cap(\cup_v\cR^{(n-1)}_v)} \sum_{u\in\mL^*_{i,j}}
  \left(\sum_{v\in\cU^{(n-1)}_{u,i,j}}\frac{\Cph_{i,j}(|\cU^{(n-1)}_{u,i,j}|)}{|\cU^{(n-1)}_{u,i,j}|}
  +|\mP^*_{u,i,j}\setminus\cU^{(n-1)}_{u,i,j}|(1-\sigma^f)C_{i,j}\right),
\end{aligned}
\end{equation}
where the first inequality uses the property: 
$\Cp_{i,j}(l)\ge\Cph_{i,j}(l_1)+(l-l_1)(1-\sigma^f)C_{i,j}$ for $0\le l_1\le l$.
After combining \eqref{eqn:APP-G(R*)} and \eqref{eqn:Fuel^n}, we obtain
\begin{equation}
\begin{aligned}
&z^{(n)}-z^*\le
\sum_{(i,j)\in(\cup_v\cR^*_v)\setminus(\cup_v\cR^{(n-1)}_v)}\Cp_{i,j}(|\cV^*_{i,j}|)
-\sum_{(i,j)\in(\cup_v\cR^*_v)\setminus(\cup_v\cR^{(n-1)}_v)}\sum_{u\in\mL^*_{i,j}}\Cp_{i,j}(|\mP^*_{u,i,j}|) \\
&\qquad +\sum_{(i,j)\in(\cup_v\cR^*_v)\cap(\cup_v\cR^{(n-1)}_v)}
\sum_{u\in\mL^*_{i,j}}\sum_{v\in\cU^{(n-1)}_{u,i,j}}
\left(\frac{\Cp_{i,j}(|\mP^{(n_v)}_{v,i,j}|)}{|\mP^{(n_v)}_{v,i,j}|} - \frac{\Cph_{i,j}(|\cU^{(n-1)}_{u,i,j}|)}{|\cU^{(n-1)}_{u,i,j}|}\right) \\
&\le\sum_{(i,j)\in(\cup_v\cR^*_v)\setminus(\cup_v\cR^{(n-1)}_v)}-\sigma^f(|\mL^*_{i,j}|-1) \\
&\qquad + \sum_{(i,j)\in(\cup_v\cR^*_v)\cap(\cup_v\cR^{(n-1)}_v)}
\sum_{u\in\mL^*_{i,j}}\sum_{v\in\cU^{(n-1)}_{u,i,j}}
\left(\frac{\Cp_{i,j}(|\mP^{(n-1)}_{v,i,j}|)}{|\mP^{(n-1)}_{v,i,j}|} - \frac{\Cph_{i,j}(|\cU^{(n-1)}_{u,i,j}|)}{|\cU^{(n-1)}_{u,i,j}|}\right),
\end{aligned}
\end{equation}
where the last inequality uses the property 
$\Cp(|\cV^*_{i,j}|)-\sum_{u\in\mL^*_{i,j}}\Cp_{i,j}(|\mP^*_{u,i,j}|)\le-\sigma^f(|\mL^*_{i,j}|-1)$
for any partition of vehicle set $\cV^*_{i,j}$ into $|\mL^*_{i,j}|$ platoons
on an edge $(i,j)$.

{\bf{(c)}} 
Depending on whether or not an edge $(i,j)\in\cup_v\cR^*_v$ belongs to previous route assignments 
$\cup^{n-1}_{k=1}\left(\cup_v\cR^{(k)}_v\right)$, 
the objective value $F^{(n)}_{\tn{\ref{opt:RDP}}}(\cup_v\cR^*_v)$ can be decomposed into two terms:
\begin{equation}\label{eqn:F^n(R*)}
\begin{aligned}
  F^{(n)}_{\tn{\ref{opt:RDP}}}(\cup_v\cR^*_v) 
&=\sum_{(i,j)\in (\cup_v\cR^*_v)\setminus(\cup^{n-1}_{k=1}\cup_v\cR^{(k)}_v)}\Cp_{i,j}\left(|\cV^*_{i,j}|\right)
+ \sum_{(i,j)\in (\cup_v\cR^*_v)\cap(\cup^{n-1}_{k=1}\cup_v\cR^{(k)}_v)} \sum_{v\in\cV^*_{i,j}}  C^{(n)}_{v,i,j},
\end{aligned}
\end{equation}
where the first term in \eqref{eqn:F^n(R*)} comes from combining the first three terms 
in \eqref{eqn:RDP-obj-update} when evaluated at 
$(i,j)\in (\cup_v\cR^*_v)\setminus(\cup^{n-1}_{k=1}\cup_v\cR^{(k)}_v)$.
Using the property of $\Cp_{i,j}$, the first term in \eqref{eqn:F^n(R*)} can be estimated as:
\begin{equation}\label{eqn:first-term}
  \sum_{(i,j)\in (\cup_v\cR^*_v)\setminus(\cup^{n-1}_{k=1}\cup_v\cR^{(k)}_v)} \Cp_{i,j}\left(|\cV^*_{i,j}|\right)
  \le \sum_{(i,j)\in (\cup_v\cR^*_v)\setminus(\cup^{n-1}_{k=1}\cup_v\cR^{(k)}_v)} \sum_{u\in\mL^*_{i,j}} \Cp_{i,j}(|\mP^*_{u,i,j}|).
\end{equation}
Applying Lemma~\ref{lem:sum-of-d<f_e} to $\{\cup\cR^{(k)}\}^{n-1}_{k=1}$, 
the second term in \eqref{eqn:F^n(R*)} can be estimated as:
\begin{equation}\label{eqn:second-term}
\begin{aligned}
 & \sum_{(i,j)\in (\cup_v\cR^*_v)\cap(\cup^{n-1}_{k=1}\cup_v\cR^{(k)}_v)} \sum_{v\in\cV^*_{i,j}}  C^{(n)}_{v,i,j}
  = \sum_{(i,j)\in (\cup_v\cR^*_v)\cap(\cup^{n-1}_{k=1}\cup_v\cR^{(k)}_v)} \sum_{u\in\mL^*_{i,j}}\sum_{v\in\mP^*_{u,i,j}} C^{(n)}_{v,i,j} \\
 &\qquad\qquad \le \sum_{(i,j)\in (\cup_v\cR^*_v)\cap(\cup^{n-1}_{k=1}\cup_v\cR^{(k)}_v)} \sum_{u\in\mL^*_{i,j}} \Cp_{i,j}(|\mP^*_{u,i,j}|)
\end{aligned}
\end{equation}
Substituting \eqref{eqn:first-term} and \eqref{eqn:second-term} into \eqref{eqn:F^n(R*)} gives
\begin{equation}
\begin{aligned}
  F^{(n)}_{\tn{\ref{opt:RDP}}}(\cup_v\cR^*_v)&\le \sum_{(i,j)\in (\cup_v\cR^*_v)\setminus(\cup^{n-1}_{k=1}\cup_v\cR^{(k)}_v)}\Cp_{i,j}\left(|\cV^*_{i,j}|\right)
  + \sum_{(i,j)\in (\cup_v\cR^*_v)\cap(\cup^{n-1}_{k=1}\cup_v\cR^{(k)}_v)} \sum_{u\in\mL^*_{i,j}} \Cp_{i,j}(|\mP^*_{u,i,j}|) \\
  &\le \sum_{(i,j)\in\cup_v\cR^*_v}\sum_{u\in\mL^*_{i,j}} \Cp_{i,j}(|\mP^*_{u,i,j}|) = z^*, 
\end{aligned}
\end{equation}
which concludes the proof of Part (c).

{\bf{(d)}}  We have
$\cR^{(n)}_v=\cR^{(n-1)}_v$ for all $v\in\cV$ by assumption, and 
$F^{(n)}_{\tn{\ref{opt:RDP}}}(\cup_v\cR^{(n)})=z^{(n)}$ shown in the first
paragraph of the proof of Part (b). 
We will show that $z^{(n)}=z^*$, 
which implies that $\cup_v\cR^n_v$ is an optimal route assignment. 
Suppose for contradiction that $z^{(n)}>z^*$. 
It follows that $F^{(n)}_{\tn{\ref{opt:RDP}}}(\cup_v\cR^n_v)=z^{(n)}>z^*$,
while by the assumed non-crossing condition and Part (c), we have
$F^{(n)}_{\tn{\ref{opt:RDP}}}(\cup_v\cR^*_v)\le z^*$.
Therefore, $F^{(n)}_{\tn{\ref{opt:RDP}}}(\cup_v\cR^*_v)<F^{(n)}_{\tn{\ref{opt:RDP}}}(\cup_v\cR^n_v)$,
which contradicts $\cup_v\cR^{(n)}_v$ being an optimal solution of \ref{opt:RDP}-$(n)$. 

{\bf{(e)}} Let $n$ be the iteration the algorithm terminates.
By assumption we have $\cR^{(n)}_u=\cR^{(n-1)}_{u}$
for all $u\in\cV$, and hence $z^{(n)}=z^{(n-1)}$.
We first consider the total fuel cost obtained at iteration $(n-1)$.
This value is
\beq
z^{(n-1)}=\sum_{(i,j)\in\cup_u\cR^{(n-1)}_u}\sum_{v\in\cL^{(n-1)}_{i,j}}\Cp_{i,j}(|\cP^{(n-1)}_{v,i,j}|).
\eeq
Consider an optimal solution of the CVPP in which $\{\cR^*_u\}_{u\in\cV}$ are the optimal routes.
The optimal fuel cost is 
\beq\label{eqn:z*}
z^*=\sum_{(i,j)\in\cup_u\cR^*_u}\sum_{v\in\cL^*_{i,j}}\Cp_{i,j}(|\cP^*_{v,i,j}|).
\eeq
We will consider the following three subsets of edges separately:
\beq
\ba
&E_1=\left(\cup_u\cR^{(n-1)}_u\right)\setminus\left(\cup_u\cR^{*}_u\right), \\
&E_2=\left(\cup_u\cR^{(n-1)}_u\right)\cap\left(\cup_u\cR^{*}_u\right), \\
&E_3=\left(\cup_u\cR^{*}_u\right)\setminus\left(\cup_u\cR^{(n-1)}_u\right).
\ea
\eeq
Let $\Fopt{n}$ be the optimal objective of \ref{opt:RDP}-$(n)$.
\beq\label{eqn:Fopt-n}
\ba
&\Fopt{n}=\Frdp{n}(\cup_u\cR^{(n)}_u)=\Frdp{n}(\cup_u\cR^{(n-1)}_u)
=\sum_{(i,j)\in\cup_u\cR^{(n-1)}_u}\sum_{v\in\cV^{(n-1)}_{i,j}}C^{(n)}_{v,i,j} \\
&=\sum_{(i,j)\in\cup_u\cR^{(n-1)}_u}\sum_{v\in\cV^{(n-1)}_{i,j}}\frac{\Cp_{i,j}(|\cP^{(n-1)}_{v,i,j}|)}{|\cP^{(n-1)}_{v,i,j}|}
=\sum_{(i,j)\in\cup_u\cR^{(n-1)}_u}\sum_{v\in\cL^{(n-1)}_{i,j}}\Cp_{i,j}(|\cP^{(n-1)}_{v,i,j}|)=z^{(n-1)}
\ea
\eeq
The evaluation of $\Frdp{n}$ at routes $\cup_u\cR^*_u$ is:
\beq\label{eqn:Frdp-n}
\ba
&\Frdp{n}(\cup_u\cR^*_u)=\sum_{(i,j)\in E_2}\left(\sum_{v\in\cV^{*}_{i,j}\cap\cV^{(n-1)}_{i,j}}C^{(n)}_{v,i,j}
+\sum_{v\in\cV^{*}_{i,j}\setminus\cV^{(n-1)}_{i,j}}C^{(n)}_{v,i,j} \right)
+\sum_{(i,j)\in E_3}\sum_{v\in\cV^*_{i,j}} C^{(n)}_{v,i,j} \\
&=\sum_{(i,j)\in E_2}\left(\sum_{v\in\cV^{*}_{i,j}\cap\cV^{(n-1)}_{i,j}}\frac{\Cp_{i,j}(|\cP^{(n)}_{v,i,j}|)}{|\cP^{(n)}_{v,i,j}|}
+\sum_{v\in\cV^{*}_{i,j}\setminus\cV^{(n-1)}_{i,j}}(1-\sigma_f)C_{i,j} \right)
+\sum_{(i,j)\in E_3}\sum_{v\in\cV^*_{i,j}} (1-\sigma_f)C_{i,j},
\ea
\eeq
where we use \eqref{eqn:C^n_ve}, $\mP^{(n)}_{v,i,j}=\mP^{(n-1)}_{v,i,j}$,
and $\mI(n-1,v,i,j)=\emptyset$ (by the hypothesis) in the second equality.
Based on \eqref{eqn:z*}, we have the following lower bound of $z^*$:
\beq\label{eqn:z*_lb}
\ba
&z^*=\sum_{(i,j)\in E_2}\sum_{v\in\cV^*_{i,j}}\frac{\Cp_{i,j}(|\cP^*_{v,i,j}|)}{|\cP^*_{v,i,j}|}
+\sum_{(i,j)\in E_3}\sum_{v\in\cV^*_{i,j}}\frac{\Cp_{i,j}(|\cP^*_{v,i,j}|)}{|\cP^*_{v,i,j}|} \\
&\ge \sum_{(i,j)\in E_2}\sum_{v\in\cV^*_{i,j}}\frac{\Cp_{i,j}(|\cP^*_{v,i,j}|)}{|\cP^*_{v,i,j}|}
+\sum_{(i,j)\in E_3}\sum_{v\in\cV^*_{i,j}}(1-\sigma_f)C_{i,j} \\
&=\sum_{(i,j)\in E_2}\sum_{v\in\cV^*_{i,j}\cap\cV^{(n-1)}_{i,j}}\frac{\Cp_{i,j}(|\cP^*_{v,i,j}|)}{|\cP^*_{v,i,j}|}
+\sum_{(i,j)\in E_2}\sum_{v\in\cV^*_{i,j}\setminus\cV^{(n-1)}_{i,j}}\frac{\Cp_{i,j}(|\cP^*_{v,i,j}|)}{|\cP^*_{v,i,j}|}
+\sum_{(i,j)\in E_3}\sum_{v\in\cV^*_{i,j}}(1-\sigma_f)C_{i,j} \\
&\ge \sum_{(i,j)\in E_2}\sum_{v\in\cV^*_{i,j}\cap\cV^{(n-1)}_{i,j}}\frac{\Cp_{i,j}(|\cP^*_{v,i,j}|)}{|\cP^*_{v,i,j}|}
+\sum_{(i,j)\in E_2}\sum_{v\in\cV^*_{i,j}\setminus\cV^{(n-1)}_{i,j}}(1-\sigma_f)C_{i,j}
+\sum_{(i,j)\in E_3}\sum_{v\in\cV^*_{i,j}}(1-\sigma_f)C_{i,j}.
\ea
\eeq
From \eqref{eqn:z*_lb} and \eqref{eqn:Frdp-n}, 
\beq
\ba
&z^{(n)}-z^*=z^{(n-1)}-z^*=\Fopt{n}-z^*\le\Frdp{n}(\cup_u\cR^*_u)-z^* \\
&\le\sum_{(i,j)\in E_2}\sum_{v\in\cV^{*}_{i,j}\cap\cV^{(n-1)}_{i,j}}\frac{\Cp_{i,j}(|\cP^{(n)}_{v,i,j}|)}{|\cP^{(n)}_{v,i,j}|}
-\sum_{(i,j)\in E_2}\sum_{v\in\cV^*_{i,j}\cap\cV^{(n-1)}_{i,j}}\frac{\Cp_{i,j}(|\cP^*_{v,i,j}|)}{|\cP^*_{v,i,j}|} \\
&=\sum_{(i,j)\in E_2}\sum_{v\in\cV^{*}_{i,j}\cap\cV^{(n)}_{i,j}}\left(\frac{\Cp_{i,j}(|\cP^{(n)}_{v,i,j}|)}{|\cP^{(n)}_{v,i,j}|} -\frac{\Cp_{i,j}(|\cP^*_{v,i,j}|)}{|\cP^*_{v,i,j}|}\right) \\
&\le \sum_{(i,j)\in E_2}\sum_{v\in\cV^{*}_{i,j}\cap\cV^{(n)}_{i,j}} \left(\frac{\Cp_{i,j}(|\cP^{(n)}_{v,i,j}|)}{|\cP^{(n)}_{v,i,j}|}- 
\frac{(\lambda-1)(1-\sigma_f)+(1-\sigma_l)}{\lambda} C_{i,j}\right) \\
&\le \sum_{(i,j)\in\cup_u\cR^{(n)}_u}\sum_{v\in\cV^{(n)}_{i,j}} \left(\frac{\Cp_{i,j}(|\cP^{(n)}_{v,i,j}|)}{|\cP^{(n)}_{v,i,j}|}-
 \frac{(\lambda-1)(1-\sigma_f)+(1-\sigma_l)}{\lambda} C_{i,j}\right) \\
&= \sum_{(i,j)\in\cup_u\cR^{(n)}_u}\sum_{v\in\cL^{(n)}_{i,j}}
\left(\Cp_{i,j}(|\cP^{(n)}_{v,i,j}|)-|\cP^{(n)}_{v,i,j}|\cdot\frac{(\lambda-1)(1-\sigma_f)+(1-\sigma_l)}{\lambda}C_{i,j} \right),
\ea
\eeq
where we use the fact that $|\cP^*_{v,i,j}|\le\lambda$.
Note that $\Cp_{i,j}(|\cP^{(n)}_{v,i,j}|)$ equals $(|\cP^{(n)}_{v,i,j}|-1)(1-\sigma_f)+(1-\sigma_l)C_{i,j}$ for 
$|\cP^{(n)}_{v,i,j}|\ge2$, and $C_{i,j}$ for $|\cP^{(n)}_{v,i,j}|=1$.
Substituting both cases into the above inequality gives the bound:
\beq
z^{(n)}-z^*\le \sum_{(i,j)\in\cup_u\cR^{(n)}_u}\sum_{v\in\cL^{(n)}_{i,j}}\left[\left(1-\frac{|\cP^{(n)}_{v,i,j}|}{\lambda} \right)(\sigma_f-\sigma_l)+\bs{1}\{|\cP^{(n)}_{v,i,j}|=1\}\sigma_l\right]C_{i,j},
\eeq
which concludes the proof.
% \hfill$\blacksquare$
\end{proof}

\section{Supplement of Section~\ref{sec:sol-route}}

\begin{lemma}\label{lem:a<x<b}
Let $n\ge 2$, and let real numbers 
$\{a_i\}^n_1$, $\{b_i\}^n_1$, $\{A_i\}^n_1$, $A$, and $B$ satisfy
the following conditions: $a_i\le b_i$ for $i\in\replacemath{[n]}{\{1,\ldots,n\}}$, 
$A\le\sum^n_{i=1}b_i$, $\sum^n_{i=1}a_i\le B$, and $A\le B$.
Then there exist $n$ real numbers $\{x_i\}^n_{1}$, 
such that: $a_i\le x\le b_i$ for $i\in\replacemath{[n]}{\{1,\ldots,n\}}$,  
and $A\le\sum^n_{i=1}x_i\le B$.
\end{lemma}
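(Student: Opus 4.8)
The plan is to observe that the statement reduces entirely to a fact about real intervals. The set of attainable sums,
\[
\Big\{\textstyle\sum_{i=1}^n x_i : a_i\le x_i\le b_i \text{ for all } i\Big\},
\]
is exactly the interval $[\sum_{i=1}^n a_i,\ \sum_{i=1}^n b_i]$: the map $(x_1,\dots,x_n)\mapsto\sum_i x_i$ is continuous on the box $\prod_i[a_i,b_i]$, which is nonempty and connected since each $a_i\le b_i$, so its image is a single interval, and the endpoints of that interval are attained at the corners $(a_1,\dots,a_n)$ and $(b_1,\dots,b_n)$. Hence it suffices to produce a value lying simultaneously in $[\sum_i a_i,\sum_i b_i]$ and in $[A,B]$, and then exhibit a feasible point realizing that sum.

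Concretely, I would carry this out by the intermediate value theorem along the straight-line interpolation between the two corners. Set $x_i(t)=a_i+t(b_i-a_i)$ for $t\in[0,1]$ and $g(t)=\sum_{i=1}^n x_i(t)$; then $g$ is continuous (indeed affine and nondecreasing, because each $b_i-a_i\ge 0$), with $g(0)=\sum_i a_i$ and $g(1)=\sum_i b_i$. The one point that requires care is choosing the right target value, for which I take $c:=\max\{A,\ \sum_{i=1}^n a_i\}$. Using the four hypotheses one checks that $c$ lands in both intervals: $c\ge A$ and $c\ge\sum_i a_i$ are immediate; $c\le B$ follows from $A\le B$ together with $\sum_i a_i\le B$; and $c\le\sum_i b_i$ follows from $A\le\sum_i b_i$ together with $\sum_i a_i\le\sum_i b_i$ (the latter because $a_i\le b_i$). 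In particular $\sum_i a_i\le c\le\sum_i b_i$, so by the intermediate value theorem there is $t^*\in[0,1]$ with $g(t^*)=c$.

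Finally I would set $x_i:=x_i(t^*)=a_i+t^*(b_i-a_i)$. Since $t^*\in[0,1]$ and $a_i\le b_i$, each $x_i$ is a convex combination of $a_i$ and $b_i$, so $a_i\le x_i\le b_i$; and $\sum_i x_i=g(t^*)=c\in[A,B]$, as required. I do not expect a genuine obstacle here: the entire content lies in recognizing that the set of achievable sums is an interval and in choosing the target $c=\max\{A,\sum_i a_i\}$ so that all four hypotheses ($a_i\le b_i$, $A\le\sum_i b_i$, $\sum_i a_i\le B$, and $A\le B$) are precisely what is needed to place $c$ in the intersection $[\sum_i a_i,\sum_i b_i]\cap[A,B]$. (The quantities $\{A_i\}_1^n$ named in the hypotheses are never used and appear to be vestigial.)
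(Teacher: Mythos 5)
Your proof is correct and is essentially the same argument as the paper's: both interpolate linearly between the corner points $(a_1,\dots,a_n)$ and $(b_1,\dots,b_n)$, the paper writing $x_i=\alpha a_i+(1-\alpha)b_i$ and solving explicitly for an admissible $\alpha\in[0,1]$, while you reach the same point via the intermediate value theorem with the target $c=\max\{A,\sum_i a_i\}$. A minor bonus of your phrasing is that it handles the degenerate case $\sum_i a_i=\sum_i b_i$ without comment, whereas the paper's explicit formula for $\alpha$ divides by $\sum_i b_i-\sum_i a_i$; and you are right that the $\{A_i\}_1^n$ in the hypothesis are vestigial.
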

\begin{proof}
Let $x_i=\alpha a_i + (1-\alpha)b_i$ for $i\in\replacemath{[n]}{\{1,\ldots,n\}}$, 
where $\alpha\in[0,1]$ is a coefficient which is to be determined.
We set $A\le\sum^n_{i=1} [\alpha a_i + (1-\alpha)b_i] \le B$, 
which requires $\alpha$ to satisfy
\begin{equation}\label{eqn:alpha-1}
(\sum^n_{i=1}b_i-B)/(\sum^n_{i=1}b_i - \sum^n_{i=1}a_i)
 \le \alpha\le (\sum^n_{i=1}b_i-A)/(\sum^n_{i=1}b_i - \sum^n_{i=1}a_i).
\end{equation}
Note that conditions in the lemma ensure that 
$(\sum^n_{i=1}b_i-B)/(\sum^n_{i=1}b_i - \sum^n_{i=1}a_i)\le 1$, 
$(\sum^n_{i=1}b_i-A)/(\sum^n_{i=1}b_i - \sum^n_{i=1}a_i)\ge 0$,
and 
$(\sum^n_{i=1}b_i-B)/(\sum^n_{i=1}b_i - \sum^n_{i=1}a_i)\le 
(\sum^n_{i=1}b_i-A)/(\sum^n_{i=1}b_i - \sum^n_{i=1}a_i)$,
which are sufficient condition to guarantee that 
there exists an $\alpha\in[0,1]$ satisfying \eqref{eqn:alpha-1}. 
Then the $\{x\}^n_{i=1}$ constructed
using this $\alpha$ satisfy the desired inequalities.  
% \hfill$\blacksquare$
\end{proof}

\label{proof_of_RDP_thrm}
\begin{proof}{Proof of Theorem~\ref{thm:P_RDP_ij_full}.}
(a) Note that the dimension of $P^{\textrm{RDP}}_{i,j}$ is $|V|+3$.
To prove the two inequalities are facet defining, we
first show that they are valid, and then for each inequality
we construct $|\cV|+3$ affinely independent feasible integral points 
of $P^{\textrm{RDP}}_{i,j}$ that are on the plane defined by
the inequality. 
For clarity, we omit the edge indices $\{i,j\}$ in the subscript of each variable.
The validness of $y\ge  y'$ is clear, since at least two vehicles
on the edge implies that there is at least one vehicle on the edge.
Next, we show the inequality $\sum_{v\in\cV}x_{v}\ge y+  y'$ is valid. 
The inequality \eqref{eqn:RDP_3}
implies that $\sum_{v\in\cV}x_{v}\ge y$ is valid, 
and we sum this inequality with $\sum_{v\in\cV}x_v\ge 2 y'$
yielding $\sum_{v\in\cV} x_v\ge \frac{1}{2}y+ y'$. 
Since all the variables in this inequality are binary, 
we have $\sum_{v\in\cV}x_v\ge y+ y'$.

To show that $y\ge y'$ is facet defining for $P^{\textrm{RDP}}_{i,j}$,
we construct the following $|\cV|+3$ affinely independent feasible integral vectors 
(entries are organized as $[x_1,\ldots,x_{|\cV|},y, y',w]$) that are on the plane defined by $y= y'$:
\begin{equation}
\begin{aligned}
&\big[\bs{e}^{|\cV|}_k+\bs{e}^{|\cV|}_{k+1},\; 1,\; 1,\; 1\big]  \;\;\forall k=1,\ldots,|\cV|-1, 
\quad \big[\bs{e}^{|\cV|}_1+\bs{e}^{|\cV|}_{|V|},\; 1,\; 1,\; 1\big], \\
&\big[\bs{1}^{|\cV|},\; 1,\; 1,\; |V|-1\big],  \quad \big[\bs{1}^{|\cV|},\; 1,\; 1,\; 0\big], 
\quad \big[\bs{0}^{|\cV|},\; 0,\; 0,\; 0\big],
\end{aligned}
\end{equation}
where $\bs{1}^{|\cV|}$ (resp. $\bs{0}^{|V|}$) denotes the $|\cV|$ 
dimensional vector with every entry being 1 (resp. 0),
and $\bs{e}^{|V|}_k$ denotes the $|\cV|$ dimensional vector with
the $k^{\textrm{th}}$ entry being 1 and other entries being 0.
To show that $\sum_{v\in\cV}x_{v}\ge y +  y'$ is facet defining for $P^{\textrm{RDP}}_{i,j}$,
we can construct the following $|\cV|+3$ affinely independent
 feasible integral vectors that are on the plane defined by 
$\sum_{v\in\cV}x_{v} = y +  y'$:
\begin{equation}
\begin{aligned}
&\big[\bs{e}^{|\cV|}_k+\bs{e}^{|\cV|}_{k+1},\;1,\; 1,\; 1\big]  \;\;\forall k=1,\ldots,|\cV|-1, 
\quad \big[\bs{e}^{|\cV|}_1+\bs{e}^{|\cV|}_{|\cV|}, \;1, \;1, \;1\big], \\
&\big[\bs{e}^{|\cV|}_1+\bs{e}^{|\cV|}_2,\; 1,\; 1,\; 0\big],  \quad
 \big[\bs{e}^{|\cV|}_1,\; 1,\; 0,\; 0\big], \quad \big[\bs{0}^{|\cV|},\; 0,\; 0, \; 0\big].
\end{aligned}
\end{equation}

(b) Denote the polytope defined by the right side of 
\eqref{eqn:P_RDP_ij_full} as $\widetilde{P}^{\textrm{RDP}}_{i,j}$.  
We show that $\widetilde{P}^{\textrm{RDP}}_{i,j}=P^{\textrm{RDP}}_{i,j}$.
We use the lift-and-project method to prove the inequalities 
in \eqref{eqn:P_RDP_ij_full} are sufficient to describe $P^{\textrm{RDP}}_{i,j}$.
First, we branch the binary variable $ y'$. 
Specifically, let  $P_1:=\textrm{conv}\big\{P^{\textrm{RDP}}_{i,j}
\cap\{[x,y, y',w]:\; y'=1 \}\big\}$ and 
$P_2:=\textrm{conv}\big\{P^{\textrm{RDP}}_{i,j}
\cap\{[x,y, y',w]:\; y'=0 \}\big\}$. Therefore,
\begin{equation}
P_1=\Set*{[x,y, y',w]}
                {\begin{aligned}
                  &\sum_{v\in\cV}x_v\ge 2, \;\; w-\sum_{v\in\cV}x_v+1\le 0,\\
                  &0\le x_v\le 1\;\;\forall v\in\cV,\;\; w\ge 0,\;\; y=1,\;\;  y'=1
                 \end{aligned}}.
\end{equation}
\begin{equation}
P_2=\Set*{[x,y, y',w]  }
                    { \begin{aligned}
                      &x_v\le y\;\;\forall v\in\cV, \;\; w-\sum_{v\in\cV}x_v +y\le 0,\;\; y'=0 \\
                  &0\le x_v\le 1\;\;\forall v\in\cV, \;\; 0\le y\le 1,\;\; w\ge 0 
                  \end{aligned}},
\end{equation}
The claim holds because the extreme points of $P_1$ and $P_2$ are integral. 
Clearly, we have $P^{\textrm{RDP}}_{i,j}=\textrm{conv}\{P_1\cup P_2\}$. 
To describe $\textrm{conv}\{P_1\cup P_2\}$ 
using linear inequalities, consider the lifted polytope:
\begin{equation}
\mathscr{E}^{\textrm{lift}}:
=\Set*{\begin{aligned} & x, y,  y', w, \delta  \\ &x^1, y^1, y^{\prime 1}, w^1,\\ & x^2, y^2, y^{\prime 2}, w^2,   \end{aligned}}
                 {\begin{aligned}
                   &\sum_{v\in\cV} x^1_v\ge 2 \delta, \;\; z^1-\sum_{v\in\cV} x^1_v + y^1 \le 0,\\
                   & \;\; 0\le x^1_v\le  \delta\;\;\forall v\in\cV, \;\;y^1= \delta, \;\;y^{\prime 1}=\delta,\;\; w^1\ge 0,  \\   
                   &x^2_v\le y^2\;\;\forall v\in\cV, \;\; z^2-\sum_{v\in\cV} x^2_v +y^2\le 0, \\
                   &0\le x^2_v\le 1- \delta \;\; \forall v\in\cV, \;\; 0\le y^2\le 1- \delta,\;\; y^{\prime 2}=0,\;\; w^2\ge 0, \\
                   &x=x^1+x^2, \;\; y=y^1+y^2,\;\; w=w^1+w^2,\;\; y^{\prime}=y^{\prime1}+y^{\prime2} \\
                   &  0\le\delta\le 1
                   \end{aligned}
                   },
\end{equation}
where the auxiliary variables $[x^1,y^1,y^{\prime1},w^1]$ (resp. $[x^2,y^2,y^{\prime2},w^2]$)
are used to represent the polytope $P_1$ (resp. $P_2$),
and $\delta$ is the convexifying variable.
Projecting out the variables $y^{\prime1}$, $y^{\prime2}$, $\delta$
by substituting $y^{\prime2}=0$, $y^{\prime1}=y^{\prime}$ and $\delta = y^{\prime}$
to simplify $\mathscr{E}^{\textrm{lift}}$, 
we get the following projection of $\mathscr{E}^{\textrm{lift}}$ on the space induced by 
$[x,y,y^{\prime},w,x^1,y^1,w^1,x^2,y^2,w^2]$:
\begin{equation}
\textrm{Proj}_{\substack{x,y,y',w, \\ x^1,y^1,w^1,\\ x^2,y^2,w^2}}\mathscr{E}^{\textrm{lift}}:
=\Set*{\begin{aligned}x, y,  y', w,\\ x^1, y^1, w^1,\\ x^2, y^2, w^2    \end{aligned}}
                 {\begin{aligned}
                   &\sum_{v\in\cV} x^1_v\ge 2 y', \;\; z^1-\sum_{v\in\cV} x^1_v + y^1 \le 0,\\
                   & \;\; 0\le x^1_v\le  y', \;\;y^1= y' \;\;\forall v\in\cV,\;\; w^1\ge 0,  \\   
                   &x^2_v\le y^2\;\;\forall v\in\cV, \;\; z^2-\sum_{v\in\cV} x^2_v +y^2\le 0, \\
                   &0\le x^2_v\le 1- y' \;\; \forall v\in\cV, \;\; 0\le y^2\le 1- y',\;\; w^2\ge 0, \\
                   &x=x^1+x^2, \;\; y=y^1+y^2,\;\; w=w^1+w^2, \;\; 0\le  y'\le 1
                   \end{aligned}
                   }.
\end{equation}
We have $\textrm{conv}\{P_1\cup P_2\}=
\textrm{Proj}_{x,y, y',w}\mathscr{E}^{\textrm{lift}}$ 
based on the property of lifted polytope (Theorem~4.39 of \cite{conforti-IP-2014}). 
We further eliminate $y^1,x^2, y^2, w^2$ by substituting $y^1= y'$, 
$x^2=x-x^1$,
$y^2=y-y^1=y- y'$ and $w^2=w-w^1$, respectively. 
This leaves
\begin{equation}\label{eqn:Proj_xylzx1z1_E}
\textrm{Proj}_{\substack{x,x^1, \\ y,y^{\prime}, \\ w,w^1}}\mathscr{E}^{\textrm{lift}}=
                % \Set*{\begin{aligned}&x, y, y' , z,\\ &x^1, z^1   \end{aligned}}
                \Set*{\begin{aligned}&x, x^1, \\  & y, y^{\prime}, \\ & w, w^1   \end{aligned}}
							   {\begin{aligned}
                   &\sum_{v\in\cV} x^1_v\ge 2 y',\;\; w^1-\sum_{v\in\cV} x^1_v +  y' \le 0, \\
                   & 0\le x^1_v\le  y' \;\;\forall v\in\cV,\\
                   &w^1\ge 0, \;\; x_v - x^1_v\le y -  y' \;\;\forall v\in\cV, \\   
                   &w-\sum_{v\in\cV} x_v +y - y' \le w^1-\sum_{v\in\cV} x^1_v, \\
                   &0\le x_v-x^1_v\le 1- y' \;\; \forall v\in\cV, \\
                   &  y' \le y\le 1,\;\; w^1\le w,\;\; 0\le y' \le 1							
                   \end{aligned}
                   }.
\end{equation}
If we collect all inequalities involving $w^1$ from \eqref{eqn:Proj_xylzx1z1_E}, we obtain:
\begin{equation}\label{eqn:z1-constr}
0\le w^1,\;\; w-\sum_{v\in\cV}x_v+y- y' + \sum_{v\in\cV}x^1_v \le w^1,\;\;
 w^1\le \sum_{v\in\cV}x^1_v- y',\;\; w^1\le w.
\end{equation}
We can apply the Fourier-Motzkin elimination
to $w^1$ from \eqref{eqn:Proj_xylzx1z1_E}.
That is we replace constraints \eqref{eqn:z1-constr} with the following constraints 
in \eqref{eqn:Proj_xylzx1z1_E}:
\begin{displaymath}
\begin{aligned}
0\le w^1,\;w^1\le \sum_{v\in\cV}x^1_v- y' &\Longrightarrow 0\le \sum_{v\in\cV}x^1_v- y', \\
0\le w^1,\; w^1\le w &\Longrightarrow 0\le w, \\
w-\sum_{v\in\cV}x_v+y- y' + \sum_{v\in\cV}x^1_v \le w^1,\; w^1\le \sum_{v\in\cV}x^1_v- y' 
&\Longrightarrow w-\sum_{v\in\cV}x_v+y- y' + \sum_{v\in\cV}x^1_v\le \sum_{v\in\cV}x^1_v- y', \\
w-\sum_{v\in\cV}x_v+y- y' + \sum_{v\in\cV}x^1_v \le w^1,\;w^1\le w
&\Longrightarrow w-\sum_{v\in\cV}x_v+y- y' + \sum_{v\in\cV}x^1_v\le w.
\end{aligned}
\end{displaymath}
Therefore, 
\begin{equation}
\textrm{Proj}_{x,y, y',w,x^1}\mathscr{E}^{\textrm{lift}}=
                \Set*{\begin{array}{l}x, y,  y', \\  w, x^1  \end{array}}
                   {\def\arraystretch{1.5}
                     \begin{array}{l}
                      \sum_{v\in\cV} x^1_v\ge 2 y', \;\; 0\le x^1_v\le  y' \;\;\forall v\in\cV,  \\
                        w - \sum_{v\in\cV}x_v+y\le 0, \\
                       x_v-x^1_v\le y- y'\;\; \forall v\in\cV, \\
                       0\le x_v-x^1_v\le 1- y' \;\;\forall v\in\cV,\\
                       \sum_{v\in\cV} x_v - \sum_{v\in\cV} x^1_v \ge y- y', \\
                        y'\le y\le 1, \;\; 0\le  y'\le 1, \;\; w\ge 0   					 
                     \end{array} 
                   } 									
\end{equation}
Let $\widetilde{\mathscr{E}}^{\textrm{lift}}=
\textrm{Proj}_{x,y, y',w,x^1}\mathscr{E}^{\textrm{lift}}$, 
then it remains to show that $\wt{P}^{\textrm{RDP}}_{i,j}
=\textrm{Proj}_{x,y, y',w}\widetilde{\mathscr{E}}^{\textrm{lift}}$.
First, for any point $[\wt{x},\widetilde{y},
\wt{ y'},\wt{w},\wt{x}^1]\in\wt{\mathscr{E}}^{\textrm{lift}}$, 
we need to verify that $[\wt{x},\widetilde{y},\wt{ y'},\wt{w}]$ 
satisfies all inequalities defining $\wt{P}^{\textrm{RDP}}_{i,j}$. 
The verification is given as:
\begin{equation}
\begin{aligned}
\textrm{inequalities from }\widetilde{\mathscr{E}}^{\textrm{lift}}\qquad\qquad &
\qquad\quad \textrm{inequalities from }\wt{P}^{\textrm{RDP}}_{i,j} \\
\sum_{v\in\cV}\tilde{x}^1_v\ge 2\tilde{ y'},\;
\sum_{v\in\cV}\tilde{x}_v-\sum_{v\in\cV}\tilde{x}^1_v\ge\tilde{y}-\tilde{ y'},\;
\tilde{ y'}\le\tilde{y}\quad&\Longrightarrow \quad\sum_{v\in\cV}\tilde{x}_v\ge 2\tilde{ y'} \\
\tilde{x}_v-\tilde{x}^1_v\le \tilde{y}-\tilde{ y'},\; \tilde{x}^1_v\le\tilde{ y'}
\quad&\Longrightarrow \quad\tilde{x}_v\le \tilde{y} \\
\tilde{w} - \sum_{v\in\cV}\tilde{x}_v+\tilde{y}\le 0 \quad&
\Longrightarrow\quad\tilde{w} - \sum_{v\in\cV}\tilde{x}_v+\tilde{y}\le 0 \\
\sum_{v\in\cV}\tilde{x}^1_v\ge 2\tilde{ y'},\;
\sum_{v\in\cV}\tilde{x}_v-\sum_{v\in\cV}\tilde{x}^1_v\ge\tilde{y}-\tilde{ y'}
\quad&\Longrightarrow \quad\sum_{v\in\cV}\tilde{x}_v\ge \tilde{y}+\tilde{ y'}.
\end{aligned}
\end{equation}
This shows that $\textrm{Proj}_{x,y, y',w}\mathscr{E}^{\textrm{lift}}
\subseteq \wt{P}^{\textrm{RDP}}_{i,j}$.
Conversely, for any point 
$[\wt{x},\widetilde{y},\wt{ y'},\wt{w}]\in\wt{P}^{\textrm{RDP}}_{i,j}$, 
we need to show that 
there exists an $\wt{x}^1$ such that 
$[\wt{x},\wt{y},\wt{ y'},\wt{w},\wt{x}^1]\in\widetilde{\mathscr{E}}^{\textrm{lift}}$.
Equivalently, $\wt{x}^1$ should satisfy:
\begin{align}
&\textrm{max}\{0, \wt{x}_v-\wt{y}+\wt{ y'}\} \le \wt{x}^1_v \le 
\textrm{min}\{ \wt{x}_v, \wt{ y'} \} \;\; \forall v\in\cV  \mbox{, and}\nonumber \\
&2\wt{ y'} \le \sum_{v\in\cV} \wt{x}^1_v \le \sum_{v\in\cV}\wt{x}_v - \wt{y} + \wt{ y'}.    \nonumber
\end{align}
We now apply Lemma~\ref{lem:a<x<b} on the above linear system 
by setting parameters in the lemma as: 
$a_v=\textrm{max}\{0, \wt{x}_v-\wt{y}+\wt{ y'}\}$, 
$b_v=\textrm{min}\{ \wt{x}_v,  y' \}$ for $v\in\cV$, 
$A=2\wt{ y'}$, and $B=\sum_{v\in\cV}\wt{x}_v - \wt{y} + \wt{ y'}$.
It is easy to verify that these parameters
satisfy the conditions in Lemma~\ref{lem:a<x<b}.
Therefore, such $\wt{x}^1$ exists, and hence 
$\wt{P}^{\textrm{RDP}}_{i,j}\subseteq 
\textrm{Proj}_{x,y, y',w}\mathscr{E}^{\textrm{lift}}$.
To conclude, we have proved that 
$\wt{P}^{\textrm{RDP}}_{i,j}=\textrm{Proj}_{x,y, y',w}\mathscr{E}^{\textrm{lift}}$, 
which shows 
$\wt{P}^{\textrm{RDP}}_{i,j}=P^{\textrm{RDP}}_{i,j}$.
% \hfill$\blacksquare$
\end{proof}

\subsection{Supplement of Section~\ref{sec:disj-cut}}
\label{app:act-constr}
\subsubsection{An Illustrative Example of Algorithm~\ref{alg:act-constr-coll}.}
\label{app:disj-cut-example}
Consider an example of 4 vehicles $v_1,v_2,v_3,v_4$ in
Figure~\ref{fig:ill-exp-alg} with routes
\begin{displaymath}
\begin{aligned}
R_{v_1}:\; A\longrightarrow B\longrightarrow C\longrightarrow D\longrightarrow E, \quad
&R_{v_2}:\; F\longrightarrow C\longrightarrow D\longrightarrow G\longrightarrow H, \\
R_{v_3}:\; I\longrightarrow B\longrightarrow C\longrightarrow J,  \quad
&R_{v_4}:\; L\longrightarrow D \longrightarrow G \longrightarrow K.
\end{aligned}
\end{displaymath}

\begin{figure}
\centering
    \includegraphics[trim=0cm 16.5cm 6cm 4.5cm]{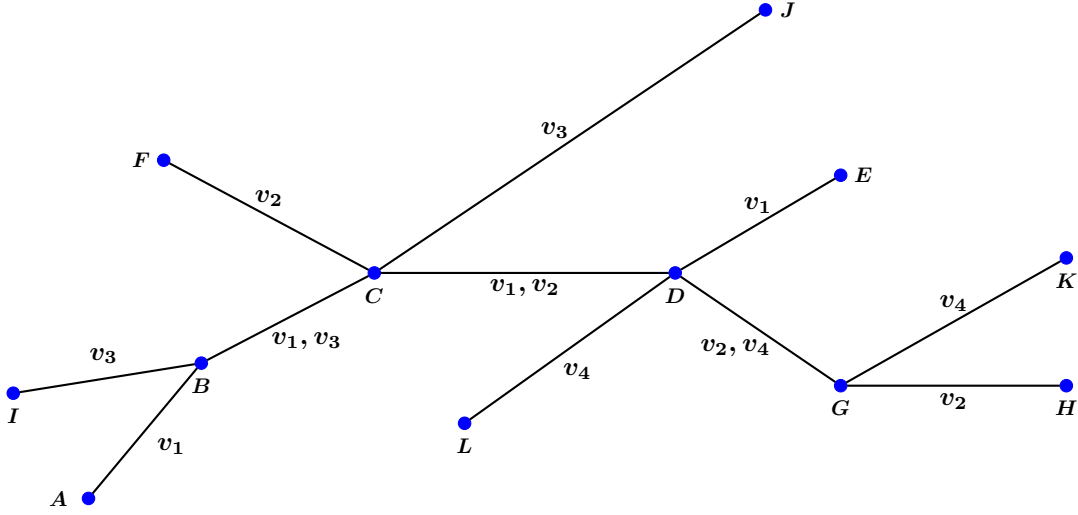}
    \caption{Illustrative example for Algorithm~\ref{alg:act-constr-coll} that generates disjunctive cuts
    for the scheduling problem.}
    \label{fig:ill-exp-alg}
\end{figure} 
Suppose the time of traversing the edge $(C,J)$ is $T_{C,J}=2$, and time of traversing
all other edges is 1. The departure- and arrival-time parameters as well as the $M$ constants are: 
\begin{displaymath}
\begin{aligned}
&T^O_{v_1}=0, \; T^D_{v_1}=8, \quad 
T^O_{v_2}=1,  \; T^D_{v_2}=9, \quad
T^O_{v_3}=3, \; T^D_{v_3}=8,  \quad
T^O_{v_4}=2,  \; T^D_{v_4} =7, \\
&M_{v_3v_1,BC}=\ol{t}_{v_3,B}-\ul{t}_{v_1,B}=5-1=4, \quad
M_{v_2v_1,CD}=\ol{t}_{v_2,C}-\ul{t}_{v_1,C}=6-2=4, \\
&M_{v_4v_2,DG}=\ol{t}_{v_2,D}-\ul{t}_{v_4,D}=7-3=4.
\end{aligned}
\end{displaymath}
Consider a fractional solution $[\hat{t},\hat{y}]$ given as:
\begin{displaymath}
\begin{aligned}
&\hat{t}_{v_1,A}=3,\; \hat{t}_{v_1,B}=4,\; \hat{t}_{v_1,C}=5,\;\hat{t}_{v_1,D}=6,\;\hat{t}_{v_1,E}=7, \\
&\hat{t}_{v_2,F}=3,\;  \hat{t}_{v_2,C}= 4,\; \hat{t}_{v_2,D}=5,\;  \hat{t}_{v_2,G}=6,\; \hat{t}_{v_2,H}=7, \\
&\hat{t}_{v_3,I}=3,\; \hat{t}_{v_3,B}=4,\; \hat{t}_{v_3,C}=5,\;\hat{t}_{v_3,J}=7, \\
&\hat{t}_{v_4,L}=4,\;\hat{t}_{v_4,D}=5,\;\hat{t}_{v_4,G}=6,\;\hat{t}_{v_4,K}=7, \\
&\hat{f}_{v_3v_1,BC}=1,\;\hat{f}_{v_4v_2,DG}=1,\;\hat{f}_{v_2v_1,CD}=\frac{3}{4}.
\end{aligned}
\end{displaymath}
In this fractional solution,
vehicles $v_1,v_3$ form a platoon on their shared edge $(B,C)$ ($\hat{f}_{v_3v_1,BC}=1$).
Vehicle $v_3$ departs at its earliest possible time ($\hat{t}_{v_3,I}=T^{v_3}_O$), which
forces the departure time of $v_1$ to be 3.
Vehicles $v_2,v_4$ form a platoon on their shared
edge $(D,G)$ ($\hat{f}_{v_4v_2,DG}=1$). Vehicle $v_4$ departs at its latest possible time 
($\hat{t}_{v_4,L}=\ol{t}_{v_4,L}$), which forces the departure time of $v_2$ to be 3. 
The arrival times of $v_1$ and $v_2$ at node $C$
are determined by $v_3$ and $v_4$, respectively due to the platooning.
The fractional $\hat{f}_{v_2v_1,CD}$ value is determined by activating the
constraint $t_{v_2,C}-t_{v_1,C}\le M_{v_2v_1,CD}(1-f_{v_2,v_1,CD})$.

Consider running Algorithm~\ref{alg:act-constr-coll} with $[\hat{t},\hat{y}]$ as the input.
At Line~\ref{line:y} of Part 1, the fractional value $\hat{f}_{v_2v_1,CD}$ is identified,
and it can be verified that $|\hat{t}_{v_2,C}-\hat{t}_{v_1,C}|=M_{v_2v_1,CD}(1-\hat{f}_{v_2v_1,CD})$. 
At Line~\ref{line:init-V1-V2} of Part 1, $\cV_1$ and $\cV_2$ are initialized as
$\cV_1\gets\{v_2\}$, $\cV_2\gets\{v_1\}$. When Line~\ref{line:deep-search} of Part 1 is reached for $\cV_1$,
the vehicle $v_4$ is identified and added into $V_1$ at Line~\ref{line:add-u} of Part 2.
Similarly, when running Line~\ref{line:deep-search} of Part 1 for $\cV_2$,
the vehicle $v_3$ is identified and added into $\cV_2$ at Line~\ref{line:add-u} of Part 2.
Finally, Algorithm~\ref{alg:act-constr-coll} returns $\cV_1=\{v_2,v_4\}$ and $\cV_2=\{v_1,v_3\}$.
Based on \eqref{eqn:ac1}--\eqref{eqn:ac6}, the following constraints are active at $[\hat{t},\hat{f}]$:
\begin{displaymath}
\begin{aligned}
&t_{v_1,B}=t_{v_1,A}+1,\;t_{v_1,C}=t_{v_1,B}+1,\;t_{v_1,D}=t_{v_1,C}+1,\;t_{v_1,E}=t_{v_1,D}+1,\\
&t_{v_2,C}=t_{v_2,F}+1,\;t_{v_2,D}=t_{v_2,C}+1,\;t_{v_2,G}=t_{v_2,D}+1,\;t_{v_2,H}=t_{v_2,G}+1,\\
&t_{v_3,B}=t_{v_3,I}+1,\;t_{v_3,C}=t_{v_3,B}+1,\;t_{v_3,J}=t_{v_3,C}+2,\\
&t_{v_4,D}=t_{v_4,L}+1,\;t_{v_4,G}=t_{v_4,D}+1,\;t_{v_4,K}=t_{v_4,G}+1, \\
&t_{v_3,I}\ge \ul{t}_{v_3,I},\; t_{v_4,L}\le\ol{t}_{v_4,L}, \\
&t_{v_3,B}-t_{v_1,B}\le M_{v_3v_1,BC}(1-f_{v_3,v_1,BC}), \\
&t_{v_4,D}-t_{v_2,D}\le M_{v_4v_2,DG}(1-f_{v_4,v_2,DG}), \\
&t_{v_2,C}-t_{v_1,C}\le M_{v_2v_1,CD}(1-f_{v_2,v_1,CD}).
\end{aligned}
\end{displaymath}

\subsubsection{Proofs for Section~\ref{sec:disj-cut}.}
\label{proofs_for_disj_cuts}
\begin{proof}{Proof of Proposition~\ref{prop:constrList}.}
There are three locations where Algorithm~\ref{alg:act-constr-coll} can return `None':
Line~\ref{line:empty1}, Line~\ref{line:empty2} and Line~\ref{line:empty3}.  
It suffices to prove that the conditions for dropping into the three situations
in the algorithm do not hold. 
First, the condition for reaching Line~\ref{line:empty1} of Part 1 is that there 
\emph{does not exist} a fractional $\hat{f}_{u^*,v^*,i^*,j^*}$, such that 
\begin{equation}
|\hat{t}_{u^*,i^*}-\hat{t}_{v^*,i^*}|=M_{u^*,v^*,i^*,j^*}(1-\hat{f}_{u^*,v^*,i^*,j^*}).
\end{equation}
Since $[\hat{t},\hat{f}]$ is a fractional solution by assumption, to reach Line~\ref{line:empty1},
we must have that for any fractional $\hat{f}_{u^*,v^*,i^*,j^*}$, the following inequality hold
\begin{equation}
\hat{t}_{u^*,i^*}-\hat{t}_{v^*,i^*}<M_{u^*,v^*,i,j}(1-\hat{f}_{u^*,v^*,i^*,j^*}).
\end{equation}
Let $\delta>0$ be sufficient small such that $\hat{t}_{u^*,i^*}-\hat{t}_{v^*,i^*}<M_{u^*,v^*,i,j}(1-\hat{f}_{u^*,v^*,i^*,j^*}-\delta)$
and $\hat{f}_{u^*,v^*,i^*,j^*}>\delta$.
Then construct two points $[\hat{t}^1,\hat{f}^1]$ and $[\hat{t}^2,\hat{f}^2]$,
such that $\hat{f}^1_{u^*,v^*,i^*,j^*}=\hat{f}_{u^*,v^*,i^*,j^*}-\delta$, $\hat{f}^2_{u^*,v^*,i^*,j^*}=\hat{f}_{u^*,v^*,i^*,j^*}+\delta$
and all the other entries of the two points are the same as $[\hat{t},\hat{f}]$.
It is easy to check that $[\hat{t}^1,\hat{f}^1]$ and $[\hat{t}^2,\hat{f}^2]$ are in $\Psprel$. 
The convex combination $[\hat{t},\hat{f}]=\frac{1}{2}[\hat{t}^1,\hat{f}^1]+\frac{1}{2}[\hat{t}^2,\hat{f}^2]$ holds.
It contradicts that $[\hat{t},\hat{f}]$ is an extremal point of $\Psprel$.

To show that the algorithm will not reach Line~\ref{line:empty2} 
and Line~\ref{line:empty3} of the main part,
we assume that $\hat{t}_{u^*,i^*}>\hat{t}_{v^*,i^*}$. The proof for the case 
of $\hat{t}_{u^*,i^*}<\hat{t}_{v^*,i^*}$ is similar.
Suppose the algorithm terminates at Line~\ref{line:empty2} of Part 1.
Then the algorithm must reach Line~\ref{line:signal0} 
when running the procedure \Call{DeepSearch}{$flag$,$V_1$}.
Consider the set $V_1$ when the procedure \Call{DeepSearch}{$flag$,$V_1$} terminates.
Suppose $\cV_1=\{u_k\}^m_{k=1}$ where vehicles $u_1,u_2,\ldots,u_m$ 
are added to $\cV_1$ following the sequence.
Line~\ref{line:init-V1-V2} (Part 1) implies that $u_1=u^*$.
Due to the condition at Line~\ref{line:cond} in the procedure,
the following properties hold:
\begin{equation}\label{eqn:property}
\begin{aligned}
&\forall k=1,\ldots,m-1,\;\exists n_{k+1}\in\;\textrm{NodesOf}(\cR_u\cap\cR_v) 
\textrm{ satisfying } \hat{t}_{u_k,n_{k+1}}-\hat{t}_{u_{k+1},n_{k+1}}=0, \\
&\forall u\in \cV_1,\;\forall v\in\cV\setminus \cV_1,\;\forall (i,j)\in\cR_u\cap\cR_v,
\;|\hat{t}_{u,i}-\hat{t}_{v,i}|<M_{\ol{u,v},i,j}(1-\hat{f}_{u,v,i,j}).
\end{aligned}
\end{equation}
Based on the above properties, we can construct two feasible points
$[\hat{t}^1,\hat{f}^1]$ and $[\hat{t}^2,\hat{f}^2]$ in $\Psprel$,
such that a convex combination of them is equal to $[\hat{t},\hat{f}]$.   
These two points are constructed as:
\begin{equation}\label{eqn:two-pts}
\begin{aligned}
\hat{t}^1_{w,i}=\hat{t}_{w,i}+\delta \quad\forall w\in\cV_1,\;\forall (i,j)\in\cR_w, 
&\qquad \hat{f}^1_{u^*,v^*,i^*,j^*}=\hat{f}_{u^*,v^*,i^*,j^*}-\delta/M_{u^*,v^*,i^*,j^*},  \\
\hat{t}^2_{w,i}=\hat{t}_{w,i}-\delta \quad\forall w\in\cV_1, \;\forall (i,j)\in\cR_w,
&\qquad  \hat{f}^2_{u^*,v^*,i^*,j^*}=\hat{f}_{u^*,v^*,i^*,j^*}+\delta/M_{u^*,v^*,i^*,j^*}, 
\end{aligned}
\end{equation}
where $\delta$ is a sufficiently small positive constant, 
and $\hat{f}_{u^*,v^*,i^*,j^*}$ is the fractional element of $[\hat{t},\hat{f}]$ identified at Line~\ref{line:y}.
The other entries of $[\hat{t}^1,\hat{f}^1]$ and $[\hat{t}^2,\hat{f}^2]$ are the same as in $[\hat{t},\hat{f}]$.
Note that we prove in the previous paragraph that the equality
$\hat{t}_{u^*,i^*}-\hat{t}_{v^*,i^*}=M_{u^*,v^*,i^*,j^*}(1-\hat{f}_{u^*,v^*,i^*,j^*})$ holds.
One can verify that the two points constructed in \eqref{eqn:two-pts} satisfy 
$\hat{t}^1_{u^*,i^*}-\hat{t}^1_{v^*,i^*}=M_{u^*,v^*,i^*,j^*}(1-\hat{f}^1_{u^*,v^*,i^*,j^*})$ 
and $\hat{t}^2_{u^*,i^*}-\hat{t}^2_{v^*,i^*}=M_{u^*,v^*,i^*,j^*}(1-\hat{f}^2_{u^*,v^*,i^*,j^*})$, respectively.
Since for the two points, the departure times of all vehicles in $\cV_1$
have been shifted for the same amount, the properties in \eqref{eqn:property} are satisfied 
at the two points. The sufficient small shifting will not violate the earliest possible departure time
or the arrival deadline since $\underline{t}_{u,O_u}<\hat{t}_{u,O_u}<\ol{t}_{u,O_u}$ 
for any $u\in\cV_1$ which is due to the condition at Line~\ref{line:cond} of the procedure is not satisfied
(otherwise the algorithm will not terminate at Line~\ref{line:empty2} in the main part).
Therefore, they are two feasible points in $\Psprel$.
Furthermore, we have $[\hat{t},\hat{y}]=\frac{1}{2}[\hat{t}^1,\hat{f}^1]+\frac{1}{2}[\hat{t}^2,\hat{f}^2]$,
which contradicts $[\hat{t},\hat{f}]$ being an extremal point of $\Psprel$.

Suppose the algorithm terminates at Line~\ref{line:empty3} in the main part,
we construct the following two feasible points:
\begin{equation}
\begin{aligned}
\hat{t}^1_{w,i,j}=\hat{t}_{w,i,j}+\delta \quad\forall w\in\cV_2,\;\forall (i,j)\in\cR_w,  
&\qquad \hat{f}^1_{u^*,v^*,i^*,j^*}=\hat{f}_{u^*,v^*,i^*,j^*}+\delta/M_{u^*,v^*,i^*,j^*},\\
\hat{t}^2_{w,i,j}=\hat{t}_{w,i,j}-\delta \quad\forall w\in\cV_2, \;\forall (i,j)\in\cR_w,
&\qquad  \hat{f}^2_{u^*,v^*,i^*,j^*}=\hat{f}_{u^*,v^*,i^*,j^*}-\delta/M_{u^*,v^*,i^*,j^*}, 
\end{aligned}
\end{equation}  
and all other entries of $[\hat{t}^1,\hat{f}^1]$ and $[\hat{t}^2,\hat{f}^2]$
remain the same as $[\hat{t},\hat{f}]$.
Then we can use a similar argument to get the contradiction.
% \hfill$\blacksquare$
\end{proof}

\begin{proof}{Proof of Theorem~\ref{thm:disj-cut}.}
(a) We only prove for the case that $\hat{t}_{u^*,i^*}-\hat{t}_{v^*,i^*}> 0$, the 
proof for the case $\hat{t}_{u^*,i^*}-\hat{t}_{v^*,i^*}<0$ is almost identical.
We first show that $A\hat{\omega}=b$ and 
$\hat{t}_{u^*,i^*}-\hat{t}_{v^*,i^*}=M_{u^*,v^*,i^*,j^*}(1-\hat{f}_{u^*,v^*,i^*,j^*})$. 
Note that the second equality holds due to Line~\ref{line:y} of 
Algorithm~\ref{alg:act-constr-coll}.
To see $A\hat{\omega}=b$, we notice that when a vehicle $u$ is added to $vehSet$
at Line~\ref{line:add-u} in the procedure, 
by the condition at Line~\ref{line:cond-add-u}, 
there exists a vehicle $v$ from the current $vehSet$
and $(r,s)\in\cR_v\cap\cR_u$ such that $\hat{f}_{u,v,r,s}=1$ and 
$\hat{t}_{u,r}-\hat{t}_{v,r}=M_{u,v,r,s}(1-\hat{f}_{u,v,r,s})$,
where the equality $\hat{t}_{u,r}-\hat{t}_{v,r}=M_{u,v,r,s}(1-\hat{f}_{u,v,r,s})$
holds because $[\hat{t},\hat{f}]$ is in $\Psprel$, and hence it satisfies that
$\hat{t}_{u,r}-\hat{t}_{v,r}\le M_{u,v,r,s}(1-\hat{f}_{u,v,r,s})$ and
$\hat{t}_{u,r}-\hat{t}_{v,r}\ge -M_{u,v,r,s}(1-\hat{f}_{u,v,r,s})$.
Because $\hat{f}_{u,v,r,s}=1$, it holds that 
$\hat{t}_{u,r}-\hat{t}_{v,r}=0=M_{u,v,r,s}(1-\hat{f}_{u,v,r,s})$.
When the procedure of Algorithm~\ref{alg:act-constr-coll} terminates,
it must have $continue=0$ (Line~\ref{line:cont=0}), which only 
happens when $\hat{t}_{u,O_{u}}=\underline{t}_{u,O_u}$ 
or $\hat{t}_{u,O_{u}}=\ol{t}_{u,O_u}$ holds (Line~\ref{line:cond}).
Moreover, due to the definition of $\mathcal{U}_1$ and $\mathcal{U}_2$,
the constraints $t_{u,O_u}\ge\ul{t}_{u,O_u}$ are active
for all $u\in\mathcal{U}_1$, and constraints $t_{u,O_u}\le\ol{t}_{u,O_u}$
are active for all $u\in\mathcal{U}_2$. Therefore, we have 
$A\hat{\omega}=b$ and 
$\hat{t}_{u,r}-\hat{t}_{v,r}=M_{u,v,r,s}(1-\hat{f}_{u,v,r,s})$. 

We now show that $[\hat{\omega},\hat{f}_{u^*,v^*,i^*,j^*}]$ is the unique solution to the linear equation system
 \begin{equation}\label{eqn:lin-sys}
        \left\{
        \begin{array}{l}
        A\omega=b \\
        \hat{t}_{u^*,i^*}-\hat{t}_{v^*,i^*}=M_{u^*,v^*,i^*,j^*}(1-\hat{f}_{u^*,v^*,i^*,j^*}).
        \end{array}
        \right.
\end{equation}
We prove this by showing that the number of variables in $\omega$ is equal to 
the number of equations in the above linear system, and the equalities in the 
above linear system are linearly independent. 
First, it is easy to see that the equality counterparts of constraints 
\eqref{eqn:ac1}-\eqref{eqn:ac6} are linearly independent because 
any two equalities involve different sets of variables.
Suppose when the algorithm terminates, we have $\cV_1=\{u_k\}^m_{k=1}$,
where vehicles $u_1,u_2,\ldots,u_m$ are added to $\cV_1$ following the sequence.
Line~\ref{line:init-V1-V2} (main part) implies that $u_1=u^*$.
By the condition of adding vehicles (Line~\ref{line:cond-add-u} of the procedure), the following 
equalities must hold:
\begin{equation}\label{eqn:act-constr1}
\begin{aligned}
&t_{u_k,j}=t_{u_k,i}+T_{i,j}  &\forall(i,j)\in\cR_{u_k}, \forall k\in\replacemath{[m]}{\{1,\ldots,m\}}, \\
&\hat{t}_{u_k,r_{k+1}}-\hat{t}_{u_{k+1},r_{k+1}}
=M_{u_ku_{k+1},r_{k+1}s_{k+1}}(1-\hat{f}_{\ol{u_ku_{k+1}},r_{k+1}s_{k+1}}) &\forall k\in[m-1], \\
&\hat{t}_{u_k,r_{k+1}}-\hat{t}_{u_{k+1},r_{k+1}}
=-M_{u_ku_{k+1},r_{k+1}s_{k+1}}(1-\hat{f}_{\ol{u_ku_{k+1}},r_{k+1}s_{k+1}}) &\forall k\in[m-1],\end{aligned}
\end{equation}
where $(r_{k+1},s_{k+1})$ is the edge satisfying the condition at Line~\ref{line:cond-add-u} in Part 2
when $u_{k+1}$ is added into $\cV_1$. 
By the termination condition in Line~\ref{line:cond} (Part 2), we have 
\begin{equation}\label{eqn:act-constr2}
t_{u_m,O_{u_m}}=\ul{t}_{u_m,O_{u_m}} \textrm{ or }\; t_{u_m,O_{u_m}}=\ol{t}_{u_m,O_{u_m}}.
\end{equation}
Similarly, suppose $\cV_2=\{v_k\}^l_{k=1}$, where vehicles $v_1,v_2,\ldots,v_l$
are added into $\cV_2$ following the sequence. We have $v_1=v^*$, and 
\begin{equation}\label{eqn:act-constr3}
\begin{aligned}
&t_{v_k,j}=t_{v_k,i}+T_{i,j}  &\forall(i,j)\in\cR_{v_k}, \forall k\in\replacemath{[l]}{\{1,\ldots,l\}}, \\
&\hat{t}_{v_k,r^{\prime}_{k+1}}-\hat{t}_{v_{k+1},r^{\prime}_{k+1}}
=M_{v_k,v_{k+1},r^{\prime}_{k+1},s^{\prime}_{k+1}}(1-\hat{f}_{\ol{v_k,v_{k+1}},r^{\prime}_{k+1},s^{\prime}_{k+1}}) &\forall k\in[l-1], \\
&\hat{t}_{v_k,r^{\prime}_{k+1}}-\hat{t}_{v_{k+1},r^{\prime}_{k+1}}
=-M_{v_k,v_{k+1},r^{\prime}_{k+1},s^{\prime}_{k+1}}(1-\hat{f}_{\ol{v_k,v_{k+1}},r^{\prime}_{k+1},s^{\prime}_{k+1}}) &\forall k\in[l-1],
\end{aligned}
\end{equation}
where $(r^{\prime}_{k+1},s^{\prime}_{k+1})$ is the edge satisfying 
the condition at Line~\ref{line:cond-add-u} in Part 2
when $v_{k+1}$ is added into $\cV_2$. Similarly, it also holds that 
\begin{equation}\label{eqn:act-constr4}
t_{v_l,O_{v_l}}=\ul{t}_{v_l,O_{v_l}} \textrm{ or }\; t_{v_l,O_{v_l}}=\ol{t}_{v_l,O_{v_l}}.
\end{equation} 
Line~\ref{line:y} at Part 1 guarantees that the following equality holds:
\begin{equation}\label{eqn:act-constr5}
\hat{t}_{u_1,i^*}-\hat{t}_{v_1,i^*}=M_{u_1v_1,i^*,j^*}(1-\hat{f}_{\ol{u_1v_1},i^*,j^*}).
\end{equation} 
Combining \eqref{eqn:act-constr1}--\eqref{eqn:act-constr5}, we see that 
there are 
\begin{displaymath}
\begin{aligned}
&\left(\sum^m_{k=1}|R_{u_k}|+ 2(m-1)+1\right)+\left(\sum^l_{k=1}|R_{v_k}|+ 2(l-1)+1\right)+1 \\
&=\left(\sum^m_{k=1}(|N_{u_k}|-1)+ 2(m-1)+1\right)+\left(\sum^l_{k=1}(|N_{v_k}|-1)+ 2(l-1)+1\right)+1 \\
&=\sum^m_{k=1}|N_{u_k}|+\sum^l_{k=1}|N_{v_k}|+m+l-1
\end{aligned}
\end{displaymath} 
equalities (active constraints). The variables involved
in these equalities are:
\begin{displaymath}
\begin{aligned}
&t_{u_k,i} & \forall i\in N_{u_k}\;\forall k\in\replacemath{[m]}{\{1,\ldots,m\}},\\
&t_{v_k,i} & \forall i\in N_{v_k}\;\forall k\in\replacemath{[l]}{\{1,\ldots,l\}}, \\
&f_{\ol{u_k,u_{k+1}},r_{k+1},s_{k+1}} & \forall k\in\replacemath{[m-1]}{\{1,\ldots,m-1\}},   \\
&f_{\ol{v_k,v_{k+1}},r^{\prime}_{k+1},s^{\prime}_{k+1}} & \forall k\in\replacemath{[l-1]}{\{1,\ldots,l-1\}}, \\
&f_{u_1,v_1,i^*,j^*}.
\end{aligned}
\end{displaymath}
So the number of variables is also $\sum^m_{k=1}|N_{u_k}|+\sum^l_{k=1}|N_{v_k}|+m+l-1$.
Therefore, $\hat{\omega}$ is the unique solution to the linear system of equations \eqref{eqn:lin-sys}.

(b) Consider the following three polytopes:
\begin{equation}
\begin{aligned}
&P=\textrm{conv}\Set*{[\omega,f_{u^*,v^*,i^*,j^*}]}{
\def\arraystretch{2}
\begin{array}{l}
\tilde{A}\omega+\tilde{p}f_{u^*,v^*,i^*,j^*}\ge \tilde{b},\; f_{u^*,v^*,i^*,j^*}\in\{0,1\}, \\
f_{u,v,i,j}\in\{0,1\}\quad\forall(u,v,i,j)\in\mathcal{F}
\end{array}
}, \\
&P_0=\Set*{[\omega,f_{u^*,v^*,i^*,j^*}]\in P}{f_{u^*,v^*,i^*,j^*}=0},\quad \\
&P_1=\Set*{[\omega,f_{u^*,v^*,i^*,j^*}]\in P}{f_{u^*,v^*,i^*,j^*}=1}.
\end{aligned}
\end{equation}
Let $P^{\prime}_0$ and $P^{\prime}_1$ be the linear relaxation polytopes of
$P_0$ and $P_1$, respectively, i.e.,
\begin{equation}
\begin{aligned}
&P^{\prime}_0=\Set*{[\omega,f_{u^*,v^*,i^*,j^*}]}{
\tilde{A}\omega+\tilde{p}f_{u^*,v^*,i^*,j^*}\ge\tilde{b},\quad f_{u^*,v^*,i^*,j^*}=0
} \\
&P^{\prime}_1=\Set*{[\omega,f_{u^*,v^*,i^*,j^*}]}{
\tilde{A}\omega+\tilde{p}f_{u^*,v^*,i^*,j^*}\ge\tilde{b},\quad f_{u^*,v^*,i^*,j^*}=1
}.
\end{aligned}
\end{equation}
Clearly, we have 
$P=\textrm{conv}(\cP_0\cup\cP_1)\subseteq\textrm{conv}(\cP^{\prime}_0\cup\cP^{\prime}_1)$.
Using Theorem~4.39 of \cite{conforti-IP-2014}, we have
\begin{equation}\label{eqn:proj_w}
\begin{aligned}
&\textrm{conv}(\cP^{\prime}_0\cup\cP^{\prime}_1)
=\textrm{Proj}_{[\omega,f_{u^*,v^*,i^*,j^*}]}
\Set*{\begin{array}{l}
\omega,\omega^0,\omega^1, \\
\f,\\
\f^0,\\
\f^1
\end{array}}
{
\def\arraystretch{1.5}
\begin{array}{l}
\omega=\omega^0+\omega^1,\\
\f=\f^0+\f^1, \\
\widetilde{A}\omega^0+\tilde{p}f^0_{u^*,v^*,i^*,j^*}\ge(1- s)\tilde{b},\\
\widetilde{A}\omega^1+\tilde{p}f^1_{u^*,v^*,i^*,j^*}\ge s\tilde{b},\\
\f^0 = 0,\;\f^1=s, \\
0\le s\le 1
\end{array}
} \\
&=\textrm{Proj}_{[\omega,f_{u^*,v^*,i^*,j^*}]}
\Set*{\begin{array}{l}
\omega,\omega^0,\omega^1, \\
\f
\end{array}}
{
\def\arraystretch{1.5}
\begin{array}{l}
\omega=\omega^0+\omega^1,\\
\widetilde{A}\omega^0\ge(1- \f)\tilde{b},\\
\widetilde{A}\omega^1+\tilde{p}\f\ge \f\tilde{b},\\
0\le \f\le 1
\end{array}
}
\end{aligned}
\end{equation}
where $\omega^0$ and $\omega^1$ are two copies of variables of $\omega$.
To derive the family of valid inequalities of $\textrm{conv}(\cP^{\prime}_0\cup\cP^{\prime}_1)$,
we assign dual variables to every constraint in \eqref{eqn:proj_w} as follows:
\begin{displaymath}
{\def\arraystretch{1.5}
\begin{array}{ll}
\textrm{constraint}  & \textrm{dual variable} \\
\omega=\omega^0+\omega^1 & \alpha\in\mathbb{R}^{\textrm{dim}(\omega)}, \\
\widetilde{A}\omega^0\ge (1- f_{u^*,v^*,i^*,j^*})\tilde{b} & \beta^0\ge\bs{0},  \\
\widetilde{A}\omega^1+\tilde{p}\f\ge \f\tilde{b} & \beta^1\ge\bs{0},  \\
f_{u^*,v^*,i^*,j^*}\ge 0 & \gamma^0\ge 0, \\
1-f^1_{u^*,v^*,i^*,j^*}\ge 0 & \gamma^1\ge 0, 
\end{array}
}
\end{displaymath}
Multiply each constraint with the associate dual variable, we obtain the following inequality:
\begin{equation}\label{eqn:L>=0}
L(f_{u^*,v^*,i^*,j^*},\omega,\omega^0,\omega^1;\alpha,\beta^0,\beta^1,
\gamma^0,\gamma^1)\ge0, 
\end{equation}
where
\begin{displaymath}
\begin{aligned}
&L(f_{u^*,v^*,i^*,j^*},\omega,\omega^0,\omega^1;\alpha,\beta^0,\beta^1,
\gamma^0,\gamma^1) \\
&=\alpha^{\top}(\omega-\omega^0-\omega^1)
+\beta^{0\top}[\widetilde{A}\omega^0-(1- f_{u^*,v^*,i^*,j^*})\tilde{b}]
+\beta^{1\top}(\widetilde{A}\omega^1+\tilde{p}\f- \tilde{b}\f) \\
&\quad+\gamma^0f_{u^*,v^*,i^*,j^*}
+\gamma^1(1-f_{u^*,v^*,i^*,j^*}).
\end{aligned}
\end{displaymath}
After reorganizing terms in 
$L(f_{u^*,v^*,i^*,j^*},\omega,\omega^0,\omega^1;\alpha,\beta^0,\beta^1,
\gamma^0,\gamma^1)$,
we get
\begin{displaymath}
\begin{aligned}
&L(f_{u^*,v^*,i^*,j^*},\omega,\omega^0,\omega^1;\alpha,\beta^0,\beta^1,
\gamma^0,\gamma^1) \\
&=\alpha^{\top}\omega+(\beta^{0\top}\widetilde{A}-\alpha^{\top})\omega^0
+(\beta^{1\top}\widetilde{A}-\alpha^{\top})\omega^1 \\
&\quad+(\beta^{0\top}\tilde{b}-\beta^{1\top}\tilde{b}+\beta^{1\top}\tilde{p}
+\gamma^0-\gamma^1)f_{u^*,v^*,i^*,j^*}
-\beta^{0\top}\tilde{b}+\gamma^1,
\end{aligned}
\end{displaymath} 
We can eliminate variables $\omega^0$, $\omega^1$ by setting
their coefficients to be zero in the inequality \eqref{eqn:L>=0}.
Then we obtain the family of all valid inequalities of 
$\textrm{conv}(\cP^{\prime}_0\cup\cP^{\prime}_1)$  
as follows:
\begin{equation}\label{eqn:valid-ineq}
\alpha^{\top}\omega+(\beta^{0\top}\tilde{b}-\beta^{1\top}\tilde{b}
+\beta^{1\top}\tilde{p}+\gamma^0-\gamma^1)f_{u^*,v^*,i^*,j^*}-\beta^{0\top}\tilde{b}+\gamma^1\ge 0,
\end{equation}
if and only if the coefficients $\alpha,\beta^0,\beta^1,\gamma^0,\gamma^1$
satisfy the following conditions:
\begin{equation}\label{eqn:dual-var-cond}
\begin{aligned}
&\widetilde{A}^{\top}\beta^{0}-\alpha=0, \\
&\widetilde{A}^{\top}\beta^{1}-\alpha=0, \\
&\beta^0\ge\bs{0},\;\beta^1\ge\bs{0},\;\gamma^0\ge0,\;\gamma^1\ge0.
\end{aligned}
\end{equation}
Note that the inequality \eqref{eqn:valid-ineq} and the system \eqref{eqn:dual-var-cond},
can be re-scaled by a positive constant. Therefore, we can normalize the system by imposing
$0\le\gamma^1\le 1$ in \eqref{eqn:dual-var-cond}.
Note that the right hand side of \eqref{eqn:valid-ineq} is the objective of 
the linear program \eqref{opt:disj-cut-gen} by setting $\omega\gets\hat{\omega}$, 
and the system \eqref{eqn:dual-var-cond} is the set of constraints in \eqref{opt:disj-cut-gen}.
The normalization $0\le\gamma^1\le 1$ ensures that the linear program \eqref{opt:disj-cut-gen}
is bounded. Because the optimal solution $[\hat{\alpha},\hat{\beta}^0,\hat{\beta}^1,
\hat{\gamma}^0,\hat{\gamma}^1]$ of \eqref{opt:disj-cut-gen}
satisfies \eqref{eqn:dual-var-cond}, it is clear that the inequality \eqref{eqn:disj-valid-ineq}
is valid for $\textrm{conv}(\cP^{\prime}_0\cup\cP^{\prime}_1)$.
Because $P\subseteq\textrm{conv}(\cP^{\prime}_0\cup\cP^{\prime}_1)$,
and $P$ is defined by a subset of variables and constraints from 
the representation of $\Psp_1$, this inequality \eqref{eqn:disj-valid-ineq}
is also valid for $\Psp_1$.

We prove by contradiction to show that the inequality \eqref{eqn:disj-valid-ineq}
strongly separates $[\hat{t},\hat{f}]$ from $\Psp_1$.
Suppose \eqref{eqn:disj-valid-ineq} does not strongly separate 
$[\hat{t},\hat{f}]$ from $\Psp_1$. 
Then we must have $[\hat{t},\hat{f}]\in\Psp_1$, and the following inequality holds
\begin{displaymath}
\hat{\alpha}^{\top}\hat{\omega}
+(\hat{\beta}^{0\top}\tilde{b}-\hat{\beta}^{1\top}\tilde{b}
+\hat{\beta}^{1\top}\tilde{p}+\hat{\gamma}^0-\hat{\gamma}^1)\hat{f}_{u^*,v^*,i^*,j^*}
-\hat{\beta}^{0\top}\tilde{b}+\hat{\gamma}^1\ge 0.
\end{displaymath}
It implies that 
\begin{displaymath}
\ba
&\alpha^{\top}\hat{\omega}
+(\beta^{0\top}\tilde{b}-\beta^{1\top}\tilde{b}
+\beta^{1\top}\tilde{p}+\gamma^0-\gamma^1)\hat{f}_{u^*,v^*,i^*,j^*}
-\beta^{0\top}\tilde{b}+\gamma^1	\\
&\ge \hat{\alpha}^{\top}\hat{\omega}
+(\hat{\beta}^{0\top}\tilde{b}-\hat{\beta}^{1\top}\tilde{b}
+\hat{\beta}^{1\top}\tilde{p}+\hat{\gamma}^0-\hat{\gamma}^1)\hat{f}_{u^*,v^*,i^*,j^*}
-\hat{\beta}^{0\top}\tilde{b}+\hat{\gamma}^1\ge 0,
\ea
\end{displaymath}
for any coefficient vector $[\alpha,\beta^0,\beta^1,\gamma^0,\gamma^1]$
satisfying \eqref{eqn:dual-var-cond}, 
because $[\hat{\alpha},\hat{\beta}^0,\hat{\beta}^1,\hat{\gamma}^0,\hat{\gamma}^1]$
is an optimal solution of \eqref{opt:disj-cut-gen}.
This indicates that the point $[\hat{\omega},\hat{f}_{u^*,v^*,i^*,j^*}]$ 
respects all valid inequalities 
for $\textrm{conv}(\cP^{\prime}_0\cup\cP^{\prime}_1)$.
Therefore, we must have 
$[\hat{\omega},\hat{f}_{u^*,v^*,i^*,j^*}]\in\textrm{conv}(\cP^{\prime}_0\cup\cP^{\prime}_1)$.
Let $P_{\textrm{rel}}$ denote the linear relaxation of $P$, i.e., relaxing 
the integral constraints on $f_{u^*,v^*,i^*,j^*}$ and $f_{u,v,i,j}$ for all $(u,v,i,j)\in\mathcal{F}$.
Note that $\omega$ is a subset of variables from $[t,f]$,
and $P_{\textrm{rel}}$ is defined by the subset of constraints
from $\Psprel$ induced by $[\omega,f_{u^*,v^*,i^*,j^*}]$.
Since $[\hat{t},\hat{f}]$ is an extreme point of $\Psprel$, 
$[\hat{\omega},\hat{f}_{u^*,v^*,i^*,j^*}]$ must be an extreme point of $P_{\textrm{rel}}$.
Moreover, since $\textrm{conv}(\cP^{\prime}_0\cup\cP^{\prime}_1)\subseteq\cP_{\textrm{rel}}$,
and $[\hat{\omega},\hat{f}_{u^*,v^*,i^*,j^*}]\in\textrm{conv}(\cP^{\prime}_0\cup\cP^{\prime}_1)$,
it follows that $[\hat{\omega},\hat{f}_{u^*,v^*,i^*,j^*}]$ is an extreme point of $\textrm{conv}(\cP^{\prime}_0\cup\cP^{\prime}_1)$.
However, at any extreme point of $\textrm{conv}(\cP^{\prime}_0\cup\cP^{\prime}_1)$,
the $f_{u^*,v^*,i^*,j^*}$ entry must be 0 or 1, 
which contradicts $\hat{f}_{u^*,v^*,i^*,j^*}$ being fractional.
% \hfill$\blacksquare$
\end{proof}

\subsection{Supplement of Section~\ref{sec:valid-ineq-SPF2}}
\begin{proposition}\label{prop:Ax=b}
Let $A$ be a $m\times n$ matrix of full rank, with $m<n$, 
and $b\in\mathbb{R}^m$ is a vector. 
A vector $x\in\mathbb{R}^n$ satisfies the constrained fractional condition if 
\begin{displaymath}
Ax=b, \emph{ and } 0<x_i<1 \quad \forall i\in\replacemath{[n]}{\{1,\ldots,n\}}.
\end{displaymath} 
Suppose there exists a vector $\hat{x}$ satisfying the constrained fractional condition. 
Then there exist two vectors $\hat{y}$ and $\hat{z}$ such that both $\hat{y}$ and $\hat{z}$
satisfy the constrained fractional condition, and $\hat{x}=(\hat{y}+\hat{z})/2$.  
\end{proposition}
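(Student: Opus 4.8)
The plan is to exploit that $A$ has full rank $m$ strictly smaller than $n$, so that its kernel is nontrivial, and then to perturb $\hat{x}$ symmetrically along a kernel direction. First I would observe that $\dim \ker A = n - m \ge 1$, so there exists a nonzero vector $d \in \mathbb{R}^n$ with $Ad = 0$. Any point of the form $\hat{x} \pm \epsilon d$ then automatically satisfies the equality constraint, since $A(\hat{x} \pm \epsilon d) = A\hat{x} = b$, regardless of the scalar $\epsilon$.

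The remaining step is to choose the step size $\epsilon > 0$ small enough to preserve the strict box constraints. Because $\hat{x}$ satisfies $0 < \hat{x}_i < 1$ for every $i$, the distances from each coordinate to $0$ and to $1$ are uniformly bounded below by some $\delta > 0$; concretely, $\delta := \min_i \min\{\hat{x}_i,\, 1 - \hat{x}_i\} > 0$. Taking $\epsilon$ proportional to $\delta / \|d\|_\infty$ (which is well defined since $d \neq 0$) guarantees $|\epsilon d_i| < \delta$ for every $i$, and hence $0 < \hat{x}_i \pm \epsilon d_i < 1$. Defining $\hat{y} := \hat{x} + \epsilon d$ and $\hat{z} := \hat{x} - \epsilon d$ then produces two vectors, each satisfying the constrained fractional condition, with $\hat{x} = (\hat{y} + \hat{z})/2$ by construction, as required.

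I do not anticipate a substantive obstacle; the argument is essentially the one-line observation that the relatively open polytope $\{x : Ax = b,\ 0 < x_i < 1\ \forall i\}$ can contain no extreme point once $\ker A \neq \{0\}$. The one place that genuinely requires care is the invocation of the hypothesis $m < n$: it is precisely this strict inequality, together with the full-rank assumption, that yields a nonzero kernel direction $d$. Without it the claim fails, for instance when $m = n$ the system $Ax = b$ has a unique solution, which cannot be written as a nontrivial average of two distinct feasible points. I would therefore state the dimension count $\dim \ker A = n - m$ explicitly at the outset so that the role of the assumption is transparent.
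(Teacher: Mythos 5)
Your proof is correct and follows essentially the same route as the paper: the paper explicitly constructs a nonzero kernel vector $p$ by partitioning $A$ into an invertible block $A^1$ and the remainder (so $p=[-(A^1)^{-1}A^2c;\,c]$ satisfies $Ap=0$), then perturbs $\hat{x}$ by $\pm\delta p$ for small $\delta$, exactly your symmetric kernel-direction perturbation. Your version simply invokes $\dim\ker A = n-m\ge 1$ abstractly rather than exhibiting the kernel vector, which is a harmless stylistic difference.
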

\begin{proof}
Since $\rank(A)=m$, and $m<n$, we can select $m$ independent columns
of $A$ which form a sub-matrix $A^1$. Write $A$ as $A=[A^1,A^2]$,
and $x=[x^1,x^2]^\top$, where $x^1$ and $x^2$ are sub-vectors of $x$
corresponding to columns in $A^1$ and $A^2$, respectively. Then the 
linear system can be written as:
\begin{displaymath}
A^1x^1=b-A^2x^2.
\end{displaymath}
Select an arbitrary non-zero $n-m$ dimensional vector $c$, and choose a sufficient
small positive constant $\delta$ and let
\begin{displaymath}
p=\begin{bmatrix} 
-(A^1)^{-1}A^2c \\
c  
\end{bmatrix}
\end{displaymath}
and then it can be verified that 
\bdm
\begin{aligned}
&A(\hat{x}\pm\delta p)=A\hat{x}\pm\delta[A^1,A^2]\begin{bmatrix} 
-(A^1)^{-1}A^2c \\
c  
\end{bmatrix}
=A\hat{x}=b.
\end{aligned}
\edm
Furthermore, since every entry of $\hat{x}$ is bounded away from 0 and 1,
a sufficient small perturbation added to $\hat{x}$ can still keep them away 
from the boundary.
This shows that $\hat{x}+\delta p$ and $\hat{x}-\delta p$ satisfy the constrained fractional
condition. Clearly, we have $\hat{x}=\frac{1}{2}[(\hat{x}+\delta p)+(\hat{x}-\delta p)]$.
% \hfill$\blacksquare$
\end{proof}

\begin{algorithm}
  \footnotesize
  \caption{Counting $|\mtc{I}_0|$.}
  \label{alg:seq-take-E1}
  \begin{algorithmic}[1]
  \State {\bf Input}: $\mtc{H}_1,\mtc{H}_2$.
  \State {\bf Output}: The value of $|\mtc{I}_0|$.
  \State Set $N\gets \sum_{D\in\mtc{H}_1}|\mtc{I}(D)|$, $\mtc{H}^{\prime}_1\gets\mtc{H}_1$, 
      $\mtc{H}^{\prime}_2\gets\mtc{H}_2$.
  \While{$\mtc{H}^{\prime}_2\neq\emptyset$.} 
    \While{$\exists\cD_1\in\mtc{H}^{\prime}_1$, and $\exists\cD_2\in\mtc{H}^{\prime}_2$ 
         such that $\mtc{I}(D_1)\cap\mtc{I}(D_2)\neq\emptyset$.} \label{lin:while-cond}
      \State Set $\mtc{H}^{\prime}_1\gets\mtc{H}^{\prime}_1\cup\{D_2\}$,
          $\mtc{H}^{\prime}_2\gets\mtc{H}^{\prime}_2\setminus\{D_2\}$,
          and $N\gets\cN+|\mtc{I}(D_2)|-1$.
    \EndWhile
    \State Select an arbitrary linear equation $D\in\mtc{H}^{\prime}_2$. \label{lin:add-arb}
    \State Set $\mtc{H}^{\prime}_1\gets\mtc{H}^{\prime}_1\cup\{D\}$,
          $\mtc{H}^{\prime}_2\gets\mtc{H}^{\prime}_2\setminus\{D\}$,
          and $N\gets\cN+|\mtc{I}(D)|$.
  \EndWhile
  \State \Return $N$.
  \end{algorithmic}
\end{algorithm}

\label{proof_of_star_part}
\begin{proof}{Proof of Theorem~\ref{thm:facet-star-partition}.}
(a) For clarity, we omit the edge index $(i,j)$ in the $f$ variables for the proof. \newline
Let $P^{\prime}$ be the polytope on the right side of \eqref{eqn:claim1}. 
We need to show that $\widetilde{P}_{3,i,j}=P^{\prime}$. Let 
\begin{equation}
  \mtc{S}=\Big\{ \displaystyle  \sum_{v\in\cV_{i,j}<v_{\textrm{max}}}f_{v_{\textrm{max}},v}\le 1,\quad f_{u,v}+\sum_{w\in\cV_{i,j}<v}f_{v,w}\le 1 \quad \forall u,v\in\cV_{i,j}, u>v,\;v>v_{\textrm{min}} \Big\}
\end{equation}
be the set of structured constraints in $P^{\prime}$.
We begin by showing that every constrain in $\mtc{S}$ defines a facet of $\widetilde{P}_{3,i,j}$.
Note that the dimension of $\widetilde{P}_{3,i,j}$ is $\textrm{dim}\widetilde{P}_{3,i,j}=\binom{|\cV_{i,j}|}{2}$. 
To ease the presentation, we let $n=|\cV_{i,j}|$ and label vehicles from $\cV_{i,j}$ as 
$1,2,\ldots,n$ in the proof.
To show that the constraint $f_{u,v}+\sum_{w\in\cV_{i,j}<v}f_{v,w}\le 1$ is valid for $\widetilde{P}_{3,i,j}$,
we consider the following possible cases: (a) $f_{u,v}=0$ and (b) $f_{u,v}=1$.
For Case (a), the constraint $f_{u,v}+\sum_{w\in\cV_{i,j}<v}f_{v,w}\le 1$
reduces to $\sum_{w\in\cV_{i,j}<v}f_{v,w}\le 1$, which is valid because
there is at most one vehicle that $v$ can follow. For Case (b), the constraint 
$f_{u,v}+\sum_{w\in\cV_{i,j}<v}f_{v,w}\le 1$ reduces to 
$\sum_{w\in\cV_{i,j}<v}f_{v,w}\le 0$, which is valid because $v$ is 
a lead vehicle and it cannot follow any other vehicles. Therefore, the inequality
$f_{u,v}+\sum_{w\in\cV_{i,j}<v}f_{v,w}\le 1$ is valid for $\widetilde{P}_{3,i,j}$.
We now construct $\binom{n}{2}$ affinely independent points from $\widetilde{P}_{3,i,j}$
that satisfy the equality $f_{u,v}+\sum_{w\in\cV_{i,j}<v}f_{v,w}=1$.
These points are
\begin{displaymath}
\begin{aligned}
&e_{u,v},  & \\
&e_{u,v}+e_{v_2v_1}, & \forall v_1,v_2\in\cV_{i,j},\; v_2>v_1,\; v_1,v_2\notin\{u,v\}, \\
&e_{u,v}+e_{v_2v}, & \forall v_2\in\cV_{i,j},\; v_2>v,\; v_2\neq u, \\
&e_{v,w},  & \forall w\in\cV_{i,j},\;w<v,  \\
&e_{v1}+e_{\ol{u,w}},  & \forall w\in\cV_{i,j},\;w\notin\{u,v\}, 
\end{aligned}
\end{displaymath}
where we recall that $\ol{u,w}$ represents the tuple $(u,w)$ if $u>w$, 
and the tuple $(w,u)$ if $w>u$.
It can be verified that the number of points given above is equal to 
\begin{displaymath}
1+\binom{n-2}{2}+(n-v-1)+(v-1)+(n-2)
=\binom{n-2}{2}+(n-1)+(n-2)=\frac{1}{2}n(n-1)=\binom{n}{2}.
\end{displaymath} 
The proof of that the inequality 
$\sum_{v\in\cV_{i,j}<v_{\textrm{max}}}f_{v_{\textrm{max}},v}\le 1$
defines a facet is similar and easier, which we omit it here.
Therefore, every inequality in $\mtc{S}$ defines a facet of $\widetilde{P}_{3,i,j}$,
and hence $\widetilde{P}_{3,i,j}\subseteq P^{\prime}$.

Next we show that every integral point in $P^{\prime}$ specifies a star partition.
We claim that for every integral point $\hat{f}$ in $P^{\prime}$,
the (undirected) graph $\hat{G}$ induced by this point can not contain a path of length greater than 2. 
We prove this claim by contradiction. Suppose $\hat{G}$ contains a path
of length at least 3, which is represented as $L:\;v_1\to v_2 \to v_3 \to v_4 \to \ldots\to v_k$,
where $k\ge 4$. Without loss of generality, we assume $v_2>v_3$, the proof
for the case $v_2<v_3$ is similar. Given that $v_2>v_3$,
the following constraints from $\mtc{S}$ are violated by $\hat{f}$
for different cases:
\begin{displaymath}
\begin{aligned}
&\textrm{cases } & & \textrm{constraint that is violated by $\hat{f}$ } \\
&v_1<v_2 \textrm{ and } v_2\le n-1, & & f_{u,v_2}+\sum_{v\in\cV_{i,j}<v_2}f_{v_2,v}\le 1\;\;\forall u>v_2, \\
&v_1<v_2 \textrm{ and } v_2= n, & & \sum_{v\in\cV_{i,j}<v_2}f_{v_2,v}\le 1, \\
&v_1>v_2, & & f_{v_1,v_2}+\sum_{v\in\cV_{i,j}<v_2}f_{v_2,v}\le 1.
\end{aligned}
\end{displaymath}
This contradicts $\hat{f}\in P^{\prime}$.
It is easy to verify that if the length of every path in an undirected graph is no greater than 2,
the graph must be a star partition graph. 

It remains to show that every extreme point of $P^{\prime}$ is an integral point.
Suppose $\hat{f}$ is an extreme point of $P^{\prime}$ that contains fractional entries. 
We call a structured constraint from $\cS$ a \emph{pivot constraint} if it satisfies 
the following two conditions:
(1) The constraint is active at $\hat{f}$. (2) The constraint contains at least one fractional
entry of $\hat{f}$. It is easy to see that any non-zero entry of $\hat{f}$ 
that is involved in a pivot constraint must be fractional. Indeed, suppose 
the constraint $f_{u,v}+\sum_{w\in\cV_{i,j}<v}f_{v,w}\le 1$ is a pivot constraint.
If any non-zero entry involved in this constraint is not fractional, it must be one,
and then all the other entries involved in the constraint must be zero. 
This contradicts the constraint is a pivot.
For clarity of notation, we use $C_n$ to denote the structured constraint
$\sum_{v\in\cV_{i,j}<n}f_{n,v}\le 1$,
and use $C_{u,v}$ to denote the structured constraint
$f_{u,v}+\sum_{w\in\cV_{i,j}<v}f_{v,w}\le 1$. 
We create a mapping $\mtc{M}$ that maps a pivot constraint to a linear equation. 
The mapping works as follows:
For a pivot constraint $C$, we first set the inequality to be equality,
and remove the variables in $C$ that correspond to zero entries in $\hat{f}$.
For example, suppose $f_{5,4}+f_{4,1}+f_{4,2}+f_{4,3}\le 1$ is a pivot constraint,
and $\hat{f}_{54}=\hat{f}_{42}=0$, $\hat{f}_{41}=\hat{f}_{43}=1/2$.
This pivot constraint is then transformed into the linear equation:
$f_{4,1}+f_{4,3}=1$. We denote the mapping result as:
$\{f_{4,1}+f_{4,3}=1\}=\mtc{M}(f_{5,4}+f_{4,1}+f_{4,2}+f_{4,3}\le 1)$ for this case.
Let $\mtc{C}$ be the set of all pivot constraints, and define a linear equation system
\bdm
\mtc{H}=\Set*{\mtc{M}(C)}{C\in\mtc{C}}.
\edm  
So $\mtc{H}$ is the linear equation system defined by mapping all pivot constraints into
linear equations. Note that if the linear equations in $\mtc{H}$ are not linearly independent,
we can impose a post process on $\mtc{H}$ by keeping a maximum set of linear equations 
that are linearly independent. From now on, we assume that the linear equations 
in $\mtc{H}$ are linearly independent.
We define the following subsets of $\mtc{H}$:
\bdm
\begin{aligned}
&\mtc{H}_1=\Set*{\mtc{M}(C_{u,v})}{\forall\;C_{u,v}\in\mtc{C},\textrm{ s.t. }\hat{f}_{u,v}=0}
\cup\Set*{\mtc{M}(C_n)}{\forall\; C_n\in\mtc{C}}, \\
&\mtc{H}_2=\Set*{\mtc{M}(C_{u,v})}{\forall\;C_{u,v}\in\mtc{C},\textrm{ s.t. }0<\hat{f}_{u,v}<1}.
\end{aligned}
\edm 
Note that $\mtc{H}_1,\mtc{H}_2$ form a partition of $\mtc{H}$. 
Define the following set of vehicle indices:
\bdm
\begin{aligned}
&\mtc{I}_0=\Set*{(u,v)}{f_{u,v}\textrm{ is involved in a linear equation from }\mtc{H}}, \\
&\mtc{I}_1=\Set*{(u,v)}{f_{u,v}\textrm{ is involved in a linear equation from }\mtc{H}_1}, \\
&\mtc{I}_2=\Set*{(u,v)}{f_{u,v}\textrm{ is involved in a linear equation from }\mtc{H}_2},
\end{aligned}
\edm
For any linear equation $D\in\mtc{H}$, we also define 
\bdm
\mtc{I}(D)=\Set*{(u,v)}{f_{u,v}\textrm{ is involved in }D}.
\edm
We claim that the number of linear equations in $\mtc{H}$ is strictly smaller
than the total number of variables involved in all linear equations from $\mtc{H}$,
i.e., $|\mtc{H}|<|\mtc{I}_0|$. If the claim holds, the linear equation system $\mtc{H}$
can then be represented as 
\begin{equation}\label{eqn:AY=b}
AY_{\mtc{I}_0}=b,
\end{equation}
for some matrix $A$ and vector $b$, where 
$Y_{\mtc{I}_0}:=\Set*{f_{u,v}}{(u,v)\in\mtc{I}_0}$ is the sub-vector of $y$ induced by
the index set $\mtc{I}_0$. Note that $A$ is a $|\mtc{H}|\times|\mtc{I}_0|$ matrix of 
full rank, the vector $\widehat{Y}_{\mtc{I}_0}:=\Set*{\hat{f}_{u,v}}{(u,v)\in\mtc{I}_0}$
satisfies \eqref{eqn:AY=b}, and every entry in $\widehat{Y}_{\mtc{I}_0}$ is fractional.
Then by Proposition~\ref{prop:Ax=b}, there exist two fractional vectors 
$\widehat{Y}^1_{\mtc{I}_0}$ and $\widehat{Y}^2_{\mtc{I}_0}$ satisfying \eqref{eqn:AY=b},
and $\widehat{Y}_{\mtc{I}_0}=\frac{1}{2}(\widehat{Y}^1_{\mtc{I}_0}+\widehat{Y}^2_{\mtc{I}_0})$.
We can construct two vectors $\hat{f}^1$, $\hat{f}^2$ in the way that 
$\hat{f}^1_{u,v}=\widehat{Y}^1_{u,v}$, $\hat{f}^2_{u,v}=\widehat{Y}^2_{u,v}$,
and $\hat{f}^1_{u,v}=\hat{f}^2_{u,v}=\hat{f}_{u,v}$ for $(u,v)\notin\mtc{I}_0$.
By construction, we have $\hat{f}^1,\hat{f}^2\in\cP^{\prime}_1$ and 
$\hat{f}=\frac{1}{2}(\hat{f}^1+\hat{f}^2)$, which contradicts to the assumption
that $\hat{f}$ is an extreme point of $P^{\prime}$.

We now show that the claim holds. First we notice that 
for any two different linear equations $D$ and $D^{\prime}$ from $\mtc{H}_1$,
we must have $\mtc{I}(D)\cap\mtc{I}(D^{\prime})=\emptyset$. 
To see this, we realize that the only situation that $\mtc{I}(D)\cap\mtc{I}(D^{\prime})\neq\emptyset$
is in the case that there exist three distinct vehicle indices $u,u^{\prime},v\in\replacemath{[n]}{\{ 1,\ldots,n \}}$
such that 
\bdm
\begin{aligned}
\hat{f}_{u,v}=\hat{f}_{u^{\prime},v}=0, \quad D = \mtc{M}\left(C_{u,v}\right), \quad
D^{\prime} = \mtc{M}\left(C_{u^{\prime},v}\right).
\end{aligned}
\edm
Since the variables $f_{u,v}$ and $f_{u^{\prime},v}$ are removed from 
the pivot constraints $C_{u,v}$ and $C_{u^{\prime},v}$ under the mapping $\mtc{M}$, respectively,
we must have $D=D^{\prime}$, which contradicts to the assumption that
$D$ and $D^{\prime}$ are different. 
Second, we notice that $|\mtc{I}(D)|\ge 2$
for any $D\in\mtc{H}_1$. This shows that 
\begin{equation}\label{eqn:I1>2H1}
|\mtc{I}_1|=\sum_{D\in\mtc{H}_1}|\mtc{I}(D)|\ge 2|\mtc{H}_1|.
\end{equation}
Now we sequentially take elements (linear equations) from 
$\mtc{H}_2$ one at a time following Algorithm~\ref{alg:seq-take-E1}.
We show that the number returned from Algorithm~\ref{alg:seq-take-E1}
equals $|\mtc{I}_0|$. When the condition for the while loop at Line~\ref{lin:while-cond}
of Algorithm~\ref{alg:seq-take-E1} holds, the linear equations $D_1$ and $D_2$ 
must contain exactly one common variable,
and $\mtc{I}(D)\cap\mtc{I}(D_2)=\emptyset$ for all 
$D\in\mtc{H}^{\prime}_1\setminus\{D_1\}$. 
Therefore, the number of new variables introduced to $\mtc{H}^{\prime}_1$
after adding $D_2$ into $\mtc{H}^{\prime}_1$ is $|\mtc{I}(D_2)|-1$.
When Algorithm~\ref{alg:seq-take-E1} reaches Line~\ref{lin:add-arb},
it must hold that $\mtc{I}(\mtc{H}^{\prime}_2)\cap\mtc{I}(\mtc{H}^{\prime}_1)=\emptyset$,
where $\mtc{I}(\mtc{H}^{\prime}_i)=\cup_{D\in\mtc{H}^{\prime}_i}\mtc{I}(D)$ for $i=1,2$. 
Then if an arbitrary $D\in\mtc{I}(\mtc{H}^{\prime}_2)$ is added into $\mtc{H}^{\prime}_1$,
the number of new variables introduced to $\mtc{H}^{\prime}_1$ is $|\mtc{I}(D)|$.
Therefore, when the algorithm terminates, we must have $N=|\mtc{I}_0|$.
Let $D_1,D_2,\ldots,D_m$ be the sequence of linear equations from $\mtc{H}^{\prime}_2$
that is added into $\mtc{H}^{\prime}_1$ from the beginning to the end of Algorithm~\ref{alg:seq-take-E1},
and notice that $\mtc{I}(D_k)\ge 2$ for all $k\in\replacemath{[m]}{\{1,\ldots,m\}}$.
If $|\mtc{H}_1|\ge 1$, we must have
\bdm
\begin{aligned}
&|\mtc{I}_0|=\left|\mtc{I}(\mtc{H}_1)\cup\left(\cup^m_{k=1}\mtc{I}(D_k) \right) \right|
\ge|\mtc{I}(\mtc{H}_1)|+\sum^m_{k=1}(|\mtc{I}(D_k)|-1) \\
&=\sum_{D\in\mtc{H}_1}|\mtc{I}(D)|+\sum^m_{k=1}(|\mtc{I}(D_k)|-1) \\
&\ge 2|\mtc{H}_1|+m=2|\mtc{H}_1|+|\mtc{H}_2| \\
&>|\mtc{H}_1|+|\mtc{H}_2|=|\mtc{H}|,
\end{aligned}
\edm
where we use \eqref{eqn:I1>2H1} in the above inequality.
If $|\mtc{H}_1|=0$ ($\mtc{H}_1=\emptyset$), we have
\bdm
\begin{aligned}
&|\mtc{I}_0|=\left|\cup^m_{k=1}\mtc{I}(D_k)\right|\ge |\mtc{I}(D_1)|+\sum^m_{k=2}(|\mtc{I}(D_2)|-1) \\
&\ge 2+(m-1)=m+1>m=|\mtc{H}|.
\end{aligned}
\edm
This shows that we have $|\mtc{I}_0|>|\mtc{H}|$, which concludes the proof of the claim.

(b) We omit the edge index $(i,j)$ in the variables $f_{u,v,i,j}$ for the proof.
We first show that the inequality \eqref{eqn:sum_y<Q-1} is valid for $P_{3,i,j}$.
In the case that there is no leading vehicle in $U$, we will have
$\sum_{u,v\in U:u>v}f_{u,v,i,j}=0\le \lambda-1$. Let $\hat{f}$ be an arbitrary
star-partition vector.
In the case that there is only one leading vehicle in $U$, and suppose the index 
of the leading vehicle is $v_0$, then we have
\bdm
\begin{aligned}
  &\sum_{u,v\in U:u>v}\hat{f}_{u,v}=\sum_{u\in U>v_0}\hat{f}_{u,v_0}
  \le\textrm{min}\{\lambda-1,\textrm{Card}\Set*{u\in U}{u>v_0}\} \\
&\le\textrm{min}\{\lambda-1,|U|-1\}=\lambda-1.
\end{aligned}
\edm
In the case that there are multiple leading vehicles in $U$,
we denote their indices as $v_1,v_2,\ldots,v_k$ ($k\ge 2$).
The vehicle set $U$ can be partitioned into $\{U_i\}^k_{i=0}$
such that $\hat{f}_{u,v_i}=1$ $\forall u\in U_i\setminus\{v_i\},\;\forall i\in\replacemath{[k]}{ \left\{ 1, \ldots, k \right\}}$,
and $\hat{f}_{u,v}=0$ $\forall u\in U_0,\;\forall v\in U\setminus\{u\},\;u>v$.
It follows that 
\bdm
\begin{aligned}
&\sum_{u,v\in U:u>v}\hat{f}_{u,v}=\sum^k_{i=1}\sum_{u\in U_i\setminus\{v_i\}}\hat{f}_{u,v_i}
=\sum^k_{i=1}(|U_i|-1)=|U|-|U_0|-k \\
&\le|U|-2=|\lambda|-1.
\end{aligned}
\edm
Therefore, the inequality \eqref{eqn:sum_y<Q-1} is valid for $P_{3,i,j}$.

We now show that the inequality \eqref{eqn:sum_y<Q-1} defines a facet of $P_{3,i,j}$.
Instead of directly constructing $\binom{|V_2|}{2}$ affinely independent points from $P_{3,i,j}$ that 
satisfy \eqref{eqn:sum_y<Q-1} as an equality, we use an alternative way of proving that 
it is facet defining. Denote the inequality \eqref{eqn:sum_y<Q-1} as 
$a^{\top}f\le a_0$. Obviously, the set $\Set*{\hat{f}\in P_{3,i,j}}{a^{\top}\hat{f}=a_0}$ is nonempty.
Suppose the inequality $b^{\top}f\le b_0$ defines a facet of $P_{3,i,j}$ such that 
$\Set*{\hat{f}\in P_{3,i,j}}{a^{\top}\hat{f}=a_0}\subseteq\Set*{\hat{f}\in P_{3,i,j}}{b^{\top}\hat{f}=b_0}$.
It suffices to show that there exists a positive constant $\alpha>0$
such that $b=\alpha a$. If this holds, we can conclude that $a^{\top}\hat{f}=a_0$
defines a facet of $P_{3,i,j}$. 
For notational convenience, let $\cV_{i,j}=\{1,2,\ldots,n\}$.
Without loss of generality, suppose 
$U=\{v_1,v_2,\ldots,v_{\lambda+1}\}$, and $1<v_1<v_2<\ldots<v_{\lambda+1}<n$.
We need to show the following claims. \newline
\textbf{Claim 1}: $b_{u,v}=0$ for all $u,v\in\replacemath{[n]}{\{1,\ldots,n\}},\;u>v$ if
$\{u,v\}\cap U\neq\{u,v\}$. There are three cases:
(1) $\{u,v\}\cap U=\emptyset$; (2) $\{u,v\}\cap U=\{u\}$;
and (3) $\{u,v\}\cap U=\{v\}$.
 For Case (1), consider the two points $\hat{f}^1=\sum^\lambda_{i=2}e_{v_iv_1}$
 and $\hat{f}^2=\sum^\lambda_{i=2}e_{v_iv_1}+e_{u,v}$.
 Since $\hat{f}^1$ and $\hat{f}^2$ are on the plane $a^{\top}f=a_0$,
 they are also on the plane $b^{\top}f=b_0$, which implies that
 $\sum^\lambda_{i=2}b_{v_iv_1}=b_0$ and $\sum^\lambda_{i=2}b_{v_iv_1}+b_{u,v}=b_0$, 
 and hence $b_{u,v}=0$.
 For Case (2), suppose $u=v_k$, and consider the two points 
 $\hat{f}^1=\sum^{k-1}_{i=2}e_{v_iv_1}+\sum^{\lambda}_{i=k+1}e_{v_iv_1}$
 and $\hat{f}^2=\sum^{k-1}_{i=2}e_{v_iv_1}+\sum^{\lambda}_{i=k+1}e_{v_iv_1}+e_{v_kv}$
 that are on the plane $a^{\top}f=a_0$.
 Substituting them into the plane $b^{\top}f=b_0$ gives $b_{u,v}=0$.
 For Case (3), $b_{u,v}=0$ can be proved similarly. \newline
 \textbf{Claim 2}: $b_{u,v}=b_{u^{\prime},v}$ for all distinct $u,u^{\prime},v\in U$
 satisfying $u>v$ and $u^{\prime}>v$.
 We first prove for the case that $v=v_1$. 
 For this case, we consider the following two points on the plane $a^{\top}f=a_0$:
 $\hat{f}^1=\sum_{w\in U\setminus\{u,v_1\}}e_{w,v_1}$, 
 and $\hat{f}^2=\sum_{w\in U\setminus\{u^{\prime},v_1\}}e_{w,v_1}$.
 Substituting the above two points into the equation $b^{\top}f=b_0$
 gives $b_{u,v_1}=b_{u^{\prime},v_1}$.
 For the case that $v\neq v_1$,
 we consider the following two points on the plane $a^{\top}f=a_0$:
 $\hat{f}^1=e_{u,v}+\sum_{w\in U\setminus\{u,v,v_1\}}e_{w,v_1}$,
 and $\hat{f}^2=e_{u^{\prime},v}+\sum_{w\in U\setminus\{u^{\prime},v,v_1\}}e_{w,v_1}$.
 Substituting the above two points into the equation $b^{\top}f=b_0$
 gives: 
 \bdm
 b_{u,v}+\sum_{w\in U\setminus\{u,v,v_1\}}b_{w,v_1}
 =b_{u^{\prime},v}+\sum_{w\in U\setminus\{u^{\prime},v,v_1\}}b_{w,v_1},
 \edm
 which implies that $b_{u,v}-b_{u,v_1}=b_{u^{\prime},v}-b_{u^{\prime},v_1}$.
 Since we have already proved that $b_{u,v_1}=b_{u^{\prime},v_1}$,
 the above equation gives that $b_{u,v}=b_{u^{\prime},v}$.\newline
 \textbf{Claim 3}: $b_{u,v}=b_{u,v^{\prime}}$ for all distinct $u,v,v^{\prime}\in U$.
 The proof of Claim 3 is similar to the proof of Claim 2. That is we first show
 that the claim holds for $u=v_{\lambda+1}$, and then we show it holds in general. \newline
 \textbf{Claim 4}: $b_{u,v}=b_{v,w}$ for all distinct $u,v,w\in U$ with $u>v>w$.
 Assuming $w\neq v_1$ (the proof for the case $w=v_1$ is similar), 
 we consider the following two points:
 $\hat{f}^1=e_{u,v}+\sum_{u^{\prime}\in U\setminus\{u,v,v_1\}}e_{u^{\prime},v_1}$,
 and $\hat{f}^2=e_{v,w}+\sum_{u^{\prime}\in U\setminus\{w,v,v_1\}}e_{u^{\prime},v_1}$.
 Substituting the above two points into the equation $b^{\top}y=b_0$ gives
 \bdm
 b_{u,v}+\sum_{u^{\prime}\in U\setminus\{u,v,v_1\}}b_{u^{\prime},v_1}
 =b_{v,w}+\sum_{u^{\prime}\in U\setminus\{w,v,v_1\}}b_{u^{\prime},v_1},
 \edm
 which implies that $b_{u,v}-b_{u,v_1}=b_{v,w}-b_{w,v_1}$.
 Since we have proved that $b_{u,v_1}=b_{w,v_1}$, the above equation
 further implies that $b_{u,v}=b_{v,w}$.
 \newline
 \textbf{Claim 5}: $b_{u,v}=b_{u^{\prime},v^{\prime}}$ for all distinct $u,u^{\prime},v,v^{\prime}\in U$.
 Claim 5 can be implied by Claims 2, 3 and 4. 
 There are three cases: (1) $\textrm{min}\{u,u^{\prime}\}>\textrm{max}\{v,v^{\prime}\}$;
(2) $v>u^{\prime}$; and (3) $v^{\prime}>u$.
For Case (1), we have $b_{u,v}=b_{u^{\prime},v}=b_{u^{\prime},v^{\prime}}$.
For Case (2), we have $b_{u,v}=b_{v,u^{\prime}}=b_{u^{\prime},v^{\prime}}$.
For Case (3), we have $b_{u^{\prime},v^{\prime}}=b_{v^{\prime},u}=b_{u,v}$.
Therefore, Claims 1-5 show that $b=\alpha a$ for some $\alpha>0$,
and hence \eqref{eqn:sum_y<Q-1} defines a facet of $P_{3,i,j}$.
% \hfill$\blacksquare$
\end{proof}

% \bibliographystyle{informs2014} 
% \bibliography{../bibs/platooning2018}

%%%%%%%%%%%%%%%%%
\end{document}